\documentclass[11pt]{article}

\usepackage{geometry}
\usepackage{amsthm}
\usepackage{amsmath}
\usepackage{amssymb}
\usepackage{mathtools}
\usepackage{mathrsfs}
\usepackage{subfigure}
\usepackage{multicol}
\usepackage{tikz}

\newcommand{\bb}{\mathbb}
\newcommand{\mc}{\mathcal}
\newcommand{\ms}{\mathscr}
\newcommand{\abs}[1]{\lvert #1 \rvert}

\newcommand{\minus}{\,\backslash\,}
\newcommand{\contract}{\, / \,}

\DeclareMathOperator{\link}{link}

\DeclareMathOperator{\conv}{conv}

\DeclareMathOperator{\sh}{shape}
\DeclareMathOperator{\spn}{span}

\theoremstyle{definition}

\newtheorem{thm}{Theorem}[section]
\newtheorem{prop}[thm]{Proposition}
\newtheorem{lem}[thm]{Lemma}

\newtheorem{claim}[thm]{Claim}

\newtheorem{defn}[thm]{Definition}

\numberwithin{equation}{section}

\title{Flip-connectivity of triangulations of the product of a tetrahedron and simplex}
\author{Gaku Liu}

\begin{document}
\maketitle

\begin{abstract}
A flip is a minimal move between two triangulations of a polytope. An open question is whether any two triangulations of the product of two simplices can be connected through a series of flips. This was proven in the case where one of the simplices is a triangle by Santos in 2005. In this paper we extend this to when one of the simplices is a tetrahedron.
\end{abstract}

\section{Introduction}

The triangulations of the product of two simplices are a well-studied and interesting set of objects. They have connections to algebraic geometry and commutative algebra and are an important tool to understanding triangulations of products of other polytopes. See \cite[Chapter 6.2]{DRS} for an overview. They have also been extensively studied for their own sake and have been characterized as tropical hyperplane arrangements, fine mixed subdivisions, and tropical oriented matroids \cite{DS,San05,AD}.

A major open question is whether or not the set of triangulations of the product of two simplices is \emph{flip-connected}.
A flip can be thought of as a minimal move between two triangulations of a fixed polytope. The set of triangulations of a polytope is flip-connected if any two triangulations can be connected by a series of flips. It has been long known that every two-dimensional polygon has a flip-connected set of triangulations. On the other hand, there are examples in higher dimensions of polytopes with non-flip-connected triangulations \cite{San00}. However, there is very little known about flip-connectivity even in specific cases or in low dimension; for example, it is unknown whether all three-dimensional polytopes have a flip-connected set of triangulations. In the case of the product of two simplices, Santos \cite{San05} proved that the triangulations of $\Delta^2 \times \Delta^n$ are flip-connected for any $n$. In this paper, we extend this result to triangulations of $\Delta^3 \times \Delta^n$.

In broad terms, our proof gives an algorithm for applying flips to a triangulation until it reaches a specific fixed triangulation. Each step of the algorithm consists of two parts:
\begin{enumerate}
\item Locate a special circuit of $\Delta^3 \times \Delta^{n-1}$, and consider the set $\ms S$ of all simplices of the triangulation which contain the negative elements of this circuit.
\item Apply a series of flips such that the only simplices affected are in $\ms S$, until eventually the circuit itself can be flipped.
\end{enumerate}
While the first part takes up a relatively small portion of this paper, it is the key idea of the proof. We will find our special circuit using a quasiorder defined on all the simplices of the triangulation.

The paper is organized as follows. Section 2 gives an overview of triangulations and flips in general and defines some other important concepts, in particular contraction. Section 3 discusses triangulations of the product of two simplices. Section 4 develops the machinery we will use in the main proof. Section 5 is the main proof.

\section{Triangulations of general sets}

\subsection{Triangulations and flips}

We will give a brief overview of triangulations and flips following \cite{DRS}.
Throughout this section, we let $A$ be a finite set of points in $\bb R^d$, not necessarily in convex position. We will use \emph{simplex} to mean an affinely independent set of points. In other words, a simplex will refer to the set of vertices of a geometric simplex, rather than the whole polytope itself. Let $\conv(S)$ denote the convex hull of a point set $S$.

\begin{defn}
A \emph{triangulation} of $A$ is a collection $\ms T$ of simplices which are subsets of $A$ such that
\begin{enumerate}
\item Any subset of a simplex in $\ms T$ is in $\ms T$.
\item Any two simplices $\sigma_1$, $\sigma_2 \in \ms T$ intersect properly; that is, $\conv(\sigma_1) \cap \conv(\sigma_2) = \conv(\sigma_1 \cap \sigma_2)$.
\item $\bigcup_{\sigma \in \ms T} \conv(\sigma) = \conv(A)$.
\end{enumerate}
\end{defn}

Given a triangulation $\ms T$, let $\ms T^\ast$ denote the subcollection of maximal simplices of $\ms T$. Note that $\ms T$ is determined by $\ms T^\ast$.

We need to give a few more definitions before introducing flips. A \emph{circuit} is a minimal affinely dependent subset of $\bb R^d$. If $X = \{x_1,\dotsc,x_k\}$ is a circuit, then the points of $X$ satisfy an affine dependence equation
\[
\sum_{i=1}^k \lambda_i x_i = 0
\]
where $\lambda_i \in \bb R \minus \{0\}$ for all $i$, $\sum_i \lambda_i = 0$, and the equation is unique up to multiplication by a constant. This gives a unique partition $X = X^+ \cup X^-$ of $X$ given by $X^+ = \{x_i : \lambda_i > 0\}$ and $X^- = \{x_i : \lambda_i < 0\}$. We will write this as $X = \{X^+, X^-\}$.

For any circuit $X = \{X^+,X^-\}$, the relative interiors of $\conv(X^+)$ and $\conv(X^-)$ intersect. In particular, $X^+$ and $X^-$ cannot both be elements of the same triangulation. The circuit $X$ has exactly two triangulations, given by
\[
\ms T_X^+ := \{ \sigma \subseteq X : \sigma \not\supseteq X^+ \} \qquad \ms T_X^- := \{ \sigma \subseteq X : \sigma \not\supseteq X^- \}.
\]

Given a triangulation $\ms T$ and a simplex $\sigma \in \ms T$, we define the \emph{link} of $\sigma$ in $\ms T$ as
\[
\link_{\ms T}(\sigma) := \{ \rho \in \ms T : \rho \cap \sigma = \emptyset,\, \rho \cup \sigma \in \ms T \}.
\]
We are now ready to state the definition of a flip, in the form of a proposition.

\begin{prop}[\cite{San02}] \label{flipdef}
Let $\ms T$ be a triangulation of $A$. Suppose there is a circuit $X = \{X^+,X^-\}$ contained in $A$ such that
\begin{enumerate}
\item $\ms T_X^+ \subseteq \ms T$.
\item All maximal simplices of $\ms T_X^+$ have the same link $\ms L$ in $\ms T$.
\end{enumerate}
Then the collection
\[
\ms T' := \ms T \minus \{ \rho \cup \sigma : \rho \in \ms L, \sigma \in \ms T_X^+ \} \cup \{ \rho \cup \sigma : \rho \in \ms L, \sigma \in \ms T_X^- \}
\]
is a triangulation of $A$. We say that $\ms T$ has a \emph{flip} supported on $(X^+,X^-)$, and that $\ms T'$ is the result of applying this flip to $\ms T$.
\end{prop}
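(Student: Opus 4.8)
The plan is to realize $\ms T'$ as the outcome of excising a subcomplex of $\ms T$ and re-triangulating the region it filled. Set
\[
\ms S^+ := \{\rho \cup \sigma : \rho \in \ms L,\ \sigma \in \ms T_X^+\}, \qquad \ms S^- := \{\rho \cup \sigma : \rho \in \ms L,\ \sigma \in \ms T_X^-\},
\]
so that $\ms T' = (\ms T \minus \ms S^+) \cup \ms S^-$. I want to show that $\ms S^+$ is a subcomplex of $\ms T$ whose union of convex hulls is a region $U$, that $\ms S^-$ is a second triangulation of that same region, and that $\ms S^+$ and $\ms S^-$ induce the same subdivision of the boundary $\partial U$; replacing $\ms S^+$ by $\ms S^-$ inside $\ms T$ then yields a triangulation of $\conv(A)$.

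I would start with the routine preliminaries. Fix one maximal simplex $\sigma_0 = X \minus \{x_0\}$ of $\ms T_X^+$; by hypothesis $\link_{\ms T}(X \minus \{x\}) = \ms L$ for every $x \in X^+$. First, every $\rho \in \ms L$ is disjoint from $X$, for otherwise $x_0 \in \rho$ and $\rho \cup \sigma_0 \in \ms T$ would contain the affinely dependent set $X$. It follows that $\ms S^+ \subseteq \ms T$, that $\ms S^+$ and $\ms S^-$ are closed under passing to subsets, and that their maximal simplices are the sets $\rho \cup (X \minus \{x\})$ with $\rho \in \ms L^\ast$ and $x \in X^+$ (resp.\ $x \in X^-$); each such set is affinely independent, for $x \in X^+$ because it lies in $\ms T^\ast$, and then for $x \in X^-$ because $X$ is the only circuit contained in $\rho \cup X$. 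Second, no simplex of $\ms T$ contains $X^+$: since $X^- \in \ms T_X^+ \subseteq \ms T$ and $\operatorname{relint}\conv(X^+)$ meets $\operatorname{relint}\conv(X^-)$, applying proper intersection to $X^-$ and a hypothetical $\tau \supseteq X^+$ would force $X^- \subseteq \tau$, hence $X \subseteq \tau$, contradicting affine independence. Third, axiom (3): because $\ms T_X^+$ and $\ms T_X^-$ are the two triangulations of the point set $X$, we have $\bigcup_{x \in X^+}\conv(X \minus \{x\}) = \conv(X) = \bigcup_{x \in X^-}\conv(X \minus \{x\})$, and coning each of these identities over the simplices $\rho \in \ms L^\ast$ gives $\bigcup_{\tau \in \ms S^+}\conv(\tau) = \bigcup_{\rho \in \ms L^\ast}\conv(\rho \cup X) = \bigcup_{\tau \in \ms S^-}\conv(\tau)$; hence $\ms T$ and $\ms T'$ cover the same set $\conv(A)$.

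Write $U := \bigcup_{\rho \in \ms L^\ast}\conv(\rho \cup X)$. Next I would record the observation, easy to check from proper intersection in $\ms T$, that $\ms S^+ = \{\tau \in \ms T : \conv(\tau) \subseteq U\}$ (for ``$\supseteq$'', a relative interior point $p$ of $\conv(\tau)$ lies in some maximal simplex $\tau^\circ$ of $\ms S^+$, which is also maximal in $\ms T$, so $\tau \subseteq \tau^\circ$). Granting this, axiom (1) for $\ms T'$ reduces to the claim
\[
(\star)\qquad X^- \subseteq \tau \in \ms T \ \Longrightarrow\ \tau \in \ms S^+,
\]
since down-closedness of $\ms T'$ can fail only at a simplex $\tau \in \ms T \minus \ms S^+$ with a face $\tau' \in \ms S^+$, and then $\tau' \in \ms S^- \subseteq \ms T'$ unless $X^- \subseteq \tau'$, which by $(\star)$ would put $\tau$ itself in $\ms S^+$. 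For axiom (2): two simplices of $\ms T'$ both lying in $\ms T \minus \ms S^+$ intersect properly because $\ms T$ does; two lying in $\ms S^-$ intersect properly because $\ms S^-$ is a triangulation of its own point configuration --- it is obtained from the triangulation $\ms T_X^-$ of the circuit $X$ by coning with the simplices $\rho \in \ms L^\ast$, which is legitimate since each $\rho \cup (X \minus \{y\})$ is affinely independent; and in the mixed case, $(\star)$ together with the above description of $\ms S^+$ confines the intersection to $\partial U$, where $\ms S^-$ and $\ms S^+ \subseteq \ms T$ agree because $\ms T_X^+$ and $\ms T_X^-$ differ only in simplices containing $X^+$ or $X^-$.

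The crux, and the only place using the full strength of the hypothesis, is $(\star)$. By the description of $\ms S^+$ it suffices to exhibit one point of $U$ in the relative interior of $\conv(\tau)$. I would argue locally around a point $q \in \operatorname{relint}\conv(X^+) \cap \operatorname{relint}\conv(X^-)$: such $q$ lies in $\operatorname{relint}\conv(X)$ and hence in $U$, while $X^- \subseteq \tau$ puts $q$ in $\conv(\tau)$; and the requirement that \emph{all} the maximal simplices $X \minus \{x\}$ of $\ms T_X^+$ have the \emph{same} link $\ms L$ should make $\ms S^+$ look near $\conv(X)$ like the honest join $\ms L \ast \ms T_X^+$, forcing $\tau$ into some cell $\conv(\rho \cup (X \minus \{x\}))$ and so $\conv(\tau) \subseteq U$. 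Executing this local analysis --- especially the case where $\tau \cap X$ omits several points of $X^+$, and the bookkeeping of which faces of $U$ are interior --- is the main work; the rest is the standard argument for gluing two triangulations along a shared boundary.
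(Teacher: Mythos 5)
The paper does not prove this proposition; it is quoted from Santos~\cite{San02}, so there is no in-paper argument to compare against. Evaluated on its own terms, your proposal has the right architecture (write $\ms T' = (\ms T \minus \ms S^+) \cup \ms S^-$, show $\ms S^+$ and $\ms S^-$ triangulate the same region $U$, glue), and you correctly identify the claim
\[
(\star)\qquad X^- \subseteq \tau \in \ms T \ \Longrightarrow\ \tau \in \ms S^+
\]
as the crux --- this is exactly the statement that the star of $X^-$ in $\ms T$ is the full join $\ms L \ast \ms T_X^+$, and it is the only place the equal-link hypothesis is used with any force.

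The problem is that $(\star)$ is left unproved. You observe that the point $q \in \operatorname{relint}\conv(X^+) \cap \operatorname{relint}\conv(X^-)$ lies in both $U$ and $\conv(\tau)$, and then say the link hypothesis ``should'' force $\conv(\tau) \subseteq U$ --- but one point of overlap gives nothing, and you explicitly defer the actual local analysis as ``the main work.'' The hard case, $\abs{\tau \cap X^+} \le \abs{X^+} - 2$ (which you flag), is precisely where an argument is needed and where the proof in the literature does something: e.g.\ contracting at $X^-$ and analyzing the fan near the subspace spanned by the images of $X^+$, or an induction along ridges that uses the link hypothesis to produce a neighboring maximal simplex with strictly larger $\tau \cap X^+$. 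Without any such argument the proposal does not establish the proposition. A secondary gap: in axiom (2), case (ii), the assertion that $\ms S^-$ intersects properly because it ``is obtained \dots by coning'' is not an argument; joins of simplicial complexes do not automatically intersect properly, and you would need to use that each $\rho \cup X$ ($\rho \in \ms L^\ast$) has corank one (hence exactly the two triangulations $2^\rho \ast \ms T_X^\pm$) together with the already-known proper intersections of the $\conv(\rho \cup X)$'s. Case (iii) is likewise deferred to ``the standard gluing argument.'' The plan is sound, but as written this is a sketch, not a proof, and the gap sits exactly at the step that makes the statement nontrivial.
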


If two triangulations of $A$ can be connected by a series of flips, then we say that these triangulations are \emph{flip-connected}. If every pair of triangulations of $A$ is flip-connected, then the set of triangulations of $A$ is \emph{flip-connected}.


\subsection{Vector configurations}

In order to talk about contractions, we will need to consider more general configurations than point sets. A \emph{vector configuration} is a finite collection of labeled nonzero vectors in $\bb R^d$. A vector configuration may contain multiple instances of the same vector with different labels. (Formally, a vector configuration is a map $A : I \to \bb R^d \minus \{0\}$ where $I$ is a finite label set.) Given a set $S$ of vectors, let $\spn(S)$ denote the nonnegative span of $S$, where $\spn(\emptyset)$ is defined to be $\{0\}$.

\begin{defn}
A \emph{triangulation} of a vector configuration $A$ is a collection $\ms T$ of linearly independent subsets of $A$ such that
\begin{enumerate}
\item Any subset of an element of $\ms T$ is in $\ms T$.
\item For any two elements $\sigma_1$, $\sigma_2 \in \ms T$ we have $\spn(\sigma_1) \cap \spn(\sigma_2) = \spn(\sigma_1 \cap \sigma_2)$.\footnote{In the intersection $\sigma_1 \cap \sigma_2$, we consider two vectors with different labels to be different. In particular, this condition implies that if $u$ and $v$ are the same vector with different labels, then $\{u\}$ and $\{v\}$ cannot both be in $\ms T$.}
\item $\bigcup_{\sigma \in \ms T} \spn(\sigma) = \spn(A)$.
\end{enumerate}
\end{defn}

We call a linearly independent set of vectors a \emph{simplex}. We will make it clear through context when we should consider a set $A$ a set of points or a vector configuration. Given a point set $A \subset \bb R^d$, we can convert $A$ into a vector configuration by embedding $A$ into a hyperplane $H \subset \bb R^{d+1}$ which does not contain the origin. The triangulations of $A$ and this corresponding vector configuration are the same.

If $A$ is a vector configuration that does not contain different labeled vectors $u$, $v$ which are parallel to each other (i.e., positive multiples of each other), then we call $A$ \emph{simple}. For any vector configuration $A$, we define the \emph{simplification} of $A$ to be a minimal vector configuration $\hat A$ such that any vector of $A$ is parallel to some element of $\hat A$. Given a triangulation of $A$, we obtain a unique triangulation of $\hat A$ by replacing each element of $A$ with its parallel vector in $\hat A$.

\subsection{Restriction and contraction}

We now define two operations on triangulations. Let $\ms T$ be a triangulation of a point set $A$. Let $F$ be the intersection of some face of $\conv(A)$ with $A$. Then $\ms T$ induces a triangulation of $F$ given by
\[
\ms T[F] := \{ \sigma \in \ms T : \sigma \subseteq F \}.
\]
We call this the \emph{restriction} of $\ms T$ to $F$.

Now, embed $A$ as vector configuration in $\bb R^d$. Let $S$ be a nonempty intersection of some linear subspace of $\bb R^d$ with $A$. Let $\pi : \bb R^d \to \bb R^{d'}$ be a linear map whose kernel is the linear span of $S$. We define the \emph{contraction} of $A$ at $S$ to be the vector configuration
\[
A \contract S := \{ \pi(y) : y \in A \minus S\}
\]
where the right hand side is a multiset with label set $A \minus S$.

Suppose $\xi \subseteq S$ is a simplex with the same dimension as $\conv(S)$. Then we have the following.

\begin{prop} \label{contractionfacts}
The following are true.
\begin{enumerate}
\item The map $\sigma \mapsto \pi(\sigma \minus \xi)$ gives a bijection from the set of simplices of $A$ containing $\xi$ to the set of simplices of $A \contract S$.
\item The above map preserves codimensions of simplices.
\item If $\sigma_1$, $\sigma_2 \subseteq A$ are simplices which contain $\xi$ and intersect properly, then $\pi(\sigma_1 \minus \xi)$ and $\pi(\sigma_2 \minus \xi)$ intersect properly and their intersection is $\pi((\sigma_1 \cap \sigma_2) \minus \xi)$.
\end{enumerate}
\end{prop}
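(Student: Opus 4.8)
The plan is to set up coordinates adapted to the simplex $\xi$ and then check each of the three statements directly, since the contraction map $\pi$ is just "quotient by $\spn(S)$" and $\xi$ spans the same subspace as $S$. First I would fix notation: write $L = \spn(S) = \spn(\xi)$, so $\pi : \bb R^d \to \bb R^{d'}$ has kernel exactly $L$ and $\dim L = \abs{\xi}$ (using that $\xi$ is a simplex spanning $\conv(S)$). For a simplex $\sigma \subseteq A$ containing $\xi$, note $\sigma \minus \xi$ is disjoint from $L$'s "coordinate block" in the sense that $\spn(\sigma) = L \oplus \spn(\sigma \minus \xi)$; applying $\pi$ kills $L$ and maps $\spn(\sigma\minus\xi)$ isomorphically onto $\spn(\pi(\sigma\minus\xi))$. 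This is the single linear-algebra fact that drives all three parts, and I would isolate it as the first step: \emph{if $\sigma \supseteq \xi$ is a simplex, then $\pi$ restricted to $\spn(\sigma \minus \xi)$ is injective, $\pi(\sigma\minus\xi)$ is linearly independent, and $\spn(\pi\sigma) = \pi(\spn\sigma) = \pi(\spn(\sigma\minus\xi))$.}

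For part (1), surjectivity: given a simplex $\tau \subseteq A \contract S$, its label set is a subset of $A \minus S$; let $\sigma$ be that label set together with $\xi$. One checks $\sigma$ is linearly independent — if a nontrivial dependence existed, projecting by $\pi$ and using that $\pi(\xi\text{-part}) = 0$ would force a dependence among $\tau$ unless all $\xi$-coefficients vanish, but then the coefficients of $\tau$ vanish too, contradiction; a little care is needed because $\pi$ could a priori kill part of $\spn(\sigma\minus\xi)$, which is exactly ruled out once we know $\sigma$ is independent — so this is mildly circular and I would instead argue that $\spn(\sigma\minus\xi)\cap L = \{0\}$ directly from $\tau$ being independent (a dependence in the intersection would descend to a dependence in $\tau$). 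Injectivity of $\sigma \mapsto \pi(\sigma\minus\xi)$ is immediate since the label set of $\pi(\sigma\minus\xi)$ is $\sigma\minus\xi$, which together with $\xi$ recovers $\sigma$. Part (2) is then a dimension count: $\dim\conv(\pi\sigma) = \dim\spn(\pi\sigma) = \dim\spn(\sigma) - \dim L = \abs{\sigma} - \abs{\xi}$, while the ambient dimension drops from (roughly) $\dim\spn(A)$ to $\dim\spn(A) - \abs{\xi}$, so codimension is preserved; I would phrase this via $\spn(A) = L \oplus \pi(\spn A)$.

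For part (3), the real content, I would argue: $\spn(\pi(\sigma_1\minus\xi)) \cap \spn(\pi(\sigma_2\minus\xi)) = \pi(\spn\sigma_1) \cap \pi(\spn\sigma_2)$. One inclusion of the goal is automatic; for the other, take $v$ in the left side, lift to $w_1 \in \spn\sigma_1$, $w_2\in\spn\sigma_2$ with $\pi w_1 = \pi w_2 = v$, so $w_1 - w_2 \in L = \spn\xi \subseteq \spn\sigma_1 \cap \spn\sigma_2$, hence $w_1 \in \spn\sigma_1 \cap \spn\sigma_2 = \spn(\sigma_1\cap\sigma_2)$ by proper intersection. Then $v = \pi w_1 \in \pi(\spn(\sigma_1\cap\sigma_2)) = \spn(\pi((\sigma_1\cap\sigma_2)\minus\xi))$, and since $\xi \subseteq \sigma_1\cap\sigma_2$ this is the span of the claimed intersection simplex. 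Finally, that $\pi((\sigma_1\cap\sigma_2)\minus\xi) = \pi(\sigma_1\minus\xi)\cap\pi(\sigma_2\minus\xi)$ as \emph{labeled} sets follows because labels are preserved by $\pi$ and $\xi \subseteq \sigma_i$.

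The main obstacle I anticipate is bookkeeping with the cone/positive-span structure (vs.\ ordinary affine triangulations) and with labels in the multiset $A\contract S$: I must be careful that when a vector of $A\minus S$ projects into $\spn(\xi)$ it simply does not appear in any simplex of the image (consistent with the claimed bijection, since such a vector can never lie in a simplex containing $\xi$), and that no spurious coincidences of projected vectors cause two distinct simplices to have the same image — this is handled by carrying labels throughout, as the paper's footnote insists. Apart from this, every step is a short exact-sequence argument $0 \to \spn\xi \to \spn\sigma \to \spn(\pi\sigma)\to 0$, so I do not expect genuine difficulty beyond getting the labeled-multiset conventions exactly right.
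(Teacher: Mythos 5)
The paper states Proposition~\ref{contractionfacts} without proof, treating it as a standard fact about contraction of vector configurations, so there is no argument of the paper's to compare against; your direct verification is the natural approach, and parts (1) and (2) are fine (including your correct resolution of the circularity worry in the surjectivity argument, and your implicit use of the fact that any simplex $\sigma \supseteq \xi$ has $\sigma \cap S = \xi$, since any further element of $S$ would make $\sigma$ dependent). The genuine issue is in part (3). You set $L = \spn(S) = \spn(\xi)$ to denote the kernel of $\pi$, which is the \emph{linear} span of $\xi$; but in this paper $\spn$ is the \emph{nonnegative} span (a pointed cone), and ``intersect properly'' for vector configurations means $\spn(\sigma_1)\cap\spn(\sigma_2)=\spn(\sigma_1\cap\sigma_2)$ with nonnegative spans. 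So the chain $w_1-w_2\in L\subseteq\spn\sigma_1\cap\spn\sigma_2$ breaks: a linear subspace is not contained in a pointed cone. If instead you read all your $\spn$'s as linear spans, then the appeal to ``$=\spn(\sigma_1\cap\sigma_2)$ by proper intersection'' is unjustified, because the hypothesis is proper intersection of cones, which does not obviously give proper intersection of linear spans.

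The repair is small, and it is exactly the cone bookkeeping you flag at the end without actually carrying out. Take $v$ in $\spn(\pi(\sigma_1\minus\xi))\cap\spn(\pi(\sigma_2\minus\xi))$ and lift it to $w_i\in\spn(\sigma_i\minus\xi)$ (nonnegative combinations) with $\pi(w_1)=\pi(w_2)=v$. Then $w_1-w_2\in\ker\pi$, so $w_1-w_2=\sum_{e\in\xi}\lambda_e e$ with the $\lambda_e$ of arbitrary sign. Splitting $\lambda_e=\lambda_e^+-\lambda_e^-$ with $\lambda_e^\pm\ge 0$, set $u:=w_1+\sum_e\lambda_e^- e=w_2+\sum_e\lambda_e^+ e$. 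Since $\xi\subseteq\sigma_1\cap\sigma_2$, both expressions are nonnegative combinations, so $u\in\spn(\sigma_1)\cap\spn(\sigma_2)=\spn(\sigma_1\cap\sigma_2)$, and $\pi(u)=v$, whence $v\in\spn\bigl(\pi((\sigma_1\cap\sigma_2)\minus\xi)\bigr)$. With this adjustment the argument for part (3) is complete; the label-theoretic identification $\pi((\sigma_1\cap\sigma_2)\minus\xi)=\pi(\sigma_1\minus\xi)\cap\pi(\sigma_2\minus\xi)$ is immediate as you say.
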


\begin{prop}
Let $\ms T$ be a triangulation of $A$ which contains $\xi$. Then the collection
\[
\ms T \contract \xi := \{ \pi(\sigma \minus \xi) : \xi \subseteq \sigma,\, \sigma \in \ms T \}
\]
is a triangulation of $A \contract S$.
\end{prop}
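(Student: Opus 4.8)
The plan is to verify, for the collection $\ms T \contract \xi$, the three axioms defining a triangulation of the vector configuration $A \contract S$, using Proposition \ref{contractionfacts} to dispatch everything except the covering axiom. A useful preliminary observation: writing $L := \ker \pi$ for the linear span of $S$, we have $S \subseteq L$, so $\pi$ vanishes on every element of $S$, and consequently $\spn(A \contract S) = \pi(\spn(A))$ — each generator $\pi(a)$ of the left-hand cone ($a \in A \minus S$) lies in the cone $\pi(\spn(A))$, and conversely applying $\pi$ to a nonnegative combination of the points of $A$ kills the contributions of the elements of $S$.

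For axiom (1), given $\tau = \pi(\sigma \minus \xi) \in \ms T \contract \xi$ with $\xi \subseteq \sigma \in \ms T$ and a subset $\tau' \subseteq \tau$, I would take $R \subseteq \sigma \minus \xi$ to be the label set of $\tau'$ (so $\tau' = \pi(R)$) and set $\sigma' := R \cup \xi$; since $R$ and $\xi$ are disjoint this is a subset of $\sigma$, hence lies in $\ms T$, contains $\xi$, and satisfies $\pi(\sigma' \minus \xi) = \tau'$, while Proposition \ref{contractionfacts}(1) guarantees $\tau'$ is a simplex of $A \contract S$. For axiom (2), given two elements $\pi(\sigma_1 \minus \xi)$, $\pi(\sigma_2 \minus \xi)$ with $\xi \subseteq \sigma_i \in \ms T$, the simplices $\sigma_1$ and $\sigma_2$ intersect properly since $\ms T$ is a triangulation, so Proposition \ref{contractionfacts}(3) directly yields that their images intersect properly with intersection $\pi((\sigma_1 \cap \sigma_2) \minus \xi) \in \ms T \contract \xi$. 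Both steps rely on the map $\sigma \mapsto \pi(\sigma \minus \xi)$ genuinely landing in the simplices of $A \contract S$, which is exactly where the hypothesis that $\xi$ is full-dimensional in $\conv(S)$ enters (through Proposition \ref{contractionfacts}).

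The real content is axiom (3): $\bigcup_{\xi \subseteq \sigma \in \ms T} \spn(\pi(\sigma \minus \xi)) = \spn(A \contract S) = \pi(\spn(A))$. The inclusion ``$\subseteq$'' is immediate since $\spn(\pi(\sigma \minus \xi)) = \pi(\spn(\sigma \minus \xi)) \subseteq \pi(\spn(A))$. For ``$\supseteq$'', I would first note that $\spn(\sigma) = \spn(\sigma \minus \xi) + \spn(\xi)$ with $\spn(\xi) \subseteq L$ whenever $\xi \subseteq \sigma$, so $\pi(\spn(\sigma)) = \spn(\pi(\sigma \minus \xi))$; it therefore suffices to show every $v \in \spn(A)$ has $\pi(v) \in \pi(\spn(\sigma'))$ for some $\sigma' \in \ms T$ containing $\xi$. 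The crux is to prove that the star $\st_{\ms T}(\xi) := \bigcup_{\xi \subseteq \sigma \in \ms T} \spn(\sigma)$ is a neighborhood, relative to $\spn(A)$, of a point $q$ in the relative interior of the simplicial cone $\spn(\xi)$ (for instance $q = \sum_{x \in \xi} x$): were it not, a sequence $p_i \to q$ in $\spn(A) \minus \st_{\ms T}(\xi)$ would, by finiteness of $\ms T$ and properness of intersections, force $q$ into some $\spn(\sigma) \cap \spn(\xi) = \spn(\sigma \cap \xi)$ with $\sigma \cap \xi \subsetneq \xi$, i.e. into a proper face of $\spn(\xi)$, contradicting $q \in \operatorname{relint} \spn(\xi)$. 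Granting this, for any $v \in \spn(A)$ the point $\tfrac1t v + q$ lies in $\spn(A)$ and tends to $q$ as $t \to \infty$, hence lies in $\st_{\ms T}(\xi)$ for large $t$; rescaling by $t$ gives $v + tq \in \spn(\sigma')$ for some $\sigma' \supseteq \xi$ in $\ms T$, and since $tq \in \spn(\xi) \subseteq L$ we conclude $\pi(v) = \pi(v + tq) \in \pi(\spn(\sigma')) = \spn(\pi(\sigma' \minus \xi))$.

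I expect the one genuinely nontrivial point to be this neighborhood-of-the-star claim inside axiom (3); everything else is bookkeeping on top of Proposition \ref{contractionfacts} and the triangulation axioms for $\ms T$. It is also worth recording the minor check that $A \contract S$ is a bona fide vector configuration — no element of $A \minus S$ can map to $0$, since such an element would lie in $L \cap A$ and hence in $S$.
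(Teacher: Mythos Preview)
The paper states this proposition without proof (it moves directly to the next subsection on local triangulations), so there is no ``paper's approach'' to compare against; you have supplied a proof where the paper simply asserts the result as background.

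Your argument is correct. Axioms (1) and (2) are indeed routine consequences of Proposition~\ref{contractionfacts} together with the fact that any simplex of $\ms T$ containing $\xi$ can meet $S$ only in $\xi$ (so $\sigma\minus\xi\subseteq A\minus S$ and the labelling in $A\contract S$ makes sense). For axiom (3), your neighborhood-of-the-star argument is the right idea and is cleanly executed: the key points---that a limit $q\in\operatorname{relint}\spn(\xi)$ would be forced into $\spn(\sigma)\cap\spn(\xi)=\spn(\sigma\cap\xi)$ for some $\sigma\in\ms T$ with $\sigma\cap\xi\subsetneq\xi$ (using that $\xi\in\ms T$ and proper intersection), and that this is a proper face of the simplicial cone $\spn(\xi)$---are exactly what is needed. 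The scaling trick $v\mapsto \tfrac{1}{t}v+q$ then reduces covering to the star being a relative neighborhood of $q$, and the passage $\pi(v)=\pi(v+tq)$ uses only $tq\in L$. The closing remark that no $a\in A\minus S$ maps to $0$ under $\pi$ is also correct and worth recording.
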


\subsection{Local triangulations}

In this section we give an easy generalization of triangulations which we will need for technical reasons. Let $A \subseteq \bb R^d$ be a point configuration, and let $\xi$ be a simplex of $A$.

\begin{defn}
For $\xi$ nonempty, a \emph{local triangulation} of $A$ at $\xi$ is a collection $\ms T$ of simplices contained in $A$ such that
\begin{enumerate}
\item All simplices in $\ms T$ contain $\xi$.
\item If $\sigma \subseteq \ms T$ and $\xi \subseteq \sigma' \subseteq \sigma$, then $\sigma' \subseteq \ms T$.
\item Any two simplices in $\ms T$ intersect properly.
\item There is a an open set $U \subset \bb R^d$ with $U \cap \xi \neq \emptyset$ such that
\[
\left( \bigcup_{\sigma \in \ms T} \conv(\sigma) \right) \cap U = \conv(A) \cap U.
\]
\end{enumerate}
If $\xi = \emptyset$, then we define a local triangulation of $A$ at $\xi$ to be a triangulation of $A$.
\end{defn}

Let $\ms T^\ast$ denote the set of maximal simplices of a local triangulation $\ms T$. If $\ms T$ is a local triangulation of $A$ at (possibly empty) $\zeta$ and $\xi \subseteq A$ is a simplex, let $\ms T(\xi)$ be the subcollection of simplices of $\ms T$ which contain $\xi$. If $\xi \cup \zeta \in \ms T$, then $\ms T(\xi)$ is a local triangulation of $A$ at $\xi \cup \zeta$.

Suppose $\xi \subseteq A$ is a nonempty simplex. Embed $A \subset \bb R^d$ as a vector configuration, and let $S$ be the intersection of $A$ with the linear span of $\xi$. Let $\pi : \bb R^d \to \bb R^{d'}$ be a linear map whose kernel is this span. Then any local triangulation $\ms T$ at $\xi$ gives a triangulation
\[
\ms T \contract \xi := \{ \pi(\sigma \minus \xi) : \sigma \in \ms T \}
\]
of $A \contract S$. In fact, this construction is a bijection from the set of local triangulations of $A$ at $\xi$ to the set of triangulations of $A \contract S$.

\section{The product of two simplices}

\subsection{The oriented matroid of $\Delta^{m-1} \times \Delta^{n-1}$}

We now take $A = \Delta^{m-1} \times \Delta^{n-1}$, the product of two simplices of dimensions $m-1$ and $n-1$. Following the conventions of the previous section, we will understand $\Delta^{m-1} \times \Delta^{n-1}$ to mean the set of vertices of $\Delta^{m-1} \times \Delta^{n-1}$ rather than the polytope itself.

The vertices of $\Delta^{m-1} \times \Delta^{n-1}$ are given by
\[
\{e_i \times f_j : i \in [m], j \in [n] \}
\]
where $[l] := \{1,2,\dotsc,l\}$ and $\Delta^{m-1} := \{e_1,\dotsc,e_m\}$, $\Delta^{n-1} := \{f_1,\dotsc,f_n\}$. Consider a bipartite directed graph $G_{m,n}$ with vertex set $\Delta^{m-1} \cup \Delta^{n-1}$ and directed edges $e_if_j$ for each $i$, $j$. We have bijection $e_i \times f_j \mapsto e_if_j$ from $\Delta^{m-1} \times \Delta^{n-1}$ to edges of $G_{m,n}$. In this map, the circuits of $\Delta^{m-1} \times \Delta^{n-1}$ map bijectively to undirected cycles of $G_{m,n}$. If we traverse a cycle $C$ of $G_{m,n}$, then we travel along some edges in the forward direction and the other edges in the backward direction; this gives a partition $C^+ \cup C^-$ of the edges of $C$. This partition is the same as the partition $X = \{X^+,X^-\}$ of the circuit $X \subseteq \Delta^{m-1} \times \Delta^{n-1}$ corresponding to $C$.

All of this can be summarized by saying that the map $e_i \times f_j \mapsto e_if_j$ gives an isomorphism of oriented matroids from the
oriented matroid of affine dependencies of $\Delta^{m-1} \times \Delta^{n-1}$ to the oriented matroid of $G_{m,n}$.

Through the above map, each simplex $\sigma \subseteq \Delta^{m-1} \times \Delta^{n-1}$ maps to a set of edges $E(\sigma)$ of $G_{m,n}$ with no cycles. Let $T(\sigma)$ denote the graph with vertex set $\Delta^{m-1} \cup \Delta^{n-1}$ and edge set $E(\sigma)$. If $\sigma$ is maximal, then $T(\sigma)$ is a spanning tree of $G_{m,n}$.

Let $\sigma \subseteq \Delta^{m-1} \times \Delta^{n-1}$ be a simplex. For each vertex $v$ of $G_{m,n}$, define $N_\sigma(v)$ to be the undirected neighborhood of $v$ in $T(\sigma)$. Suppose that $j_1$, \dots, $j_k$ are all the indices $j \in [n]$ for which $\abs{N_\sigma(f_j)} > 1$. We define the \emph{shape} of $\sigma$ to be the set
\[
\sh(\sigma) := \{ N_\sigma(f_{j_1}), \dotsc, N_\sigma(f_{j_k}) \}.
\]

Finally, given a triangulation (or local triangulation) $\ms T$ of $\Delta^{m-1} \times \Delta^{n-1}$, we define $\ms T^\star$ to be the subcollection of simplices $\sigma \in \ms T$ such that $N_\sigma(f_j) \neq \emptyset$ for all $j \in [n]$. In particular, $\ms T^\ast \subseteq \ms T^\star$.

\subsection{The Cayley trick}

The Cayley trick provides a useful way to visualize triangulations of $\Delta^{m-1} \times \Delta^{n-1}$ by mapping them to lower-dimensional objects called fine mixed subdivisions. Let $\sigma$ be a simplex in $\Delta^{m-1} \times \Delta^{n-1}$ with $N_\sigma(f_j) \neq \emptyset$ for all $j \in [n]$. Let $\mc C(\sigma)$ denote the polytope
\[
\mc C(\sigma) := \sum_{j=1}^n \conv(N_\sigma(f_j)) = \{ x_1 + \dotsb + x_n : x_j \in \conv(N_\sigma(f_j)) \}
\]
where we view each $N_\sigma(f_j)$ as a subset of $\Delta^{m-1}$.
We call $\mc C(\sigma)$ the \emph{fine mixed cell} associated to $\sigma$.
If $\ms T$ is a triangulation, then the collection $\mc C(\ms T) := \{ \mc C(\sigma) \}_{\sigma \in \ms T^\star}$ gives a subdivision of the polytope $n\Delta^{m-1} = \{nx : x \in \conv(\Delta^{m-1})\}$ which we call a \emph{fine mixed subdivision} of $n\Delta^{m-1}$. Moreover, the map $\sigma \mapsto \mc C(\sigma)$ gives a bijection $\ms T^\star \to \mc C(\ms T)$ which preserves face relations (and therefore adjacency relations) when viewed as a map of polytopes.

The \emph{shape} of $\mc C(\sigma)$ is defined to be $\sh(\sigma)$. If $\mc C(\sigma)$ has shape $\{N_1,\dotsc,N_k\}$, then it is geometrically congruent to $\sum_{r=1}^k \conv(N_r)$. We call a fine mixed cell with shape $\{\Delta^{m-1}\}$ an \emph{unmixed cell}. If $\mc C(\sigma)$ is an unmixed cell, then $\sigma$ is maximal, and there is a unique $f_j \in \Delta^{n-1}$ such that $N_{\sigma}(f_j) = \Delta^{m-1}$. In this case, we ``label'' the unmixed cell $\mc C(\sigma)$ with $f_j$. If $\ms T$ is a triangulation, this labeling gives a bijection between $\Delta^{n-1}$ and the set of unmixed cells of $\mc C(\ms T)$. A fact we will not use is that a triangulation $\ms T$ of $\Delta^{m-1} \times \Delta^{n-1}$ is completely determined by $\mc C(\ms T)$ and the labeling of the umixed cells of $\mc C(\ms T)$.

Now suppose $\ms T$ is a local triangulation of $\Delta^{m-1} \times \Delta^{n-1}$ at $e_1 \times f_1$. The simplices $\{e_1\} \times \Delta^{n-1}$ and $\Delta^{m-1} \times \{f_1\}$ are faces of $\Delta^{m-1} \times \Delta^{n-1}$ that contain $e_1 \times f_1$, and therefore must be elements of $\ms T$. Thus, the collection $\mc C(\ms T)$ is a connected set of fine mixed cells which contains the cell $\{ne_1\}$ and some unmixed cell with label $f_1$.

\subsection{Triangulations of $\Delta^1 \times \Delta^{n-1}$} \label{sec:m=1}
We will use the Cayley trick to analyze triangulations and local triangulations of $\Delta^1 \times \Delta^{n-1}$. This case will be an important starting point when working with higher values of $m$.

Let $\ms T$ be a triangulation of $\Delta^1 \times \Delta^{n-1}$. Every maximal fine mixed cell of $\mc C(\ms T)$ is a unit line segment, and they are arranged in a line along $n\Delta^1$. The maximal cell $\mc C(\tau)$ incident to $ne_1$ must have $N_\tau(e_1) = \Delta^{n-1}$.

Consider two adjacent maximal cells $\mc C(\tau)$ and $\mc C(\tau')$. Let $\mc C(\sigma)$ be the shared vertex of $\mc C(\tau)$ and $\mc C(\tau')$. Then $\tau$ and $\tau'$ are maximal simplices of $\ms T$ that share the facet $\sigma$. Suppose $ne_1$ is closer to $\mc C(\tau)$ than to $\mc C(\tau')$. It follows that $\tau = \sigma \cup \{e_1 \times f_j\}$ for some $f_j \in N_\sigma(e_2)$, and $\tau' = \sigma \cup \{e_2 \times f_{j'}\}$ for some $f_{j'} \in N_\sigma(e_1)$. Thus,
\begin{equation} \label{adjacent}
\tau' = \tau \minus \{e_1 \times f_j\} \cup \{e_2 \times f_{j'}\}.
\end{equation}
The $f_j$ and $f_{j'}$ are precisely the ``labels'' of $\mc C(\tau)$ and $\mc C(\tau')$ respectively; that is, $f_j$ is such that $N_\tau(f_j) = \Delta^1$.

Suppose instead that $\ms T$ is a local triangulation at $e_1 \times f_1$. Then the set of maximal cells of $\mc C(\ms T)$ is a connected set of line segments, one of which is incident to $ne_1$ and one of which has label $f_1$. We can apply the same reasoning to adjacent cells of $\mc C(\ms T)$ as above. In particular, the cell $\mc C(\tau)$ with label $f_1$ must be the furthest cell from $ne_1$: If there were a cell $\mc C(\tau')$ adjacent to $\mc C(\tau)$ and further away from $ne_1$, then by \eqref{adjacent}, $\tau'$ would not contain $e_1 \times f_1$, contradicting the definition of $\ms T$.

Finally, suppose $\ms T$ is a local triangulation at $\{e_1 \times f_1, e_2 \times f_2\}$. It is not hard to see that $\ms T$ must contain the simplices
\[
\{e_1 \times f_1, e_2 \times f_2, e_1 \times f_2\}, \{e_1 \times f_1, e_2 \times f_2, e_2 \times f_1\}.
\]
Thus, the set of maximal cells of $\mc C(\tau)$ is a connected set of line segments, one of which has label $f_2$ and another of which has label $f_1$. We can use the above reasoning to show that the cell labeled $f_2$ is the cell closest to $ne_1$ and the cell labeled $f_1$ is the furthest.

We can summarize this as follows.

\begin{prop} \label{m=1}
Let $\ms T$ be a local triangulation of $\Delta^1 \times \Delta^{n-1}$ at $\xi \subseteq \{e_1 \times f_1, e_2 \times f_2\}$. Then there exists a unique ordering $\tau_1$, \dots, $\tau_N$ of $\ms T^\ast$ such that
\begin{enumerate}
\item For each $r = 1$, \dots, $N-1$, $\tau_r$ is adjacent to $\tau_{r+1}$, and these are the only pairs of adjacent maximal simplices.
\item We have
\[
\tau_{r+1} = \tau_r \minus \{e_1 \times f_{j_r}\} \cup \{e_2 \times f_{j_{r+1}}\},
\]
where $f_{j_r}$ is such that $N_{\tau_{r}}(f_{j_r}) = \Delta^1$.
\end{enumerate}
Furthermore,
\begin{enumerate} \setcounter{enumi}{2}
\item If $e_1 \times f_1 \in \xi$, then $f_{j_N} = f_1$; otherwise, $N_{\tau_N}(e_2) = \Delta^{n-1}$.
\item If $e_2 \times f_2 \in \xi$, then $f_{j_1} = f_2$; otherwise, $N_{\tau_1}(e_1) = \Delta^{n-1}$.
\end{enumerate}
\end{prop}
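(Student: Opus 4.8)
The plan is to read off all four claims from the Cayley-trick picture of Section~\ref{sec:m=1}. First I would translate everything through the bijection $\sigma \mapsto \mc C(\sigma)$: the maximal simplices of $\ms T$ correspond to the maximal cells of $\mc C(\ms T)$, each of which is a unit segment, and this bijection carries ``$\tau$ and $\tau'$ share a facet'' to ``$\mc C(\tau)$ and $\mc C(\tau')$ share a vertex.'' When $\xi = \emptyset$ these $N := \abs{\ms T^\ast}$ unit segments tile $n\Delta^1$; when $\xi \neq \emptyset$ I would use the observation from Section~\ref{sec:m=1} that a local triangulation yields a \emph{connected} set of fine mixed cells, so they tile a connected subsegment $I$ of $n\Delta^1$ one of whose ends lies on the $ne_1$-side. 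Because $I$ is one-dimensional, listing $\mc C(\tau_1), \dotsc, \mc C(\tau_N)$ in order along $I$ starting from the $ne_1$-end turns the adjacency graph of $\ms T^\ast$ into the path $\tau_1 - \tau_2 - \cdots - \tau_N$; this is exactly claim~(1), and any ordering satisfying (1) is this one or its reverse.

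Next I would get (2) and uniqueness together. For consecutive cells $\mc C(\tau_r)$, $\mc C(\tau_{r+1})$ with $\mc C(\tau_r)$ the nearer to $ne_1$, \eqref{adjacent} gives
\[
\tau_{r+1} = \tau_r \minus \{e_1 \times f_{j_r}\} \cup \{e_2 \times f_{j_{r+1}}\},
\]
where $f_{j_r}$ is the label of $\mc C(\tau_r)$, i.e.\ $N_{\tau_r}(f_{j_r}) = \Delta^1$; this is (2). The reverse ordering would instead have to delete an $e_2$-edge from $\tau_{r+1}$ to recover $\tau_r$, so it cannot satisfy (2); hence the ordering satisfying both (1) and (2) is unique.

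It then remains to check the endpoint conditions (3) and (4) by cases on $\xi$, each case quoting Section~\ref{sec:m=1}. If $\xi = \emptyset$ then $I = n\Delta^1$, so $\mc C(\tau_1)$ is incident to $ne_1$ and $\mc C(\tau_N)$ to $ne_2$, whence $N_{\tau_1}(e_1) = \Delta^{n-1}$ and, by the symmetry $e_1 \leftrightarrow e_2$, $N_{\tau_N}(e_2) = \Delta^{n-1}$. If $e_1 \times f_1 \in \xi$, then $\{e_1\} \times \Delta^{n-1} \in \ms T$ lies in some maximal simplex whose cell is incident to $ne_1$, and Section~\ref{sec:m=1} shows the unmixed cell labeled $f_1$ is the farthest cell from $ne_1$, namely $\mc C(\tau_N)$, so $f_{j_N} = f_1$; if moreover $e_2 \times f_2 \notin \xi$, the cell incident to $ne_1$ is $\mc C(\tau_1)$, giving $N_{\tau_1}(e_1) = \Delta^{n-1}$. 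The case $e_2 \times f_2 \in \xi$ is the mirror image under $e_1 \leftrightarrow e_2$ (which reverses $I$), giving $f_{j_1} = f_2$ and, when $e_1 \times f_1 \notin \xi$, $N_{\tau_N}(e_2) = \Delta^{n-1}$; and when $\xi = \{e_1 \times f_1, e_2 \times f_2\}$ the first clauses of (3) and (4) both follow from these two cases, using the fact from Section~\ref{sec:m=1} that $\ms T$ must contain the two designated simplices, which is what forces both an $f_1$-labeled and an $f_2$-labeled unmixed cell to be present. I do not expect a genuine obstacle: there is essentially no computation, and the only points needing care are keeping straight which end of $I$ is the $ne_1$-end, the degenerate case $N = 1$ (where (2) is vacuous and (3)--(4) merely describe $\tau_1 = \tau_N$), and writing out the $e_2 \times f_2 \in \xi$ mirror of the facts that Section~\ref{sec:m=1} records only for $e_1 \times f_1$.
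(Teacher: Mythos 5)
Your proposal is correct and takes essentially the same route as the paper: the paper's Proposition~\ref{m=1} is stated as a ``summary'' of the informal discussion in Section~\ref{sec:m=1}, which is exactly the Cayley-trick picture you develop (unit segments tiling a connected subsegment of $n\Delta^1$, equation~\eqref{adjacent} for consecutive cells, the label-$f_1$ cell farthest from $ne_1$ when $e_1\times f_1\in\xi$, and the $e_1\leftrightarrow e_2$ mirror for $e_2\times f_2\in\xi$). Your organization of that discussion into an explicit uniqueness argument (the path structure fixes the order up to reversal, and~(2) selects the direction) is a fair formalization of what the paper leaves implicit.
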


Let $\tau_1$, \dots, $\tau_N$ be the ordering of the maximal simplices of $\ms T$ given by Proposition~\ref{m=1}. We formalize this order as $<_{\ms T}$, so that
\begin{equation} \label{simplexorder}
\tau_1 <_{\ms T} \tau_2 <_{\ms T} < \dotsb <_{\ms T} \tau_N.
\end{equation}
Let $f_{j_1}$, \dots, $f_{j_N}$ be as in the Proposition. We have a total order $<_{\ms T}$ on the $\{f_{j_r}\}$ given by
\[
f_{j_1} <_{\ms T} f_{j_2} <_{\ms T} \dotsb <_{\ms T} f_{j_N}.
\]
We can interpret these orders as follows: If $j \in \{j_1,\dotsc,j_N\}$, then any maximal simplex with a label greater than or equal to $f_j$ contains $e_2 \times f_j$, and any maximal simplex with a label less than or equal to $f_j$ contains $e_1 \times f_j$.

We can use this idea to extend $<_{\ms T}$ to a quasiorder on $\Delta^{n-1}$ as follows. Let $f_{j_1'}$, \dots, $f_{j_{N'}'}$ be the elements of $N_{\tau_1}(e_2) \minus \{f_{j_1}\}$, and let $f_{j_1''}$, \dots, $f_{j_{N''}''}$ be the elements of $N_{\tau_N}(e_1) \minus \{f_{j_N}\}$. We define the quasiorder $\le_{\ms T}$ on $\Delta^{n-1}$ by
\begin{equation} \label{vertexorder}
f_{j_1'} =_{\ms T} \dotsb =_{\ms T} f_{j_{N'}'} <_{\ms T} f_{j_1} <_{\ms T}  \dotsb <_{\ms T} f_{j_N} <_{\ms T} f_{j_1''} =_{\ms T} \dotsb =_{\ms T} f_{j_{N''}''}.
\end{equation}
If $\ms T$ is a triangulation, then $\le_{\ms T}$ is a total order on $\Delta^{n-1}$.

We have the following easy way to determine the relative order of two elements.

\begin{prop} \label{compare12}
Let $f_j$, $f_{j'} \in \Delta^{m-1}$ be distinct. Then $f_j <_{\ms T} f_{j'}$ if and only if
\[
\{e_2 \times f_{j}, e_1 \times f_{j'}\} \subseteq \sigma \in \ms T.
\]
for some $\sigma$.
\end{prop}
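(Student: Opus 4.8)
The plan is to work entirely inside the explicit model for $\ms T$ furnished by Proposition~\ref{m=1}: fix the ordering $\tau_1, \dotsc, \tau_N$ of $\ms T^\ast$ and the labels $f_{j_1}, \dotsc, f_{j_N}$ with $N_{\tau_r}(f_{j_r}) = \Delta^1$. Since $\ms T^\ast = \{\tau_1,\dotsc,\tau_N\}$ consists of the inclusion-maximal elements of $\ms T$ and (by finiteness) every element of $\ms T$ is contained in some $\tau_r$, the right-hand side of the proposition holds iff $E(\tau_r) \supseteq \{e_2 f_j, e_1 f_{j'}\}$ for some $r$ (note $\{e_2 \times f_j, e_1 \times f_{j'}\}$ is a simplex, being two distinct vertices). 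So the heart of the matter is to determine, for each $f_j \in \Delta^{n-1}$, the set of indices $r$ with $e_2 \times f_j \in \tau_r$ (and likewise with $e_1$). I will prove the clean statement
\[
e_2 \times f_j \in \tau_r \iff f_j \le_{\ms T} f_{j_r}, \qquad e_1 \times f_j \in \tau_r \iff f_{j_r} \le_{\ms T} f_j,
\]
valid for \emph{every} $f_j \in \Delta^{n-1}$ and every $r$.

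The key step is a monotonicity observation. In each $\tau_r$, $T(\tau_r)$ is a spanning tree of $G_{2,n}$ in which every $f_j$ has degree $1$, except $f_{j_r}$ which has degree $2$; thus each $f_j \neq f_{j_r}$ is adjacent in $T(\tau_r)$ to exactly one of $e_1, e_2$, while $f_{j_r}$ is adjacent to both. By Proposition~\ref{m=1}(2),
\[
\tau_{r+1} = \tau_r \minus \{e_1 \times f_{j_r}\} \cup \{e_2 \times f_{j_{r+1}}\},
\]
so passing from $\tau_r$ to $\tau_{r+1}$ changes only the $T$-neighborhoods of $f_{j_r}$ (which loses $e_1$: from ``both'' to ``$e_2$ only'') and of $f_{j_{r+1}}$ (which gains $e_2$: from ``$e_1$ only'' to ``both''). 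Reading this off for a fixed $f_j$ as $r$ runs from $1$ to $N$: the pattern of $f_j$ is ``$e_1$ only'' on an initial segment, then ``both'' at the single index $r$ with $j_r = j$ (if such an $r$ exists), then ``$e_2$ only'' on a final segment. In particular the $j_r$ are pairwise distinct, $\{r : e_2 \times f_j \in \tau_r\}$ is an up-set, and $\{r : e_1 \times f_j \in \tau_r\}$ is a down-set. Now match the three types of $f_j$ against the definition of $\le_{\ms T}$. If $j = j_s$, then since the labels are totally ordered as $f_{j_1} <_{\ms T} \dotsb <_{\ms T} f_{j_N}$ we get $e_2 \times f_j \in \tau_r \iff r \ge s \iff f_{j_s} \le_{\ms T} f_{j_r}$, and symmetrically for $e_1$. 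If $f_j$ is never a middle and is ``$e_2$ only'' throughout, then $f_j \in N_{\tau_1}(e_2) \minus \{f_{j_1}\}$, so by \eqref{vertexorder} $f_j$ lies in the minimal class of $\le_{\ms T}$; then $e_2 \times f_j \in \tau_r$ for all $r$, matching $f_j \le_{\ms T} f_{j_r}$ for all $r$, and $e_1 \times f_j \in \tau_r$ for no $r$, matching $f_{j_r} \le_{\ms T} f_j$ for no $r$ (the labels are strictly above the minimal class). The remaining case, $f_j$ ``$e_1$ only'' throughout, is dual: then $f_j \in N_{\tau_N}(e_1) \minus \{f_{j_N}\}$ lies in the maximal class. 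This proves the clean statement.

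Granting it, the right-hand condition holds iff there is an $r$ with $f_j \le_{\ms T} f_{j_r} \le_{\ms T} f_{j'}$, i.e.\ iff some label value lies weakly between $f_j$ and $f_{j'}$. Since by \eqref{vertexorder} the label values $f_{j_1} <_{\ms T} \dotsb <_{\ms T} f_{j_N}$ (with $N \ge 1$) occupy exactly the portion of the order strictly above the minimal class and strictly below the maximal class, a routine check over the cases (each of $f_j, f_{j'}$ being a label, in the minimal class, or in the maximal class) shows that such an $r$ exists precisely when $f_j <_{\ms T} f_{j'}$. Here distinctness of $f_j$ and $f_{j'}$ is used twice: to upgrade the conclusion from $f_j \le_{\ms T} f_{j'}$ to $f_j <_{\ms T} f_{j'}$, and to rule out the case $f_j =_{\ms T} f_{j'}$ with $f_j \neq f_{j'}$ (two distinct members of the minimal or of the maximal class), for which no label lies between them.

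I expect the only genuine obstacle to be the monotonicity observation and, within it, the bookkeeping needed to see that the ``extremal'' vertices $f_j$ (those not equal to any $f_{j_r}$) receive the correct relation — which is forced by the placement of $N_{\tau_1}(e_2) \minus \{f_{j_1}\}$ and $N_{\tau_N}(e_1) \minus \{f_{j_N}\}$ at the two ends of \eqref{vertexorder}. The reduction to tree edges and the final case analysis are mechanical.
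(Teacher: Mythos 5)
Your proof is correct and takes essentially the same approach as the paper: both work through the explicit ordering of $\ms T^\ast$ provided by Proposition~\ref{m=1} and exploit the monotone evolution of the $e_1$- and $e_2$-neighborhoods along that order (the paper invokes this implicitly as $N_\tau(e_1) \supseteq N_{\tau'}(e_1)$ for $\tau \le_{\ms T} \tau'$, you make it explicit). The only real difference is organizational — you isolate the clean intermediate characterization $e_2 \times f_j \in \tau_r \iff f_j \le_{\ms T} f_{j_r}$ and then case-analyze, whereas the paper argues each direction directly by choosing extremal $\tau$, $\tau'$ — but the underlying ideas coincide.
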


\begin{proof}
Suppose $f_j <_{\ms T} f_{j'}$. If there is a maximal simplex of $\ms T$ with label $f_j$, let $\tau$ be that simplex. Otherwise, since $f_j <_{\ms T} f_{j'}$, we must have $f_j \in N_{\tau_1}(e_2) \minus \{f_{j_1}\}$; in this case, let $\tau = \tau_1$. Similarly, let $\tau'$ be the maximal simplex of $\ms T$ with label $f_{j'}$ if such a simplex exists, and let $\tau' = \tau_N$ otherwise. In any case, we have $e_2 \times f_j \in \tau$, $e_1 \times f_{j'} \in \tau'$, and $\tau \le_{\ms T} \tau'$. By Proposition~\ref{m=1}, we have $N_\tau(e_1) \supseteq N_{\tau'}(e_1)$. Hence $\{e_2 \times f_{j}, e_1 \times f_{j'}\} \subseteq \tau \in \ms T$, as desired.

Conversely, suppose that $\{e_2 \times f_{j}, e_1 \times f_{j'}\} \subseteq \sigma \in \ms T$. Let $\tau_0$ be a maximal simplex of $\ms T$ containing $\sigma$. Let $\tau$ be the smallest simplex in the order \eqref{simplexorder} containing $e_2 \times f_j$, and let $\tau'$ be the largest simplex in this order containing $e_1 \times f_{j'}$. By Proposition~\ref{m=1}, either $\tau$ has label $f_j$ or $f_j \in N_{\tau_1}(e_2) \minus \{f_{j_1}\}$. Similarly, either $\tau'$ has label $f_{j'}$ or $f_{j'} \in N_{\tau_N}(e_1) \minus \{f_{j_N}\}$. In any case, since $\tau \le_{\ms T} \tau_0 \le_{\ms T} \tau'$ and $f_j$ and $f_{j'}$ are distinct, we have $f_j <_{\ms T} f_{j'}$.
\end{proof}

\subsection{Restriction and contraction} \label{sec:rescon}

In this section we look at restrictions and contractions of triangulations of $\Delta^{m-1} \times \Delta^{n-1}$.

The faces of $\Delta^{m-1} \times \Delta^{n-1}$ are the sets $I \times J$ where $I \subseteq \Delta^{m-1}$ and $J \subseteq \Delta^{n-1}$. Given a triangulation $\ms T$ of $\Delta^{m-1} \times \Delta^{n-1}$, the restriction of $\ms T$ to $I \times J$ is
\[
\ms T[I \times J] = \{ \sigma \in \ms T : \sigma \subseteq I \times J \}.
\]

We now consider contraction. Let $\xi \subseteq \Delta^{m-1} \times \Delta^{n-1}$ be a simplex. Let $I = (I_1,\dotsc,I_s)$ be a partition of $\Delta^{m-1}$ such that each $I_r$ is the intersection of $\Delta^{m-1}$ with a connected component of $T(\xi)$. Let $J = (J_1, \dotsc, J_t)$ be a partition of $\Delta^{n-1}$ such that each $J_r$ is the intersection of $\Delta^{n-1}$ with a connected component of $T(\xi)$. We say that these partitions are \emph{associated} to $\xi$.

Let $C_1$, \dots, $C_k$ be the connected components of $T(\xi)$ containing an edge, and let $p_1$, \dots, $p_k$ and $q_1$, \dots, $q_k$ be the indices such that for each $r = 1$, \dots, $k$, $I_{p_r} \cup J_{q_r}$ is the vertex set of $C_r$.
Then the intersection of $\Delta^{m-1} \times \Delta^{n-1}$ with the affine span of $\xi$ is
\[
S := (I_{p_1} \times J_{q_1}) \cup \dotsb \cup (I_{p_k} \times J_{q_k}).
\]

We now compute the contraction $\Delta^{m-1} \times \Delta^{n-1} \contract S$. Let $V$ be a real vector space with basis $\Delta^{m-1} \cup \Delta^{n-1}$, and embed $\Delta^{m-1} \times \Delta^{n-1} \subset V$ as $e_i \times f_j \mapsto e_i - f_j$. Let $V'$ be a real vector space which has the set of connected components of $T(\xi)$ as a basis. (That is, $V'$ consists of formal linear combinations of the connected components of $T(\xi)$.) Let $\pi : V \to V'$ be the linear map which takes $v \in \Delta^{m-1} \cup \Delta^{n-1}$ to the connected component of $T(\xi)$ that contains $v$. Then the kernel of $\pi$ is the linear span of $S$. Hence, we have
\[
\Delta^{m-1} \times \Delta^{n-1} \contract S = \{ \pi(e_i \times f_j) : e_i \times f_j \notin S \}.
\]
Moreover, if $C_r$ consists of a single edge for all $r$, then $\pi$ is one-to-one on $\Delta^{m-1} \times \Delta^{n-1} \minus S$.

To better understand triangulations of $\Delta^{m-1} \times \Delta^{n-1} \contract S$, let $I$ and $J$ be as above, and construct simplices
\begin{align*}
\Delta_I &:= \{\bar e_1, \dotsc, \bar e_s\} \\
\Delta_J&:= \{\bar f_1, \dotsc, \bar f_t\}.
\end{align*}
By the above computation, the simplification of $\Delta^{m-1} \times \Delta^{n-1} \contract S$ is equivalent to
\[
\Delta_I \times \Delta_J \contract \{\bar e_{p_1} \times \bar f_{q_1}, \dotsc, \bar e_{p_k} \times \bar f_{q_k} \}.
\]
It follows that every triangulation of $\Delta^{m-1} \times \Delta^{n-1} \contract S$ gives a local triangulation of $\Delta_I \times \Delta_J$ at $\{\bar e_{p_1} \times \bar f_{q_1}, \dotsc, \bar e_{p_k} \times \bar f_{q_k} \}$.

To put this more precisely, let $\phi_I : \Delta^{m-1} \to \Delta_I$ and $\phi_J : \Delta^{m-1} \to \Delta_J$ be the maps
\[
\phi_I(e_i) = \bar e_r\ \text{ if }\ e_i \in I_r
\qquad
\phi_J(f_j) = \bar f_r\ \text{ if }\ f_j \in J_r
\]
Let $\phi_{I,J} : \Delta^{m-1} \times \Delta^{n-1} \to \Delta_I \times \Delta_J$ be the map $\phi_I \times \phi_J$. Then $\phi_{I,J}$ maps simplices of $\Delta^{m-1} \times \Delta^{n-1}$ containing $\xi$ to simplices of $\Delta_I \times \Delta_J$ containing $\{\bar e_{p_1} \times \bar f_{q_1}, \dotsc, \bar e_{p_k} \times \bar f_{q_k} \}$, and this map preserves proper intersections and codimensions. Suppose that $\ms T$ is a triangulation of $\Delta^{m-1} \times \Delta^{n-1}$ containing $\xi$. Recall that $\ms T(\xi)$ is the subcollection of simplices of $\ms T$ containing $\xi$. Then
\[
\phi_\xi(\ms T) := \{ \phi_{I,J}(\sigma) : \sigma \in \ms T(\xi) \}
\]
is a local triangulation of $ \Delta_I \times \Delta_J$ at $\{\bar e_{p_1} \times \bar f_{q_1}, \dotsc, \bar e_{p_k} \times \bar f_{q_k} \}$. Moreover, $\phi_{I,J}$ gives a bijection between $\ms T(\xi)$ and $\phi_\xi(\ms T)$.
If $\ms T$ is a local triangulation at $\zeta$ and $\xi \cup \zeta \in \ms T$, then $\phi_\xi(\ms T)$ is a local triangulation of $\Delta_I \times \Delta_J$ at $\{\bar e_{p_1} \times \bar f_{q_1}, \dotsc, \bar e_{p_k} \times \bar f_{q_k} \} \cup \phi_{I,J}(\zeta)$, and $\phi_{I,J}$ gives a bijection between $\ms T(\xi)$ and $\phi_\xi(\ms T)$.

\section{Tools}

With the general theory established, we now put them to use to develop the machinery needed in the proof of the theorem.

\subsection{Orders defined by triangulations}

In this section we will define several orders given by a triangulation $\ms T$. Our main tools will be the restriction and contraction operations defined earlier and the characterization of triangulations of $\Delta^1 \times \Delta^{n-1}$.

\subsubsection{The restriction order $\le_{i_1i_2}$}

Let $i_1$, $i_2 \in [m]$ be distinct, and let $I = \{e_{i_1},e_{i_2}\}$. Let $\ms T$ be a local triangulation of $\Delta^{m-1} \times \Delta^{n-1}$ at $\xi \subseteq I \times \Delta^{n-1}$. The restriction $\ms T[I \times \Delta^{n-1}]$ is a local triangulation of $I \times \Delta^{n-1}$ at $\xi$. By mapping $e_{i_1} \mapsto e_1$ and $e_{i_2} \mapsto e_2$, we obtain a local triangulation of $\Delta^1 \times \Delta^{n-1}$. This triangulation and the order \eqref{vertexorder} give a quasiorder on $\Delta^{n-1}$. We denote this order by $\le_{\ms T[i_1i_2]}$, or simply $\le_{i_1i_2}$ if $\ms T$ is understood. If $\ms T$ is a triangulation, then $\le_{i_1i_2}$ is a total order.

We have the following immediate consequence of Proposition~\ref{compare12}.

\begin{prop} \label{compare}
Let $f_j$, $f_{j'} \in \Delta^{m-1}$ be distinct. Then $f_j <_{i_1i_2} f_{j'}$ if and only if
\[
\{e_{i_2} \times f_{j}, e_{i_1} \times f_{j'}\} \subseteq \sigma \in \ms T.
\]
for some $\sigma$.
\end{prop}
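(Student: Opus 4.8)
The plan is to reduce directly to Proposition~\ref{compare12} by unwinding the definition of $\le_{i_1i_2}$. Put $I = \{e_{i_1}, e_{i_2}\}$, and let $\ms T'$ be the local triangulation of $\Delta^1 \times \Delta^{n-1}$ obtained from the restriction $\ms T[I \times \Delta^{n-1}]$ under the relabeling $e_{i_1} \mapsto e_1$, $e_{i_2} \mapsto e_2$; by definition, $\le_{i_1i_2}$ is exactly the order \eqref{vertexorder} attached to $\ms T'$, so $f_j <_{i_1i_2} f_{j'}$ holds if and only if $f_j <_{\ms T'} f_{j'}$. Applying Proposition~\ref{compare12} to $\ms T'$, the latter holds if and only if $\{e_2 \times f_j,\, e_1 \times f_{j'}\} \subseteq \sigma'$ for some $\sigma' \in \ms T'$. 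Undoing the relabeling, this says precisely that there is a simplex $\tilde\sigma \in \ms T[I \times \Delta^{n-1}]$ with $\{e_{i_2} \times f_j,\, e_{i_1} \times f_{j'}\} \subseteq \tilde\sigma$.

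So the only thing left is to see that the existence of such a $\tilde\sigma$ in the restriction $\ms T[I \times \Delta^{n-1}]$ is equivalent to the existence of a simplex $\sigma \in \ms T$ containing $\{e_{i_2} \times f_j,\, e_{i_1} \times f_{j'}\}$. One direction is immediate, since $\ms T[I \times \Delta^{n-1}] \subseteq \ms T$. For the converse, suppose $\{e_{i_2} \times f_j,\, e_{i_1} \times f_{j'}\} \subseteq \sigma \in \ms T$. Every simplex of a local triangulation at $\xi$ contains $\xi$, so $\eta := \{e_{i_2} \times f_j,\, e_{i_1} \times f_{j'}\} \cup \xi$ is still contained in $\sigma$; hence $\eta$ is a simplex, it lies in $\ms T$ by down-closure, and it is contained in $I \times \Delta^{n-1}$ because $\xi \subseteq I \times \Delta^{n-1}$ and $e_{i_1}, e_{i_2} \in I$. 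Thus $\eta \in \ms T[I \times \Delta^{n-1}]$ and $\eta \supseteq \{e_{i_2} \times f_j,\, e_{i_1} \times f_{j'}\}$, which is what we wanted. (When $\ms T$ is a genuine triangulation, $\xi = \emptyset$ and one may just take $\eta = \{e_{i_2} \times f_j,\, e_{i_1} \times f_{j'}\}$ itself.)

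There is no real obstacle here: the statement is, as the paper says, an immediate consequence of Proposition~\ref{compare12}, and the argument is entirely a matter of chasing definitions. The one place that takes a line of justification is the passage between membership in $\ms T$ and membership in its restriction, which rests only on the down-closure axiom for (local) triangulations and on the fact that $\xi$ lies inside the face $I \times \Delta^{n-1}$.
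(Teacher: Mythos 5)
Your proof is correct and follows exactly the route the paper intends: the paper simply declares Proposition~\ref{compare} an ``immediate consequence'' of Proposition~\ref{compare12}, and you carry out that reduction, with the only nontrivial detail being the passage between $\sigma \in \ms T$ and $\tilde\sigma \in \ms T[I \times \Delta^{n-1}]$, which you correctly handle via down-closure (taking care to adjoin $\xi$ in the local-triangulation case so that down-closure applies).
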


\subsubsection{The partial order $\preceq_i$}

The next order we will construct is a partial order on the set of all maximal simplices of $\Delta^{m-1} \times \Delta^{n-1}$. It tells us if we can go from one maximal simplex to another along a path that always ``tends toward'' a specific direction.

Suppose $\tau$, $\tau'$, are two adjacent maximal simplices of $\Delta^{m-1} \times \Delta^{n-1}$. We can write
\begin{align*}
\tau &= \sigma \cup \{e_i \times f_j\} \\
\tau' &= \sigma \cup \{e_{i'} \times f_{j'}\}
\end{align*}
where $\sigma$ is the common facet of $\tau$, $\tau'$. Since $\sigma$ has codimension 1 and is not contained in a face of $\Delta^{m-1} \times \Delta^{n-1}$, $T(\sigma)$ has exactly two connected components, both of which contain an edge. Let $I = (I_1,I_2)$ and $J=(J_1,J_2)$ be partitions associated to $\sigma$ (as defined in Section~\ref{sec:rescon}) so that $I_1 \cup J_1$ and $I_2 \cup J_2$ are the vertex sets of the components of $T(\sigma)$. Applying equation~\eqref{adjacent} to the adjacent simplices $\phi_{I,J}(\tau)$ and $\phi_{I,J}(\tau')$ in $\Delta_I \times \Delta_J$, we have, without loss of generality, that
\begin{align*}
e_i \times f_j &\in I_1 \times J_2 \\
e_{i'} \times f_{j'} &\in I_2 \times J_1
\end{align*}
If this is the case, then we write
\[
\tau \xrightarrow{I_1,I_2} \tau'.
\]

Now, fix some $i \in [m]$. If $\tau$, $\tau'$ are any two maximal simplices of $\Delta^{m-1} \times \Delta^{n-1}$, we write $\tau \preceq_i \tau'$ if there is a sequence of maximal simplices $\tau_1$, \dots, $\tau_N$ such that
\[
\tau = \tau_1 \xrightarrow{I_1^1,I_2^1} \tau_2 \xrightarrow{I_1^2,I_2^2} \dotsb \xrightarrow{I_1^{N-1},I_2^{N-1}} \tau_N = \tau'
\]
for some partitions $(I_1^r,I_2^r)$ such that $e_i \in I_2^r$ for all $r = 1$, \dots, $N-1$.

\begin{prop} \label{prec1}
The relation $\preceq_i$ is a partial order over the set of maximal simplices of $\Delta^{m-1} \times \Delta^{n-1}$.
\end{prop}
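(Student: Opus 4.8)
The plan is to prove the three order axioms, of which only antisymmetry has content. Reflexivity is the one‑term sequence $\tau_1 = \tau$ (the condition on the $I_2^r$ being vacuous), and transitivity is just concatenation of witnessing sequences. So it remains to show there is no nontrivial directed cycle
\[
\tau_1 \xrightarrow{I_1^1,I_2^1} \tau_2 \xrightarrow{I_1^2,I_2^2} \dotsb \xrightarrow{I_1^{N-1},I_2^{N-1}} \tau_N = \tau_1
\]
of adjacent maximal simplices with $e_i \in I_2^r$ for every $r$. I would argue by induction on $m+n$; the cases $m=1$ or $n=1$ are trivial, since then $\Delta^{m-1}\times\Delta^{n-1}$ has a single maximal simplex. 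Writing one arrow as $\tau = \sigma \cup \{e_{i_0}\times f_{j_0}\} \xrightarrow{I_1,I_2} \sigma \cup \{e_{i'}\times f_{j'}\} = \tau'$ with $e_{i_0}\in I_1$, $e_{i'}\in I_2$, the hypothesis $e_i\in I_2$ forces $e_{i_0}\neq e_i$; thus passing from $T(\tau)$ to $T(\tau')$ deletes an edge not at $e_i$ and adjoins $e_{i'}f_{j'}$, so $\deg_{T(\tau)}(e_i)\le\deg_{T(\tau')}(e_i)$, with equality unless $e_{i'}=e_i$. This integer is bounded and returns to its starting value around a cycle, hence is constant on the cycle; so $e_{i'}\neq e_i$ for every arrow and no arrow alters the star $\xi := \{e_i\times f : f\in N(e_i)\}$ of $e_i$. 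In particular $\xi$ is a fixed simplex with $\xi\subseteq\sigma\subseteq\tau_r$ for all $r$; set $d := \abs{N(e_i)}$.

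If $d\ge 2$, I would contract at $\xi$. Since $\xi$ is contained in every $\tau_r$, the map $\phi_{I,J}$ of Section~\ref{sec:rescon} sends each $\tau_r$ to a maximal simplex of $\Delta_I\times\Delta_J$ through $\phi_{I,J}(\xi)$; here the block of $I$ containing $e_i$ is $\{e_i\}$ and the $d$ vertices of $N(e_i)$ merge into a single block of $J$, so $\Delta_I\times\Delta_J$ is $\Delta^{m-1}\times\Delta^{n-d}$, and $\phi_{I,J}$ is injective on simplices containing $\xi$ and preserves codimensions and proper intersections. Since every $f\in N(e_i)$ remains adjacent to $e_i$ in $T(\sigma)$, the block of $e_i$ lies inside the component $I_2\cup J_2$ of $T(\sigma)$, hence $\phi_I(e_i)$ lies in the corresponding component of $T(\phi_{I,J}(\sigma))$; unwinding the definition of the arrow then shows that $\phi_{I,J}(\tau_1),\dots,\phi_{I,J}(\tau_N)$ is again a nontrivial directed cycle, now for $\preceq_{\phi_I(e_i)}$ on $\Delta^{m-1}\times\Delta^{n-d}$, contradicting the inductive hypothesis as $m+(n-d+1)<m+n$.

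If $d=1$, say $N(e_i) = \{f_\ell\}$, I would instead restrict to the face $F := (\Delta^{m-1}\minus\{e_i\})\times\Delta^{n-1}$. As $e_i$ is a leaf of $T(\tau_r)$, the simplex $\tau_r\cap F$ is $T(\tau_r)$ with the leaf edge $e_if_\ell$ deleted, a maximal simplex of $\Delta^{m-2}\times\Delta^{n-1}$, and $\tau\mapsto\tau\cap F$ is a bijection onto all such simplices (invert by re‑adjoining $e_if_\ell$). Each arrow of the cycle uses two edges not incident to $e_i$, so it restricts to an arrow of $\Delta^{m-2}\times\Delta^{n-1}$; exchanging the two factors so that $f_\ell$ becomes a distinguished vertex of the first factor and tracking which side of the common facet contains $f_\ell$, one finds that a restricted arrow becomes a $\preceq_{f_\ell}$‑arrow \emph{reversed}. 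Hence $\tau_1\cap F,\dots,\tau_N\cap F$ traversed backwards is a nontrivial directed cycle for $\preceq_{f_\ell}$ on $\Delta^{m-2}\times\Delta^{n-1}$, again contradicting the inductive hypothesis.

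The routine parts are reflexivity, transitivity, and the monotonicity of $\deg_{T(\tau)}(e_i)$. The main obstacle is checking that the contraction and the restriction transport the \emph{oriented} arrow relation correctly: once the star of $e_i$ is frozen, each arrow must descend to a $\preceq$‑arrow in the smaller product pointing toward the image of the distinguished vertex — with an orientation reversal in the restriction case — which requires carefully pushing the partitions $(I_1,I_2)$, $(J_1,J_2)$ of the common facet through Proposition~\ref{contractionfacts} and through the explicit description of the contractions of $\Delta^{m-1}\times\Delta^{n-1}$ in Section~\ref{sec:rescon}.
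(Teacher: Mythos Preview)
Your argument is correct, but it takes a different route from the paper. Both proofs begin by observing that the edges of $T(\tau)$ incident to $e_i$ are frozen along any putative cycle. The paper then proves a little more (Proposition~\ref{monotone}): for each $f_j \in N_\tau(e_i)$, the neighbourhood $N_\tau(f_j)$ is also constant along the cycle. Since $m>1$ forces some such $f_j$ to have a second neighbour $e_{i'}\neq e_i$, the paper contracts the two-edge simplex $\{e_i\times f_j,\,e_{i'}\times f_j\}$, merging $e_i$ and $e_{i'}$ and reducing $m$ by one. This gives a clean induction on $m$ alone with no case analysis. You instead contract the whole star $\xi$ of $e_i$, which merges the $d$ vertices of $N(e_i)$ and reduces $n$ to $n-d+1$; when $d=1$ this gains nothing, so you are forced into the separate restriction-and-factor-swap argument. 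Both work, but the paper's extra monotonicity observation buys a uniform one-step descent.

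One small correction: your claim that $\phi_{I,J}$ is \emph{injective} on all simplices containing $\xi$ is false when $d\ge 2$ (for $a\neq i$ and distinct $f_b,f_c\in N(e_i)$, the simplices $\xi\cup\{e_a\times f_b\}$ and $\xi\cup\{e_a\times f_c\}$ have the same image). Fortunately your proof does not actually need this. What you need, and what does hold, is that $\phi_{I,J}(\tau_r)\neq\phi_{I,J}(\tau_{r+1})$ for each arrow: the exchanged elements $e_{i_0}\times f_{j_0}$ and $e_{i'}\times f_{j'}$ lie in different components of $T(\sigma)$, so their $e$-coordinates are distinct, and since the $I$-blocks are singletons their images under $\phi_{I,J}$ are distinct as well.
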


\begin{proof}
We induct on $m$. If $m=1$, then $\Delta^0 \times \Delta^{n-1}$ has only one maximal simplex and the result follows. Suppose $m > 1$. We first note the following.

\begin{prop} \label{monotone}
Suppose $\tau \xrightarrow{I_1,I_2} \tau'$ and let $e_i \in I_2$. Then $N_\tau(e_i) \subseteq N_{\tau'}(e_i)$. Moreover, for any $f_j \in N_\tau(e_i)$, $N_\tau(f_j) \supseteq N_{\tau'}(f_j)$.
\end{prop}

\begin{proof}
This follows easily from the definition of $\xrightarrow{I_1,I_2}$.
\end{proof}

Now, suppose the relation $\preceq_i$ contains a cycle. Thus there exists $\tau_1$, \dots, $\tau_N$, where $N > 1$, such that
\[
\tau_1 \xrightarrow{I_1^1,I_2^1} \tau_2 \xrightarrow{I_1^2,I_2^2} \dotsb \xrightarrow{I_1^{N-1},I_2^{N-1}} \tau_N \xrightarrow{I_1^N,I_2^N} \tau_1
\]
and $e_i \in I^r_2$ for all $r$. Let $f_j \in N_{\tau_1}(e_i)$ be such that $\abs{N_{\tau_1}(f_j)} > 1$; since $T(\tau_1)$ is a spanning tree and $m > 1$, such an $f_j$ must exist. Let $e_{i'} \in N_{\tau_1}(f_j) \minus \{e_i\}$. Since $\tau_1$, \dots, $\tau_N$ is a cycle in $\preceq_i$, Proposition~\ref{monotone} implies that $f_j \in N_{\tau_r}(e_i)$ and $e_{i'} \in N_{\tau_r}(f_j)$ for $r = 1$, \dots, $N$. Hence, $\xi := \{e_i \times f_j, e_{i'} \times f_j\} \subseteq \tau_r$ for all $r$. Thus, we have a cycle
\[
\phi_{I, J}(\tau_1) \xrightarrow{\bar I_1^1,\bar I_2^1} \phi_{I, J}(\tau_2) \xrightarrow{\bar I_1^2,\bar I_2^2} \dotsb \xrightarrow{\bar I_1^{N-1},\bar I_2^{N-1}} \phi_{I, J}(\tau_N) \xrightarrow{\bar I_1^N,\bar I_2^N} \phi_{I, J}(\tau_1)
\]
in $\Delta_{I} \times \Delta_{J}$, where $I$ and $J$ are partitions associated to $\xi$ and $\bar I^r_b := \phi_{I}(I^r_b)$. Since $\phi_{I}(e_i) \in \bar I^r_2$ for all $r$ and $\abs{I} = m-1$, this contradicts the inductive hypothesis, completing the proof.
\end{proof}

\subsubsection{The quasiorder $\preceq_{i_1i_2}$}

We now come to the order which is the main idea of our proof. Like the previous order, it is defined on maximal simplices, and a simplex $\tau'$ comes after $\tau$ in the order if we can go from $\tau$ to $\tau'$ along a path that tends toward a specific direction. However, in this order we also allow this path to move freely back and forth along another specified direction. The result is a quasiorder on the set of maximal simplices of a triangulation.

Let $i_1$, $i_2 \in [m]$ be distinct. Let $\ms T$ be a local triangulation of $\Delta^{m-1} \times \Delta^{n-1}$ at $\{e_{i_2}\} \times J_0$ for some (possibly empty) $J_0 \subseteq \Delta^{n-1}$. Given two maximal simplices $\tau$, $\tau' \in \ms T^\ast$, we write $\tau \preceq_{i_1i_2} \tau'$ if there is a sequence $\tau_1$, \dots, $\tau_N \in \ms T^\ast$ such that
\[
\tau = \tau_1 \xrightarrow{I_1^1,I_2^1} \tau_2 \xrightarrow{I_1^2,I_2^2} \dotsb \xrightarrow{I_1^{N-1},I_2^{N-1}} \tau_N = \tau'
\]
for some partitions $(I_1^r,I_2^r)$ such that for each $r = 1$, \dots, $N-1$, either
\begin{enumerate} \renewcommand{\labelenumi}{(\Alph{enumi})}
\item $e_{i_2} \in I^r_2$, or
\item $\{ I^r_1, I^r_2\} = \{\{e_{i_1}\}, \Delta^{n-1} \minus \{e_{i_1}\}\}$.
\end{enumerate}
Then $\preceq_{i_1i_2}$ is a quasiorder on $\ms T^\ast$. We wish to determine the elements which are equivalent under $\preceq_{i_1i_2}$. We denote this equivalence by $\sim_{i_1i_2}$.

\begin{lem} \label{prec12}
Let $\tau$, $\tau' \in \ms T^\ast$. We have $\tau \sim_{i_1i_2} \tau'$ if and only if there is $\sigma \in \tau \cap \tau'$ such that $T(\sigma)$ has a connected component containing $\Delta^{m-1} \minus \{e_{i_1}\}$.
\end{lem}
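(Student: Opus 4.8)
The statement is an "if and only if" characterizing the $\preceq_{i_1i_2}$-equivalence classes, so I would prove the two directions separately, with the "only if" direction being the substantial one.

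For the "if" direction, suppose $\sigma \in \tau \cap \tau'$ is such that $T(\sigma)$ has a connected component whose vertex set contains $\Delta^{m-1} \minus \{e_{i_1}\}$. Then $T(\sigma)$ has exactly two connected components, one containing $\Delta^{m-1} \minus \{e_{i_1}\}$ and one containing $e_{i_1}$ (and some subset of $\Delta^{n-1}$); call the associated partitions $I = (I_1, I_2)$, $J = (J_1, J_2)$ with $e_{i_1} \in I_1$, so $I_1 = \{e_{i_1}\}$ and $I_2 = \Delta^{m-1} \minus \{e_{i_1}\}$. Contracting along $\sigma$ via $\phi_{I,J}$ yields a local triangulation of $\Delta^1 \times \Delta^{|J|}$; here $\tau$ and $\tau'$ both map to maximal simplices, and any maximal simplex of this contraction pulls back (through $\sigma$) to a maximal simplex of $\ms T$ containing $\sigma$. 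By Proposition~\ref{m=1}, the maximal simplices of this $\Delta^1 \times \Delta^{|J|}$ triangulation are linearly ordered by adjacency, and every adjacency move is exactly a move of type (B) with $\{I_1^r, I_2^r\} = \{\{e_{i_1}\}, \Delta^{m-1}\minus\{e_{i_1}\}\}$ when lifted back to $\ms T$. Walking along this chain from $\tau$ to $\tau'$ and back gives both $\tau \preceq_{i_1i_2} \tau'$ and $\tau' \preceq_{i_1i_2} \tau$, hence $\tau \sim_{i_1i_2} \tau'$. (I should double-check the corner case where $\tau = \tau'$ or where the chain is trivial, but that is immediate.)

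For the "only if" direction, suppose $\tau \sim_{i_1i_2} \tau'$, so there are chains $\tau \to \cdots \to \tau'$ and $\tau' \to \cdots \to \tau$, which concatenate to a cycle
$\tau = \tau_1 \xrightarrow{I_1^1, I_2^1} \cdots \xrightarrow{I_1^{N}, I_2^{N}} \tau_{N+1} = \tau_1$
in which every step is of type (A) or type (B). The goal is to produce a common facet $\sigma$ of $\tau$ and $\tau'$ (equivalently, to show $\tau$ and $\tau'$ are adjacent across such a $\sigma$, or possibly equal) whose $T(\sigma)$ has a component containing $\Delta^{m-1} \minus \{e_{i_1}\}$. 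The natural approach mirrors the proof of Proposition~\ref{prec1}: find an edge of $G_{m,n}$ that is forced to lie in every $T(\tau_r)$ along the cycle, contract it, and induct on $m$. Concretely, since in a cycle of $\preceq_{i_1i_2}$ the type-(A) steps push $e_{i_2}$ "forward" monotonically (via Proposition~\ref{monotone}) and type-(B) steps only move mass between $e_{i_1}$ and the rest, I expect that $N_{\tau_r}(e_{i_2})$ and the relevant neighborhoods $N_{\tau_r}(f_j)$ for $f_j \in N_{\tau_1}(e_{i_2})$, $f_j \ne$ the label, must stabilize around the cycle — forcing some $\xi = \{e_{i_2} \times f_j, e_{i'} \times f_j\}$ into every $\tau_r$. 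Then I contract at $\xi$: since $e_{i_2} \in I_2^r$ in every type-(A) step and type-(B) steps involve $\{e_{i_1}\}$ (which is a different index from $e_{i_2}$ and $e_{i'}$), the images $\phi_{I,J}(\tau_r)$ form a cycle in $\Delta_I \times \Delta_J$ of the same form but with $m$ replaced by $m-1$, and $\phi_I(e_{i_1})$, $\phi_I(e_{i_2})$ still distinct. By induction there is a common facet $\bar\sigma$ of $\phi_{I,J}(\tau)$ and $\phi_{I,J}(\tau')$ in the contraction with $T(\bar\sigma)$ having a component containing $\Delta_I \minus \{\phi_I(e_{i_1})\}$. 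Pulling $\bar\sigma$ back through the contraction $\phi_\xi$ (which is a bijection on simplices containing $\xi$, preserving codimension and proper intersection) gives the desired $\sigma \in \tau \cap \tau'$, and the component of $T(\sigma)$ corresponding to the good component of $T(\bar\sigma)$ contains all of $\Delta^{m-1} \minus \{e_{i_1}\}$ because $\phi_I$ collapses exactly the vertices glued together by $\xi$'s components.

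**The main obstacle.**
The hard part will be the base case and the bookkeeping in the inductive step of the "only if" direction — specifically, nailing down exactly which edge $\xi$ is forced into all $\tau_r$ and verifying that the type-(B) allowance survives contraction in the right form (i.e., that $\{\phi_I(I_1^r), \phi_I(I_2^r)\}$ is still $\{\{\phi_I(e_{i_1})\}, \Delta_I \minus \{\phi_I(e_{i_1})\}\}$ when $I_1^r = \{e_{i_1}\}$, using that $e_{i_1}$ is a singleton component boundary). The base case $m = 2$ needs a direct argument: there $\Delta^{m-1} \minus \{e_{i_1}\} = \{e_{i_2}\}$, $\preceq_{i_1i_2}$ collapses to the $\Delta^1 \times \Delta^{n-1}$ order of Section~\ref{sec:m=1}, and $\tau \sim_{i_1i_2} \tau'$ should force $\tau$ and $\tau'$ to be equal or adjacent with a shared facet $\sigma$ having $e_{i_2}$ in a component together with at least one $f_j$ — which is automatic for any codimension-1 $\sigma$ not on a face. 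I also need to handle the degenerate possibility that the cycle only contains type-(B) steps (then everything happens inside a single $\Delta^1$ fiber already) and the possibility that $T(\tau_1)$ has no $f_j$ with $|N_{\tau_1}(e_{i_2})| > 1$ other than a label, which is where the local-triangulation-at-$\{e_{i_2}\}\times J_0$ hypothesis and the structure of $\ms T^\star$ come into play.
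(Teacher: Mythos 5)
The ``if'' direction of your proposal matches the paper's argument: contract along $\sigma$ to land in $\Delta^1 \times \Delta^{|J|-1}$, use Proposition~\ref{m=1} to get a chain of adjacencies, and lift back to a chain of type-(B) moves in both directions. That part is fine.

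The ``only if'' direction is where the substance lives, and there is a genuine gap in your plan. You say you ``expect that $N_{\tau_r}(e_{i_2})$ and the relevant neighborhoods $N_{\tau_r}(f_j)$ \dots must stabilize around the cycle.'' This expectation is not correct as stated. Along a type-(A) step $N_\tau(e_{i_2})$ can only grow (by Proposition~\ref{monotone}), but a type-(B) step with $(I_1,I_2)=(\Delta^{m-1}\minus\{e_{i_1}\},\{e_{i_1}\})$ drops an edge $e_i \times f_j$ with $e_i \in \Delta^{m-1}\minus\{e_{i_1}\}$, which can perfectly well be $e_{i_2}\times f_j$; so $N_\tau(e_{i_2})$ can also shrink. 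Around a cycle the set can oscillate, and there is no a priori reason a particular $f_j$ survives in every $N_{\tau_r}(e_{i_2})$ --- which is exactly what you need before you can contract at $\xi = \{e_{i_2}\times f_j, e_{i'}\times f_j\}$. The same objection applies to tracking $N_{\tau_r}(f_j)$: you cannot fix an $f_j$ and apply the second clause of Proposition~\ref{monotone} around the whole cycle unless you already know $f_j$ stays attached to $e_{i_2}$ throughout.

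The paper overcomes this with a genuinely new invariant, not just Proposition~\ref{monotone}. It introduces the quasiorder $\le_{i_1i_2}$ on $\Delta^{n-1}$ coming from the restriction $\ms T[\{e_{i_1},e_{i_2}\}\times\Delta^{n-1}]$, a ``lexicographic-like'' quasiorder $\le^{\mathrm{l}}$ on subsets of $\Delta^{n-1}$, and the auxiliary sets $R_\sigma = \{f_j \in N_\sigma(e_{i_2}) : |N_\sigma(f_j)|>1\}$ and $S_\sigma$ (the prefix of $N_\sigma(e_{i_2})$ up to the minimum of $R_\sigma$). Proposition~\ref{lmonotone} is then a careful case analysis showing $S_\tau \ge^{\mathrm{l}} S_{\tau'}$ along any step of type (A) or (B), with equality forcing $R_\tau\cap S_\tau \supseteq R_{\tau'}\cap S_{\tau'}$ and $N_\tau(f_j)\minus\{e_{i_1}\} \supseteq N_{\tau'}(f_j)\minus\{e_{i_1}\}$ for $f_j\in N_\tau(e_{i_2})$. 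Forcing all of these to be equalities around the cycle is what actually produces an $f_j \in R_{\tau_r}\cap S_{\tau_r}$ (hence $f_j \in N_{\tau_r}(e_{i_2})$) for every $r$ with $N_{\tau_r}(f_j)\minus\{e_{i_1}\}$ constant, and only then does the dichotomy ($N=\{e_{i_2}\}$ forcing all-(A) and Proposition~\ref{prec1}, versus $N\supsetneq\{e_{i_2}\}$ giving a $\xi$ to contract) go through. Your outline correctly identifies the induction and contraction skeleton, the base case $m=2$, and the need to verify type-(B) survives contraction, but without something playing the role of $\le^{\mathrm{l}}$ and Proposition~\ref{lmonotone} the argument does not close.
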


\begin{proof}
First assume that such a $\sigma$ exists. Let $I$, $J$ be partitions associated to $\sigma$ such that $I = (\{e_{i_1}\}, \Delta^{m-1} \minus \{e_{i_1}\})$. Then $\phi_{I,J}(\tau)$ and $\phi_{I,J}(\tau')$ are both maximal simplices in $\phi_\sigma(\ms T)$. By Proposition~\ref{m=1}, we thus have a sequence
\[
\phi_{I,J}(\tau) = \tau_1 \xrightarrow{I_1^1,I_2^1} \tau_2 \xrightarrow{I_1^2,I_2^2} \dotsb \xrightarrow{I_1^{N-1},I_2^{N-1}} \tau_N = \phi_{I,J}(\tau')
\]
where $\tau_1$, \dots, $\tau_n \in \phi_\sigma(\ms T)$ and $\{I_1^r,I_2^r\} = \{\{\bar e_1\}, \{\bar e_2\}\}$ for all $r$. Lifting this sequence to $\ms T$, we obtain a sequence of adjacent maximal simplices where each adjacency has the form (B). Reversing this sequence also gives a sequence of this form. Hence $\tau \sim_{i_1i_2} \tau'$.

We now prove the converse. Suppose that $\tau_1$, \dots, $\tau_N \in \ms T^\ast$ are such that
\begin{equation} \label{eq:cycle}
\tau_1 \xrightarrow{I_1^1,I_2^1} \tau_2 \xrightarrow{I_1^2,I_2^2} \dotsb \xrightarrow{I_1^{N-1},I_2^{N-1}} \tau_N \xrightarrow{I_1^N,I_2^N} \tau_1
\end{equation}
where each adjacency satisfies (A) or (B). We need to prove that there is some $\sigma$ contained in all the $\tau_r$ which satisfies the conclusion in the Lemma. We induct on $m$. If $m=2$, by Proposition~\ref{m=1} there is some $f_j$ (namely, the label of the first simplex in the order), such that $e_{i_2} \times f_j \in \tau$ for any $\tau \in \ms T^\ast$. We can thus take $\sigma = \{ e_{i_2} \times f_j \}$.

Assume $m > 2$. Recall that $\ms T$ gives a quasiorder $\le_{i_1i_2}$ on $\Delta^{n-1}$. Let $\le^\text{l}$, be the ``lexicographic-like'' quasiorder defined on the power set of $\Delta^{n-1}$ as follows. Let $J = \{f_{j_1},\dotsc,f_{j_s}\}$ and $J' = \{f_{j_1'},\dotsc,f_{j_t'}\}$ be subsets of $\Delta^{n-1}$ with $f_{j_1} \le_{i_1i_2} \dotsc \le_{i_1i_2} f_{j_s}$ and $f_{j_1'} \le_{i_1i_2} \dotsc \le_{i_1i_2} f_{j_t'}$. Then $J \le^\text{l} J'$ if either
\begin{enumerate} \renewcommand{\labelenumi}{(\roman{enumi})}
\item $s \ge t$ and $f_{j_p} =_{i_1i_2} f_{j_p'}$ for all $p \le t$, or
\item There is some $q \le s$, $t$ such that $f_{j_p} =_{i_1i_2} f_{j_p'}$ for all $p < q$ and $f_{j_q} <_{i_1i_2} f_{j_q'}$.
\end{enumerate}
Note that this order is the same as the lexicographic order except that a prefix of a set $J$ comes \emph{after} $J$ in this order. We have $J <^\text{l} J'$ if and only if (ii) holds or (i) holds and $s > t$. In particular, if $J \supsetneq J'$, then $J <^\text{l} J'$.

Given a simplex $\sigma \in \ms T$, define the sets
\[
R_\sigma := \{ f_j \in N_\sigma(e_{i_2}) : \abs{N_\sigma(f_j)} > 1 \}
\]
and
\[
S_\sigma := \{ f_j \in N_\sigma(e_{i_2}) : f_j \le_{i_1i_2} f_{j'} \text{ for all } f_{j'} \in R_\sigma \}.
\]
Note that if $\sigma$ is maximal, then $R_\sigma \cap S_\sigma$ is nonempty (specifically, it contains the minimal elements of $R_\sigma$ with respect to $\le_{i_1i_2}$). We will prove the following.

\begin{prop} \label{lmonotone}
Suppose $\tau$, $\tau' \in \ms T^\ast$ such that $\tau \xrightarrow{I_1,I_2} \tau'$ and $(I_1,I_2)$ satisfies (A) or (B). Then $S_\tau \ge^\text{l} S_{\tau'}$. If $S_\tau =^\text{l} S_{\tau'}$, then $R_\tau \cap S_\tau \supseteq R_{\tau'} \cap S_{\tau'}$. Also, for any $f_j \in N_\tau(e_{i_2})$, $N_\tau(f_j) \minus \{e_{i_1}\} \supseteq N_{\tau'}(f_j) \minus \{e_{i_1}\}$.
\end{prop}

\begin{proof}
Let $\tau = \sigma \cup \{e_i \times f_j\}$ and $\tau' = \sigma \cup \{e_{i'} \times f_{j'}\}$. We consider separately the cases where $(I_1,I_2)$ satisfies (A) or (B).

\paragraph{If (A) holds}
We have $R_\sigma \subseteq R_\tau$. Since $N_\sigma(e_{i_2}) = N_\tau(e_{i_2})$, we thus have $S_\sigma \supseteq S_\tau$, and hence $S_\sigma \le^\text{l} S_\tau$.
Now, if $e_{i'} \neq e_{i_2}$, then $S_{\tau'} = S_\sigma$. Otherwise, we have $R_{\tau'} = R_\sigma \cup \{f_{j'}\}$ and $N_{\tau'}(e_{i_2}) = N_\sigma(e_{i_2}) \cup \{f_{j'}\}$. If $f_{j'}$ is not a minimal element of $R_{\tau'}$, then $S_{\tau'} = S_\sigma$. Otherwise, $S_{\tau'}$ consists of $f_{j'}$ and all elements of $S_\sigma$ which are less than or equivalent to $f_{j'}$. In this case, $S_{\tau'} <^\text{l} S_\sigma$. In all cases, $S_{\tau'} \le^\text{l} S_\sigma \le^\text{l} S_\tau$.

Now suppose $S_{\tau'} =^\text{l} S_{\tau}$. By the above argument, we must have $S_\sigma =^\text{l} S_\tau$, which holds above only if $S_\sigma = S_\tau$. Since $R_\sigma \subseteq R_\tau$, we thus have $R_\sigma \cap S_\sigma \subseteq R_\tau \cap S_\tau$. We must also have $S_{\tau'} =^\text{l} S_\sigma$, and so either $e_{i'} \neq e_{i_2}$ or $f_{j'}$ is not a minimal element of $R_{\tau'}$. In either case, $R_{\tau'}$ and $R_\sigma$ have the same minimal elements, so $R_{\tau'} \cap S_{\tau'} = R_\sigma \cap S_\sigma$. Thus, $R_{\tau'} \cap S_{\tau'} \subseteq R_\tau \cap S_\tau$. Also, by Proposition~\ref{monotone}, for any $f_{j''} \in N_\tau(e_{i_2})$ we have $N_\tau(f_{j''}) \supseteq N_{\tau'}(f_{j''})$, and thus $N_\tau(f_{j''}) \minus \{e_{i_1}\} \supseteq N_{\tau'}(f_{j''}) \minus \{e_{i_1}\}$.

\paragraph{If (B) holds}
If $(I_1,I_2) = (\{e_{i_1}\}, \Delta^{n-1} \minus \{e_{i_1}\})$, then we have case (A). So assume $(I_1,I_2) = (\Delta^{n-1} \minus \{e_{i_1}\},\{e_{i_1}\})$. We consider two cases.

\emph{Case 1:}
$R_\sigma \neq \emptyset$. Let $f_{j^\ast} \in R_\sigma$. Since
\[
\{e_{i_2} \times f_{j^\ast}, e_{i_1} \times f_j\}, \{e_{i_2} \times f_{j^\ast}, e_{i_1} \times f_{j'}\} \subseteq \tau',
\]
by Proposition~\ref{compare} we have $f_{j^\ast} <_{i_1i_2} f_j$, $f_{j'}$. Since $f_{j^\ast} <_{i_1i_2} f_j$, removing $f_j$ from $N_\tau(e_{i_2})$ and $R_\tau$ does not change $S_\tau$ or the set of minimal elements of $R_\tau$. Similarly, adding $f_{j'}$ to $R_\sigma$ does not change $S_\sigma$ or the set of minimal elements of $R_\sigma$. Thus, $S_\tau = S_\sigma = S_{\tau'}$ and $R_\tau \cap S_\tau = R_\sigma \cap S_\sigma = R_{\tau'} \cap S_{\tau'}$.

\emph{Case 2:}
$R_\sigma = \emptyset$. Then we must have $R_\tau = \{f_j\}$ and $R_{\tau'} = \{f_{j'}\}$. Then since
\[
\{e_{i_2} \times f_{j'}, e_{i_1} \times f_j\} \subseteq \tau',
\]
by Proposition~\ref{compare} we have $f_{j'} <_{i_1i_2} f_j$. It follows that $S_{\tau'}$ consists of $f_{j'}$ and all elements of $S_\tau$ which are less than or equivalent to $f_{j'}$. Thus $S_{\tau'} <^\text{l} S_\tau$.

In either case, it is clear that $N_\tau(f_{j''}) \minus \{e_{i_1}\} \supseteq N_{\sigma}(f_{j''}) \minus \{e_{i_1}\}$ for all $f_{j''} \in \Delta^{n-1}$. This completes the proof of the Proposition.
\end{proof}

We return to the proof of the Lemma. Recall that we have a cycle \eqref{eq:cycle}. By Proposition~\ref{lmonotone}, we must have $S_{\tau_1} =^\text{l} \dotsb =^\text{l} S_{\tau_N}$, and thus $R_{\tau_1} \cap S_{\tau_1} = \dotsb = R_{\tau_N} \cap S_{\tau_N}$. Let $f_j \in R_{\tau_1} \cap S_{\tau_1}$. By Proposition~\ref{lmonotone}, $N_{\tau_r}(f_j) \minus \{e_{i_1}\}$ is the same set $N$ for all $r$.

Clearly $e_{i_2} \in N$. First suppose $N = \{e_{i_2}\}$. Since $f_j \in R_{\tau_r}$ for all $r$, we must have $N_{\tau_r}(f_j) = \{e_{i_1},e_{i_2}\}$ for all $r$. Hence, none of the adjacencies in \eqref{eq:cycle} can satisfy (B). Thus all of them satisfy (A), which means we have a cycle in the order $\preceq_{i_2}$. By Proposition~\ref{prec1}, this means the cycle has one element, in which case the result trivially holds.

Now suppose $N \supsetneq \{e_{i_2}\}$. Then there is some $e_i \neq e_{i_1},e_{i_2}$ which is in $N_{\tau_r}(f_j)$ for all $r$. Thus $\xi := \{e_i \times f_j, e_{i_2} \times f_j\} \subseteq \tau_r$ for all $r$. We obtain a cycle
\[
\phi_{I, J}(\tau_1) \xrightarrow{\bar I_1^1,\bar I_2^1} \phi_{I, J}(\tau_2) \xrightarrow{\bar I_1^2,\bar I_2^2} \dotsb \xrightarrow{\bar I_1^{N-1},\bar I_2^{N-1}} \phi_{I, J}(\tau_N) \xrightarrow{\bar I_1^N,\bar I_2^N} \phi_{I, J}(\tau_1)
\]
where $I$ and $J$ are partitions associated to $\xi$ and $\bar I^r_b := \phi_{I}(I^r_b)$. Each simplex in this cycle belongs to the local triangulation $\phi_\xi(\ms T)$ of $\Delta_{I} \times \Delta_{J}$ at $\{\phi_{I}(e_{i_2})\} \times \phi_{J}(J_0 \cup \{f_j\})$. Furthermore, all adjacencies satisfy (A) or (B) with $e_{i_1}$ and $e_{i_2}$ replaced by $\phi_I(e_{i_1})$ and $\phi_J(e_{i_2})$, respectively. Thus, by the inductive hypothesis, there is some $\bar \sigma$ contained in all the $\phi_{I,J}(\tau_r)$ such that $T(\bar\sigma)$ has a connected component containing $\Delta_I \minus \{\phi_I(e_{i_1})\}$. We can write $\bar \sigma = \phi_{I,J}(\sigma)$ for some $\sigma \in \ms T$ containing $\xi$. This $\sigma$ satisfies the conclusion of the Lemma.
\end{proof}

\subsection{Structure of local triangulations}

Let $\ms T$ be a local triangulation of $\Delta^{m-1} \times \Delta^{n-1}$ at $\xi$. Let $e_{i_0} \times f_{j_0} \in \xi$. The goal of this section is to prove the following.

\begin{prop} \label{uniqueminimal}
There is a unique minimal element of $\ms T^\ast$ with respect to $\preceq_{i_0}$.
\end{prop}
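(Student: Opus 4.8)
The plan is to reduce the statement to a local ``diamond'' property of the flip graph of $\ms T$ and then deduce uniqueness by a path‑rectification argument. Since $\ms T^\ast$ is finite, \emph{existence} of a $\preceq_{i_0}$‑minimal element is immediate, so the content is \emph{uniqueness}. First I would record two facts about the dual graph $D$ of $\ms T$, whose vertex set is $\ms T^\ast$ and whose edges are the pairs of maximal simplices sharing a facet. First, $D$ is connected: this follows from the corresponding fact for ordinary triangulations applied to $\ms T\contract\xi$, since by Proposition~\ref{contractionfacts} the contraction map is a codimension‑ and proper‑intersection‑preserving bijection onto the triangulation $\ms T\contract\xi$ of $\Delta^{m-1}\times\Delta^{n-1}\contract S$, hence preserves facet‑adjacency (and for $\xi=\emptyset$, $\ms T$ is already an ordinary triangulation). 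Second, any two adjacent $\tau\neq\tau'$ are $\preceq_{i_0}$‑comparable in exactly one direction: if $\sigma=\tau\cap\tau'$ and $(I_1,I_2)$ is the partition of $\Delta^{m-1}$ associated to $\sigma$, then one has $\tau\xrightarrow{I_1,I_2}\tau'$ and $\tau'\xrightarrow{I_2,I_1}\tau$, and exactly one of these two arrows has its ``$I_2$''‑slot equal to the part containing $e_{i_0}$; since $\preceq_{i_0}$ is a partial order by Proposition~\ref{prec1}, the other direction is impossible. Orienting each edge of $D$ by this comparison makes $D$ a finite acyclic digraph whose reachability relation is exactly $\preceq_{i_0}$ and whose sources are exactly the $\preceq_{i_0}$‑minimal elements; the goal becomes to show $D$ has a unique source.

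The heart of the argument is a local confluence lemma: if $\tau_a,\tau_b\in\ms T^\ast$ are distinct, both adjacent to $\tau$, with $\tau_a\prec_{i_0}\tau$ and $\tau_b\prec_{i_0}\tau$, then there is a walk $\tau_a=\pi_0,\pi_1,\dotsc,\pi_l=\tau_b$ in $D$ with $\pi_r\prec_{i_0}\tau$ for every $r$. Granting this, let $h(\sigma)$ be the length of a longest $\prec_{i_0}$‑chain below $\sigma$ in $\ms T^\ast$, and suppose $\tau_1\neq\tau_2$ are both $\preceq_{i_0}$‑minimal. Take any walk from $\tau_1$ to $\tau_2$ in the connected graph $D$. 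Because $\tau_1,\tau_2$ are minimal (so $h=0$ there) and adjacent simplices have distinct $h$‑values, $h$ strictly increases at each end of the walk, so $h$ attains its maximum at an interior vertex $\pi_j$ whose two walk‑neighbors are both strictly $\prec_{i_0}\pi_j$. Splicing in the confluence walk between those neighbors deletes $\pi_j$ and inserts only vertices of $h$‑value $<h(\pi_j)$, so the pair consisting of $\max_r h(\pi_r)$ and the number of indices attaining it drops in lexicographic order. Iterating must terminate, and a walk from $\tau_1$ to $\tau_2$ with no interior local $h$‑maximum must be constant, forcing $\tau_1=\tau_2$. (This last step is just Newman's lemma for the reversed digraph together with the Church--Rosser property and connectedness of $D$.)

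It remains to prove the local confluence lemma, which I expect to be the main obstacle. The two flips off $\tau$ are supported on facets $\mu_a=\tau\cap\tau_a$ and $\mu_b=\tau\cap\tau_b$, whose active edges $\epsilon_a\neq\epsilon_b$ are edges of the spanning tree $T(\tau)$, and removing both leaves a forest with three components. I would prove the lemma by induction on $m$, mirroring the cycle argument in the proof of Proposition~\ref{prec1}: using Proposition~\ref{monotone} along the intended confluence diagram, locate an $f$‑vertex $f_j$ and an $e$‑vertex $e_{i'}\neq e_{i_0}$ such that $\xi':=\{e_{i_0}\times f_j,\ e_{i'}\times f_j\}$ lies in $\tau$, in $\tau_a$, in $\tau_b$, and in all simplices one wants to use; apply $\phi_{\xi'}$ to pass to a local triangulation of $\Delta_I\times\Delta_J$ with $\abs{\Delta_I}=m-1$; invoke the inductive hypothesis there for the direction $\phi_I(e_{i_0})$; and lift the resulting walk back to $\ms T$. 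The base case $m=1$ is trivial, and once the $e$‑side has collapsed (in particular when $m=2$) the picture is the explicit adjacency chain of Proposition~\ref{m=1}, where $\preceq_{i_0}$ is a total order and the lemma is vacuous. The delicate points — where I expect most of the work to be — are guaranteeing the stable edge $\xi'$ exists (one may first have to push both $\tau_a$ and $\tau_b$ a little further toward $e_{i_0}$ so that the neighborhood of $e_{i_0}$ stabilizes), handling the case where $e_{i_0}$ is a leaf of $T(\tau)$, and verifying that the tree‑level commutation of the two flips is genuinely realized by flips of $\ms T$ (equivalently, that the intermediate facets stay interior); for the last point the codimension‑$2$ contraction $\ms T\contract(\mu_a\cap\mu_b)$, a two‑dimensional and hence transparent triangulation, is the natural tool.
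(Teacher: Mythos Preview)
Your reduction to a local confluence lemma is sound: the dual graph of $\ms T^\ast$ is connected (via contraction to an ordinary triangulation), adjacent maximal simplices are $\preceq_{i_0}$-comparable, and the height-lowering splicing argument terminates. The problem is that the confluence lemma itself carries all the content, and your sketch does not prove it. The inductive step needs a pair $\xi'=\{e_{i_0}\times f_j,\ e_{i'}\times f_j\}$ lying in each of $\tau_a,\tau,\tau_b$. Proposition~\ref{monotone} does give $N_{\tau_a}(e_{i_0}),N_{\tau_b}(e_{i_0})\subseteq N_\tau(e_{i_0})$ and, for any $f_j$ in the intersection, $N_{\tau_a}(f_j),N_{\tau_b}(f_j)\supseteq N_\tau(f_j)$; but the only $f_j$ you know to lie in $N_{\tau_a}(e_{i_0})\cap N_{\tau_b}(e_{i_0})$ is $f_{j_0}$ coming from $\xi$, and nothing prevents $f_{j_0}$ from being a leaf of $T(\tau)$. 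Your proposal to ``push $\tau_a,\tau_b$ further toward $e_{i_0}$'' goes the wrong way: descending shrinks $N_\bullet(e_{i_0})$ rather than enlarging it, and once you abandon adjacency to $\tau$ you face the same confluence problem at new peaks with no induction parameter decreasing. The codimension-$2$ contraction at $\mu_a\cap\mu_b$ does exhibit the combinatorial diamond of spanning trees, but whether the fourth tree is realized by a simplex of $\ms T$ is precisely the question of whether the relevant facet is interior to the local triangulation, and that is not automatic.

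The paper avoids confluence altogether by first \emph{characterizing} the $\preceq_{i_0}$-minimal simplices: $\tau\in\ms T^\ast$ is minimal if and only if for every $e_i$ the path from $e_{i_0}$ to $e_i$ in $T(\tau)$ is $1$-alternating with respect to $\xi$ (Proposition~\ref{minimal}, proved via Proposition~\ref{elimination}). A short circuit argument (Proposition~\ref{alternating}) then shows that any two simplices of $\ms T$ carry the \emph{same} $1$-alternating path between given endpoints, so two minimal $\tau,\tau'$ share the subtree spanning $\Delta^{m-1}$; contracting at that common subtree collapses the $e$-side to a point, where there is a single maximal simplex, forcing $\tau=\tau'$. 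Beyond being short, this route produces Proposition~\ref{minimal}, which is reused throughout Section~4.3 and Section~5 (e.g.\ in Propositions~\ref{minimalcontainssimplex} and~\ref{uniqueattach} and in every ``let $\tau_\ast$ be the minimal element of $\ms T(\cdot)$ with respect to $\preceq_i$'' step of the main proof), so even a completed confluence argument would not replace it.
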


Let $u$, $v$ be vertices of $G_{m,n}$. An \emph{alternating path} (with respect to $\xi$) from $u$ to $v$ is a path in $G_{m,n}$ from $u$ to $v$ such that every other edge of the path is an element of $\xi$. A \emph{1-alternating path} is an alternating path such that the first, third, fifth, and so on edges are in $\xi$, and a \emph{2-alternating path} is an alternating path such that the second, fourth, sixth, and so on edges are in $\xi$. We note the following.

\begin{prop} \label{alternating}
Let $\sigma$, $\sigma' \in \ms T$, and let $b = 1$ or 2. Suppose there are $b$-alternating paths $P$, $P'$ from $u$ to $v$ in $T(\sigma)$, $T(\sigma')$, respectively. Then $P = P'$. 
\end{prop}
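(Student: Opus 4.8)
\textit{Proof proposal.} The plan is to argue by contradiction: assume $P \ne P'$ and extract from the two paths a circuit that cannot live in the local triangulation. Write the vertex sequences of $P$ and $P'$ as $u = w_0, w_1, \dots, w_\ell = v$ and $u = w_0', w_1', \dots, w_{\ell'}' = v$, and let $k$ be the length of their longest common prefix. Since paths have no repeated vertices, neither path is a prefix of the other (if, say, $P$ were a prefix of $P'$, then $v$ would occur twice in $P'$), so $w_{k+1}$ and $w_{k+1}'$ both exist, and by maximality of $k$ they are distinct neighbours of $w_k = w_k'$. Now follow $P$ from $w_{k+1}$ until it first meets a vertex $w_m$ lying on the sub-path $w_{k+1}', \dots, w_{\ell'}'$ of $P'$ — this happens at the latest for $w_\ell = v$ — and say $w_m = w_{m'}'$. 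Closing up along $P'$ yields a simple cycle
\[
w_k,\ w_{k+1},\ \dots,\ w_m = w_{m'}',\ w_{m'-1}',\ \dots,\ w_{k+1}',\ w_k,
\]
call it $C$, whose edge set is the disjoint union of a \emph{$P$-part} $E_P$ (the edges of $P$ at positions $k+1,\dots,m$, all lying in $T(\sigma)$) and a \emph{$P'$-part} $E_{P'}$ (the edges of $P'$ at positions $k+1,\dots,m'$, all lying in $T(\sigma')$). Since $\sigma$ and $\sigma'$ both contain $\xi$, every edge of $C$ belonging to $\xi$ lies in both $\sigma$ and $\sigma'$, every non-$\xi$ edge of $E_P$ lies in $\sigma$, and every non-$\xi$ edge of $E_{P'}$ lies in $\sigma'$.

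Identifying edges of $G_{m,n}$ with vertices of $\Delta^{m-1}\times\Delta^{n-1}$ as in Section~3.1, the simple cycle $C$ is a circuit, so it has a sign partition $\{C^+, C^-\}$; and since $G_{m,n}$ is bipartite, $C^+$ and $C^-$ are exactly the edges at the odd and even positions (in one order or the other) along the traversal above. Tracking these positions — and noting that the $P'$-part is traversed in reverse and that $\abs{C}$ is even — one finds that an edge of $E_P$ lies in a fixed one of $C^\pm$ exactly according to the parity of its position in $P$, while an edge of $E_{P'}$ lies in that same one of $C^\pm$ exactly according to the parity of its position in $P'$ shifted by one. Since $P$ and $P'$ are $b$-alternating, the $\xi$-edges of $E_P$ are precisely those at positions $\equiv b \pmod 2$ in $P$, and likewise for $E_{P'}$. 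Combining these facts and distinguishing the two cases $k \equiv b$ and $k \equiv b+1 \pmod 2$, one obtains in both cases — after possibly renaming $C^+$ and $C^-$ —
\[
C^+ = (\text{$\xi$-edges of } E_P)\ \cup\ (\text{non-$\xi$ edges of } E_{P'}), \qquad
C^- = (\text{non-$\xi$ edges of } E_P)\ \cup\ (\text{$\xi$-edges of } E_{P'}).
\]
By the closing remark of the previous paragraph this gives $C^- \subseteq \sigma$ and $C^+ \subseteq \sigma'$.

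To finish, recall that for the circuit $C$ the relative interiors of $\conv(C^+)$ and $\conv(C^-)$ intersect; pick $x$ in their intersection. Then $x \in \conv(\sigma) \cap \conv(\sigma')$, which equals $\conv(\sigma \cap \sigma')$ because simplices of a local triangulation intersect properly. As $C^- \subseteq \sigma$, the polytope $\conv(C^-)$ is a face of the simplex $\conv(\sigma)$ whose relative interior contains $x$; hence the face $\conv(\sigma \cap \sigma')$ of $\conv(\sigma)$, which contains $x$, must contain $\conv(C^-)$, so $C^- \subseteq \sigma'$. Together with $C^+ \subseteq \sigma'$ this puts the entire cycle $C$ inside the simplex $\sigma'$, which is impossible since $T(\sigma')$ is acyclic. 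This contradiction proves $P = P'$.

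I expect the parity bookkeeping in the middle paragraph to be the only delicate point: one must express positions along $P$ and $P'$ in terms of positions along $C$ (accounting for the reversal on the $P'$-side and the evenness of $\abs{C}$), and then verify that the $\xi$-edges of $E_P$ and those of $E_{P'}$ always fall on opposite sides of the circuit partition, so that one of $C^\pm$ ends up inside $\sigma$ and the other inside $\sigma'$. The remaining ingredients — that simple cycles of $G_{m,n}$ are circuits, that simplices of a local triangulation intersect properly, and that simplices are acyclic — are already available from the setup above.
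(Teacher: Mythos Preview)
Your proposal is correct and follows essentially the same approach as the paper: both arguments produce a cycle whose edges alternate between belonging to $\sigma$ and to $\sigma'$, so that the two halves of the corresponding circuit lie in $\sigma$ and $\sigma'$ respectively, contradicting proper intersection. The paper's version is terser---it concatenates $P$ with the reverse of $P'$ to get a closed walk, observes that bipartiteness and the shared $\xi$-edges make this walk alternate between $\sigma$-edges and $\sigma'$-edges, and then extracts a cycle without tracking a common prefix---whereas your explicit cycle construction and parity bookkeeping fill in exactly the details the paper leaves implicit.
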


\begin{proof}
By walking along $P$ from $u$ to $v$ and then walking backward along $P'$ from $v$ to $u$, we obtain a closed walk. Since $G_{m,n}$ is bipartite and elements of $\xi$ are elements of both $\sigma$ and $\sigma'$, this walk alternates between elements of $\sigma$ and elements of $\sigma'$. If $P \neq P'$, then from this walk we can obtain a cycle which alternates between elements of $\sigma$ and elements of $\sigma'$.\footnote{Again, using bipartiteness.} Thus $\sigma$ and $\sigma'$ contain opposite parts of a circuit, which is impossible for a triangulation.
\end{proof}

\begin{prop} \label{elimination}
Let $\tau \in \ms T^\ast$, and suppose $e_i \times f_j \in \tau$ such that $e_i \times f_j \notin \xi$. Let $\sigma = \tau \minus \{e_i \times f_j\}$. If both connected components of $T(\sigma)$ contain an edge, then there is some $\tau' \in \ms T^\ast$ with $\tau' = \sigma \cup \{e_{i'} \times f_{j'}\}$, where $e_{i'}$ and $f_{j'}$ are in different components of $T(\sigma)$ than $e_i$ and $f_j$, respectively.
\end{prop}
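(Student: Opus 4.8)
The plan is to reduce the statement to the (essentially trivial) structure of triangulations of $\Delta^1\times\Delta^1$ by contracting at $\sigma$. The first step is to check that $\sigma$ is a simplex of $\ms T$ to which the contraction machinery applies: since $e_i\times f_j\notin\xi$ and every simplex of $\ms T$ contains $\xi$, we have $\xi\subseteq\tau$ and hence $\xi\subseteq\sigma=\tau\minus\{e_i\times f_j\}$; thus $\sigma\in\ms T$ and $\ms T(\sigma)$ is a local triangulation of $\Delta^{m-1}\times\Delta^{n-1}$ at $\sigma$. I would also record that the maximal simplices of $\ms T(\sigma)$ are precisely the maximal simplices of $\ms T$ that contain $\sigma$, one of which is $\tau$.

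Next I would pin down the combinatorics of $\sigma$. Since $\tau$ is maximal, $T(\tau)$ is a spanning tree of $G_{m,n}$, so $T(\sigma)=T(\tau)\minus\{e_if_j\}$ is a spanning forest with exactly two components $C_1,C_2$; by hypothesis each contains an edge, so the associated partitions $(I_1,I_2)$ of $\Delta^{m-1}$ and $(J_1,J_2)$ of $\Delta^{n-1}$ each have two nonempty parts, with $I_b\cup J_b$ the vertex set of $C_b$. Because $T(\sigma)\cup\{e_if_j\}$ is a tree, the edge $e_if_j$ joins $C_1$ to $C_2$; relabeling if necessary, say $e_i\in I_1$ and $f_j\in J_2$.

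The core step is to apply $\phi_\sigma$. Because both components of $T(\sigma)$ carry an edge, $\phi_\sigma(\ms T)$ is a local triangulation of $\Delta_I\times\Delta_J$, which after identifying $\Delta_I=\{\bar e_1,\bar e_2\}$ and $\Delta_J=\{\bar f_1,\bar f_2\}$ is $\Delta^1\times\Delta^1$, at the diagonal $\{\bar e_1\times\bar f_1,\bar e_2\times\bar f_2\}$; and $\phi_{I,J}$ is a codimension-preserving bijection $\ms T(\sigma)\to\phi_\sigma(\ms T)$, hence restricts to a bijection on maximal simplices. Proposition~\ref{m=1}, applied with $n=2$ and with $\xi$ the full diagonal, then shows this local triangulation has exactly the two maximal simplices $\{\bar e_1\times\bar f_1,\bar e_2\times\bar f_2,\bar e_1\times\bar f_2\}$ and $\{\bar e_1\times\bar f_1,\bar e_2\times\bar f_2,\bar e_2\times\bar f_1\}$. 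Transporting back, $\ms T$ has exactly two maximal simplices containing $\sigma$: one is $\tau$, and I call the other $\tau'$. Finally I would compute $\phi_{I,J}(\sigma)=\{\bar e_1\times\bar f_1,\bar e_2\times\bar f_2\}$ and, using $e_i\in I_1$, $f_j\in J_2$, that $\phi_{I,J}(\tau)=\{\bar e_1\times\bar f_1,\bar e_2\times\bar f_2,\bar e_1\times\bar f_2\}$; hence $\phi_{I,J}(\tau')$ is the other triangle, forcing $\tau'=\sigma\cup\{e_{i'}\times f_{j'}\}$ with $\phi_{I,J}(e_{i'}\times f_{j'})=\bar e_2\times\bar f_1$, i.e.\ $e_{i'}\in I_2$ and $f_{j'}\in J_1$. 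So $e_{i'}$ lies in $C_2$ (different from the component $C_1$ containing $e_i$) and $f_{j'}$ lies in $C_1$ (different from the component $C_2$ containing $f_j$), which is the asserted conclusion.

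I do not expect a conceptual obstacle; the work is bookkeeping. The points needing care are: verifying that $\phi_\sigma(\ms T)$ is a local triangulation at the \emph{full} diagonal of $\Delta^1\times\Delta^1$ (this is exactly where the hypothesis that both components of $T(\sigma)$ contain an edge is used, so that contracting the part of $\xi$ lying inside each component contributes nothing new); checking that ``maximal'' and ``codimension $0$'' coincide for the local triangulations involved, so that the codimension-preserving bijection $\phi_{I,J}$ carries maximal simplices to maximal simplices; and correctly tracking, through the relabeling $e_i\mapsto\bar e_1$, $f_j\mapsto\bar f_2$, which component contains $e_{i'}$ versus $e_i$ and $f_{j'}$ versus $f_j$ — this last bit is where an error is easiest to make.
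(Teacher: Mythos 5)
Your proposal is correct and follows essentially the same route as the paper: contract at $\sigma$ to get a local triangulation of $\Delta^1\times\Delta^1$ at the full diagonal, invoke Proposition~\ref{m=1} to produce the other triangle, and lift back via $\phi_{I,J}$. The paper's proof is more terse, but the decomposition, the key lemma, and the bookkeeping of which part of the partition contains which vertex are all the same as yours.
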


\begin{proof}
Since $e_i \times f_j \notin \xi$, $\sigma \in \ms T$. Let $I$, $J$ be partitions associated to $\sigma$ with $e_{i} \in I_1$ and $f_{j} \in J_2$. Then $\phi_\sigma(\ms T)$ is a local triangulation of $\Delta_I \times \Delta_J$ at $\bar \xi := \{\bar e_1 \times \bar f_1, \bar e_2 \times \bar f_2\}$. By Proposition~\ref{m=1}, we have $\bar \xi \cup \{\bar e_2 \times \bar f_1\} \in \phi_\sigma(\ms T)$. Let $\tau' \in \ms T^\ast$ be such that $\phi_{I,J}(\tau') = \bar\xi \cup \{\bar e_2 \times \bar f_1\}$. This $\tau'$ satisfies the desired conclusion.
\end{proof}

\begin{prop} \label{minimal}
Let $\tau \in \ms T^\ast$. Then $\tau$ is minimal in $\ms T^\ast$ with respect to $\preceq_{i_0}$ if and only if for every $e_i \in \Delta^{m-1}$, the path in $T(\tau)$ from $e_{i_0}$ to $e_i$ is 1-alternating.
\end{prop}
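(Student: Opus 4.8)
The plan is to prove the proposition in contrapositive form through two equivalences. Call an edge $e_{i'}\times f_{j'}$ of $T(\tau)$ a \emph{bad edge} if $e_{i'}\times f_{j'}\notin\xi$ and, with $T(\tau)$ rooted at $e_{i_0}$, the vertex $e_{i'}$ is the parent of $f_{j'}$ and $f_{j'}$ is not a leaf. I will show (1) that $\tau$ is non-minimal with respect to $\preceq_{i_0}$ if and only if $T(\tau)$ has a bad edge, and (2) that $T(\tau)$ has a bad edge if and only if for some $e_i\in\Delta^{m-1}$ the path in $T(\tau)$ from $e_{i_0}$ to $e_i$ is not $1$-alternating. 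Negating and combining these gives the proposition.

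For (1): since $\preceq_{i_0}$ is the transitive closure of the descending arrows between maximal simplices of $\ms T$, the simplex $\tau$ is non-minimal exactly when there is a single arrow $\tau''\xrightarrow{I_1,I_2}\tau$ with $\tau''\in\ms T^\ast$ and $e_{i_0}\in I_2$ (the last step of any descending chain into $\tau$). Write $\sigma=\tau\cap\tau''$; since every simplex of a local triangulation contains $\xi$, we have $\xi\subseteq\sigma$, and $\tau=\sigma\cup\{e_{i'}\times f_{j'}\}$ with $e_{i'}\times f_{j'}\notin\xi$. The two components of $T(\sigma)$ — that is, of $T(\tau)$ with the edge $e_{i'}f_{j'}$ deleted — are $I_1\cup J_1\ni f_{j'}$ and $I_2\cup J_2\ni e_{i'}$, so $e_{i_0}\in I_2$ says exactly that $e_{i_0}$ lies in the component of $e_{i'}$, i.e. $e_{i'}$ is the parent of $f_{j'}$. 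The existence of $\tau''=\sigma\cup\{e_{i''}\times f_{j''}\}$ with $e_{i''}\in I_1$ and $f_{j''}\in J_2$ forces both components of $T(\sigma)$ to contain an edge, equivalently $f_{j'}$ has degree at least $2$ in $T(\tau)$ (so is not a leaf) and $e_{i'}$ has degree at least $2$; the latter is automatic, since if $e_{i'}\ne e_{i_0}$ then $e_{i'}$ has a parent besides its child $f_{j'}$, while if $e_{i'}=e_{i_0}$ then $e_{i_0}f_{j_0}\in\xi$ with $f_{j_0}\ne f_{j'}$ (as $e_{i_0}\times f_{j'}\notin\xi$) gives $e_{i_0}$ a second child — this is the one place the hypothesis $e_{i_0}\times f_{j_0}\in\xi$ is used. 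So a descending arrow into $\tau$ produces a bad edge. Conversely, given a bad edge $e_{i'}\times f_{j'}$, both components of $T(\tau)$ with $e_{i'}f_{j'}$ deleted contain an edge, so Proposition~\ref{elimination} (applied to $e_{i'}\times f_{j'}$) yields $\tau''\in\ms T^\ast$ of the form $\sigma\cup\{e_{i''}\times f_{j''}\}$ with $e_{i''}$ on the $f_{j'}$-side of $T(\sigma)$ and $f_{j''}$ on the $e_{i'}$-side; one checks directly that this adjacency is an arrow $\tau''\xrightarrow{I_1,I_2}\tau$ with $e_{i_0}\in I_2$, so $\tau$ is non-minimal.

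For (2): because $G_{m,n}$ is bipartite and $e_{i_0}\in\Delta^{m-1}$, in $T(\tau)$ rooted at $e_{i_0}$ the vertices of $\Delta^{m-1}$ sit at even depths and those of $\Delta^{n-1}$ at odd depths; hence an edge $e_i\times f_j$ has $e_i$ as its parent endpoint exactly when it lies at odd depth, and on the path from $e_{i_0}$ to any vertex the $k$-th edge is the edge at depth $k$. So the path from $e_{i_0}$ to $e_i$ is $1$-alternating iff every odd-depth edge on it belongs to $\xi$, whence some path to a vertex of $\Delta^{m-1}$ fails to be $1$-alternating iff there is an odd-depth edge $e_{i'}\times f_{j'}\notin\xi$ having a $\Delta^{m-1}$-vertex among the descendants of its child $f_{j'}$; since children of $f_{j'}\in\Delta^{n-1}$ lie in $\Delta^{m-1}$, this last condition is just that $f_{j'}$ is not a leaf, i.e. that $e_{i'}\times f_{j'}$ is a bad edge.

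I expect the crux to be step~(1): making the dictionary between a descending arrow $\tau''\xrightarrow{I_1,I_2}\tau$ and a bad edge fully precise — matching the ordered partition $(I_1,I_2)$ to which side of the deleted edge contains $e_{i_0}$, and checking that the simplex supplied by Proposition~\ref{elimination} is glued back in the orientation making the adjacency an arrow \emph{into} $\tau$ rather than out of it. The parity bookkeeping in step~(2) is then routine.
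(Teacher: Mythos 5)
Your proof is correct and follows essentially the same route as the paper's: both directions rest on Proposition~\ref{elimination}, on the bipartite structure of $T(\tau)$ rooted at $e_{i_0}$, and on the fact that $\xi$ is contained in every simplex of the local triangulation. The intermediate ``bad edge'' notion is an organizational device that, once unwound, reproduces the paper's argument, and your explicit check of the hypothesis of Proposition~\ref{elimination} (that both components of $T(\sigma)$ carry an edge) and of where $e_{i_0}\times f_{j_0}\in\xi$ is actually needed is a welcome clarification that the paper leaves implicit.
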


\begin{proof}
Suppose $\tau$ is minimal in $\ms T^\ast$ with respect to $\preceq_{i_0}$. Let $e_i \in \Delta^{m-1}$, and let $P$ be the path in $T(\tau)$ from $e_{i_0}$ to $e_i$. Assume $P$ is not 1-alternating. Then $P$ contains consecutive vertices $e_{i'}$, $f_{j'}$ in that order such that $e_{i'} \times f_{j'} \notin \xi$. By Proposition~\ref{elimination}, there is some $\tau' \in \ms T^\ast$ with $\tau' \xrightarrow{I_1,I_2} \tau$ such that $e_{i'}$, $e_{i_0} \in I_2$. This contradicts the minimality of $\tau$ in $\preceq_{i_0}$.

Conversely, suppose that for each $e_i \in \Delta^{m-1}$ the path in $T(\tau)$ from $e_{i_0}$ to $e_i$ is 1-alternating. Suppose that there is some $\tau' \in \ms T^\ast$ with $\tau' \xrightarrow{I_1,I_2} \tau$, where $e_{i_0} \in I_2$. Let $\sigma = \tau \cap \tau'$, and let $e_{i'} \times f_{j'} = \tau \minus \sigma$. Let $e_i \in I_1$, and consider the path in $T(\tau)$ from $e_{i_0}$ to $e_i$. Since this path is 1-alternating and must contain $e_{i'} \times f_{j'}$, we have $e_{i'} \times f_{j'} \in \xi$. But $e_{i'} \times f_{j'} \notin \tau'$, contradicting $\tau' \in \ms T$. Thus $\tau$ is minimal in $\ms T^\ast$ with respect to $\preceq_{i_0}$.
\end{proof}

We can now prove Proposition~\ref{uniqueminimal}.

\begin{proof}[Proof of Proposition~\ref{uniqueminimal}]
Suppose $\tau$, $\tau'$ are minimal in $\ms T^\ast$ with respect to $\preceq_{i_0}$. Let $\sigma$, $\sigma'$ be the minimal subsets of $\tau$, $\tau'$, respectively, such that $T(\sigma)$ and $T(\sigma')$ contain $\Delta^{m-1}$. By Propositions~\ref{minimal} and \ref{alternating}, we have $\sigma = \sigma'$. Now, $\phi_\sigma(\ms T)$ has a single maximal simplex, and both $\tau$ and $\tau'$ must map to this simplex. Hence $\tau = \tau'$, as desired.
\end{proof}

\subsection{Circuits and flips}

In this final subsection, we collect some facts about flips. We first prove the following general fact about flips of point configurations.

\begin{prop} \label{flip}
Let $\ms T$ be a triangulation of a point set $A$ and let $X = \{X^+,X^-\}$ be a circuit in $A$, where $X^+ = \{x_1,\dotsc,x_k\}$. Suppose that $X^- \in \ms T$. Then $\ms T$ has a flip supported on $(X^+,X^-)$ if and only if there is no maximal simplex $\tau \in \ms T(X^-)^\ast$ with $\abs{\tau \cap X} \le \abs{X} - 2$. If $\ms T$ does not have a flip supported on $(X^+,X^-)$ and $X \minus \{x_i\} \in \ms T$ for some $i$, then such a $\tau$ exists with $\tau \cap X = X \minus \{x_i,x_j\}$ for some $j \neq i$.
\end{prop}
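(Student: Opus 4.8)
The plan is to analyze when the two conditions in Proposition~\ref{flipdef} hold, using the fact that $X^- \in \ms T$ plays the role of $X^+$ in that proposition (after swapping signs, $\ms T_X^{-}$ is the triangulation of $X$ contained in $\ms T$, and a flip supported on $(X^+,X^-)$ is precisely a flip in the sense of Proposition~\ref{flipdef} with the roles of $+$ and $-$ reversed). So the first thing I would check is that $\ms T_X^{-} \subseteq \ms T$: this follows automatically from $X^- \in \ms T$, since every simplex of $\ms T_X^{-}$ is a subset of $X$ not containing $X^-$, hence a proper subset of $X^-$ together with possibly some elements of $X^+$; more carefully, the maximal simplices of $\ms T_X^{-}$ are the sets $X \minus \{x_i\}$ for $x_i \in X^+$, and each such set is affinely independent, contained in $A$, and — since it is a facet of $\conv(X)$ lying in the boundary — must lie in $\ms T$ because $X^- \subseteq X \minus \{x_i\}$ restricts the subdivision there. (I'd spell this out via the circuit's geometry: $X \minus \{x_i\}$ spans a simplex whose interior meets $\operatorname{relint}\conv(X^-)$, and $X^- \in \ms T$ forces every simplex of $\ms T$ meeting that region to contain $X^-$.) The real content is therefore condition (2): \emph{all maximal simplices of $\ms T_X^{-}$ have the same link in $\ms T$}.

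Now fix $\tau \in \ms T(X^-)^\ast$, i.e.\ a maximal simplex containing $X^-$. Write $\tau = (X \minus \{x_i\}) \cup \rho$ for the unique $x_i \in X^+$ with $x_i \notin \tau$ — except this presumes $\abs{\tau \cap X} = \abs X - 1$, which is exactly the good case. In general $\tau \cap X$ is some subset containing $X^-$ and missing at least one element of $X^+$; the claim ``$\abs{\tau \cap X} \le \abs X - 2$'' says $\tau$ misses at least two elements of $X^+$. The key observation I would establish is: if every $\tau \in \ms T(X^-)^\ast$ satisfies $\abs{\tau \cap X} = \abs X - 1$, then for each such $\tau$, writing $x_i$ for the omitted vertex, the set $\rho := \tau \minus X = \tau \minus (X \minus\{x_i\})$ is a simplex disjoint from $X$ with $\rho \cup (X\minus\{x_i\}) = \tau \in \ms T$, so $\rho \in \link_{\ms T}(X \minus \{x_i\})$; conversely every element of every such link arises this way. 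So I need: the link is independent of $i$. Both directions of the stated biconditional then reduce to this, so let me describe how I'd get it.

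For the forward direction of ``flip $\iff$ no bad $\tau$'': assume there is a bad $\tau$, i.e.\ $\abs{\tau \cap X} \le \abs X - 2$, so $\tau$ omits two vertices $x_i, x_j \in X^+$. Then $X \minus \{x_i\}$ and $X \minus \{x_j\}$ are distinct maximal simplices of $\ms T_X^{-}$. I would argue their links differ: $\conv(\tau)$ is a full-dimensional simplex containing $\conv(X^-)$ in its boundary complex in a specific way, and because $\tau$ avoids both $x_i$ and $x_j$, one can exhibit (using proper intersection in $\ms T$) a facet $\sigma$ of $\conv(X\minus\{x_i\})$ meeting $\operatorname{relint}\conv(\tau)$, forcing $\link(X\minus\{x_i\})$ to ``point'' toward $\tau$ in a direction not available to $\link(X \minus \{x_j\})$ — concretely, $\tau$ itself certifies a link element of one but the geometry obstructs it being a link element of the other, since $(X\minus\{x_j\}) \cup (\tau \minus X)$ would fail to be a simplex or fail proper intersection. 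Conversely, assume no bad $\tau$, so every maximal simplex through $X^-$ omits exactly one $x_i$. Here I'd show the links coincide: take $\tau = (X\minus\{x_i\})\cup\rho$ and $\tau' = (X\minus\{x_j\})\cup\rho'$ in $\ms T(X^-)^\ast$; I want $\rho = \rho'$. Since $\conv(\tau)$ and $\conv(\tau')$ both contain a neighborhood of a fixed interior point $p$ of $\conv(X^-)$ (as both contain $X^-$ maximally), and $\ms T$ is a triangulation, the star of $X^-$ in $\ms T$ is ``$\conv(X^-)$ times a local triangulation of the contraction at any maximal simplex of $X^-$'' — this is exactly Proposition~\ref{contractionfacts} and the local-triangulation machinery. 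Contracting at $X^-$, the circuit $X$ becomes a circuit $\bar X = \{\bar X^+, \emptyset\}$... which is absurd unless $\abs{X^+}=1$; the right statement is that $X \minus X^-$ maps to an affinely independent set $\bar X^+$ and $\ms T \contract X^-$ is a triangulation in which $\bar X^+$ is a simplex (it's $X^+$ contracted, dimension $\abs{X^+}-1$) all of whose facets $\bar X^+ \minus \{\bar x_i\}$ have the same link — which is automatic because they're the facets of a single simplex. Pulling back through the bijection of Proposition~\ref{contractionfacts}(1), condition (2) holds.

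For the final sentence: suppose no flip is supported on $(X^+,X^-)$ and $X \minus \{x_i\} \in \ms T$ for some $i$. By the biconditional just proved, there is a bad $\tau \in \ms T(X^-)^\ast$ with $\abs{\tau \cap X} \le \abs X - 2$. I want to upgrade this to $\tau \cap X = X \minus \{x_i, x_j\}$ for some $j \ne i$ — i.e.\ the bad simplex can be taken to omit exactly $x_i$ and one other. Since $X\minus\{x_i\}\in\ms T$, extend it to a maximal simplex $\tau_0 \in \ms T^\ast$; then $\tau_0 \in \ms T(X^-)^\ast$ and $\tau_0 \cap X \supseteq X\minus\{x_i\}$, so $\tau_0 \cap X = X\minus\{x_i\}$ (it can't equal $X$). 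Now I'd use proper intersection between $\tau_0$ and the already-obtained bad simplex $\tau$: $\conv(\tau_0)\cap\conv(\tau) = \conv(\tau_0\cap\tau)$ contains $\conv(X^-)$, and walking along a path of maximal simplices from $\tau_0$ toward $\tau$ in the star of $X^-$ (flips/bistellar moves in the contracted picture), the quantity $\abs{\cdot \cap X}$ drops from $\abs X - 1$ by steps; the first simplex $\tau'$ on this path with $\abs{\tau' \cap X} = \abs X - 2$ must, because its predecessor had $X\minus\{x_i\}$ (or was reached preserving $x_i \notin$), omit $x_i$ and exactly one new vertex. More carefully I'd contract at $X^-$, reduce to the statement: in a triangulation $\bar{\ms T}$ containing the simplex $\bar X^+$, if some facet $\bar X^+\minus\{\bar x_i\}$ is in $\bar{\ms T}$ but $\bar{\ms T}$ is not the "full" local triangulation, then some maximal simplex meets $\bar X^+$ in a codimension-2 face containing $\bar X^+ \minus \{\bar x_i, \bar x_j\}$ — which follows by taking a maximal simplex $\bar\tau_0 \supseteq \bar X^+\minus\{\bar x_i\}$ and looking at its facet $\bar X^+ \minus \{\bar x_i\}$: the simplex on the other side of that facet omits a second vertex of $\bar X^+$.

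I expect the main obstacle to be the careful handling of the forward direction — producing, from a bad $\tau$, a genuine \emph{difference} of links of two facets of $\ms T_X^{-}$ — and getting the bookkeeping in the final sentence right, namely that the codimension-2 bad simplex can be chosen to contain $X \minus \{x_i, x_j\}$ with the \emph{specified} $x_i$. Both are cleanest after contracting at $X^-$ (or at a maximal simplex $\xi \subseteq X^-$) via Propositions~\ref{contractionfacts} and the local-triangulation correspondence, which turns $X$ into an honest simplex $\bar X^+$ inside a triangulation and reduces everything to the elementary geometry of which simplices can neighbor a given simplex across its various facets.
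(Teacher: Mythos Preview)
Your contraction approach has two concrete errors that block the argument as written.

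\textbf{First}, condition (1) of Proposition~\ref{flipdef} is \emph{not} automatic from $X^-\in\ms T$. It is simply false that $X\minus\{x_i\}\in\ms T$ for every $i$ just because $X^-\in\ms T$: if $A$ contains points other than $X$, a maximal simplex through $X^-$ may use none of the $x_i\in X^+$. The paper handles this explicitly: if no $X\minus\{x_i\}$ lies in $\ms T$, then any maximal $\tau\supseteq X^-$ has $\abs{\tau\cap X}\le\abs{X}-2$, so a bad $\tau$ exists trivially. Your ``restricts the subdivision there'' sentence does not establish this, and the parenthetical geometry does not force $X^+\minus\{x_i\}$ into the star of $X^-$.

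\textbf{Second}, after contracting at $X^-$ the image $\bar X^+$ is \emph{not} a simplex: the affine dependence $\sum_{X^+}\lambda_i x_i+\sum_{X^-}\lambda_j x_j=0$ projects to a \emph{linear} dependence $\sum_{X^+}\lambda_i\bar x_i=0$ with all $\lambda_i>0$, i.e.\ $\bar X^+$ is a positive circuit in the vector configuration $A/S$. So the sentence ``automatic because they're the facets of a single simplex'' is based on a false premise, and the link equality you need is exactly as nontrivial after contraction as before. (Indeed the facets $\bar X^+\minus\{\bar x_i\}$ span the same $(k-1)$-dimensional linear subspace, but their links in $\bar{\ms T}$ are in bijection with the maximal simplices of $\bar{\ms T}$ omitting $\bar x_i$, and there is no a priori reason these coincide for different $i$.) The same error recurs in your treatment of the last sentence.

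The paper's argument avoids contraction entirely. For the forward direction it observes that a bad $\tau$ has $\abs{\tau\minus X}\ge\abs\tau-\abs X+2$, so $\tau\minus X$ is too large to lie in the link of any $X\minus\{x_i\}$; hence $\tau$ survives the flip while $X^-$ does not, contradicting that the result is a triangulation. For the converse (and simultaneously the last sentence) it fixes $\tau\supseteq X\minus\{x_i\}$ maximal and, for each $j\ne i$, looks at the facet $\sigma=\tau\minus\{x_j\}$. A supporting-hyperplane argument using the circuit shows $\sigma$ is not contained in the boundary of $\conv(A)$, so there is an adjacent maximal $\tau'\ne\tau$ containing $\sigma$. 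Either $x_i\notin\tau'$, giving $\tau'\cap X=X\minus\{x_i,x_j\}$ (the desired bad simplex), or $\tau'=\tau\minus\{x_j\}\cup\{x_i\}$, giving $\link_{\ms T}(X\minus\{x_i\})\subseteq\link_{\ms T}(X\minus\{x_j\})$; symmetry then forces equality of links and hence a flip. This direct facet-chase is what replaces your contraction step, and it is where the actual work lies.
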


\begin{proof}
First, suppose that $\ms T$ has a flip supported on $(X^+,X^-)$ and such a $\tau$ exists. Since $\abs{\tau \minus X} \ge \abs{\tau} - \abs{X} + 2$ and $\tau$ is maximal, $\tau \minus X$ is not in the link of any maximal simplex of $\ms T_X^+$. Thus, in the collection $\ms T'$ defined in Proposition~\ref{flipdef}, we have $\tau \in \ms T'$ but $X^- \notin \ms T'$, contradicting the fact that $\ms T'$ is a triangulation.

Conversely, suppose that $\ms T$ does not have a flip supported on $(X^+,X^-)$. Consider a maximal simplex $\tau \in \ms T^\ast$ containing $X^-$. If $X \minus \{x_i\} \not\in \ms T$ for all $i$, then $\abs{\tau \cap X} \le \abs{X} - 2$ and we are done. Otherwise, if $X \minus \{x_i\} \in \ms T$, then choose $\tau$ so that $X \minus \{x_i\} \subseteq \tau$.

Suppose there is no $\tau' \in \ms T(X^-)^\ast$ and $j \neq i$ such that $\tau' \cap X = X \minus \{x_i,x_j\}$. Let $j \neq i$. Consider the facet $\sigma := \tau \minus \{x_j\}$ of $\tau$. We claim that $\sigma$ is not contained in a face of $\conv(A)$. Assume the contrary, and let $H$ be a supporting hyperplane of this face. Since $\tau$ is a maximal simplex, $x_j \notin H$. Then since $x_i$, $x_j$ are in $X^+$ of the circuit $X$ and $X \minus \{x_i,x_j\} \subseteq H$, this implies $x_i$ and $x_j$ are on opposite sides of $H$. This contradicts the fact that $H$ is a supporting hyperplane. Thus, $\sigma$ is not contained in a face of $\conv(A)$. It follows that there is another maximal simplex $\tau' \in \ms T^\ast$ containing $\sigma$. If $x_i \notin \tau'$, then $\abs{\tau' \cap X} = X \minus \{x_i,x_j\}$, a contradiction. Thus we have that $\tau' = \tau \minus \{x_j\} \cup \{x_i\}$, and hence $X \minus \{x_j\} \in \ms T$ and $\tau \minus X \in \link_{\ms T}(X \minus \{x_j\})$. We thus have $\link_{\ms T}(X \minus \{x_i\}) \subseteq \link_{\ms T}(X \minus \{x_j\})$.

Switching $i$ and $j$ in the above argument, we either have $\tau' \cap X = X \minus \{x_j,x_i\}$ for some $\tau' \in \ms T(X^-)^\ast$ or $\link_{\ms T}(X \minus \{x_j\}) \subseteq \link_{\ms T}(X \minus \{x_i\})$. Hence $\link_{\ms T}(X \minus \{x_i\}) = \link_{\ms T}(X \minus \{x_j\})$. Since this holds for all $j \neq i$, we have $\ms T_X^+ \subseteq \ms T$ and every maximal simplex of $\ms T_X^+$ has the same link in $\ms T$. Hence $\ms T$ has a flip supported on $(X^+,X^-)$, a contradiction. This completes the proof.
\end{proof}

Now, let $\ms T$ be a triangulation of $\Delta^{m-1} \times \Delta^{n-1}$. Let $X = \{X^+,X^-\}$ be a circuit of $\Delta^{m-1} \times \Delta^{n-1}$, where
\begin{equation}
\begin{split} \label{circuitnotation}
X^- &= \{e_{i_1} \times f_{j_1}, e_{i_2} \times f_{j_2}, \dotsc, e_{i_k} \times f_{j_k}\} \\
X^+ &= \{e_{i_2} \times f_{j_1}, e_{i_3} \times f_{j_2}, \dotsc, e_{i_1} \times f_{j_k}\}
\end{split}
\end{equation}
By Proposition~\ref{uniqueminimal}, for each $r = 1$, \dots, $k$, $\ms T(X^-)^\ast$ has a unique minimal element in the order $\preceq_{i_r}$. Denote this element by $\tau_r$. For each $r = 1$, \dots, $k$, let
\[
\sigma_r := X \minus \{e_{i_r} \times f_{j_{r-1}}\}
\]
be a maximal simplex of $\ms T_X^+$.\footnote{Here, as in the rest of the section, subscripts of $i$, $j$, and $\sigma$ are taken modulo $k$.}

\begin{prop} \label{minimalcontainssimplex}
If $\sigma_r \in \ms T$, then $\sigma_r \subseteq \tau_r$.
\end{prop}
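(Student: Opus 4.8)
The plan is to recognize $\sigma_r$ as essentially a $1$-alternating path and then to combine the characterization of $\tau_r$ from Proposition~\ref{minimal} with the uniqueness of alternating paths in Proposition~\ref{alternating}.

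First I would describe $T(\sigma_r)$ explicitly. In $G_{m,n}$ the circuit $X$ is the cycle $e_{i_1}, f_{j_1}, e_{i_2}, f_{j_2}, \dots, e_{i_k}, f_{j_k}$ (returning to $e_{i_1}$), in which the edge $e_{i_s} \times f_{j_s}$ lies in $X^-$ and the edge $e_{i_{s+1}} \times f_{j_s}$ lies in $X^+$ for each $s$ (indices mod $k$). Deleting the element $e_{i_r} \times f_{j_{r-1}} \in X^+$ breaks this cycle into the path $P$ from $e_{i_r}$ to $f_{j_{r-1}}$ given by
\[
e_{i_r},\ f_{j_r},\ e_{i_{r+1}},\ f_{j_{r+1}},\ \dots,\ e_{i_{r-1}},\ f_{j_{r-1}},
\]
so $T(\sigma_r)$ is exactly $P$ together with isolated vertices; in particular $E(\sigma_r) = E(P)$. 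Traversing $P$ starting at $e_{i_r}$, the edges lie alternately in $X^-$ and $X^+$, beginning with $e_{i_r} \times f_{j_r} \in X^-$. Hence every initial segment of $P$ (every sub-path of $P$ starting at $e_{i_r}$) is a $1$-alternating path with respect to $X^-$, which is the simplex at which $\ms T(X^-)$ is a local triangulation.

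Next I would bring in $\tau_r$. Since $\tau_r$ is the unique minimal element of $\ms T(X^-)^\ast$ with respect to $\preceq_{i_r}$, Proposition~\ref{minimal}, applied to the local triangulation $\ms T(X^-)$ at $X^-$ with the vertex $e_{i_r} \times f_{j_r} \in X^-$ in the role of $e_{i_0} \times f_{j_0}$, shows that for every $e_i \in \Delta^{m-1}$ the path in $T(\tau_r)$ from $e_{i_r}$ to $e_i$ is $1$-alternating. I apply this with $e_i = e_{i_{r-1}}$: the path in $T(\tau_r)$ from $e_{i_r}$ to $e_{i_{r-1}}$ is $1$-alternating. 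On the other hand, let $P'$ be $P$ with its last edge $e_{i_{r-1}} \times f_{j_{r-1}}$ removed; this is a $1$-alternating path from $e_{i_r}$ to $e_{i_{r-1}}$ lying in $T(\sigma_r)$. Since $\sigma_r, \tau_r \in \ms T$, Proposition~\ref{alternating} forces $P'$ to coincide with the path in $T(\tau_r)$ from $e_{i_r}$ to $e_{i_{r-1}}$; in particular $E(P') \subseteq E(\tau_r)$.

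Finally I would assemble the pieces: $E(\sigma_r) = E(P) = E(P') \cup \{e_{i_{r-1}} \times f_{j_{r-1}}\}$, the first part is contained in $E(\tau_r)$ by the previous step, and the one remaining edge $e_{i_{r-1}} \times f_{j_{r-1}}$ is an element of $X^- \subseteq \tau_r$ because $\tau_r \in \ms T(X^-)^\ast$. Hence $E(\sigma_r) \subseteq E(\tau_r)$, which is exactly $\sigma_r \subseteq \tau_r$. I do not expect a genuine obstacle; the only subtlety is that Proposition~\ref{minimal} governs only the paths in $T(\tau_r)$ terminating at vertices of $\Delta^{m-1}$, so the last edge of $P$ (which terminates at a vertex of $\Delta^{n-1}$) has to be treated separately---but that edge happens to lie in $X^-$ and is therefore automatically an edge of $\tau_r$. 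A slightly longer alternative would be to start from an arbitrary maximal simplex of $\ms T$ containing $\sigma_r$ and descend step by step in $\preceq_{i_r}$, checking from the definition of $\xrightarrow{I_1,I_2}$ and the combinatorial structure of $\sigma_r$ (in particular that $X^-\subseteq\sigma_r$) that $\sigma_r$ stays inside at each step until $\tau_r$ is reached; the argument via alternating paths above is cleaner.
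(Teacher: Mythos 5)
Your proof is correct and takes essentially the same route as the paper: identify $T(\sigma_r)$ minus the edge $e_{i_{r-1}} \times f_{j_{r-1}}$ as a $1$-alternating path from $e_{i_r}$, invoke Proposition~\ref{minimal} to know the corresponding path in $T(\tau_r)$ is also $1$-alternating, match them via Proposition~\ref{alternating}, and then handle the one remaining edge by noting it lies in $X^- \subseteq \tau_r$. The only cosmetic difference is that you apply Proposition~\ref{minimal} once at the far endpoint $e_{i_{r-1}}$ and observe that the intermediate vertices are covered as prefixes, whereas the paper phrases it as the single $1$-alternating path reaching $e_{i_{r+1}}, e_{i_{r+2}}, \dotsc, e_{i_{r-1}}$ in sequence.
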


\begin{proof}
We have $\sigma_r \in \ms T(X^-)$. The edges of $T(\sigma_r \minus \{e_{i_{r-1}} \times f_{j_{r-1}}\})$ form a 1-alternating path with respect to $X^-$ from $e_{i_r}$ to $e_{i_{r+1}}$, $e_{i_{r+2}}$, \dots, $e_{i_{r-1}}$. So by Propositions~\ref{minimal} and \ref{alternating}, $\sigma_r \minus \{e_{i_{r-1}} \times f_{j_{r-1}}\} \subseteq \tau_r$. Finally, $e_{i_{r-1}} \times f_{j_{r-1}} \in \tau_r$ because $X^- \subseteq \tau_r$.
\end{proof}

Now assume that $\ms T$ has a flip supported on $(X^+,X^-)$. Let $\ms T'$ be the result of this flip.
We will prove the following two propositions. They will later be used to determine the effect that certain flips have on the shapes of the simplices involved.

\begin{prop} \label{uniqueattach}
Let $e_i \in \Delta^{m-1}$ where $i \neq i_1$, \dots, $i_k$. Then there is a unique $j \in \{j_1,\dotsc,j_k\}$ such that $\sigma_r \cup \{e_i \times f_j\} \in \ms T$ for some $r$.
\end{prop}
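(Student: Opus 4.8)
The plan is to use the flip hypothesis to remove the quantifier ``for some $r$'', and then to prove existence by a face restriction and uniqueness by a contraction; the one delicate point is the first step, which is the only place the flip hypothesis enters. Concretely: since $\ms T$ has a flip supported on $(X^+,X^-)$, we have $\ms T_X^+\subseteq\ms T$, so each $\sigma_r\in\ms T$, and $\sigma_1,\dots,\sigma_k$ are precisely the maximal simplices of $\ms T_X^+$, hence share a common link $\ms L$ in $\ms T$. If $i\neq i_1,\dots,i_k$, then for every $j$ the vertex $e_i\times f_j$ is not a vertex of $X$, hence not of $\sigma_r$, so $\sigma_r\cup\{e_i\times f_j\}\in\ms T$ is equivalent to $\{e_i\times f_j\}\in\link_{\ms T}(\sigma_r)=\ms L$, a condition independent of $r$. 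It therefore suffices to prove that there is a unique $j\in\{j_1,\dots,j_k\}$ with $\sigma_1\cup\{e_i\times f_j\}\in\ms T$.

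For existence I would restrict $\ms T$ to the face $F:=\{e_{i_1},\dots,e_{i_k},e_i\}\times\{f_{j_1},\dots,f_{j_k}\}$, a copy of $\Delta^k\times\Delta^{k-1}$ of dimension $2k-1$ that contains $X$, and hence $\sigma_1$. In the triangulation $\ms T[F]$ the simplex $\sigma_1$ has only $2k-1$ vertices, so, since every maximal simplex of $\ms T[F]$ has $2k$ vertices, $\sigma_1$ is contained in a maximal simplex $\tau=\sigma_1\cup\{v\}$ of $\ms T[F]$. Restricted to the vertices of $F$, the graph $T(\sigma_1)$ is the path through $e_{i_1},f_{j_1},e_{i_2},f_{j_2},\dots,e_{i_k},f_{j_k}$ together with the isolated vertex $e_i$; since $T(\tau)$ is acyclic, the extra edge $v$ must be incident to $e_i$, forcing $v=e_i\times f_{j_s}$ for some $s$. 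Then $\sigma_1\cup\{e_i\times f_{j_s}\}=\tau\in\ms T[F]\subseteq\ms T$, which gives existence.

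For uniqueness I would pass to $\phi_{\sigma_1}(\ms T)$, the local triangulation of $\Delta_I\times\Delta_J$ obtained from $\ms T(\sigma_1)$ by contracting at $\sigma_1$ as in Section~\ref{sec:rescon}; here the partitions $I$, $J$ associated to $\sigma_1$ have exactly one non-singleton block each, namely $\{e_{i_1},\dots,e_{i_k}\}$ and $\{f_{j_1},\dots,f_{j_k}\}$. For each $j\in\{j_1,\dots,j_k\}$ the vertex $e_i\times f_j$ has the same image under $\phi_{I,J}$ (since $\phi_J$ sends every $f_{j_s}$ to the single block $\{f_{j_1},\dots,f_{j_k}\}$), and $\phi_{I,J}(\sigma_1)$ is a single vertex; hence $\sigma_1\cup\{e_i\times f_{j_1}\},\dots,\sigma_1\cup\{e_i\times f_{j_k}\}$ all have the same image under $\phi_{I,J}$. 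Since $\phi_{I,J}$ restricts to a bijection from $\ms T(\sigma_1)$ onto $\phi_{\sigma_1}(\ms T)$, at most one of these simplices can lie in $\ms T$, which is the required uniqueness. The face and contraction computations are routine translations through the bipartite-graph description of simplices; I expect the reduction in the first paragraph to be the part needing the most care.
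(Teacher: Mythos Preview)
Your proof is correct, but takes a genuinely different route from the paper. The paper proves existence by invoking the minimal simplex $\tau_1$ of $\ms T(X^-)^\ast$ with respect to $\preceq_{i_1}$: by Proposition~\ref{minimal} there is a 1-alternating path (with respect to $X^-$) in $T(\tau_1)$ from $e_{i_1}$ to $e_i$, whose last edge must be $e_i\times f_j$ with $j\in\{j_1,\dots,j_k\}$, and then Proposition~\ref{minimalcontainssimplex} gives $\sigma_1\subseteq\tau_1$. For uniqueness the paper again uses alternating paths: two different attachments $e_i\times f_j$ and $e_i\times f_{j'}$ would produce two distinct 1-alternating paths from $e_{i_1}$ to $e_i$ in simplices of $\ms T$, contradicting Proposition~\ref{alternating}. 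Your argument instead bypasses the $\preceq_{i}$ and alternating-path machinery entirely: existence comes from a dimension count in the restricted face $F$, and uniqueness from the bijectivity of $\phi_{I,J}$ on $\ms T(\sigma_1)$. Your approach is more self-contained and arguably more elementary; the paper's approach has the virtue of reusing the structural results already developed (and incidentally locates the attachment inside $\tau_1$, which fits the surrounding narrative).
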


\begin{proof}
We first show that such a $j$ exists. By Proposition~\ref{minimal}, there is a 1-alternating path with respect to $X^-$ from $e_{i_1}$ to $e_i$ in $T(\tau_1)$. Since $i \neq i_1$, \dots, $i_k$, the last edge of this path must be of the form $e_i \times f_j$ for some $j \in \{j_1,\dotsc,j_k\}$. Thus $\{e_i \times f_j\} \subseteq \tau_1$, and so by Proposition~\ref{minimalcontainssimplex}, $\sigma_1 \cup \{e_i \times f_j\} \in \ms T$.

Now suppose a different such $j'$ exists, and let $\sigma_r$ be such that $\sigma_r \cup \{e_i \times f_{j'}\} \in \ms T$. By Proposition~\ref{flipdef}, $\sigma_r$ and $\sigma_1$ have the same link in $\ms T$, so we may assume $\sigma_r = \sigma_1$. Then there is a 1-alternating path from $e_{i_1}$ to $e_i$ in $T(\sigma_1 \cup \{e_i \times f_j\})$ and a different 1-alternating path from $e_{i_1}$ to $e_i$ in $T(\sigma_1 \cup \{e_i \times f_{j'}\})$. This contradicts Proposition~\ref{alternating}. Hence $j$ is unique.
\end{proof}

\begin{prop} \label{shapeshift}
There is a bijection $\Psi: \ms T^\ast \to (\ms T')^\ast$ such that for each $\tau \in \ms T^\ast$ and $f_j \in \Delta^{n-1}$, we have
\begin{enumerate}
\item $N_{\Psi(\tau)}(f_j) = N_\tau(f_j)$ if $j \neq j_1$, \dots, $j_k$.
\item If $j = j_r$, then either $N_{\Psi(\tau)}(f_j) = N_\tau(f_j)$ or
\[
N_{\Psi(\tau)}(f_j) = N_\tau(f_j) \minus \{e_{i_r}\} \cup \{e_{i_{r+1}}\},
\]
with the latter occurring if and only if $\sigma_{r+1} = X \minus \{e_{i_{r+1}} \times f_{j_r}\} \subseteq \tau$.
\end{enumerate}
\end{prop}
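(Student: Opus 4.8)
The plan is to read the structure of the flip off Proposition~\ref{flipdef} and then write $\Psi$ down explicitly. Let $\ms L$ be the common link of the maximal simplices $\sigma_1,\dots,\sigma_k$ of $\ms T_X^+$, and set $\sigma_r' := X \minus \{e_{i_r}\times f_{j_r}\}$ for $r = 1,\dots,k$, so that $\sigma_1',\dots,\sigma_k'$ are the maximal simplices of $\ms T_X^-$. First I would establish which maximal simplices are affected: a maximal simplex $\tau\in\ms T^\ast$ either contains none of the $\sigma_r$, in which case $\tau\in(\ms T')^\ast$ as well, or it contains exactly one $\sigma_s$ (it cannot contain two, since $\sigma_s\cup\sigma_{s'}=X$ is not a simplex), and then, writing $\rho := \tau\minus\sigma_s$, one has $\rho\cap X=\emptyset$ (otherwise $X\subseteq\tau$) and hence $\rho\in\ms L$; note $\abs{\rho} = (d+1)-(2k-1)$ where $d=\dim(\Delta^{m-1}\times\Delta^{n-1})$. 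The same argument applied to $\ms T'$ shows that $(\ms T')^\ast$ consists of the unaffected simplices together with the full-dimensional simplices $\rho\cup\sigma_r'$ with $\rho\in\ms L$ and $\rho\cap X=\emptyset$.

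With this in hand, define $\Psi$ to be the identity on $\ms T^\ast\cap(\ms T')^\ast$ and $\Psi(\rho\cup\sigma_s):=\rho\cup\sigma_{s-1}'$ on the affected simplices (indices mod $k$). Since $s\mapsto s-1$ is a bijection of $[k]$, and the parametrizations $(\rho,s)\mapsto\rho\cup\sigma_s$ and $(\rho,r)\mapsto\rho\cup\sigma_r'$ of the affected simplices are bijective, $\Psi$ is a bijection $\ms T^\ast\to(\ms T')^\ast$. The key identity is
\[
\sigma_{s-1}' \;=\; \sigma_s \minus \{e_{i_{s-1}}\times f_{j_{s-1}}\} \cup \{e_{i_s}\times f_{j_{s-1}}\},
\]
which one verifies by observing both sides equal $X$ with one of its two edges at $f_{j_{s-1}}$ removed. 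Consequently $T(\rho\cup\sigma_{s-1}')$ is obtained from $T(\tau)$ by replacing the edge $e_{i_{s-1}}f_{j_{s-1}}$ with $e_{i_s}f_{j_{s-1}}$, the edges coming from $\rho$ being untouched; hence $N_{\Psi(\tau)}(f_j)=N_\tau(f_j)$ for $j\neq j_{s-1}$, while $N_{\Psi(\tau)}(f_{j_{s-1}})=N_\tau(f_{j_{s-1}})\minus\{e_{i_{s-1}}\}\cup\{e_{i_s}\}$.

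It then remains to match this against conditions (1) and (2). Condition (1) is immediate because $j_{s-1}\in\{j_1,\dots,j_k\}$. For (2) with $j=j_r$: using again $\rho\cap X=\emptyset$ and a cardinality count, $\sigma_{r+1}\subseteq\tau=\rho\cup\sigma_s$ holds iff $\sigma_{r+1}=\sigma_s$, i.e.\ iff $r=s-1$; so the computation above gives the ``latter'' formula exactly when $\sigma_{r+1}\subseteq\tau$, and gives $N_{\Psi(\tau)}(f_{j_r})=N_\tau(f_{j_r})$ otherwise, as required. For an unaffected $\tau$ we have $\Psi(\tau)=\tau$ and no $\sigma_{r+1}$ (each being one of the $\sigma_s$) is contained in $\tau$, so the ``former'' alternative holds throughout, consistently. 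A small sanity check is that when $\sigma_{r+1}\subseteq\tau$ one has $e_{i_r}\in N_\tau(f_{j_r})$ and $e_{i_{r+1}}\notin N_\tau(f_{j_r})$, so the displayed set is genuinely the right one and the two alternatives do not clash.

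I expect the main obstacle to be identifying the correct pairing between the maximal simplices of $\ms T_X^+$ and those of $\ms T_X^-$. The ``off by one'' shift $\sigma_s\leftrightarrow\sigma_{s-1}'$ is precisely what makes only a single $N_\tau(f_j)$ change under $\Psi$; the naive pairing $\sigma_s\leftrightarrow\sigma_s'$ would alter $N$ at two values of $j$ and so fail condition (2). Once the pairing is right, the rest is bookkeeping with the edge sets $T(\sigma)$.
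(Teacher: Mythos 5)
Your proof is correct and takes essentially the same approach as the paper; the paper's own proof just cites Propositions~\ref{flip} and \ref{flipdef} and leaves the details to the reader, while you unpack the same bijection explicitly. Your case split on ``contains some $\sigma_s$'' vs.\ ``contains none'' is equivalent, under the flip hypothesis, to the paper's split on ``contains $X^-$'' vs.\ not (Proposition~\ref{flip} rules out a $\tau\supseteq X^-$ with $\abs{\tau\cap X}\le\abs{X}-2$, which is the only way the two dichotomies could disagree), and the off-by-one pairing $\sigma_s\leftrightarrow\sigma_{s-1}'$ that you identify as the crux is exactly the paper's definition $\Psi(\tau)=\tau\minus\{e_{i_r}\times f_{j_r}\}\cup\{e_{i_{r+1}}\times f_{j_r}\}$ for $\sigma_{r+1}\subseteq\tau$.
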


\begin{proof}
Let $\tau \in \ms T^\ast$. By Proposition~\ref{flip}, either $\tau$ does not contain $X^-$ or $\tau$ contains a maximal simplex of $\ms T_X^+$. In the former case, define $\Psi(\tau) = \tau$. Otherwise, suppose $\sigma_{r+1} \subseteq \tau$ for some $r$. Define
\[
\Psi(\tau) = \tau \minus \{e_{i_r} \times f_{j_r}\} \cup \{e_{i_{r+1}} \times f_{j_r}\}.
\]
By Proposition~\ref{flipdef}, $\Psi$ is a bijection $\ms T^\ast$ to $(\ms T')^\ast$, and it satisfies the desired properties.
\end{proof}

Finally, we note the following relationship between flips and the restriction order $\le_{i_1i_2}$.

\begin{prop} \label{fliporder}
Let $\ms T$, $X$, and $\ms T'$ be as above. Let $i$, $i'$ be distinct elements of $[m]$. If $\abs{X^-} \neq 2$ or $\abs{X^-} = 2$ and $\{i,i'\} \neq \{i_1,i_2\}$, then the orders $\le_{\ms T[ii']}$ and $\le_{\ms T'[ii']}$ are the same. If $\abs{X^-} = 2$ and $\{i,i'\} = \{i_1,i_2\}$, then the orders $\le_{\ms T[ii']}$ and $\le_{\ms T'[ii']}$ are the same except the order of the consecutive elements $f_{j_1}$, $f_{j_2}$ is flipped.
\end{prop}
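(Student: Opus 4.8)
The plan is to reduce the entire statement to a question about which two-element sets of the form $\{e_{i'} \times f_j,\, e_i \times f_{j'}\}$ are simplices of $\ms T$ versus $\ms T'$. The starting point is Proposition~\ref{compare}: since a triangulation is closed under passing to subsets, $f_j <_{\ms T[ii']} f_{j'}$ holds if and only if the two-element set $\{e_{i'} \times f_j,\, e_i \times f_{j'}\}$ is itself an element of $\ms T$, and similarly with $\ms T'$. As $\le_{\ms T[ii']}$ and $\le_{\ms T'[ii']}$ are both total orders on $\Delta^{n-1}$ (since $\ms T$, $\ms T'$ are triangulations), it therefore suffices to determine, for each ordered pair of distinct $f_j$, $f_{j'}$, whether the membership of $\{e_{i'} \times f_j,\, e_i \times f_{j'}\}$ changes when we pass from $\ms T$ to $\ms T'$.

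Next I would establish that every simplex in $\ms T \minus \ms T'$ contains $X^-$ and every simplex in $\ms T' \minus \ms T$ contains $X^+$. This comes straight out of the formula in Proposition~\ref{flipdef}. Because the circuit $X$ corresponds to a cycle of length at least $4$ in $G_{m,n}$, we have $\abs{X^+} = \abs{X^-} \ge 2$, and consequently any $\rho$ in the common link $\ms L$ satisfies $\rho \cap X = \emptyset$ (it is disjoint from $X \minus \{x\}$ for every $x \in X^+$, and these intersect in $\emptyset$). Hence if $\mu = \rho \cup \sigma \in \ms T \minus \ms T'$ with $\rho \in \ms L$ and $\sigma \in \ms T_X^+$, then $\sigma = \mu \cap X$ is forced, and the fact that $\mu$ is not of the form $\rho' \cup \sigma'$ with $\rho' \in \ms L$, $\sigma' \in \ms T_X^-$ forces $\sigma \supseteq X^-$, so $X^- \subseteq \mu$; the case $\ms T' \minus \ms T$ is symmetric. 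Therefore the membership of a two-element set $\{e_{i'} \times f_j,\, e_i \times f_{j'}\}$ can change only if it contains $X^-$ or $X^+$, and since $\abs{X^+} = \abs{X^-}$, this is possible only when $\abs{X^-} = 2$ and the two-element set actually equals $X^-$ or $X^+$; by the notation in \eqref{circuitnotation}, this forces $\{i,i'\} = \{i_1,i_2\}$ and $\{f_j,f_{j'}\} = \{f_{j_1},f_{j_2}\}$.

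The two cases then follow. If $\abs{X^-} \neq 2$, or $\abs{X^-} = 2$ but $\{i,i'\} \neq \{i_1,i_2\}$, then no two-element set of the relevant form changes membership, so $<_{\ms T[ii']}$ and $<_{\ms T'[ii']}$ agree on every pair and hence are equal. If $\abs{X^-} = 2$ and $\{i,i'\} = \{i_1,i_2\}$, take $i = i_1$, $i' = i_2$ (the other labeling merely reverses the order, so the conclusion is the same). Then the two orders agree on every pair except possibly $\{f_{j_1},f_{j_2}\}$. For that pair, $X^- \in \ms T_X^+ \subseteq \ms T$, so Proposition~\ref{compare} gives $f_{j_2} <_{\ms T[i_1i_2]} f_{j_1}$, while $X^+ \in \ms T_X^- \subseteq \ms T'$, so $f_{j_1} <_{\ms T'[i_1i_2]} f_{j_2}$; thus the order of this pair is genuinely reversed. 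Finally, since every other pair is unchanged, no $f_c$ can lie strictly between $f_{j_1}$ and $f_{j_2}$ in $\le_{\ms T[i_1i_2]}$ — its order relative to each of $f_{j_1}$, $f_{j_2}$ would be preserved, forcing $f_{j_2} <_{\ms T'[i_1i_2]} f_{j_1}$, a contradiction. Hence $f_{j_1}$, $f_{j_2}$ are consecutive and $\le_{\ms T'[i_1i_2]}$ is $\le_{\ms T[i_1i_2]}$ with this consecutive pair transposed.

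I expect the only delicate step to be the second one: reading off from the flip formula that $\ms T \minus \ms T'$ and $\ms T' \minus \ms T$ are confined to simplices containing $X^-$ and $X^+$ respectively, and in particular verifying that elements of the link $\ms L$ meet $X$ trivially. The rest is routine bookkeeping with Proposition~\ref{compare} and the definitions of $\ms T_X^+$, $\ms T_X^-$.
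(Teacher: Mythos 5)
Your proof is correct, and it takes a somewhat more elementary route than the one the paper gestures at. The paper cites Proposition~\ref{shapeshift} (the bijection $\Psi$ on maximal simplices and its effect on the neighborhood sets $N_\tau(f_j)$) together with Proposition~\ref{compare}, so the intended argument tracks the two-element sets $\{e_{i'} \times f_j, e_i \times f_{j'}\}$ through the bijection on maximal simplices. You instead go straight to the flip formula of Proposition~\ref{flipdef} and observe the clean fact that $\ms T \minus \ms T'$ consists only of simplices containing $X^-$ and $\ms T' \minus \ms T$ only of simplices containing $X^+$ (using that every element of the link $\ms L$ is disjoint from $X$ once $\abs{X^+}\ge 2$, which holds since the smallest cycle in $G_{m,n}$ has length $4$). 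This reduces the question to whether a two-element set can equal $X^-$ or $X^+$, which forces $\abs{X^-}=2$ and pins down the indices via \eqref{circuitnotation}. Your route buys simplicity: it bypasses the book-keeping about shapes in Proposition~\ref{shapeshift} entirely, and the consecutiveness of $f_{j_1}, f_{j_2}$ drops out of the fact that all other comparisons are preserved. The only place to be careful — and you handled it — is that Proposition~\ref{compare} speaks of $\{e_{i'} \times f_j, e_i \times f_{j'}\} \subseteq \sigma \in \ms T$ for some $\sigma$, which is equivalent to the two-element set itself lying in $\ms T$ because triangulations are closed under subsets; this is what lets you work at the level of the full collection of simplices rather than only maximal ones.
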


\begin{proof}
This follows easily from Proposition~\ref{shapeshift} and Proposition~\ref{compare}.
\end{proof}

\section{The main proof}

We are now ready to prove the main result.

\begin{thm}
The set of triangulations of $\Delta^3 \times \Delta^{n-1}$ is flip-connected.
\end{thm}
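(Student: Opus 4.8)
The plan is to fix a canonical "staircase" triangulation $\ms T_0$ of $\Delta^3 \times \Delta^{n-1}$ and give an algorithm that, starting from an arbitrary triangulation $\ms T$, applies a finite sequence of flips reaching $\ms T_0$. Since flips are reversible, this establishes flip-connectivity. The canonical target $\ms T_0$ should be one whose fine mixed subdivision is "as unmixed as possible", e.g.\ the lexicographic (pulling/pushing) triangulation, characterized by a simple property of its maximal simplices (each $T(\tau)$ being a suitable monotone spanning tree). We measure progress toward $\ms T_0$ by a potential function on triangulations — for instance, a vector of statistics like the number of mixed cells of $\mc C(\ms T)$ of each shape, ordered so that $\ms T_0$ is the unique minimum. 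The bulk of the work is showing that as long as $\ms T \neq \ms T_0$, we can strictly decrease the potential through a controlled series of flips.

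The heart of the argument, following the two-part outline from the introduction, is the following. When $\ms T \neq \ms T_0$ there is some "defect" — a simplex $\sigma$ whose shape $\sh(\sigma)$ is not of the canonical form. From this defect I would extract a circuit $X = \{X^+, X^-\}$ of $\Delta^3 \times \Delta^{n-1}$ (equivalently, a cycle in $G_{4,n}$) with $X^- \in \ms T$, chosen so that flipping $X$ moves the triangulation toward $\ms T_0$. The selection of $X$ is where the quasiorder $\preceq_{i_1i_2}$ and Lemma~\ref{prec12} enter: using the structure theory of Section~4 (in particular Proposition~\ref{uniqueminimal} giving, for each $r$, the $\preceq_{i_r}$-minimal simplex $\tau_r$ of $\ms T(X^-)^\ast$), one identifies a circuit that is "extremal" in an appropriate sense. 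If $\ms T$ already has a flip supported on $(X^+, X^-)$, apply it; Proposition~\ref{shapeshift} then controls exactly how the shapes of all simplices change, so one checks the potential drops.

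If $\ms T$ does not yet admit a flip on $(X^+, X^-)$, Proposition~\ref{flip} produces an obstruction: a maximal simplex $\tau \in \ms T(X^-)^\ast$ with $|\tau \cap X| \le |X| - 2$, and in fact (when some $X \minus \{x_i\} \in \ms T$) one with $\tau \cap X = X \minus \{x_i, x_j\}$. This obstruction itself determines a smaller circuit $X'$ — a "chord" of the original cycle — with $X'^- \in \ms T$ and $X'^- \subsetneq $ the span of $X$, living in a contraction $\Delta^3 \times \Delta^{n'-1} \contract S$ or inside the relevant restriction. One recurses: flip $X'$ (or, if that too is obstructed, an even smaller circuit), using Proposition~\ref{fliporder} to confirm that these auxiliary flips only touch simplices in $\ms S = \ms T(X^-)^\ast$ and move the configuration closer to admitting the flip on $X$ without undoing earlier progress. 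A termination argument — a secondary potential, such as the number of simplices in $\ms S$ or the total "distance" in the various $\le_{i_1i_2}$-orders between $\ms T$ and what is needed — guarantees the inner loop ends, after which the flip on $(X^+, X^-)$ becomes available and the outer potential strictly decreases.

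The main obstacle, and the reason the result is restricted to $m = 4$ rather than being proved for all $m$, is step~2: showing that the inner loop of auxiliary flips actually terminates and genuinely makes the target circuit flippable. For $m \le 3$ (Santos) and $m = 4$, the contraction of an obstruction circuit lands in a product of simplices with $m' \le 3$, where by induction (the base cases being $\Delta^1 \times \Delta^{n-1}$ from Section~\ref{sec:m=1} and Santos's $\Delta^2 \times \Delta^{n-1}$ result) one has enough control — in particular, one can invoke flip-connectivity in the smaller factor to reorganize $\ms S$. The delicate point is that these auxiliary flips are themselves only guaranteed to exist and be well-behaved because of the quasiorder machinery: one must verify that each auxiliary flip is supported on a circuit whose negative part lies in $\ms T$ and whose flip does not create new defects elsewhere, which is exactly what Propositions~\ref{uniqueattach}, \ref{shapeshift}, and \ref{fliporder} are designed to certify. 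Getting the bookkeeping right so that the two nested potentials jointly decrease — and confirming no auxiliary flip interferes with the link condition needed for the main flip — is the technical core of Section~5.
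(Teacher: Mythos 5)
Your high-level outline is aligned with the paper's strategy in spirit — fix a canonical target $\ms T_0$, measure progress by a potential, find an extremal circuit $X$ using the quasiorder machinery (Lemma~\ref{prec12}, Proposition~\ref{uniqueminimal}), and when a flip on $(X^+,X^-)$ is obstructed, perform auxiliary flips inside $\ms T(X^-)$ governed by a secondary potential until the main flip becomes available. But two of your central technical claims do not match what actually makes the proof work, and one of them is a genuine gap.

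First, a smaller inaccuracy: the obstruction does not produce a ``smaller circuit $X'$ — a chord of the original cycle.'' In the actual argument, the obstruction is a path $P$ in $T(\tau)$ (for $\tau \in \ms T(X^-)^\ast$ with $\tau \cap X^+ = \emptyset$), and the auxiliary circuit $Y$ is built by splicing $P$ together with parts of $X$. This $Y$ is typically the same size as $X$ or \emph{larger}, not smaller, and $Y^-$ is chosen so that $\ms T(Y^-) \subseteq \ms T(X^-)$. The secondary potential is $|\ms T(X^-)^\ast|$, and flipping $Y$ strictly decreases it. Getting the right $Y$ requires a shape case analysis on $\tau_\text{I}$ and on $P$; this is where the ``goodness'' conditions and Propositions~\ref{minimal}, \ref{alternating}, \ref{monotone} do their work.

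Second, and more seriously: you propose that the termination of the inner loop comes from ``invoking flip-connectivity in the smaller factor'' (i.e.\ Santos's $\Delta^2 \times \Delta^{n-1}$ result, via contraction). There is no such reduction in the paper, and I don't see how it could be made to work. Flips of a contraction $\ms T \contract \xi$ do not in general lift to flips of $\ms T$: the flip condition in $\ms T$ on a circuit $Y \supseteq \xi$ requires the maximal simplices of $\ms T_Y^+$ to all have the same link \emph{in $\ms T$}, a condition that involves simplices outside $\ms T(\xi)$ and is invisible in the contraction. Conversely, the obstruction in Proposition~\ref{flip} may well involve simplices $\tau$ that do not contain any contraction-worthy $\xi$ at all, so there is no contraction in sight. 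What the paper does instead is a direct, explicit analysis for $m=4$: it never uses the $m=3$ flip-connectivity result as a black box, and the induction on $m$ appears only inside the structural lemmas (Propositions~\ref{prec1}, \ref{prec12}), not in the algorithm. You would need to replace ``invoke Santos's theorem via contraction'' with a concrete mechanism for producing the auxiliary flips, which is precisely the content of the three-phase case analysis in Section~5. Relatedly, the three phases are not a cosmetic detail: the potential and the goodness conditions change between phases, because the shape defects one is trying to eliminate are of genuinely different types.
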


Our proof will be an algorithm that starts with any triangulation of $\Delta^3 \times \Delta^{n-1}$ and applies flips to reach a specific triangulation. The algorithm will have three ``phases''. The only difficult phase is Phase I; in terms of the Cayley trick, the purpose of Phase I is to move all unmixed cells of $\mc C(\ms T)$ to the face $n\{e_1,e_2,e_3\}$ of $n\Delta^3$. The purpose of Phase II is to move all the unmixed cells to the edge $n\{e_1,e_2\}$. Phase III then permutes the unmixed cells and sorts out the remaining cells.

For notational convenience, we will now refer to $e_i$ as simply $i$.

\subsection{Phase I}

Let $\ms T$ be a triangulation of $\Delta^3 \times \Delta^{n-1}$. Let $\ms T^\text{I}$ be the set of all maximal simplices $\tau \in \ms T^\ast$ for which there is some $f_j \in \Delta^{n-1}$ with $\{1,2\} \subseteq N_\tau(f_j)$ and $4 \notin N_\tau(f_j)$. The goal of this section is to prove the following.

\begin{claim} \label{phaseIclaim}
$\ms T$ is flip-connected to a triangulation $\ms T'$ with $(\ms T')^\text{I} = \emptyset$.
\end{claim}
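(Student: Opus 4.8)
\textbf{Proposal for the proof of Claim~\ref{phaseIclaim}.}
The plan is to show that as long as $\ms T^{\text I} \neq \emptyset$ we can always find a flip whose effect, measured by a suitable monovariant, strictly decreases some complexity of the triangulation, so that after finitely many flips we reach $(\ms T')^{\text I}=\emptyset$. The key is to locate, for a triangulation with $\ms T^{\text I}\neq\emptyset$, a \emph{special circuit} of the promised form \eqref{circuitnotation} that can eventually be flipped. The natural candidate is a circuit supported on the pair of directions $1$ and $2$ (and possibly $4$): concretely, if $\tau\in\ms T^{\text I}$ has $\{1,2\}\subseteq N_\tau(f_j)$ and $4\notin N_\tau(f_j)$, then $\{1\times f_j, 2\times f_j\}$ together with data extracted from $\tau$ should produce a circuit $X=\{X^+,X^-\}$ with $X^-\ni 1\times f_j$ (or $2\times f_j$) whose flip removes the "bad" coincidence $\{1,2\}\subseteq N_{\cdot}(f_j)$. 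Following the two-part scheme advertised in the introduction, I would (i) use the quasiorder $\preceq_{i_1i_2}$ from Lemma~\ref{prec12} to pick the circuit so that the set $\ms S$ of simplices of $\ms T$ containing $X^-$ is "small" or well-structured, and (ii) push flips through $\ms S$ — each flip being supported on a circuit in the contraction $\phi_\xi(\ms T)$, lifted back via Section~\ref{sec:rescon} — until the obstruction to flipping $X$ itself (described by Proposition~\ref{flip}: some maximal $\tau\in\ms T(X^-)^\ast$ with $\abs{\tau\cap X}\le\abs X-2$) is cleared.

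More precisely, here is the order in which I would carry out the argument. First, fix a triangulation with $\ms T^{\text I}\neq\emptyset$ and among all $\tau\in\ms T^{\text I}$ and all valid $f_j$ choose one that is extremal for $\le_{12}$ (say, $f_j$ is $\le_{12}$-maximal among vertices $f$ with $\{1,2\}\subseteq N_\tau(f)$, $4\notin N_\tau(f)$ for some $\tau$); this uses Proposition~\ref{compare} to translate the combinatorial condition into the order language. Second, build the circuit: reading off the tree $T(\tau)$, the edges incident to $f_j$ other than $1\times f_j, 2\times f_j$, together with the relevant edges at vertices $3,4$, determine a cycle in $G_{4,n}$, i.e.\ a circuit $X$ of the form \eqref{circuitnotation} with $k\in\{2,3,4\}$ and with $1\times f_j$ a negative element. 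Third, apply Proposition~\ref{uniqueminimal} to obtain the unique $\preceq_{i_r}$-minimal simplices $\tau_r\in\ms T(X^-)^\ast$, and use Propositions~\ref{minimalcontainssimplex} and \ref{flip} to analyze whether $X$ is flippable. Fourth — the induction on a monovariant — if $X$ is not yet flippable, Proposition~\ref{flip} hands us a maximal $\tau'\in\ms T(X^-)^\ast$ with $\tau'\cap X = X\minus\{x_i,x_\ell\}$; apply a flip supported on a circuit living in $\link_{\ms T}$ of the appropriate face (or in a contraction) that strictly moves $\tau'$ "toward" $X$ in the $\preceq_{i_1i_2}$-quasiorder. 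Using Lemma~\ref{prec12} and Proposition~\ref{lmonotone} one shows this keeps $\ms T^{\text I}$ from growing while strictly decreasing, e.g., $\sum_{\tau}\big(\abs X - \abs{\tau\cap X}\big)$ over $\tau\in\ms T(X^-)^\ast$, or the $\le^{\text l}$-values $S_\tau$. When this monovariant bottoms out, $X$ is flippable; flipping it (via Proposition~\ref{shapeshift}) strictly decreases $\abs{\ms T^{\text I}}$, because by the $\le_{12}$-extremal choice of $f_j$ the flip replaces $N_\tau(f_j)\ni 1,2$ by a neighborhood not containing both, and Proposition~\ref{shapeshift}(1)--(2) guarantees no \emph{new} simplex acquires the bad pattern.

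The main obstacle, and the part that will require the most care, is the bookkeeping in step four: proving that the flips used to "clear the obstruction to flipping $X$" do not themselves enlarge $\ms T^{\text I}$ and that they genuinely make progress. This is exactly where the quasiorder $\preceq_{i_1i_2}$ is doing the real work — one must show (a) that the obstruction simplex $\tau'$ supplied by Proposition~\ref{flip} lies strictly above the minimal simplices $\tau_r$ in $\preceq_{i_1i_2}$, so there is "room" to move it down, (b) that an appropriate flip exists realizing a single $\xrightarrow{I_1,I_2}$-step of type (A) or (B) within the set $\ms S$, necessarily touching only simplices containing $X^-$, and (c) that after such a step the invariant $\min_\preceq$-structure of $\ms T(X^-)^\ast$ is preserved while the global monovariant drops. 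Establishing (b) will likely need a local analysis of $\phi_\xi(\ms T)$ as a local triangulation of $\Delta_I\times\Delta_J$ with $\abs{\Delta_I}\le 4$, invoking Proposition~\ref{m=1} when the contracted picture is two-dimensional and the already-proved $\Delta^2\times\Delta^{n-1}$ flip-connectivity of Santos when it is three-dimensional. I expect termination to follow from a lexicographic combination: outer loop decreasing $\abs{\ms T^{\text I}}$, inner loop decreasing the $\le_{12}$-rank of the chosen $f_j$, innermost loop decreasing $\sum_{\tau\in\ms T(X^-)^\ast}(\abs X-\abs{\tau\cap X})$.
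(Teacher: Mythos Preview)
Your overall architecture---pick an extremal bad simplex, define a circuit $X$, clear obstructions to flipping $X$ by auxiliary flips that do not enlarge $\ms T^{\text I}$, then flip $X$ to strictly shrink $\ms T^{\text I}$---matches the paper's. But several concrete choices in your sketch are off, and the missing pieces are exactly where the difficulty lies.

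First, the extremal choice. You propose to take $f_j$ extremal in the restriction order $\le_{12}$. The paper instead takes $\tau_{\text I}$ \emph{maximal in the simplex quasiorder} $\preceq_{34}$ over $\ms T^{\text I}$. This is not cosmetic: the $\preceq_{34}$-maximality of $\tau_{\text I}$ is precisely what forces the invariant the paper calls ``$\tau_{\text I}$-goodness'' of $\ms T(X^-)$---no simplex in $\ms T(X^-)^\ast$ lies in $\ms T^{\text I}$ unless it contains a fixed $\sigma_{\text I}\subseteq\tau_{\text I}$, and no $\{1,2\}\subseteq N_\tau(f_{j'})$ occurs for $j'\neq 1,2$. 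This invariant, not anything about $\le_{12}$, is what lets you certify that intermediate flips do not enlarge $\ms T^{\text I}$. Your $\le_{12}$-extremality of $f_j$ does not yield such an invariant.

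Second, the circuit $X$ is not uniformly of the kind you describe. The paper does an explicit case split on $\sh(\tau_{\text I})$ (three cases), and in Case~3 (shape $\{\{1,2,3\},\{3,4\}\}$) the circuit is $X^-=\{3\times f_1,4\times f_2\}$, which contains neither $1\times f_j$ nor $2\times f_j$. So ``$1\times f_j$ is a negative element of $X$'' is simply false in one of the cases, and your monovariant built around $\tau\cap X$ will not see the relevant structure there.

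Third, the inner loop is not a single $\xrightarrow{I_1,I_2}$ step, nor does it invoke Santos's $\Delta^2\times\Delta^{n-1}$ theorem. The paper finds an auxiliary circuit $Y$ by making a \emph{second} extremal choice---e.g.\ a $\preceq_{21}$- or $\preceq_{24}$-maximal element $\tau$ of the obstruction set $\ms S\subseteq\ms T(X^-)^\ast$---then analyzes the path from $1$ to $4$ (or $3$ to $4$) in $T(\tau)$ to pin down $Y$ explicitly, and proves directly via Proposition~\ref{flip} and Lemma~\ref{prec12} that $(Y^+,Y^-)$ is flippable. The monovariant that drops is simply $\abs{\ms T(X^-)^\ast}$, not your sum $\sum(\abs X-\abs{\tau\cap X})$. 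In Case~3 this nests one level deeper: a third circuit $Z$ and a $\preceq_{23}$-maximal choice are needed, with an additional ``$\tau_0$-goodness'' invariant and a further counter $\abs{\ms T(Y^-)^\ast}$ controlling termination. None of this nested structure is visible in your proposal, and without it step~(b) of your plan (``an appropriate flip exists realizing a single step of type (A) or (B)'') is unsupported.
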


Assume $\ms T^\text{I} \neq \emptyset$. To prove the Claim, it suffices to show that $\ms T$ is flip-connected to some $\ms T'$ with $\abs{(\ms T')^\text{I}} < \abs{\ms T^\text{I}}$. We will use the following Propositions to determine how certain flips affect $\ms T^\text{I}$. In all of the below Propositions, $X$ is as in \eqref{circuitnotation}.

\begin{prop} \label{Iflip1}
Suppose that $\ms T$ has a flip supported on $(X^+,X^-)$, and let $\ms T'$ be the result of this flip. Assume that $1 = i_1$ and $2 \notin \{i_1, \dots, i_k\}$. Suppose that there is a maximal simplex $\sigma$ of $\ms T_X^+$ and $j \in \{j_1,\dotsc,j_k\}$ such that $\sigma \cup \{2 \times f_j\} \in \ms T$. We have the following.
\begin{enumerate} \renewcommand{\labelenumi}{(\alph{enumi})}
\item If $j \neq j_k$, then $\abs{(\ms T')^\text{I}} \le \abs{\ms T^\text{I}}$.
\item If $j = j_1$ and $4 \in \{i_1,\dotsc,i_k\}$, then $\abs{(\ms T')^\text{I}} < \abs{\ms T^\text{I}}$.
\end{enumerate}
\end{prop}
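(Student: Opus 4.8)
### Proof proposal for Proposition~\ref{Iflip1}

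\textbf{Setup and strategy.} The plan is to analyze how the flip changes the shapes of maximal simplices, using Proposition~\ref{shapeshift} as the main tool, and then to track membership in $\ms T^\text{I}$ through the bijection $\Psi: \ms T^\ast \to (\ms T')^\ast$. Recall $\tau \in \ms T^\text{I}$ iff some $f_j$ has $\{1,2\} \subseteq N_\tau(f_j)$ and $4 \notin N_\tau(f_j)$; under the hypothesis $1 = i_1$, $2 \notin \{i_1,\dots,i_k\}$, only the neighborhoods $N_\tau(f_{j_r})$ for $r \in [k]$ can change, and they change only by swapping $i_r \mapsto i_{r+1}$ (with $i_1 = 1$). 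So the only vertices $f_j$ whose membership-relevant data changes are $f_{j_1},\dots,f_{j_k}$, and the change at $f_{j_r}$ (for those $\tau$ with $\sigma_{r+1} \subseteq \tau$) is: $1 \in N_\tau(f_{j_1})$ is replaced by $i_2$ at $f_{j_1}$; and for $r \geq 2$, $i_r \in N_\tau(f_{j_r})$ is replaced by $i_{r+1}$. First I would observe that since $2 \notin \{i_1,\dots,i_k\}$, the vertex $2$ is never added to or removed from any $N_\tau(f_{j_r})$ by the flip, so $2 \in N_{\Psi(\tau)}(f_{j_r})$ iff $2 \in N_\tau(f_{j_r})$ for all $r$.

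\textbf{Part (a).} Here I want an injection from $(\ms T')^\text{I}$ into $\ms T^\text{I}$, which I will build from $\Psi^{-1}$. Take $\tau' \in (\ms T')^\text{I}$, witnessed by some $f_j$ with $\{1,2\} \subseteq N_{\tau'}(f_j)$, $4 \notin N_{\tau'}(f_j)$, and let $\tau = \Psi^{-1}(\tau')$. If $j \notin \{j_1,\dots,j_k\}$ then $N_\tau(f_j) = N_{\tau'}(f_j)$ and $\tau \in \ms T^\text{I}$ immediately. If $j = j_r$ for some $r$, I claim $\tau \in \ms T^\text{I}$ as well, possibly witnessed by a \emph{different} vertex. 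The key point: since $1 \in N_{\tau'}(f_{j_r})$ and the only way a vertex $i_s$ gets \emph{added} at $f_{j_r}$ going from $\tau$ to $\tau'$ is the single swap at position $r$ (adding $i_{r+1}$), we have $1 \in N_\tau(f_{j_r})$ \emph{unless} $r+1$ is the index with $i_{r+1} = 1 = i_1$, i.e.\ unless $r = k$. Since $j = j_r$ with $r \neq k$ is excluded only in case (b), I need to handle $r = k$ here too: but the hypothesis of (a) is exactly $j \neq j_k$, so $r \neq k$, hence $1 \in N_\tau(f_{j_r})$. Also $2 \in N_\tau(f_{j_r})$ by the observation above, and $4 \notin N_\tau(f_{j_r})$ because the swap at position $r$ could only have removed $i_r$, and adding it back does not introduce $4$ (the swap going forward removed $i_r \neq 4$ unless $i_r = 4$, in which case $4 \notin N_{\tau'}(f_{j_r})$ forces... — here I need to check that if $i_r = 4$ then $4 \in N_\tau(f_{j_r})$, which would still give $\tau$ witnessed possibly elsewhere; more carefully, $N_\tau(f_{j_r}) = N_{\tau'}(f_{j_r}) \setminus \{i_{r+1}\} \cup \{i_r\}$, so $4 \in N_\tau(f_{j_r})$ iff $i_r = 4$ or $4 \in N_{\tau'}(f_{j_r})$; the latter is false, so the only issue is $i_r = 4$). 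If $i_r = 4$, then $\tau'$ is witnessed by $f_{j_r}$ with $4 \notin N_{\tau'}(f_{j_r})$; going backward adds $4$, so $f_{j_r}$ may fail as a witness for $\tau$, and I must produce another. This is the main technical obstacle, and I expect to resolve it by noting $\tau$ still contains $\sigma_{r+1} \subseteq \tau'$-type structure forcing $\{1,2\} \subseteq N_\tau(f_{j_s})$ for some $s$ with $i_s \neq 4$ — concretely, by tracing the relevant circuit edges, or by observing that in case (a) with $i_r = 4$ one can only have a limited configuration. I would then conclude $\Psi^{-1}$ restricts to an injection $(\ms T')^\text{I} \hookrightarrow \ms T^\text{I}$, giving $\abs{(\ms T')^\text{I}} \le \abs{\ms T^\text{I}}$.

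\textbf{Part (b).} Now $j = j_1$ and $4 \in \{i_1,\dots,i_k\}$, say $4 = i_t$ for some $t \in \{2,\dots,k\}$ (note $t \neq 1$ since $i_1 = 1$). By hypothesis there is a maximal simplex $\sigma$ of $\ms T_X^+$ and $\sigma \cup \{2 \times f_{j_1}\} \in \ms T$; by Proposition~\ref{flipdef} all maximal simplices of $\ms T_X^+$ have the same link, so I may take $\sigma = \sigma_2 = X \setminus \{i_2 \times f_{j_1}\}$ and pick $\tau \in \ms T^\ast$ with $\sigma_2 \cup \{2 \times f_{j_1}\} \subseteq \tau$. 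Then $\sigma_2 \subseteq \tau$, so by Proposition~\ref{shapeshift}(2) the flip replaces $i_1 = 1$ by $i_2$ at $f_{j_1}$ in passing to $\Psi(\tau)$. In $N_\tau(f_{j_1})$ we have $\{1, 2, i_2\} \subseteq N_\tau(f_{j_1})$ (from $\sigma_2$ giving $1 = i_1$ and $i_2$, and the extra edge giving $2$). Whether or not $4 \in N_\tau(f_{j_1})$, I want to show $\tau$ contributes a \emph{net loss}: either $\tau \in \ms T^\text{I}$ but $\Psi(\tau) \notin (\ms T')^\text{I}$, or I find some other $\tau$ with this property, and then combine with the injection from part (a) (applied to the complementary part of $\ms T^\ast$) to get strict inequality. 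The cleanest route: show (i) $\Psi$ restricted to $\{\tau : \Psi(\tau) \in (\ms T')^\text{I}\}$ lands in $\ms T^\text{I}$ (the same witness-tracking as in (a), now legitimate since we are not in the excluded $r=k$ case for the forward direction), giving $\abs{(\ms T')^\text{I}} \le \abs{\ms T^\text{I}}$, and then (ii) exhibit one specific $\tau \in \ms T^\text{I} \setminus \Psi^{-1}((\ms T')^\text{I})$. For (ii) I would take $\tau$ containing $\sigma_2 \cup \{2 \times f_{j_1}\}$ and, if needed, large enough that its only $\{1,2\}$-without-$4$ vertex is $f_{j_1}$; after the flip, $N_{\Psi(\tau)}(f_{j_1})$ has $1$ replaced by $i_2$, destroying that witness, and I argue no other vertex of $\Psi(\tau)$ becomes a new witness (using $4 = i_t$ and part (a)-style analysis at the other $f_{j_r}$). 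I expect step (ii) — pinning down a concrete simplex whose membership strictly drops and controlling all $k$ neighborhoods simultaneously — to be the hard part, and I would lean on Proposition~\ref{minimalcontainssimplex} and Proposition~\ref{uniqueattach} to control which vertices attach to the $\sigma_r$'s and hence to rule out spurious new witnesses.
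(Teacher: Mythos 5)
Your overall plan — track membership in $\ms T^\text{I}$ through the shape bijection $\Psi$ of Proposition~\ref{shapeshift}, and (for (b)) pair the non-strict bound with one lost witness — matches the paper's approach, but you stop short at two genuine gaps and explicitly acknowledge that you have not closed them.

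In part~(a), you correctly reduce to the case where the witness $f_{j'}$ has $j' = j_r$, $\sigma_{r+1} \subseteq \tau$, and $N_{\Psi(\tau)}(f_{j_r}) \neq N_\tau(f_{j_r})$, and you note that the flip can add $1 = i_1$ at $f_{j_r}$ only when $r = k$. You then worry about $i_r = 4$ and call it ``the main technical obstacle,'' proposing only that you ``expect to resolve it.'' This obstacle is vacuous, for a reason you did not observe: when $\sigma_{r+1} \subseteq \tau$ and $1 < r < k$, the edge $1 \times f_{j_r}$ together with the path in $T(\sigma_{r+1})$ from $i_1$ to $f_{j_r}$ would create a cycle in $T(\tau)$, so $1 \notin N_\tau(f_{j_r})$; and when $r = 1$ the flip removes $1 = i_1$. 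Either way $1 \notin N_{\Psi(\tau)}(f_{j_r})$ unless $r = k$, so $f_{j_r}$ with $r \neq k$ can never witness $\Psi(\tau) \in (\ms T')^\text{I}$ in the changed-shape case, and the $i_r = 4$ subcase never arises. Relatedly, your jump from ``$j \neq j_k$'' to ``$r \neq k$'' conflates the hypothesized index $j$ with the witness index $j_r$; the connection has to go through Proposition~\ref{uniqueattach} (the case $r = k$ forces $\sigma_1 \cup \{2 \times f_{j_k}\} \subseteq \tau$, and uniqueness then gives $j = j_k$), which you never actually invoke in (a).

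In part~(b), you identify the right simplices $\tau$ (those containing $\sigma_2 \cup \{2 \times f_{j_1}\}$) and the right strategy, but again flag ``argue no other vertex of $\Psi(\tau)$ becomes a new witness'' as the hard unresolved step, and you are uncertain whether $4 \in N_\tau(f_{j_1})$. Both points are settled by short forest/circuit observations that you do not make: $4 = i_t$ with $t \geq 2$ means $4 \times f_{j_1}$ together with $\sigma_2$ contains a cycle, so $4 \notin N_\tau(f_{j_1})$ and \emph{every} such $\tau$ is in $\ms T^\text{I}$ (no delicate choice of $\tau$ is needed); and since $T(\tau)$ is a forest, $\{1,2\} \subseteq N_\tau(f_j)$ can hold for at most one $j$ (two would give two $1$--$2$ paths), while the only other potential new-witness index $f_{j_k}$ requires $\sigma_1 \subseteq \tau$, impossible because $\sigma_1 \cup \sigma_2 = X$ is a circuit. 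Thus $\Psi(\tau) \notin (\ms T')^\text{I}$ for every $\tau \supseteq \sigma_2 \cup \{2 \times f_{j_1}\}$, giving the strict drop.
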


\begin{proof}
Let $\Psi$ be as in Proposition~\ref{shapeshift}. Suppose $N_{\Psi(\tau)}(f_{j'}) \supseteq \{1,2\}$ for some $\Psi(\tau) \in \ms T'$ and $f_{j'} \in \Delta^{n-1}$. Then by Proposition~\ref{shapeshift}, either $N_\tau(f_{j'}) = N_{\Psi(\tau)}(f_{j'})$ or $j' = j_k$ and $\sigma_1 \cup \{2 \times f_{j_k}\} \subseteq \tau$. In the latter case, Proposition~\ref{uniqueattach} implies $j = j_k$. Thus if $j \neq j_k$, $\abs{(\ms T')^\text{I}} \le \abs{\ms T^\text{I}}$. If furthermore $j = j_1$, then
\[
\ms S := \{ \tau \in \ms T^\ast : \sigma_2 \cup \{2 \times f_{j_1}\} \subseteq \tau \}
\]
is nonempty. If furthermore $4 \in \{i_1,\dotsc,i_k\}$, then $4 \notin N_\tau(f_{j_1})$ for any $\tau \in \ms S$, so $\ms S \subseteq \ms T^\text{I}$. But $\Psi(\ms S) \cap (\ms T')^\text{I} = \emptyset$, so $\abs{(\ms T')^\text{I}} < \abs{\ms T^\text{I}}$.
\end{proof}

Note that we could swap the roles of 1 and 2 in the above Proposition and the result would still hold. The following Propositions are proved similarly; we leave them to the reader.

\begin{prop} \label{Iflip2}
Suppose that $\ms T$ has a flip supported on $(X^+,X^-)$, and let $\ms T'$ be the result of this flip. Assume that 1, $2 \notin \{i_1, \dots, i_k\}$ and $4 = i_2$. Suppose there is a maximal simplex $\sigma$ of $\ms T_X^+$ and $j$, $j' \in \{j_1,\dotsc,j_k\}$ such that $\sigma \cup \{1 \times f_j, 2 \times f_{j'}\} \in \ms T$. We have the following.
\begin{enumerate} \renewcommand{\labelenumi}{(\alph{enumi})}
\item If $j \neq j'$, then $\abs{(\ms T')^\text{I}} = \abs{\ms T^\text{I}}$.
\item If $j = j' = j_1$, then $\abs{(\ms T')^\text{I}} < \abs{\ms T^\text{I}}$.
\end{enumerate}
\end{prop}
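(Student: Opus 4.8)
The plan is to prove Proposition~\ref{Iflip2} by mimicking the structure of the proof of Proposition~\ref{Iflip1}, using the shape-tracking bijection $\Psi$ from Proposition~\ref{shapeshift} to see exactly which shapes change, and then using Proposition~\ref{uniqueattach} to pin down where the vertices $1$ and $2$ land. The key observation is that since $1, 2 \notin \{i_1,\dots,i_k\}$, the shapes $N_\tau(f_j)$ for $j \notin \{j_1,\dots,j_k\}$ are unchanged by the flip (part (1) of Proposition~\ref{shapeshift}), and for $j = j_r$ the set $N_\tau(f_{j_r})$ can only change by swapping $i_r$ for $i_{r+1}$. Since $4 = i_2$ and $1, 2$ are none of the $i_r$, the only way a shape $N_{\Psi(\tau)}(f_{j'})$ can newly acquire $\{1,2\}$ while losing $4$ (or vice versa) is at $j' = j_1$: that is the unique index $j_r$ for which the swap $i_r \to i_{r+1}$ removes $4 = i_2$ from the neighborhood (namely $r = 1$, so $i_1 \to i_2 = 4$ would \emph{add} $4$, and $r$ with $i_{r+1} = 4$, i.e. $r=1$ again since $i_{r+1}=i_2=4$ means $r=1$; more carefully, $4$ enters $N(f_{j_1})$ via the swap and leaves $N(f_{j_k})$... here I would carefully check indices). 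So the membership of $4$ in shapes only changes at $j_1$ (gained) and possibly $j_k$ (lost, if $4 = i_1$, which it is not here since $1 \neq 4$), so in fact $4$-status changes only at $j_1$.

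For part (a), assume $j \neq j'$. By Proposition~\ref{uniqueattach} applied to $e_i = 1$ and to $e_i = 2$ separately (both are distinct from all $i_r$), there is a unique index at which $1$ attaches and a unique index at which $2$ attaches to the simplices $\sigma_r$; since $\sigma \cup \{1 \times f_j, 2 \times f_{j'}\} \in \ms T$ with $j \neq j'$, vertices $1$ and $2$ attach at different $f$-vertices among $\{f_{j_1},\dots,f_{j_k}\}$. I would argue that this forces $\{1,2\} \not\subseteq N_\tau(f_{j_r})$ for \emph{any} $\tau \in \ms T^\ast$ and any $r$ — because if some maximal simplex had $\{1,2\} \subseteq N(f_{j_r})$, then contracting along an appropriate simplex and using the uniqueness in Proposition~\ref{uniqueattach} would contradict $j \neq j'$. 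Hence no simplex of $\ms T$ (nor, by symmetry of the argument applied to $\ms T'$, of $\ms T'$) has $\{1,2\}$ together in a shape $N(f_{j_r})$ at all. Therefore $\ms T^\text{I}$ and $(\ms T')^\text{I}$ are both controlled entirely by the shapes $N(f_{j'})$ with $j' \notin \{j_1,\dots,j_k\}$, which are preserved by $\Psi$; combined with the fact that $\Psi$ is a bijection $\ms T^\ast \to (\ms T')^\ast$, this gives $\abs{(\ms T')^\text{I}} = \abs{\ms T^\text{I}}$.

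For part (b), assume $j = j' = j_1$. Now both $1$ and $2$ attach at $f_{j_1}$, so the set
\[
\ms S := \{ \tau \in \ms T^\ast : \sigma_2 \cup \{1 \times f_{j_1}, 2 \times f_{j_1}\} \subseteq \tau \}
\]
is nonempty (using Proposition~\ref{minimalcontainssimplex} or Proposition~\ref{uniqueattach} to see that $\sigma_2 \cup \{1 \times f_{j_1}, 2 \times f_{j_1}\} \in \ms T$, hence lies in some maximal simplex). For $\tau \in \ms S$ we have $\{1,2\} \subseteq N_\tau(f_{j_1})$ and $4 = i_2 \notin N_\tau(f_{j_1})$ (since $\sigma_2 = X \minus \{e_{i_2} \times f_{j_1}\}$ does not contain $i_2 \times f_{j_1}$, and the only vertices added are $1 \times f_{j_1}$, $2 \times f_{j_1}$), so $\ms S \subseteq \ms T^\text{I}$. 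On the other hand, for $\tau \in \ms S$ the flip removes $i_1 \times f_{j_1}$... — here I would invoke Proposition~\ref{shapeshift}(2) at $r = 1$: since $\sigma_2 \subseteq \tau$, we get $N_{\Psi(\tau)}(f_{j_1}) = N_\tau(f_{j_1}) \minus \{i_1\} \cup \{i_2\}$, which now \emph{contains} $4 = i_2$, so $\Psi(\tau) \notin (\ms T')^\text{I}$ via the witness $f_{j_1}$; and as argued in part (a)/the general analysis, no \emph{other} shape of $\Psi(\tau)$ newly witnesses membership in $(\ms T')^\text{I}$. Thus $\Psi(\ms S) \cap (\ms T')^\text{I} = \emptyset$ while $\ms S \subseteq \ms T^\text{I}$, and since $\Psi$ preserves $\ms T^\text{I}$-membership for simplices outside $\ms S$ (their relevant shapes are unchanged), $\abs{(\ms T')^\text{I}} < \abs{\ms T^\text{I}}$.

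The main obstacle I expect is the bookkeeping in part (a): showing that the hypothesis $j \neq j'$ genuinely forbids \emph{any} maximal simplex of $\ms T$ (or $\ms T'$) from having $1$ and $2$ in a common $f_{j_r}$-shape. The clean way is to apply Proposition~\ref{uniqueattach} after contracting, exactly as in the proof of Proposition~\ref{uniqueattach} itself (an alternating-path / Proposition~\ref{alternating} argument): if $\{1,2\} \subseteq N_\tau(f_{j_r})$ for some maximal $\tau \supseteq X^-$, then $\sigma_1 \cup \{1 \times f_{j_r}\}$ and $\sigma_1 \cup \{2 \times f_{j_r}\}$ are both in $\ms T$ after replacing $\sigma_r$ by $\sigma_1$ (same link), but then $1$ and $2$ both attach at $f_{j_r}$, contradicting $j \neq j'$ via the uniqueness in Proposition~\ref{uniqueattach} applied to $e_i = 1$ and $e_i = 2$. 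Everything else is routine once the index arithmetic ($i_2 = 4$, subscripts mod $k$) is set up carefully.
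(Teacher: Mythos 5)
The paper does not actually prove Proposition~\ref{Iflip2}; it says only that it is ``proved similarly'' to Proposition~\ref{Iflip1} and leaves the details to the reader. Your proposal carries out exactly that ``similar'' proof: track the shape changes through Proposition~\ref{shapeshift}, use Proposition~\ref{uniqueattach} to locate where $1$ and $2$ can attach, and in part~(b) exhibit a nonempty set $\ms S$ whose image under $\Psi$ leaves $(\ms T')^\text{I}$. The core reasoning is correct. Two comments.

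First, your early bookkeeping sentence---``the membership of $4$ in shapes only changes at $j_1$''---is wrong: since $4 = i_2$, the swap at $r=1$ adds $4$ to $N(f_{j_1})$ \emph{and} the swap at $r=2$ removes $4$ from $N(f_{j_2})$ whenever $\sigma_3 \subseteq \tau$. This does not damage your argument, because your actual proof of~(a) never uses that claim: the uniqueattach argument correctly shows that $\{1,2\} \not\subseteq N_\tau(f_{j_r})$ for every $\tau \supseteq X^-$ and every $r$, which makes the $4$-status at the $f_{j_r}$ irrelevant and forces all witnesses to lie outside $\{j_1,\dots,j_k\}$, where $\Psi$ acts as the identity on shapes. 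You should simply delete the confused sentence, since a reader could wrongly infer from it that the case $j=j'=j_2$ behaves like $j=j'=j_1$ (it does not---there $4$ is removed, not added, and $\abs{(\ms T')^\text{I}}$ could grow).

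Second, in part~(b) you need to justify $\Psi(\ms S) \cap (\ms T')^\text{I} = \emptyset$ by ruling out \emph{all} witnesses for $\Psi(\tau)$, not only the ``new'' ones; your phrasing only addresses new witnesses. The missing step is that for $\tau \in \ms S$, $f_{j_1}$ is the \emph{unique} index $l$ with $\{1,2\} \subseteq N_\tau(f_l)$: if $\{1,2\} \subseteq N_\tau(f_l)$ for some $l \neq j_1$, then $T(\tau)$ would contain the cycle $1 - f_l - 2 - f_{j_1} - 1$, contradicting that $T(\tau)$ is a tree. Since $\Psi$ preserves $N(f_l)$ for $l \neq j_1$ on these $\tau$, this gives $\Psi(\tau) \notin (\ms T')^\text{I}$, as you want. (The same observation is also what the paper silently uses in the displayed set $\ms S$ at the end of its Iflip1 proof.)

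Apart from these two points, the structure and conclusions match the intended proof.
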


\begin{prop} \label{Iflip3}
Suppose that $\ms T$ has a flip supported on $(X^+,X^-)$, and let $\ms T'$ be the result of this flip. Assume that 1, $2 \in \{i_1,\dotsc,i_k\}$. Then $\abs{(\ms T')^\text{I}} = \abs{\ms T^\text{I}}$.
\end{prop}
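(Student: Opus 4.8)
\textbf{Proof proposal for Proposition~\ref{Iflip3}.}

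The plan is to imitate the proof of Proposition~\ref{Iflip1}, but to observe that under the hypothesis $1, 2 \in \{i_1,\dotsc,i_k\}$ the flip never changes whether a simplex belongs to $\ms T^\text{I}$, so in fact the bijection $\Psi$ of Proposition~\ref{shapeshift} restricts to a bijection between $\ms T^\text{I}$ and $(\ms T')^\text{I}$. First I would invoke Proposition~\ref{shapeshift} to get $\Psi : \ms T^\ast \to (\ms T')^\ast$ with $N_{\Psi(\tau)}(f_j) = N_\tau(f_j)$ for $j \notin \{j_1,\dotsc,j_k\}$, and for $j = j_r$ either $N_{\Psi(\tau)}(f_{j_r}) = N_\tau(f_{j_r})$ or $N_{\Psi(\tau)}(f_{j_r}) = N_\tau(f_{j_r}) \minus \{i_r\} \cup \{i_{r+1}\}$.

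The key point is: for every $r$, the operation $N \mapsto N \minus \{i_r\} \cup \{i_{r+1}\}$ applied to a set $N \subseteq \Delta^3 = \{1,2,3,4\}$ does not change whether $N$ contains $\{1,2\}$ and avoids $4$, precisely because $i_r, i_{r+1} \in \{i_1,\dotsc,i_k\}$ and hence — here I use $1,2 \in \{i_1,\dotsc,i_k\}$ — we have $\{1,2\} \subseteq \{i_1,\dotsc,i_k\}$. I would argue this case-by-case on $r$: if $\{i_r,i_{r+1}\} = \{1,2\}$ (possible only when all the $i$'s lie in $\{1,2\}$, i.e.\ $k=2$), then swapping $i_r$ for $i_{r+1}$ leaves $N \cap \{1,2\}$ a subset of $\{1,2\}$ unchanged as a pair-membership condition only if $N$ already contained both or neither; more robustly, note that since $\sigma_r \in \ms T_X^+ \subseteq$ (potentially) and the two endpoints of the circuit... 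Actually the cleaner route: the condition ``$\{1,2\} \subseteq N_\tau(f_j)$ and $4 \notin N_\tau(f_j)$ for some $j$'' is what defines $\ms T^\text{I}$; since by Proposition~\ref{shapeshift} the only coordinates $f_{j_r}$ that change satisfy $N_{\Psi(\tau)}(f_{j_r}) \triangle N_\tau(f_{j_r}) \subseteq \{i_r,i_{r+1}\} \subseteq \{i_1,\dotsc,i_k\} \supseteq \{1,2\}$, one checks directly that $\{1,2\} \subseteq N_\tau(f_{j_r})$, $4 \notin N_\tau(f_{j_r})$ holds iff the same holds for $\Psi(\tau)$ — the swap can remove $1$ or $2$ only by inserting the other, and it can never insert $4$ unless $4 = i_{r+1}$, but then $4 \in \{i_1,\dotsc,i_k\}$ and... hmm, this last subcase ($4 \in \{i_1,\dotsc,i_k\}$ together with $1,2 \in \{i_1,\dotsc,i_k\}$) needs the observation that if $4 = i_{r+1}$ is inserted then $i_r \in \{1,2\}$ is removed, so $\{1,2\} \not\subseteq N_{\Psi(\tau)}(f_{j_r})$, so $\Psi(\tau) \notin \ms T^\text{I}$ via this coordinate — but we must rule out that $\tau \in \ms T^\text{I}$ via coordinate $f_{j_r}$ while $\Psi(\tau)$ is pushed out, i.e.\ we need the membership to be genuinely invariant, not just one-directional. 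The resolution is that $\Psi$ is a bijection and the inverse flip (from $\ms T'$ back to $\ms T$) has the same form, so any asymmetry would be symmetric; concretely I would show $\tau \in \ms T^\text{I} \iff \Psi(\tau) \in \ms T^\text{I}$ by checking both implications using that $\Psi^{-1}$ arises from the flip supported on $((X')^+,(X')^-)$ with the analogous circuit, whose $i$-indices are the same set $\{i_1,\dotsc,i_k\}$.

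\textbf{Main obstacle.} The only delicate point is the case where the changed coordinate $f_{j_r}$ has $4 \in \{i_r, i_{r+1}\}$: one must verify that inserting or deleting $4$ is always accompanied by a compensating deletion or insertion of an element of $\{1,2\}$ that keeps the predicate ``contains $\{1,2\}$ and omits $4$'' constant, which is exactly where $1,2 \in \{i_1,\dotsc,i_k\}$ is used (it forces the index being swapped out, when $4$ is swapped in, to lie in $\{1,2\}$ in the relevant configuration, because the cyclic circuit structure pairs $i_r$ with $i_{r+1}$). Since $\Delta^3$ has only four vertices $\{1,2,3,4\}$ this is a completely finite check — a handful of cases on which of $\{1,2,3,4\}$ equal $i_r, i_{r+1}$ — and once it is done we conclude $\Psi(\ms T^\text{I}) = (\ms T')^\text{I}$ and hence $\abs{(\ms T')^\text{I}} = \abs{\ms T^\text{I}}$. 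I expect this to be short enough that the paper indeed leaves it ``to the reader'' as stated, modeled line-for-line on Proposition~\ref{Iflip1}'s argument.
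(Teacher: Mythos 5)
Your high-level plan, to show that the bijection $\Psi$ of Proposition~\ref{shapeshift} restricts to a bijection $\ms T^\text{I} \to (\ms T')^\text{I}$, is the right one and is indeed modeled on Proposition~\ref{Iflip1}. However, the execution has a genuine gap. Your ``key point,'' that the swap $N \mapsto N \minus \{i_r\} \cup \{i_{r+1}\}$ preserves the predicate ``$\{1,2\} \subseteq N$ and $4 \notin N$,'' is false for arbitrary $N \subseteq \{1,2,3,4\}$: take $N = \{1,2,3\}$, $i_r = 3$, $i_{r+1} = 4$ (possible when $k=4$), giving $N' = \{1,2,4\}$, which fails $4 \notin N'$. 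Your attempted patch, that ``if $4 = i_{r+1}$ is inserted then $i_r \in \{1,2\}$ is removed,'' is also incorrect: the cyclic predecessor $i_r$ of $4$ in the circuit need not lie in $\{1,2\}$; it can be $3$ whenever $3 \in \{i_1,\dots,i_k\}$, and the hypothesis $1,2 \in \{i_1,\dots,i_k\}$ does not exclude this. The symmetry fall-back cannot rescue the argument either: symmetry only propagates an established one-directional implication to the other direction, but you have not established either direction, and the ``abstract swap'' reasoning it would rest on is precisely the false claim above.

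What the hypothesis $1, 2 \in \{i_1,\dots,i_k\}$ actually buys you is not a combinatorial invariance of the swap on subsets of $\{1,2,3,4\}$, but a structural constraint on which sets $N$ can arise. The change in Proposition~\ref{shapeshift} happens at the coordinate $f_{j_r}$ only when $\sigma_{r+1} = X \minus \{e_{i_{r+1}} \times f_{j_r}\} \subseteq \tau$, and $T(\sigma_{r+1})$ is a path whose vertex set includes $e_1$, $e_2$, and $f_{j_r}$; this is exactly where $1, 2 \in \{i_1,\dots,i_k\}$ enters. Since $T(\tau)$ is a tree containing this path, $\tau$ can contain at most one of the two edges $e_1 \times f_{j_r}$, $e_2 \times f_{j_r}$ (any additional one closes a cycle), so $\{1,2\} \not\subseteq N_\tau(f_{j_r})$. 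The same argument applied to the path $T(X \minus \{e_{i_r} \times f_{j_r}\})$, contained in $T(\Psi(\tau))$, gives $\{1,2\} \not\subseteq N_{\Psi(\tau)}(f_{j_r})$. Hence the changed coordinate is never a witness to $\ms T^\text{I}$-membership for either $\tau$ or $\Psi(\tau)$; all other coordinates agree, and the bijection goes through. A purely finite case check on subsets of $\{1,2,3,4\}$ cannot see this, because the needed constraint comes from acyclicity of $T(\tau)$, not from the hypothesis alone.
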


Now, let $\tau_\text{I}$ be any maximal element of $\ms T^\text{I}$ with respect to $\preceq_{34}$. Let $\sh(\tau_\text{I}) = \{N_1,\dots,N_k\}$, and without loss of generality, let $f_1$, \dots, $f_k \in \Delta^{n-1}$ be such that $N_r = N_{\tau_\text{I}}(f_r)$. Since $\tau_\text{I} \in \ms T^\text{I}$, we can assume $\{1,2\} \subseteq N_1$ (and thus $4 \notin N_1$). We then have three distinct possibilities for $\sh(\tau_\text{I})$.
\begin{enumerate}
\item $\{1,4\}$ or $\{2,4\} \subseteq N_2$
\item $N_1 = \{1,2\}$, $N_2 = \{3,4\}$, $N_3 = \{1,3\}$ or $\{2,3\}$
\item $N_1 = \{1,2,3\}$, $N_2 = \{3,4\}$
\end{enumerate}
The remainder of Phase I will depend on which case we are in. Each case will use the same strategy, which we outline as follows. Let $\sigma_\text{I}$ be the unique minimal subset of $\tau_\text{I}$ such that $T(\sigma_\text{I})$ has a connected component containing $\{1,2,4\}$. Call a subcollection $\ms S \subseteq \ms T$ of simplices $\tau_\text{I}$-\emph{good} if both of the following hold.
\begin{enumerate} \renewcommand{\labelenumi}{(\alph{enumi})}
\item There is no maximal simplex $\tau \in \ms S$ such that $\tau \in \ms T^\text{I}$ and $\sigma_\text{I} \not\subseteq \tau$.
\item There is no maximal simplex $\tau \in \ms S$ and $j \neq 1$, 2 such that $\{1,2\} \subseteq N_\tau(f_j)$.
\end{enumerate}
The strategy will be to define a circuit $X = \{X^+,X^-\}$ such that $\ms T(X^-)$ is $\tau_\text{I}$-good, and so that if $\ms T$ has a flip supported on $(X^+,X^-)$, then the result will decrease the size of $\ms T^\text{I}$. Once $X$ is defined, we will use $\tau_\text{I}$-goodness to find a series of intermediate flips starting from $\ms T$ so that the final result has a flip supported on $(X^+,X^-)$.

\subsubsection{Case 1: $\{1,4\}$ or $\{2,4\} \subseteq N_2$}

We will assume $\{1,4\} \subseteq N_2$; the other case follows analogously. Let $X = \{X^+,X^-\}$ be the circuit
\begin{align*}
X^- &= \{ 1 \times f_1, 4 \times f_2 \} \\
X^+ &= \{ 4 \times f_1, 1 \times f_2 \}
\end{align*}
Then $\sigma = X \minus \{4 \times f_1\}$ is a maximal simplex of $\ms T_X^+$, and $\sigma_\text{I} = \sigma \cup \{2 \times f_1\} \subseteq \tau_\text{I}$. Thus, if $(X^+,X^-)$ supports a flip of $\ms T$, then this flip satisfies the conditions of Proposition~\ref{Iflip1}(b), and we would be done.

Let $\tau_\ast$ be the unique minimal element of $\ms T(X^-)^\ast$ with respect to $\preceq_4$. Note that $\sigma_\text{I} \in \ms T(X^-)$, and the edges of $T(\sigma_\text{I})$ give a 1-alternating path with respect to $X^-$ from 4 to 1 and 2. Thus, by Propositions~\ref{minimal} and \ref{alternating}, we have $\sigma_\text{I} \subseteq \tau_\ast$. By Lemma~\ref{prec12}, it follows that $\tau_\ast \sim_{34} \tau_\text{I}$. Thus, $\tau_\ast$ is maximal in $\preceq_{34}$. Moreover, its shape satisfies Case 1. We may thus redefine $\tau_\text{I}$ as $\tau_\text{I} = \tau_\ast$.

We can now make the following key observation.

\begin{prop} \label{good1}
$\ms T(X^-)$ is $\tau_\text{I}$-good.
\end{prop}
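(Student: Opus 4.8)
I want to show that $\ms T(X^-)$ is $\tau_\text{I}$-good, i.e. that it satisfies both conditions (a) and (b) in the definition. Condition (b) forbids a maximal simplex $\tau \in \ms T(X^-)$ with $\{1,2\} \subseteq N_\tau(f_j)$ for some $j \neq 1,2$, while condition (a) forbids a maximal simplex $\tau \in \ms T(X^-) \cap \ms T^\text{I}$ with $\sigma_\text{I} \not\subseteq \tau$. The only structural facts available about members of $\ms T(X^-)^\ast$ are that they all contain $X^- = \{1 \times f_1,\ 4 \times f_2\}$ and that $\tau_\text{I}$ is the unique $\preceq_4$-minimal such simplex; together these mean that for every $\tau \in \ms T(X^-)^\ast$ we have $\tau_\text{I} \preceq_4 \tau$, so there is a directed path from $\tau_\text{I}$ to $\tau$ along $\xrightarrow{I_1,I_2}$-steps with $4 \in I_2$ throughout. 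The idea is to run Proposition~\ref{monotone} along this path: along each such step, $N_\tau(4)$ only grows and, for each $f_j \in N_\tau(4)$, $N_\tau(f_j)$ only shrinks. Thus I will track what happens to $N_\bullet(f_1)$, $N_\bullet(f_2)$, and the neighborhoods of other vertices $f_j$ as we move from $\tau_\text{I}$ out to an arbitrary $\tau \in \ms T(X^-)^\ast$.

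First I would establish (b). Since $\tau_\text{I}$ has shape of Case~1 type with $\{1,2\} \subseteq N_{\tau_\text{I}}(f_1) = N_1$ and $\{1,4\} \subseteq N_{\tau_\text{I}}(f_2) = N_2$, the vertices $1$ and $4$ are adjacent (in $T(\tau_\text{I})$) only to $f_1$ and $f_2$; in particular $N_{\tau_\text{I}}(f_j) \subseteq \{3\} \cup (\text{stuff})$ avoids containing both $1$ and $2$ for $j \neq 1,2$. Now suppose some $\tau \in \ms T(X^-)^\ast$ had $\{1,2\} \subseteq N_\tau(f_j)$ with $j \neq 1,2$. Walking back along the $\preceq_4$-path from $\tau_\text{I}$ to $\tau$, Proposition~\ref{monotone} tells us $N_{\tau}(4)$ contains $N_{\tau_\text{I}}(4) = \{f_1,f_2\}$ always and can only gain vertices; meanwhile since $1$ and $2$ are in $N_\tau(f_j)$, the vertex $f_j$ is adjacent to $1$ in $T(\tau)$, i.e. $f_j \in N_\tau(1)$. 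The key point is that $4 \times f_2 \in X^- \subseteq \tau$ forces $f_2 \in N_\tau(4)$, so $N_\tau(4) \supseteq \{f_2\}$, and the step-structure (each $\xrightarrow{I_1,I_2}$ with $4 \in I_2$ only adds edges at $4$ of the form $4 \times f$ where $f$ was already a neighbor of something in $I_2$) keeps $f_1$ special. I would trace through the bipartite cycle condition (Proposition~\ref{alternating} or the no-opposite-parts-of-a-circuit principle) to derive that having $\{1,2\} \subseteq N_\tau(f_j)$ together with $1 \times f_1,\ 4 \times f_2 \in \tau$ and $\{1,4\}\subseteq N_{\tau_\text I}(f_2)$ forces a forbidden alternating cycle between $\tau$ and $\tau_\text{I}$ — contradiction. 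So (b) holds.

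For (a), suppose $\tau \in \ms T(X^-)^\ast \cap \ms T^\text{I}$ with $\sigma_\text{I} \not\subseteq \tau$. Being in $\ms T^\text{I}$, $\tau$ has some $f_{j'}$ with $\{1,2\} \subseteq N_\tau(f_{j'})$ and $4 \notin N_\tau(f_{j'})$; by part (b) already proved, $j' \in \{1,2\}$, and since $4 \times f_2 \in \tau$ gives $4 \in N_\tau(f_2)$, we must have $j' = 1$, i.e. $\{1,2\} \subseteq N_\tau(f_1)$, $4 \notin N_\tau(f_1)$. Recall $\sigma_\text{I} = \{1 \times f_1,\ 4 \times f_1,\ 4\times f_2,\ 2\times f_1\}$ minus whatever; concretely $\sigma_\text I = \sigma \cup \{2 \times f_1\}$ where $\sigma = \{1 \times f_1, 1 \times f_2, 4 \times f_2\}$ — wait, I should recompute: $\sigma = X \minus \{4\times f_1\} = \{1 \times f_1,\ 4\times f_2,\ 1 \times f_2\}$, so $\sigma_\text I = \{1\times f_1,\ 4\times f_2,\ 1\times f_2,\ 2\times f_1\}$. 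All four of these edges should already lie in $\tau$: $1\times f_1, 4\times f_2 \in X^- \subseteq \tau$; $2 \times f_1$ is there since $\{1,2\}\subseteq N_\tau(f_1)$; the only one in question is $1 \times f_2$. But $\tau_\text{I} \preceq_4 \tau$ and the $\preceq_4$-path from $\tau_\text{I}$ (which contains $1 \times f_2$) to $\tau$: by Proposition~\ref{monotone}, since $f_2 \in N_{\tau_\text{I}}(4)$ stays in $N_\bullet(4)$, we get $N_{\tau_\text{I}}(f_2) \supseteq N_\tau(f_2)$, which goes the wrong way. Instead I would argue directly via Proposition~\ref{alternating}: $1 \times f_2$ lies on the unique $1$-alternating path (w.r.t.\ $X^-$) from $4$ to $1$, namely $4 - f_2 - 1$ — this path is forced in any $\tau \supseteq X^-$ with the right local structure. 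Since $\tau \supseteq X^- $ and $T(\tau)$ has $f_2$ adjacent to $4$ (as $4 \times f_2 \in \tau$), the edge from $f_2$ toward $1$ in the tree $T(\tau)$, combined with $1$ already being adjacent to $f_1$, must be $f_2 - 1$ or else we get a cycle alternating with $\tau_\text{I}$. This gives $1\times f_2 \in \tau$, hence $\sigma_\text{I} \subseteq \tau$, contradiction. So (a) holds, and $\ms T(X^-)$ is $\tau_\text{I}$-good.

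The main obstacle is the bookkeeping in going from "$\preceq_4$-reachable from $\tau_\text{I}$" to precise containment statements about specific edges: one must correctly identify which alternating paths (w.r.t.\ $X^-$) are forced to be present and unchanged, and apply Proposition~\ref{alternating} to the right pair of simplices. I expect the cleanest route is to avoid the path-tracing where possible and argue purely from (i) $X^- \subseteq \tau$ for all $\tau \in \ms T(X^-)$, (ii) $\sigma_\text{I} \subseteq \tau_\text{I}$ with $\tau_\text{I}$ of Case~1 shape, and (iii) Proposition~\ref{alternating}, since any putative violating $\tau$ would share $X^-$ with $\tau_\text{I}$ and differ from it in a way that produces a $\tau/\tau_\text{I}$-alternating cycle.
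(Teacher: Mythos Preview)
Your plan has a genuine gap: you never use the hypothesis that $\tau_\text{I}$ is a \emph{maximal} element of $\ms T^\text{I}$ with respect to $\preceq_{34}$, and you never invoke Lemma~\ref{prec12}. These are exactly the ingredients the paper's proof hinges on, and without them the argument cannot close.

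Concretely, your treatment of (a) tries to force $1\times f_2 \in \tau$ by an alternating-path argument. But Proposition~\ref{alternating} only compares two $b$-alternating paths between the same endpoints; it does not guarantee that the path from $4$ to $1$ in $T(\tau)$ is $1$-alternating with respect to $X^-$. That conclusion would require $\tau$ itself to be $\preceq_4$-minimal (via Proposition~\ref{minimal}), which it is not --- only $\tau_\text{I}$ is. So nothing prevents the tree path in $\tau$ from $4$ to $1$ from passing through, say, $3$, leaving $1\times f_2 \notin \tau$ while still $\tau \in \ms T(X^-)^\ast \cap \ms T^\text{I}$. Your argument for (b) has the same defect: the claimed ``forbidden alternating cycle between $\tau$ and $\tau_\text{I}$'' is never exhibited, and in fact $\{1,2\}\subseteq N_\tau(f_j)$ together with $X^-\subseteq \tau$ and $\sigma_\text{I}\subseteq \tau_\text{I}$ does not by itself produce opposite halves of any circuit in two simplices of $\ms T$.

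The paper's route is short and uses the global choice of $\tau_\text{I}$. For (a): from $\tau_\text{I}$ being $\preceq_4$-minimal in $\ms T(X^-)^\ast$ one gets $\tau_\text{I}\preceq_{34}\tau$; from $\sigma_\text{I}\not\subseteq\tau$ and Lemma~\ref{prec12} (with $i_1=3$, $i_2=4$, noting $\sigma_\text{I}$ is the minimal subset of $\tau_\text{I}$ whose tree has a component containing $\{1,2,4\}=\Delta^{m-1}\setminus\{3\}$) one gets $\tau_\text{I}\not\sim_{34}\tau$. Hence $\tau_\text{I}\prec_{34}\tau$, contradicting maximality of $\tau_\text{I}$ in $\ms T^\text{I}$. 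For (b): use Proposition~\ref{elimination} (not monotonicity or alternating paths) to strip $4$ from $N_\tau(f_j)$ while staying in $\ms T(X^-)$, producing a $\tau'\in\ms T^\text{I}$ with $\sigma_\text{I}\not\subseteq\tau'$, which reduces to (a).
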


\begin{proof}
First, suppose there is some $\tau \in \ms T(X^-)^\ast$ such that $\tau \in \ms T^\text{I}$ and $\sigma_\text{I} \not\subseteq \tau$. Since $\tau_\text{I}$ is minimal in $\ms T(X^-)^\ast$ with respect to $\preceq_4$, we have $\tau_\text{I} \preceq_{34} \tau$. On the other hand, $\sigma_\text{I} \not\subseteq \tau$, and $\sigma_\text{I}$ is the unique minimal subset of $\tau_\text{I}$ such that $T(\sigma_\text{I})$ has a connected component containing $\{1,2,4\}$. Thus, by Proposition~\ref{prec12}, $\tau_\text{I} \not\sim_{34} \tau$. Hence $\tau_\text{I} \prec_{34} \tau$, which contradicts the maximality of $\tau_\text{I}$ in $\ms T^\text{I}$ with respect to $\preceq_{34}$.

Now suppose there is some $\tau \in \ms T(X^-)^\ast$ and $j \neq 1$, 2 such that $\{1,2\} \subseteq N_{\tau}(f_j)$. Since $j \neq 2$, by Proposition~\ref{elimination}, there is some $\tau' \in \ms T(X^-)$ (possibly the same as $\tau$) with $N_{\tau'}(f_j) = N_{\tau}(f_j) \minus \{4\}$. Since $\{1,2\} \subseteq N_{\tau'}(f_j)$ and $j \neq 1$, we must also have $\sigma_\text{I} \not\subseteq \tau'$. Thus $\tau' \in \ms T^\text{I}$ and $\sigma_\text{I} \not\subseteq \tau'$, which as above is a contradiction.
\end{proof}

It now suffices to prove the following claim.

\begin{claim} \label{case1claim}
Let $\ms T$ be any triangulation of $\Delta^3 \times \Delta^{n-1}$ such that $\tau_\text{I} \in \ms T$ and $\ms T(X^-)$ is $\tau_\text{I}$-good. Then either there is a flip of $\ms T$ supported on $(X^+,X^-)$, or there is a flip of $\ms T$ with result $\ms T'$ such that $\tau_\text{I} \in \ms T'$, $\ms T'(X^-)$ is $\tau_\text{I}$-good, $\abs{(\ms T')^\text{I}} \le \abs{\ms T^\text{I}}$, and $\abs{\ms T'(X^-)^\ast} < \abs{\ms T(X^-)^\ast}$.
\end{claim}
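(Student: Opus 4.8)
The plan is to assume $\ms T$ has no flip supported on $(X^+,X^-)$ and produce a single intermediate flip that shrinks $\ms T(X^-)^\ast$ while preserving all the invariants. Since $X^- = \{1 \times f_1, 4 \times f_2\}$ has size $2$ and $\sigma = X \minus \{4 \times f_1\} = \{1 \times f_1, 4 \times f_2, 1 \times f_2\} \in \ms T$ (as $\sigma \subseteq \sigma_\text{I} \subseteq \tau_\text{I} \in \ms T$), Proposition~\ref{flip} applies with $X \minus \{x_i\} \in \ms T$ for $x_i = 4 \times f_1$: there is a maximal simplex $\tau \in \ms T(X^-)^\ast$ with $\tau \cap X = X \minus \{4\times f_1, x_j\}$ for some $x_j$, i.e. $\tau \cap X = \{4 \times f_2, 1 \times f_2\}$ (taking $x_j = 1 \times f_1$ is impossible since $X^- \subseteq \tau$, so $x_j = 1 \times f_1$ is forced to be the other removed point—wait, $X = \{1\times f_1, 4\times f_2, 4\times f_1, 1\times f_2\}$, $x_i = 4 \times f_1$, and $x_j \in \{1 \times f_1, 1 \times f_2\}$; since $X^- = \{1\times f_1, 4\times f_2\} \subseteq \tau$ we need $1 \times f_1 \in \tau$, so $x_j = 1 \times f_2$). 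Thus $\tau \in \ms T(X^-)^\ast$ with $\tau \cap X = X^- = \{1 \times f_1, 4 \times f_2\}$, so $1 \times f_2, 4 \times f_1 \notin \tau$. First I would fix such a $\tau$, and consider the facet $\sigma' := \tau \minus \{4 \times f_2\}$; since $\tau$ is maximal and not contained in a face (standard argument as in the proof of Proposition~\ref{flip}, using that $1\times f_1$ and $1 \times f_2$ lie in $X$), there is a neighbor $\tau'' = \sigma' \cup \{1 \times f_2\}$ or similar, and iterating this kind of move should walk $\tau$ toward $\tau_\text{I}$.

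The cleaner approach is to use the order $\preceq_4$: since $\tau_\text{I}$ is the unique minimal element of $\ms T(X^-)^\ast$ under $\preceq_4$ (Proposition~\ref{uniqueminimal}) and $\sigma \cup \{1 \times f_2\}$ is NOT a simplex of $\ms T$ (else, combined with $\sigma_\text{I} \subseteq \tau_\text{I}$ and Proposition~\ref{alternating}, one contradicts properness of intersection, since $\{1 \times f_1, 1\times f_2, 4\times f_1, 4\times f_2\}$ contains a circuit's two parts), I would look for a maximal simplex $\tau \in \ms T(X^-)^\ast$ with $4 \times f_1 \notin \tau$ that is \emph{not} $\preceq_4$-above $\tau_\text{I}$ via an edge involving $1 \times f_2$, and apply Proposition~\ref{elimination} to the simplex $\tau \minus \{1 \times f_2\}$—but $1 \times f_2 \notin \tau$. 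Instead, since $\ms T$ has no flip on $(X^+,X^-)$, by Proposition~\ref{flip} $\sigma_r = \sigma_2 = X \minus \{4 \times f_2\}$ is the relevant maximal simplex of $\ms T_X^+$; the point is to find a codimension-one face $\rho$ of some maximal simplex $\tau \in \ms T(X^-)^\ast$ with $\tau \cap X = X^-$, such that the flip across $\rho$ (a flip in $\Delta^3 \times \Delta^{n-1}$, existing by Proposition~\ref{flipdef} applied to the circuit determined by $\rho$) produces $\ms T'$ with $\tau_\text{I}$ still in $\ms T'$, $\abs{\ms T'(X^-)^\ast} < \abs{\ms T(X^-)^\ast}$, and $\ms T'(X^-)$ still $\tau_\text{I}$-good and with $\abs{(\ms T')^\text{I}} \le \abs{\ms T^\text{I}}$.

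The key step I expect to work is: apply Proposition~\ref{elimination} with $\xi = X^-$ to the maximal simplex $\tau$ and the point $e_i \times f_j := 4 \times f_1'$ for the appropriate $f_1'$—more precisely, identify a vertex $v \times f_j \in \tau$, $v \times f_j \notin X^-$, such that removing it splits $T(\tau \minus \{v\times f_j\})$ into two edge-containing components, giving a neighbor $\tau' = (\tau \minus \{v \times f_j\}) \cup \{v' \times f_{j'}\}$; choose it so that $\tau'$ is closer to containing $\sigma_2$, or so that $\abs{\ms T'(X^-)^\ast}$ drops. The size-drop invariant $\abs{\ms T'(X^-)^\ast} < \abs{\ms T(X^-)^\ast}$ is the crux: I would track the quantity $\abs{\ms T(X^-)^\ast}$ and argue the chosen flip removes $\tau$ (and possibly other simplices of $\ms T(X^-)^\ast$) from the star of $X^-$ without adding new ones containing $X^-$—this should follow because the flipped circuit does not contain $X^-$, so it can only \emph{destroy} simplices containing $X^-$ or leave them fixed, and we choose the flip to destroy at least one. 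The main obstacle will be verifying that the intermediate flip preserves both $\tau_\text{I}$-goodness conditions (a) and (b) for $\ms T'(X^-)$ and keeps $\abs{(\ms T')^\text{I}}$ from growing; this is where Propositions~\ref{Iflip1}, \ref{Iflip3}, \ref{shapeshift}, and the fact that $\tau_\text{I}$-goodness controls the shapes $N_\tau(f_j)$ of simplices in $\ms T(X^-)$ must be combined carefully, checking that the flip's supporting circuit interacts with $\{1,2,4\}$ and the indices $f_1, f_2$ in a controlled way. I would organize the verification by cases on the shape of the facet $\rho$ across which we flip, mirroring the three-case analysis already set up for $\sh(\tau_\text{I})$.
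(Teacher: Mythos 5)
There is a genuine gap: the proposal never actually produces the intermediate flip. Everything after the first paragraph is conditional (``I would look for\ldots'', ``The key step I expect to work is\ldots'', ``I would organize\ldots''), and the one concrete attempt — applying Proposition~\ref{elimination} to $\tau\minus\{1\times f_2\}$ — is immediately abandoned because $1\times f_2\notin\tau$. The paper's proof requires identifying, from the structure of the path $P$ from $1$ to $4$ in $T(\tau)$, a specific auxiliary circuit $Y$ (one of the two forms in Subcases~1.1 and~1.2), and then proving $\ms T$ supports a flip on $(Y^+,Y^-)$. Nothing in the proposal locates $Y$ or shows it supports a flip. The central device that makes this work — choosing $\tau$ to be a \emph{maximal} element of $\ms S$ with respect to the quasiorder $\preceq_{21}$ (not $\preceq_4$), combined with Lemma~\ref{prec12} to show any obstruction to the $Y$-flip would contradict that maximality — is missing entirely. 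You never mention $\preceq_{i_1i_2}$, which is ``the main idea of our proof'' per the paper's own description.

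A second, more local error: you write that ``the flipped circuit does not contain $X^-$, so it can only destroy simplices containing $X^-$ or leave them fixed.'' This is false. In Subcase~1.1 the circuit $Y$ contains $4\times f_2\in X^-$, and the formula
\[
\ms T'(X^-)^\ast = \ms T(X^-)^\ast \minus \ms T(Y^-)^\ast \cup \Psi( \ms T(Y \minus \{3 \times f_{j_1}\})^\ast) \cup \Psi( \ms T(Y \minus \{4 \times f_{j_2}\})^\ast)
\]
shows the flip \emph{does} introduce new maximal simplices containing $X^-$; the decrease $\abs{\ms T'(X^-)^\ast}<\abs{\ms T(X^-)^\ast}$ is a nontrivial consequence of the inclusion $\ms T(Y^-)\subseteq\ms T(X^-)$ plus the cardinality accounting, not a generality about circuits disjoint from $X^-$. (Also, you misidentify $\sigma_2$: in the paper's indexing, $\sigma_2 = X\minus\{4\times f_1\}$, not $X\minus\{4\times f_2\}$ — the latter is $X^+$ and cannot be in $\ms T$.) In short, the proposal correctly names some of the relevant tools (Propositions~\ref{flip}, \ref{elimination}, \ref{Iflip1}, \ref{shapeshift}) but does not supply the construction of $Y$, the $\preceq_{21}$-extremality argument, or the path-form case analysis that the proof actually turns on.
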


By repeatedly applying this Claim to our original $\ms T$, we eventually obtain that $\ms T$ is flip-connected to some $\ms T'$ for which $\tau_\text{I} \in \ms T'$, $\abs{(\ms T')^\text{I}} \le \abs{\ms T^\text{I}}$, and $\ms T'$ has a flip supported on $(X^+,X^-)$. Then by Proposition~\ref{Iflip1}, the result $\ms T''$ of this flip satisfies $\abs{(\ms T'')^\text{I}} < \abs{\ms T^\text{I}}$, as desired.

\begin{proof}[Proof of Claim~\ref{case1claim}]

Let $\ms S$ be the set of all maximal simplices $\tau \in \ms T(X^-)^\ast$ such that $\tau \cap X^+ = \emptyset$. If $\ms S = \emptyset$, then by Proposition~\ref{flip}, $\ms T$ has a flip supported on $(X^+,X^-)$, and we are done.
Assume $\ms S \neq \emptyset$.

We will use the following to restrict the possible shapes of simplices in $\ms S$.

\begin{prop} \label{restrictions}
Let $\tau \in \ms S$. The following are true.
\begin{multicols}{2}
\begin{enumerate} \renewcommand{\labelenumi}{(\alph{enumi})}
\item $1 \times f_2 \notin \tau$
\item $4 \times f_1 \notin \tau$
\item $2 \times f_1 \notin \tau$
\item $2 \times f_2 \notin \tau$
\end{enumerate}
\end{multicols}
\end{prop}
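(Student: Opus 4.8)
The plan is to handle the four items separately, noting that (a) and (b) are essentially built into the definition of $\ms S$. Recall that $\ms S$ consists of the maximal simplices $\tau \in \ms T(X^-)^\ast$ with $\tau \cap X^+ = \emptyset$, and that here $X^- = \{1 \times f_1, 4 \times f_2\}$ and $X^+ = \{4 \times f_1, 1 \times f_2\}$. So for $\tau \in \ms S$, from $\tau \cap X^+ = \emptyset$ we read off $1 \times f_2 \notin \tau$ and $4 \times f_1 \notin \tau$, which are exactly (a) and (b). It is also worth recording that $\sigma_\text{I} = \sigma \cup \{2 \times f_1\}$ where $\sigma = X \minus \{4 \times f_1\} = \{1 \times f_1, 4 \times f_2, 1 \times f_2\}$, so that $\sigma_\text{I} = \{1 \times f_1, 2 \times f_1, 1 \times f_2, 4 \times f_2\}$; in particular $2 \times f_1$ and $1 \times f_2$ both lie in $\sigma_\text{I} \subseteq \tau_\text{I}$.

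For (c) I would argue by contradiction using $\tau_\text{I}$-goodness. Suppose $2 \times f_1 \in \tau$ for some $\tau \in \ms S$. Since $1 \times f_1 \in X^- \subseteq \tau$, we get $\{1,2\} \subseteq N_\tau(f_1)$, and by (b) we have $4 \notin N_\tau(f_1)$; hence $\tau \in \ms T^\text{I}$. On the other hand $1 \times f_2 \in \sigma_\text{I}$ but $1 \times f_2 \notin \tau$ by (a), so $\sigma_\text{I} \not\subseteq \tau$. Since $\tau \in \ms T(X^-)^\ast$ and $\ms T(X^-)$ is $\tau_\text{I}$-good, this contradicts condition (a) in the definition of $\tau_\text{I}$-goodness.

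For (d), the only item requiring more than unwinding definitions, I would compare positions in the restriction order $\le_{12}$. Suppose $2 \times f_2 \in \tau$ for some $\tau \in \ms S$. Since also $1 \times f_1 \in \tau$, Proposition~\ref{compare} gives $f_2 <_{12} f_1$. But $\{2 \times f_1, 1 \times f_2\} \subseteq \sigma_\text{I} \subseteq \tau_\text{I}$, so Proposition~\ref{compare} also gives $f_1 <_{12} f_2$. Because $\ms T$ is a genuine triangulation, $\le_{12}$ is a total order on $\Delta^{n-1}$, and $f_1 \neq f_2$, so the two relations contradict each other. (Note that $\tau_\text{I}$-goodness does not directly apply to (d), since $1 \times f_2 \notin \tau$ and hence we cannot exhibit a label $f_j$ with $\{1,2\} \subseteq N_\tau(f_j)$; the order comparison is what replaces it.)

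The main thing to be careful about is that $\tau_\text{I}$ has, by this point, been redefined as $\tau_\ast$; but $\sigma_\text{I} \subseteq \tau_\ast$ was established when making that redefinition, so the containments $1 \times f_1, 2 \times f_1, 1 \times f_2, 4 \times f_2 \in \tau_\text{I}$ used above remain valid. Beyond that, everything is a direct appeal to the definitions of $\ms S$, $\ms T^\text{I}$, and $\tau_\text{I}$-goodness, together with Proposition~\ref{compare}, so I do not anticipate a real obstacle; the only substantive step is (d).
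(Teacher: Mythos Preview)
Your argument is correct. Parts (a), (b), and (c) match the paper's proof exactly: (a) and (b) unwind the definition of $\ms S$, and (c) is the $\tau_\text{I}$-goodness contradiction you describe.

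For (d) you take a genuinely different route from the paper. The paper argues via the partial order $\preceq_4$: since $\tau_\text{I}$ was chosen (after the redefinition $\tau_\text{I}=\tau_\ast$) to be the $\preceq_4$-minimal element of $\ms T(X^-)^\ast$, Proposition~\ref{monotone} applied along a chain from $\tau_\text{I}$ to $\tau$ (with $e_4$ always in $I_2$ and $f_2\in N_{\tau_\text{I}}(e_4)$) gives $N_{\tau_\text{I}}(f_2)\supseteq N_\tau(f_2)$; since $2\notin N_{\tau_\text{I}}(f_2)$, the conclusion follows. Your argument instead reads off two incompatible relations in the total order $\le_{12}$ via Proposition~\ref{compare}: $\{2\times f_1,1\times f_2\}\subseteq\tau_\text{I}$ forces $f_1<_{12}f_2$, while $\{2\times f_2,1\times f_1\}\subseteq\tau$ would force $f_2<_{12}f_1$. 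This is a clean and slightly more elementary alternative, since it avoids invoking the $\preceq_4$-minimality of $\tau_\text{I}$ and Proposition~\ref{monotone}; the paper's approach, by contrast, fits the pattern used repeatedly in the later subcases (where monotonicity of neighborhoods under $\preceq_i$ is the workhorse). Either way the logic is sound.
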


\begin{proof}
Parts (a) and (b) hold by the definition of $\ms S$. Suppose $2 \times f_1 \in \tau$. By part (b), we have $4 \notin N_{\tau}(f_1)$. Hence $\tau \in \ms T^\text{I}$. By part (a), we have $\sigma_\text{I} \not\subseteq \tau$. This contradicts $\tau_\text{I}$-goodness(a), proving (c).

Finally, since $\tau_\text{I}$ is minimal in $\ms T(X^-)^\ast$ with respect to $\preceq_4$, by Proposition~\ref{monotone} we have $N_{\tau_\text{I}}(f_2) \supseteq N_\tau(f_2)$. Hence $2 \times f_2 \notin \tau$, proving (d).
\end{proof}

Suppose $\tau \in \ms S$. Let
\[
P = \{ 1 \times f_{j_1}, i_1 \times f_{j_1}, i_1 \times f_{j_2}, i_2 \times f_{j_2}, \dotsc, i_{k-1} \times f_{j_k}, 4 \times f_{j_k} \}
\]
be the path in $T(\tau)$ from 1 to 4. Suppose first that
\[
P = \{1 \times f_{j_1}, 4 \times f_{j_1}\}.
\]
By Proposition~\ref{restrictions}(a) and (b), we have $j_1 \neq 1$, 2. Consider the minimal element $\tau'$ of $\ms T(X^- \cup P)^\ast$ with respect to $\preceq_4$. Since $P \subseteq \tau'$, we have $\tau' \in \ms S$. Now, by Proposition~\ref{minimal}, we must have $2 \times f_j \in \tau'$ for some $j = 1$, 2, or $j_1$. This contradicts either Proposition~\ref{restrictions}(c), (d), or $\tau_\text{I}$-goodness(b). Hence $P$ cannot be of this form.

Furthermore, if $i_1 = 2$, then we have $\{1,2\} \subseteq N_\tau(f_{j_1})$, which contradicts either Proposition~\ref{restrictions}(c) or $\tau_\text{I}$-goodness(b). Hence, $i_1 = 3$.

Now suppose that $f_{j_k} = f_2$. By Proposition~\ref{restrictions}(d), we have $i_{k-1} \neq 2$. Hence $i_{k-1} = 3$, and
\[
P = \{1 \times f_{j_1}, 3 \times f_{j_1}, 3 \times f_2, 4 \times f_2\}
\]
Let $\tau'$ be the minimal element of $\ms T(X^- \cup P)^\ast$ with respect to $\preceq_4$. Since $P \subseteq \tau'$, we have $\tau' \in \ms S$. By Proposition~\ref{minimal}, we have $2 \times f_j \in \tau'$ for some $j = 1$, 2, or $j_1$. As before, this is a contradiction. Hence, $j_k \neq 2$.

We have thus restricted $P$ to two possible forms:
\begin{enumerate} \renewcommand{\labelenumi}{(\roman{enumi})}
\item $P = \{1 \times f_{j_1}, 3 \times f_{j_1}, 3 \times f_{j_2}, 4 \times f_{j_2}\}$, $j_2 \neq 2$
\item $P = \{1 \times f_{j_1}, 3 \times f_{j_1}, 3 \times f_{j_2}, 2 \times f_{j_2}, 2 \times f_{j_3}, 4 \times f_{j_3}\}$, $j_3 \neq 2$.
\end{enumerate}
Now, choose $\tau$ so that it is a maximal element of $\ms S$ with respect to $\preceq_{21}$.\footnote{Here, as in the rest of the proof, we mean the order $\preceq_{21}$ which is defined on all of $\ms T$.} We have two possibilities depending on $P$.

\paragraph{Subcase 1.1: $P$ has form (i)}
Let $Y = \{Y^+,Y^-\}$ be the circuit
\begin{align*}
Y^- &= \{ 1 \times f_{j_1}, 3 \times f_{j_2}, 4 \times f_2 \} \\
Y^+ &= \{3 \times f_{j_1}, 4 \times f_{j_2}, 1 \times f_2 \}
\end{align*}
Let $\rho = Y \minus \{1 \times f_2\}$. Then $\rho$ is a maximal simplex of $\ms T_Y^+$ and $\rho \cup \{1 \times f_1\} = X^- \cup P \subseteq \tau$. Let $\tau_\ast$ be the minimal element of $\ms T(X^- \cup P)^\ast$ with respect to $\preceq_1$. Since $P \subseteq \tau_\ast$, we have $\tau_\ast \in \ms S$.
Now, by Proposition~\ref{minimal}, we must have $2 \times f_j \in \tau_\ast$ for some $j = 1$, 2, $j_1$, or $j_2$. We cannot have $j=1$, 2, or $j_1$ by Propositions~\ref{restrictions}(c), (d), and $\tau_\text{I}$-goodness(b). Thus $2 \times f_{j_2} \in \tau_\ast$. Then by Proposition~\ref{minimal}, it follows that $\tau_\ast$ is the minimal element of $\ms T(Y^-)^\ast$ with respect to $\preceq_1$.

Since $P \subseteq \tau_\ast$, by Lemma~\ref{prec12}, we have $\tau \sim_{21} \tau_\ast$. Hence $\tau_\ast$ is a maximal element of $\ms S$ with respect to $\preceq_{21}$, and $P \subseteq \tau_\ast$. We may thus redefine $\tau$ as $\tau = \tau_\ast$.

We first claim that $\ms T(Y^-) \subseteq \ms T(X^-)$. Suppose that $\tau' \in \ms T(Y^-)^\ast$. Clearly $4 \times f_2 \in \tau'$. Also, since $\tau$ is minimal in $\ms T(Y^-)^\ast$ with respect to $\preceq_1$, by Proposition~\ref{monotone} we have $N_\tau(1) \subseteq N_{\tau'}(1)$. Hence $1 \times f_1 \in \tau'$. So $\tau' \in \ms T(X^-)$, as desired.

We now claim that $\ms T$ has a flip supported on $(Y^+,Y^-)$. Suppose the contrary. Since $\rho \in \ms T$, by Proposition~\ref{flip}, there is some $\tau' \in \ms T(Y^-)^\ast$ such that $\abs{\tau' \cap Y^+} \le \abs{Y^+}-2 = 1$ and $1 \times f_2 \notin \tau'$. As just shown, $\tau' \in \ms T(X^-)$. Moreover, by Proposition~\ref{monotone}, we have $N_{\tau}(f_1) \supseteq N_{\tau'}(f_1)$, so $4 \times f_1 \notin \tau'$. Hence $\tau' \in \ms S$. Since $\abs{\tau' \cap Y^+} \le 1$, we have $P \not\subseteq \tau'$, and hence by Lemma~\ref{prec12}, $\tau \not\sim_{21} \tau'$. Since $\tau$ is minimal in $\ms T(Y^-)$ with respect to $\preceq_1$, we thus have $\tau \prec_{21} \tau'$. This contradicts the maximality of $\tau$ in $\ms S$ with respect to $\preceq_{21}$. Hence, $\ms T$ has a flip supported on $(Y^+,Y^-)$.

Let $\ms T'$ be the result of this flip. By Proposition~\ref{shapeshift}, if $j_1 \neq 1$, we have
\[
\ms T'(X^-)^\ast = \ms T(X^-)^\ast \minus \ms T(Y^-)^\ast \cup \Psi( \ms T(X \minus \{3 \times f_{j_1}\})^\ast) \cup \Psi( \ms T(X \minus \{4 \times f_{j_2}\})^\ast).
\]
and if $j_1 = 1$, we have
\[
\ms T'(X^-)^\ast = \ms T(X^-)^\ast \minus \ms T(Y^-)^\ast \cup \Psi( \ms T(X \minus \{4 \times f_{j_2}\})^\ast).
\]
Either way, $\abs{\ms T'(X^-)^\ast} < \abs{\ms T(X^-)^\ast}$. By Proposition~\ref{Iflip1}(a), since $\rho \cup \{2 \times f_{j_2}\} \in \ms T(X^-)$, we have $\abs{(\ms T')^\text{I}} \le \abs{\ms T^\text{I}}$. It is easy to check using the above equations that if $\ms T(X^-)$ is $\tau_\text{I}$-good, then so is $\ms T'(X^-)$. Finally, to show that $\tau_\text{I} \in \ms T'(X^-)$, it suffices to show $\tau_\text{I} \notin \ms T(Y^-)$. If $\tau_\text{I} \in \ms T(Y^-)$, then by Proposition~\ref{monotone}, $N_\tau(f_1) \supseteq N_{\tau_\text{I}}(f_1)$, so $2 \times f_1 \notin \tau_\text{I}$, a contradiction. Hence $\tau_\text{I} \notin \ms T(Y^-)$, which completes the Subcase.

\paragraph{Subcase 1.2: $P$ has form (ii)}

Let $Y = \{Y^+,Y^-\}$ be the circuit
\begin{align*}
Y^- = \{ 1 \times f_{j_1}, 3 \times f_{j_2}, 2 \times f_{j_3}, 4 \times f_2 \} \\
Y^+ = \{3 \times f_{j_1}, 2 \times f_{j_2}, 4 \times f_{j_3}, 1 \times f_2 \}
\end{align*}
Let $\rho = Y \minus \{1 \times f_2\}$. Then $\rho$ is a maximal simplex of $\ms T_Y^+$ and $\rho \subseteq \tau$. By Proposition~\ref{minimal}, $\tau$ is the minimal element of $\ms T(Y^-)^\ast$ with respect to $\preceq_1$.

By the same arguments as in the previous subcase, we have $\ms T(Y^-) \subseteq \ms T(X^-)$ and $\ms T$ has a flip supported on $(Y^+,Y^-)$. If the result of this flip is $\ms T'$, then Proposition~\ref{Iflip3} implies $\abs{(\ms T')^\text{I}} = \abs{\ms T^\text{I}}$. Similar arguments to the previous subcase show that the rest of Claim~\ref{case1claim} holds. This concludes Case 1.
\end{proof}

\subsubsection{Case 2: $N_1 = \{1,2\}$, $N_2 = \{3,4\}$, $N_3 = \{1,3\}$ or $\{2,3\}$}
We will assume $N_2 = \{1,3\}$; the other case follows analogously. Let $X = \{X^+,X^-\}$ be the circuit
\begin{align*}
X^- &= \{ 1 \times f_1, 4 \times f_2, 3 \times f_3 \} \\
X^+&= \{ 4 \times f_1, 3 \times f_2, 1 \times f_3 \}
\end{align*}
Then $\sigma = X \minus \{4 \times f_1\}$ is a maximal simplex of $\ms T_X^+$, and $\sigma_\text{I} = \sigma \cup \{2 \times f_1\} \subseteq \tau_\text{I}$. Thus, if $(X^+,X^-)$ supports a flip of $\ms T$, then this flip satisfies the conditions of Proposition~\ref{Iflip1}(b), and we would be done.

By Proposition~\ref{minimal}, $\tau_\text{I}$ is minimal in $\ms T(X^-)^\ast$ with respect to $\preceq_4$. We can use the same argument as in Case 1 to prove that $\ms T(X^-)$ is $\tau_\text{I}$-good.

It now suffices to prove Claim~\ref{case1claim} with $\tau_\text{I}$, $\sigma_\text{I}$, and $X$ redefined as in this Case. Let $\ms S$ be the set of all maximal simplices $\tau \in \ms T(X^-)^\ast$ for which $\abs{\tau \cap X^+} \le 1$ and $4 \times f_1 \notin \tau$. Since $\sigma \in \ms T$, by Proposition~\ref{flip} we have that $(X^+,X^-)$ supports a flip of $\ms T$ if and only if $\ms S = \emptyset$. So assume $\ms S \neq \emptyset$.

\begin{prop} \label{restrictions2}
Let $\tau \in \ms S$. The following are true.
\begin{enumerate} \renewcommand{\labelenumi}{(\alph{enumi})}
\item $1 \times f_2 \notin \tau$
\item $4 \times f_1 \notin \tau$
\item $2 \times f_j \notin \tau$ for $j = 1$, 2, 3.
\end{enumerate}
\end{prop}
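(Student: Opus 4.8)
The plan is to adapt the proof of Proposition~\ref{restrictions}: parts (a) and (b) and the $j=1,2$ instances of (c) go through by essentially the arguments used there, while the $j=3$ instance of (c) needs one new idea. Part (b) is immediate from the definition of $\ms S$. For part (a) and the $j=2$ instance of (c), I would use that $\tau_\text{I}$ is the unique $\preceq_4$-minimal element of $\ms T(X^-)^\ast$ (Propositions~\ref{minimal} and \ref{uniqueminimal}): since $\tau\in\ms T(X^-)^\ast$ we have $\tau_\text{I}\preceq_4\tau$, and because $4\times f_2\in X^-\subseteq\tau_\text{I}$, iterating Proposition~\ref{monotone} along a witnessing chain gives $N_\tau(f_2)\subseteq N_{\tau_\text{I}}(f_2)=\{3,4\}$, so $1\times f_2\notin\tau$ and $2\times f_2\notin\tau$.

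For the $j=1$ instance of (c), suppose $2\times f_1\in\tau$; together with $1\times f_1\in X^-\subseteq\tau$ and part (b) this yields $\{1,2\}\subseteq N_\tau(f_1)$ and $4\notin N_\tau(f_1)$, so $\tau\in\ms T^\text{I}$. But $\sigma_\text{I}=\sigma\cup\{2\times f_1\}$ contains $3\times f_2$ and $1\times f_3$, which together with $4\times f_1$ make up $X^+$; since $\abs{\tau\cap X^+}\le1$ and $4\times f_1\notin\tau$, $\tau$ cannot contain both $3\times f_2$ and $1\times f_3$, hence $\sigma_\text{I}\not\subseteq\tau$, contradicting $\tau_\text{I}$-goodness(a). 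This reproduces Proposition~\ref{restrictions}(c) in the new setting.

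The main obstacle is the $j=3$ instance of (c), which has no counterpart in Case~1. Here I would argue by contradiction: assume $2\times f_3\in\tau$. Then $X^-\cup\{2\times f_3\}\subseteq\tau$ is a simplex of $\ms T$, so by Proposition~\ref{uniqueminimal} the local triangulation $\ms T(X^-\cup\{2\times f_3\})$ has a unique $\preceq_4$-minimal maximal simplex $\tau'$. By Proposition~\ref{minimal} the path in $T(\tau')$ from $4$ to $1$ is $1$-alternating with respect to $X^-\cup\{2\times f_3\}$, and I claim it is forced to be $4-f_2-3-f_3-1$: its first edge lies in $X^-\cup\{2\times f_3\}$ and is incident to $4$, so it is $4\times f_2$; its second edge is not in $X^-\cup\{2\times f_3\}$ and leads to a vertex of $N_{\tau'}(f_2)\subseteq\{3,4\}$ (the bound again coming from $\tau'\in\ms T(X^-)^\ast$ and Proposition~\ref{monotone}) other than $4$, hence to $3$; its third edge lies in $X^-\cup\{2\times f_3\}$ and is incident to $3$, so it is $3\times f_3$; and its fourth edge is not in $X^-\cup\{2\times f_3\}$ and incident to $f_3$, so it avoids $2$ and $3$, and, being a simple path, avoids $4$, forcing it to end at $1$. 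Consequently $1\times f_3\in\tau'$, so $\{1,2,3\}\subseteq N_{\tau'}(f_3)$; since $\tau'\in\ms T(X^-)^\ast$ and $3\notin\{1,2\}$, this contradicts $\tau_\text{I}$-goodness(b).

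I expect the bulk of the proof to be routine once this last observation is in place — namely, that passing to a $\preceq_4$-minimal extension of $\tau$ rigidifies the relevant portion of $T(\tau')$ via the alternation condition and the bound $N(f_2)\subseteq\{3,4\}$, producing the forbidden neighborhood $\{1,2,3\}$ for $f_3$. The remaining details are the alternation bookkeeping and checking that $\tau'$ genuinely lies in $\ms T(X^-)^\ast$, so that both $\tau_\text{I}$-goodness and the $N(f_2)$ bound apply to it.
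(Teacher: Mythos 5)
Your proposal is correct, but it takes a noticeably different route from the paper's own proof. For part (a) and the $j=2$ case of (c), the paper uses Proposition~\ref{alternating}: if $1\times f_2\in\tau$, then $T(\tau)$ has the 2-alternating path $f_2\to 1$ while $T(\tau_\text{I})$ has the different 2-alternating path $f_2\to 3\to f_3\to 1$ (both with respect to $X^-$), a contradiction; for $j=2$ the same is done with the path $f_2\to 3\to f_3\to 1\to f_1\to 2$ in $T(\tau_\text{I})$. You instead invoke $\preceq_4$-minimality of $\tau_\text{I}$ and Proposition~\ref{monotone} to get $N_\tau(f_2)\subseteq N_{\tau_\text{I}}(f_2)=\{3,4\}$, which handles (a) and $j=2$ simultaneously; this works, and in fact the paper uses exactly this monotonicity argument for the analogous claim in Proposition~\ref{restrictions3}(a) for Case~3. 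The $j=1$ case is identical. The real divergence is $j=3$: the paper disposes of it in one line, again by Proposition~\ref{alternating}, using the 2-alternating path $f_3\to 2$ in $T(\tau)$ against the 2-alternating path $f_3\to 1\to f_1\to 2$ in $T(\tau_\text{I})$ (all three of $1\times f_3$, $1\times f_1$, $2\times f_1$ lie in $\sigma_\text{I}\subseteq\tau_\text{I}$). Your argument is valid but considerably heavier: you pass to the $\preceq_4$-minimal simplex $\tau'$ over $X^-\cup\{2\times f_3\}$, force the path $4\to f_2\to 3\to f_3\to 1$ via Proposition~\ref{minimal} plus the $N(f_2)\subseteq\{3,4\}$ bound, and then invoke $\tau_\text{I}$-goodness(b). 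The forcing steps check out (the fourth edge avoids $2$, $3$, and $4$, so it goes to $1$), but the net effect is that you rederive by hand what the alternating-path uniqueness lemma gives directly; noticing that $\tau_\text{I}$ already contains the competing alternating path through $\sigma_\text{I}$ would have made all of (a), $j=2$, and $j=3$ uniform one-liners.
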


\begin{proof}
Suppose $1 \times f_2 \in \tau$. Then, with respect to $X^-$, we have a 2-alternating path $f_2 \rightarrow 1$ in $T(\tau)$ and a 2-alternating path $f_2 \rightarrow 3 \rightarrow f_3 \rightarrow 1$ in $T(\tau_\text{I})$. This contradicts Proposition~\ref{alternating}, proving (a). Part (b) follows from the definition of $\ms S$.

Suppose $2 \times f_j \in \tau$. First suppose $j = 1$. By (b), $4 \times f_1 \notin \tau$, so $\tau \in \ms T^\text{I}$. Since $\abs{\tau \cap X^+} \le 1$, we have $\sigma_\text{I} \not\subseteq \tau$. This contradicts $\tau_\text{I}$-goodness(a). So $j \neq 1$. If $j = 2$, then we have, with respect to $X^-$, a 2-alternating path $f_2 \rightarrow 2$ in $T(\tau)$ and a 2-alternating path $f_2 \rightarrow 3 \rightarrow f_3 \rightarrow 1 \rightarrow f_1 \rightarrow 2$ in $T(\tau_\text{I})$. Thus $j \neq 2$. The same argument using the path $f_3 \rightarrow 1 \rightarrow f_1 \rightarrow 2$ in $T(\tau_\text{I})$ shows that $j \neq 3$. This proves (c).
\end{proof}

Suppose $\tau \in \ms S$. Let
\[
P = \{ 1 \times f_{j_1}, i_1 \times f_{j_1}, i_1 \times f_{j_2}, i_2 \times f_{j_2}, \dotsc, i_{k-1} \times f_{j_k}, 4 \times f_{j_k} \}
\]
be the path in $T(\tau)$ from 1 to 4. By the same arguments as in Case 1, we have $k > 1$, $i_1 = 3$, and $f_{j_k} \neq f_2$. Furthermore, suppose that $f_{j_1} \neq f_3$. Let $\tau'$ be the minimal element of $\ms T(X^- \cup \{1 \times f_{j_1}, 3 \times f_{j_1}\})$ with respect to $\preceq_4$. Since $\tau' \preceq_4 \tau$, by Propostition~\ref{monotone} we have $N_{\tau'}(4) \subseteq N_\tau(4)$, and hence $4 \times f_1 \notin \tau'$. In addition, $\{1 \times f_{j_1}, 3 \times f_{j_1}, 3 \times f_3\} \subseteq \tau'$ and $f_{j_1} \neq f_3$, so $1 \times f_3 \notin \tau'$. Thus, $\tau' \in \ms S$. Now, by Proposition~\ref{minimal}, we must have $2 \times f_j \in \tau'$ for some $j = 1$, 2, 3, or $j_1$. However, all of these cases contradict Proposition~\ref{restrictions2}(c) or $\tau_\text{I}$-goodness(b). Hence $j_1 = 3$.

It follows that $P$ must satisfy one of the following.
\begin{enumerate} \renewcommand{\labelenumi}{(\roman{enumi})}
\item $P = \{1 \times f_3, 3 \times f_3, 3 \times f_{j_2}, 4 \times f_{j_2}\}$, $j_2 \neq 2$
\item $P = \{1 \times f_3, 3 \times f_3, 3 \times f_{j_2}, 2 \times f_{j_2}, 2 \times f_{j_3}, 4 \times f_{j_3}\}$, $j_3 \neq 2$.
\end{enumerate}
Now, choose $\tau$ to be a maximal element element of $\ms S$ with respect to $\preceq_{24}$. We consider the subcases (i) and (ii).

\paragraph{Subcase 2.1: $P$ has form (i)}

Let $Y = \{Y^+,Y^-\}$ be the circuit
\begin{align*}
Y^- &= \{ 1 \times f_1, 4 \times f_{j_2}, 3 \times f_3 \} \\
Y^+&= \{ 4 \times f_1, 3 \times f_{j_2}, 1 \times f_3 \}
\end{align*}
Let $\rho = Y \minus \{4 \times f_1\}$. Then $\rho$ is a maximal simplex of $\ms T_Y^+$ and $\rho \cup \{4 \times f_2\} = X^- \cup P \subseteq \tau$. Let $\tau_\ast$ be the minimal element of $\ms T(X^- \cup P)$ with respect to $\preceq_4$. Since $P \subseteq \tau_\ast$, we have $\tau_\ast \in \ms S$. By Proposition~\ref{minimal}, we have $2 \times f_j \in \tau_\ast$ for some $j = 1$, 2, 3, or $j_2$. By Proposition~\ref{restrictions2}(c), we thus have $2 \times j_2 \in \tau_\ast$. It follows by Proposition~\ref{minimal} that $\tau_\ast$ is the minimal element of $\ms T(Y^-)^\ast$ with respect to $\preceq_4$.

Since $P \subseteq \tau_\ast$, by Lemma~\ref{prec12}, we have $\tau \sim_{24} \tau_\ast$. Hence $\tau_\ast$ is a maximal element of $\ms S$ with respect to $\preceq_{24}$, and $P \subseteq \tau_\ast$. We may thus redefine $\tau$ as $\tau = \tau_\ast$.

We first claim that $\ms T(Y^-) \subseteq \ms T(X^-)$. Suppose that $\tau' \in \ms T(Y^-)^\ast$. Clearly $1 \times f_1$, $3 \times f_3 \in \tau'$. Since $\tau$ is minimal in $\ms T(Y^-)^\ast$ with respect to $\preceq_4$, by Proposition~\ref{monotone} we have $N_\tau(4) \subseteq N_{\tau'}(4)$. Hence $4 \times f_2 \in \tau'$. So $\tau' \in \ms T(X^-)$.

We now claim that $\ms T$ has a flip supported on $(Y^+,Y^-)$. Suppose the contrary. Since $\rho \in \ms T$, by Proposition~\ref{flip}, there is some $\tau' \in \ms T(Y^-)^\ast$ such that $\abs{\tau' \cap Y^+} \le \abs{Y^+}-2 = 1$ and $4 \times f_1 \notin \tau'$. As just shown, $\tau' \in \ms T(X^-)$. Moreover, by Proposition~\ref{monotone}, we have $N_\tau(f_2) \supseteq N_{\tau'}(f_2)$, so $3 \times f_2 \notin \tau'$. Hence $\tau' \in \ms S$. Since $\abs{\tau' \cap Y^+} \le 1$, we have $P \not\subseteq \tau'$, and hence by Lemma~\ref{prec12}, $\tau \not\sim_{24} \tau'$. Thus, $\tau \prec_{24} \tau'$. This contradicts the maximality of $\tau$ in $\ms S$ with respect to $\preceq_{24}$. Hence $\ms T$ has a flip supported on $(Y^+,Y^-)$.

Let $\ms T'$ be the result of this flip. We have
\[
\ms T'(X^-)^\ast = \ms T(X^-)^\ast \minus \ms T(Y^-)^\ast \cup \Psi(\ms T(X \minus \{3 \times f_{j_2}\})^\ast).
\]
Hence $\abs{\ms T'(X^-)^\ast} < \abs{\ms T(X^-)^\ast}$. By Proposition~\ref{Iflip1}, since $\rho \cup \{2 \times f_{j_2}\}$, we have $\abs{(\ms T')^\text{I}} \le \abs{\ms T^\text{I}}$. It is easy to check from the above equation that if $\ms T(X^-)$ is $\tau_\text{I}$-good, then so is $\ms T'(X^-)$. Finally, if $\tau_\text{I} \in \ms T(Y^-)$, then as in the previous paragraph we would have $3 \times f_2 \notin \tau_\text{I}$, a contradiction. So $\tau_\text{I} \notin \ms T(Y^-)$, and hence $\tau_\text{I} \in \ms T'(X^-)$.

\paragraph{Subcase 2.2: $P$ has form (ii)}

Let $Y = \{Y^+,Y^-\}$ be the circuit
\begin{align*}
Y^- &= \{ 1 \times f_1, 4 \times f_{j_3}, 2 \times f_{j_2}, 3 \times f_3 \} \\
Y^+&= \{ 4 \times f_1, 2 \times f_{j_3}, 3 \times f_{j_2}, 1 \times f_3 \}.
\end{align*}
Let $\rho = Y \minus \{4 \times f_1\}$. Then $\rho$ is a maximal simplex of $\ms T_Y^+$ and $\rho \subseteq \tau$. By Proposition~\ref{minimal}, $\tau$ is the minimal element of $\ms T(Y^-)^\ast$ with respect to $\preceq_4$.

By the same arguments as in the previous subcase, we have $\ms T(Y^-) \subseteq \ms T(X^-)$ and $\ms T$ has a flip supported on $(Y^+,Y^-)$. If the result of this flip is $\ms T'$, then Proposition~\ref{Iflip3} implies $\abs{(\ms T')^\text{I}} = \abs{\ms T^\text{I}}$. Similar arguments to the previous subcase show that the rest of Claim~\ref{case1claim} holds. This concludes Case 2.

\subsubsection{Case 3: $N_1 = \{1,2,3\}$, $N_2 = \{3,4\}$}

Let $X = \{X^+,X^-\}$ be the circuit
\begin{align*}
X^- &= \{ 3 \times f_1, 4 \times f_2\} \\
X^+&= \{ 4 \times f_1, 3 \times f_2\}
\end{align*}
Let $\sigma = X \minus \{4 \times f_1\}$. Then $\sigma$ is a maximal simplex of $\ms T_X^+$, and $\sigma_\text{I} = \sigma \cup \{1 \times f_1, 2 \times f_1\} \subseteq \tau_\text{I}$. Thus, if $(X^+,X^-)$ supports a flip of $\ms T$, then this flip satisfies the conditions of Proposition~\ref{Iflip2}(b), and we would be done.

By Proposition~\ref{minimal}, $\tau_\text{I}$ is minimal in $\ms T(X^-)^\ast$ with respect to $\preceq_4$. We can use the same argument as in Case 1 to prove that $\ms T(X^-)$ is $\tau_\text{I}$-good.

Let
\begin{align*}
\ms T(X^-)_1 &= \{ \tau \in \ms T(X^-)^\ast : 2 \times f_1 \in \tau,\, 1 \times f_1 \notin \tau,\, X^+ \cap \tau = \emptyset \} \\
\ms T(X^-)_2 &= \{ \tau \in \ms T(X^-)^\ast : 1 \times f_1 \in \tau,\, 2 \times f_1 \notin \tau,\, X^+ \cap \tau = \emptyset \}
\end{align*}
We will prove the following.

\begin{prop} \label{12notempty}
Let $\ms T$ be any triangulation of $\Delta^{m-1} \times \Delta^{n-1}$ such that $\tau_\text{I} \in \ms T$ and $\ms T(X^-)$ is $\tau_\text{I}$-good. If $\ms T$ does not have a flip supported on $(X^+,X^-)$, then $\ms T(X^-)_1$ and $\ms T(X^-)_2$ are not both empty.
\end{prop}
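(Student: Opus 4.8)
The plan is to argue the contrapositive: assuming $\ms T$ has no flip supported on $(X^+,X^-)$, I will exhibit a maximal simplex lying in $\ms T(X^-)_1\cup\ms T(X^-)_2$. Since $\sigma=X\minus\{4\times f_1\}$ is a face of $\tau_\text{I}$, it lies in $\ms T$, so Proposition~\ref{flip} gives a maximal simplex $\tau$ of $\ms T(X^-)^\ast$ meeting $X$ in at most $\abs{X}-2=2$ points; as $X^-\subseteq\tau$ this intersection is forced to be exactly $X^-$, so $\tau\cap X^+=\emptyset$, i.e. $4\times f_1\notin\tau$ and $3\times f_2\notin\tau$. Everything hinges on how $N_\tau(f_1)$ meets $\{1,2\}$.

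The first observation is that such a $\tau$ cannot contain both $1\times f_1$ and $2\times f_1$: otherwise $\{1,2\}\subseteq N_\tau(f_1)$ while $4\notin N_\tau(f_1)$, so $\tau\in\ms T^\text{I}$, yet $3\times f_2\notin\tau$ forces $\sigma_\text{I}\not\subseteq\tau$, contradicting $\tau_\text{I}$-goodness(a). Hence $\tau$ contains at most one of $1\times f_1$, $2\times f_1$; if it contains exactly one, then $\tau$ already lies in $\ms T(X^-)_1$ or $\ms T(X^-)_2$ and we are done. So the entire content is to rule out the case in which no maximal simplex $\tau'\in\ms T(X^-)^\ast$ with $\tau'\cap X^+=\emptyset$ contains $1\times f_1$ or $2\times f_1$.

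For this I would not take an arbitrary $\tau$, but choose $\tau$ to be a maximal element, with respect to the partial order $\preceq_4$, of the (nonempty) set $\mc F:=\{\tau'\in\ms T(X^-)^\ast:\tau'\cap X^+=\emptyset\}$; by the previous paragraph I may assume $\tau$ contains neither $1\times f_1$ nor $2\times f_1$, so $N_\tau(f_1)=\{3\}$. Since $\tau_\text{I}$ is the $\preceq_4$-minimal element of $\ms T(X^-)^\ast$ and $N_{\tau_\text{I}}(f_2)=\{3,4\}$, Proposition~\ref{monotone} applied along a $\preceq_4$-chain from $\tau_\text{I}$ to $\tau$ gives $N_\tau(f_2)\subseteq\{3,4\}$, hence $N_\tau(f_2)=\{4\}$. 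Thus $f_1$ and $f_2$ are both pendant vertices of the spanning tree $T(\tau)$, attached at $3$ and $4$ respectively. The plan is then to delete a suitable edge of $\tau$: one wants an edge $e_i\times f_j\in\tau$ whose deletion is non-leaf-incident, lies off the $3$--$f_1$ and $4$--$f_2$ paths in $T(\tau)$ (so that by Proposition~\ref{elimination} the forced re-attachment $\tau''$ cannot be $\tau\minus\{e_i\times f_j\}\cup\{4\times f_1\}$ or $\tau\minus\{e_i\times f_j\}\cup\{3\times f_2\}$, whence $\tau''\cap X^+=\emptyset$), and leaves $3,4$ on the side opposite $e_i$ (so the cut can be oriented with $4\in I_2$). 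Such an edge yields $\tau\xrightarrow{I_1,I_2}\tau''$ with $4\in I_2$ and $\tau''\in\mc F$, contradicting the $\preceq_4$-maximality of $\tau$.

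The delicate point is precisely showing such an edge always exists. When $1$ and $2$ are themselves pendant in $T(\tau)$ there is no local exchange to perform on them, and $T(\tau)$ reduces to the two pendant edges at $3$ and $4$, the pendant edges of $1$ and $2$, and a spanning tree of the face $\{3,4\}\times(\Delta^{n-1}\minus\{f_1,f_2\})$ — essentially a triangulation of $\Delta^1\times\Delta^{n-3}$. Here one must either extract the needed edge from that core (and argue its forced re-attachment stays in $\mc F$) or, if that is obstructed, fall back to a short secondary induction via restriction to a face $\{1,3,4\}\times\Delta^{n-1}$ or $\{2,3,4\}\times\Delta^{n-1}$, where the analogous statement for $\Delta^2$ is available. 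Everything else — producing $\tau$ from Proposition~\ref{flip}, the goodness(a) dichotomy, and the reduction to pendant $f_1,f_2$ via Proposition~\ref{monotone} — is routine given the tools already developed.
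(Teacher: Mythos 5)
Your reduction steps are sound: producing a $\tau$ with $\tau\cap X^+=\emptyset$ via Proposition~\ref{flip}, ruling out $\{1\times f_1,2\times f_1\}\subseteq\tau$ via $\tau_\text{I}$-goodness(a), and forcing $N_\tau(f_1)=\{3\}$, $N_\tau(f_2)=\{4\}$ via Proposition~\ref{monotone} (these are essentially Proposition~\ref{restrictions3}(a) and (b)). Where you diverge from the paper is the choice of element: you pick a $\preceq_4$-\emph{maximal} element of $\mc F=\{\tau'\in\ms T(X^-)^\ast:\tau'\cap X^+=\emptyset\}$ and aim to derive a contradiction from maximality, whereas the paper extracts the path $P$ from $3$ to $4$ in $T(\tau)$ for an arbitrary $\tau\in\mc F$, shows it has one of two short forms (ruled by $\tau_\text{I}$-goodness(b)), and then passes to the $\preceq_4$-\emph{minimal} element of $\ms T(X^-\cup P)^\ast$, applying the 1-alternating-path characterization (Proposition~\ref{minimal}) to read off $1\times f_j$, $2\times f_{j'}\in\tau'$ for constrained $j,j'$ and land in $\ms T(X^-)_1\cup\ms T(X^-)_2$ directly.

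The problem is that your version is not actually a proof: the step you yourself flag as ``the delicate point'' --- producing an edge $e_i\times f_j$ of $\tau$ whose deletion (a) puts $4$ on the side opposite $e_i$, (b) re-attaches, via Proposition~\ref{elimination}, to an edge not equal to $4\times f_1$ or $3\times f_2$, and (c) thereby yields $\tau''\in\mc F$ with $\tau\prec_4\tau''$ --- is exactly the combinatorial content of the proposition, and you leave it as a plan. Several obstructions are real and untreated. Deletable edges $e_i\times f_j$ must have both $e_i$ and $f_j$ of degree at least $2$ in $T(\tau)$; in the configuration where $1$, $2$, $f_1$, $f_2$ are all leaves, the only candidates lie on the $3$--$4$ path, and with $e_i=4$ excluded by (a) the deletable edges may all have $e_i=3$, in which case the re-attachment lives in $\{1,2,4\}\times\{f_1\}$ and landing on $4\times f_1$ is not prevented by anything you state. (In a concrete small example the re-attachment \emph{does} hit $4\times f_1$; such configurations must be excluded using $\tau_\text{I}$-goodness(b), but that argument is absent.) The ``fall back to a secondary induction via restriction to a face $\{1,3,4\}\times\Delta^{n-1}$'' is also not workable as phrased, since $\ms T(X^-)_1$ and $\ms T(X^-)_2$ refer to both $1\times f_1$ and $2\times f_1$, and a restriction dropping one of $1,2$ changes the statement. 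In short: the framing is reasonable and the easy reductions are right, but the heart of the argument --- which the paper handles by going to the $\preceq_4$-minimal element and invoking Proposition~\ref{minimal} --- is missing.
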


\begin{proof}
We first note the following

\begin{prop} \label{restrictions3}
Let $\ms T$ be as above. Let $\tau \in \ms T(X^-)^\ast$. Suppose that $\tau \cap X^+ = \emptyset$. The following are true.
\begin{enumerate} \renewcommand{\labelenumi}{(\alph{enumi})}
\item $1 \times f_2$, $2 \times f_2 \notin \tau$
\item $\{1 \times f_1, 2 \times f_1 \} \not\subseteq \tau$
\end{enumerate}
\end{prop}

\begin{proof}
Since $\tau_\text{I}$ is minimal in $\ms T(X^-)^\ast$ with respect to $\preceq_4$, by Proposition~\ref{monotone} we have $N_{\tau_\text{I}}(f_2) \supseteq N_\tau(f_2)$. This proves (a).

Now suppose $\{1 \times f_1, 2 \times f_1 \} \subseteq \tau$. Since $\tau \cap X^+ = \emptyset$, it follows that $\tau \in \ms T^\text{I}$ and $\sigma_\text{I} \not\subseteq \tau$. This contradicts $\tau_\text{I}$-goodness(a). This proves (b).
\end{proof}

Now suppose $\ms T$ does not have a flip supported on $(X^+,X^-)$. By Proposition~\ref{flip}, there is some $\tau \in \ms T(X^-)^\ast$ with $\tau \cap X^+ = \emptyset$. Let
\[
P = \{ 3 \times f_{j_1}, i_1 \times f_{j_1}, i_1 \times f_{j_2}, i_2 \times f_{j_2}, \dotsc, i_{k-1} \times f_{j_k}, 4 \times f_{j_k} \}
\]
be the path from 3 to 4 in $T(\tau)$. If $k > 2$ and $\{i_1,i_2\} = \{1,2\}$, then we have $\{1,2\} \subseteq N_\tau(f_{j_2})$ and $j_2 \neq 1$, 2. This contradicts $\tau_\text{I}$-goodness(b). Hence, $P$ has one of the following forms:
\begin{align*}
P &= \{3 \times f_{j_1}, 4 \times f_{j_1}\}, \text{ or } \\
P &= \{3 \times f_{j_1}, i_1 \times f_{j_1}, i_1 \times f_{j_2}, 4 \times f_{j_2}\}.
\end{align*}
In either case, let $\tau'$ be the minimal element of $\ms T(X^- \cup P)^\ast$ with respect to $\preceq_4$. Since $P \subseteq \tau'$, we have $\tau' \cap X^+ = \emptyset$.

Suppose $P$ is of the first form. By Proposition~\ref{minimal}, we have $1 \times f_j$, $2 \times f_{j'} \in \tau'$ for some $j$, $j' = 1$, 2, or $j_1$. By Proposition~\ref{restrictions3}(a), we have $j$, $j' \neq 2$. If $j = j' = 1$, this contradicts Proposition~\ref{restrictions3}(b). If $j = j' = j_1$, this contradicts $\tau_\text{I}$-goodness(b). Hence, exactly one of $j$, $j'$ is 1 and the other is $j_1$. If $j' = 1$ and $j = j_1$, then $\tau' \in \ms T(X^-)_1$. Otherwise, $\tau' \in \ms T(X^-)_2$, as desired.

Now suppose $P$ is of the second form. First assume $i_1 = 1$. By Proposition~\ref{minimal}, we have $2 \times f_j \in \tau'$ for some $j = 1$, 2, $j_1$, or $j_2$. If $j \in \{j_1,j_2\}$ and $j \neq 1$, 2, then we have a contradiction by $\tau_\text{I}$-goodness(b). Also, Proposition~\ref{restrictions3}(a) implies $j \neq 2$. Hence $j = 1$. Since we proved $j \neq j_1$, we have also shown that $j_1 \neq 1$. (Also, since $1 \times f_{j_2} \in P$, we have $j_2 \neq 2$ by Proposition~\ref{restrictions3}(a).) Hence $\tau' \in \ms T(X^-)_1$. By an analogous argument, if $i_1 = 2$ then $\tau' \in \ms T(X^-)_2$.
\end{proof}

To finish this case, it suffices to prove the following.

\begin{claim} \label{case3claim}
Let $\ms T$ be any triangulation of $\Delta^{m-1} \times \Delta^{n-1}$ such that $\tau_\text{I} \in \ms T$ and $\ms T(X^-)$ is $\tau_\text{I}$-good. Suppose that $\ms T(X^-)_1$ is nonempty. Then $\ms T$ is flip-connected to a triangulation $\ms T'$ such that $\tau_\text{I} \in \ms T'$, $\ms T'(X^-)$ is $\tau_\text{I}$-good, $\abs{(\ms T')^\text{I}} \le \abs{\ms T^\text{I}}$, $\abs{\ms T'(X^-)_2} \le \abs{\ms T(X^-)_2}$, and $\abs{\ms T'(X^-)} < \abs{\ms T(X^-)}$.
\end{claim}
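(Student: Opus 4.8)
The plan is to mirror the proof of Claim~\ref{case1claim} and the Subcase analyses of Cases~1 and 2, with $\ms T(X^-)_1$ playing the role of $\ms S$ and with $\abs{\ms T(X^-)_2}$ carried along as an additional invariant to be preserved.

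Since $\ms T(X^-)_1 \neq \emptyset$, some $\tau \in \ms T(X^-)^\ast$ has $\tau \cap X^+ = \emptyset$, so by Proposition~\ref{flip} applied to $\sigma = X \minus \{4 \times f_1\} \in \ms T$ the triangulation $\ms T$ has no flip supported on $(X^+,X^-)$. First I would record the restrictions obeyed by a simplex $\tau \in \ms T(X^-)_1$, exactly as in Propositions~\ref{restrictions2} and \ref{restrictions3}: $\tau_\text{I}$-goodness together with $\preceq_4$-minimality of $\tau_\text{I}$ and Proposition~\ref{alternating} force $1 \times f_2, 2 \times f_2 \notin \tau$, while $2 \times f_1 \in \tau$ and $1 \times f_1, 4 \times f_1, 3 \times f_2 \notin \tau$ by definition of $\ms T(X^-)_1$. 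Repeating the path analysis from the proof of Proposition~\ref{12notempty}, the path $P$ from $3$ to $4$ in $T(\tau)$, together with the way $2 \times f_1$ and $3 \times f_1$ attach to it, is pinned to a short list of forms, the analogues of forms~(i) and~(ii) in Subcases~2.1 and 2.2.

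Next I would choose $\tau \in \ms T(X^-)_1$ maximal with respect to an appropriate quasiorder $\preceq_{i_1i_2}$ and, via Lemma~\ref{prec12}, Proposition~\ref{minimal}, and Proposition~\ref{monotone} as in the Subcases, replace $\tau$ by a $\sim$-equivalent member of $\ms T(X^-)_1$ that contains $P$ and is $\preceq_\bullet$-minimal in $\ms T(X^- \cup P)^\ast$. For each form of $P$ I would write down an auxiliary circuit $Y = \{Y^+,Y^-\}$ built around the segment of $T(\tau)$ joining $f_1$ to $P$, arranged so that $Y^- \subseteq \tau$, some maximal simplex $\rho$ of $\ms T_Y^+$ lies in $\tau$, and $\tau$ is $\preceq_\bullet$-minimal in $\ms T(Y^-)^\ast$. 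The two structural checks go exactly as in the earlier Subcases: (i) $\ms T(Y^-) \subseteq \ms T(X^-)$, using that $4 \times f_2 \in Y^-$ and a Proposition~\ref{monotone} argument from minimality of $\tau$ to get $3 \times f_1 \in \tau'$ for each $\tau' \in \ms T(Y^-)^\ast$; and (ii) $\ms T$ has a flip supported on $(Y^+,Y^-)$, by supposing otherwise, using Proposition~\ref{flip} to produce $\tau' \in \ms T(Y^-)^\ast \subseteq \ms T(X^-)^\ast$ with $\tau' \in \ms T(X^-)_1$ and $P \not\subseteq \tau'$, and then Lemma~\ref{prec12} to conclude $\tau \prec_{i_1i_2} \tau'$, contradicting maximality. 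Where a form does not admit this direct flip, I would first apply an intermediate flip of Claim~\ref{case1claim} type; this is where the ``flip-connected'' conclusion (rather than a single flip) is used.

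Finally, let $\ms T'$ be the result of the $(Y^+,Y^-)$-flip. Proposition~\ref{shapeshift} gives the explicit formula $\ms T'(X^-)^\ast = \ms T(X^-)^\ast \minus \ms T(Y^-)^\ast \cup \Psi(\dotsb)$ with the added-back terms being $\Psi$-images of restrictions of $\ms T$ to proper facets of $Y$, which yields $\abs{\ms T'(X^-)} < \abs{\ms T(X^-)}$. The appropriate one of Propositions~\ref{Iflip1}, \ref{Iflip2}, \ref{Iflip3} (depending on the parity of $\abs{Y^-}$, as in Subcases~2.1 and 2.2) gives $\abs{(\ms T')^\text{I}} \le \abs{\ms T^\text{I}}$; $\tau_\text{I} \in \ms T'$ holds because $2 \times f_1 \in \tau_\text{I}$ forces $\tau_\text{I} \notin \ms T(Y^-)$ via Proposition~\ref{monotone}; and $\tau_\text{I}$-goodness of $\ms T'(X^-)$ is read off the formula. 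The one genuinely new check is $\abs{\ms T'(X^-)_2} \le \abs{\ms T(X^-)_2}$: from the formula together with Proposition~\ref{shapeshift}, the flip never turns a simplex with $1 \times f_1 \notin \tau$ into a member of $\ms T(X^-)_2$, because $Y$ is chosen so that the shape change at $f_1$ can only delete $2$ from $N(f_1)$ and never exchange $2$ for $1$. I expect the main obstacle to be exactly the identification of the auxiliary circuits $Y$ and the verification of $\ms T(Y^-) \subseteq \ms T(X^-)$ and of flip-existence in each subcase --- the same core of the argument as in Claim~\ref{case1claim} --- together with this new bookkeeping that $\abs{\ms T(X^-)_2}$ does not grow.
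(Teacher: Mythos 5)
Your proposal correctly identifies the broad template (choose a maximal element, build an auxiliary circuit $Y$, argue via $\preceq$-minimality), but it misses the structural reason Case~3 is harder than Cases~1 and~2, and as a result the central step fails.

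The paper explicitly warns that Case~3 ``will require one extra step compared to the other two Cases,'' and the reason is the following. You propose to show that $\ms T$ has a flip supported on $(Y^+,Y^-)$ by contradiction: if not, Proposition~\ref{flip} produces an obstruction $\tau' \in \ms T(Y^-)^\ast$ which, you claim, lies in $\ms T(X^-)_1$, has $P \not\subseteq \tau'$, and hence (by Lemma~\ref{prec12}) strictly dominates $\tau$ in $\preceq_{14}$, contradicting maximality. But the obstruction $\tau'$ does \emph{not} lie in $\ms T(X^-)_1$: membership in $\ms T(X^-)_1$ requires $2 \times f_1 \in \tau'$ and $1 \times f_1 \notin \tau'$, whereas the paper's Proposition~\ref{restrictions4}(c) shows precisely that $i \times f_j \notin \tau'$ for all $i, j \in \{1,2\}$ --- in particular $2 \times f_1 \notin \tau'$. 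So the obstruction set $\ms S$ for the circuit $Y$ is \emph{disjoint} from $\ms T(X^-)_1$, and there is nothing in $\ms T(X^-)_1$ to compare $\tau$ against; the maximality contradiction you invoke is vacuous. This is exactly why the paper introduces a new goodness notion ($\tau_0$-good, relative to a new witness $\sigma_0$ and the condition of lying in $\ms T(X^-)_1$), then states and proves the nested Claim~\ref{case3subclaim1}, which does \emph{not} assert the $(Y^+,Y^-)$-flip exists but instead reduces $\abs{\ms T(Y^-)^\ast}$ by yet another flip; and inside that nested claim a third circuit $Z$, a restricted family $\ms S' = \{\tau \in \ms S : 1 \times f_{j_1} \in \tau\}$, and a fresh quasiorder $\preceq_{23}$ are needed to locate that flip. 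Your phrase ``I would first apply an intermediate flip of Claim~\ref{case1claim} type'' gestures at this but does not produce it: the intermediate flips are not of the same type, since the quantity being decreased is $\abs{\ms T(Y^-)^\ast}$ rather than $\abs{\ms T(X^-)^\ast}$, the preservation of $\tau_0$ and its goodness has to be tracked alongside the original invariants, and the circuit along which to flip has to be found in $\ms S'$ rather than in $\ms S$. Until this extra layer is actually constructed, the argument does not close.

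One smaller point: your heuristic for $\abs{\ms T'(X^-)_2} \le \abs{\ms T(X^-)_2}$ (``the shape change at $f_1$ can only delete $2$ from $N(f_1)$'') is not quite how the paper argues it. The paper checks directly from the $\Psi$-formula that if $\Psi(\tau) \in \ms T'(X^-)_2$ then $\tau \in \ms T(X^-)_2$, and in the nested claim the bookkeeping of $\abs{\ms T'(X^-)_2}$ is carried as an explicit invariant (item (e) of Claim~\ref{case3subclaim1}) alongside $\abs{\ms T'(X^-)^\ast}$ (item (f)), both of which need to be re-verified for the $Z$-flips as well as the final $Y$-flip. Your sketch only addresses the final step.
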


Using this claim, we can apply flips that our original triangulation $\ms T$ to eventually reach some $\ms T'$ with $\abs{(\ms T')^\text{I}} \le \abs{\ms T^\text{I}}$, $\abs{\ms T'(X^-)_2} \le \abs{\ms T(X^-)_2}$, and $\ms T'(X^-)_1 = \emptyset$. By symmetry, the claim also holds after switching the roles of 1 and 2. Thus, we can show $\ms T'$ is flip-connected to some $\ms T''$ with $\abs{(\ms T'')^\text{I}} \le \abs{\ms T^\text{I}}$ and $\ms T''(X^-)_1 = \ms T''(X^-)_2 = \emptyset$. Moreover, the claim implies $\tau_\text{I} \in \ms T''$ and $\ms T''(X^-)$ is $\tau_\text{I}$-good. By Proposition~\ref{12notempty}, it follows that $\ms T''$ has a flip supported on $(X^+,X^-)$, and by Proposition~\ref{Iflip2}(b), this finishes the Case.

We now prove Claim~\ref{case3claim}. Let $\tau_0$ be a maximal element of $\ms T(X^-)_1$ with respect to $\preceq_{14}$. Let $P$ be the path from 3 to 4 in $T(\tau_0)$. By the proof of Proposition~\ref{12notempty}, $P$ satisifies one of the following.\footnote{We have $i_1 = 1$ in case (ii) by the definition of $\ms T(X^-)_1$. Also, notice that we have switched $j_2$ and $j_1$ in case (ii) from the original notation; this will allow us to later talk about both cases simultaneously.}
\begin{enumerate} \renewcommand{\labelenumi}{(\roman{enumi})}
\item $P = \{3 \times f_{j_1}, 4 \times f_{j_1}\}$, $j_1 \neq 1$, 2
\item $P = \{3 \times f_{j_2}, 1 \times f_{j_2}, 1 \times f_{j_1}, 4 \times f_{j_1}\}$, $j_2 \neq 1$, $j_1 \neq 2$
\end{enumerate}
We will deal with these subcases simultaneously, but our proof will require one extra step compared to the other two Cases. Let $\sigma_0$ be the unique minimal subset of $\tau_0$ such that $T(\sigma_0)$ has a connected component containing $\{2,3,4\}$. Call a collection $\ms S \subseteq \ms T$ of simplices $\tau_0$-\emph{good} if there is no maximal simplex $\tau \in \ms S$ such that $\tau \in \ms T(X^-)_1$ and $\sigma_0 \not\subseteq \tau$.

Now, if we have case (i), let $Y = \{Y^+,Y^-\}$ be the circuit
\begin{align*}
Y^- &= \{ 3 \times f_1, 4 \times f_{j_1} \} \\
Y^+&= \{ 4 \times f_1, 3 \times f_{j_1} \}
\end{align*}
If we have case (ii), let $Y = \{Y^+,Y^-\}$ be the circuit
\begin{align*}
Y^- &= \{ 3 \times f_1, 4 \times f_{j_1}, 1 \times f_{j_2} \} \\
Y^+&= \{ 4 \times f_1, 1 \times f_{j_1}, 3 \times f_{j_2} \}
\end{align*}
In either case, let $\rho = Y \minus \{4 \times f_1\}$. Since $\tau_0 \in \ms T(X^-)_1$, we have $2 \times f_1 \in \tau_0$. Hence $\sigma_0 = \rho \cup \{2 \times f_1\}$, and $\sigma_0 \cup \{4 \times f_2\} \subseteq \tau_0$.

In case (ii), we have by Proposition~\ref{minimal} that $\tau_0$ is the minimal element of $\ms T(Y^-)$ with respect to $\preceq_4$. Suppose we have case (i). Let $\tau_\ast$ be the minimal element of $\ms T(\sigma_0 \cup \{4 \times f_2\})$ with respect to $\preceq_4$. By the proof of Proposition~\ref{12notempty}, since $2 \times f_1 \in \tau_\ast$, we have $1 \times f_{j_1} \in \ms \tau_\ast$ and $\tau_\ast \in \ms T(X^-)_1$. By Proposition~\ref{minimal}, it follows that $\tau_\ast$ is minimal in $\ms T(Y^-)$ with respect to $\preceq_4$. Also, since $\sigma_0 \subseteq \tau_\ast$, by Lemma~\ref{prec12} we have $\tau_0 \sim_{14} \tau_\ast$. Hence $\tau_\ast$ is a maximal element of $\ms T(X^-)_1$ with respect to $\preceq_{14}$, and $P \subseteq \tau_\ast$. We may thus redefine $\tau_0$ as $\tau_0 = \tau_\ast$.

In either case, since $\tau_0$ is minimal in $\ms T(Y^-)$ with respect to $\preceq_4$, the same argument as in the previous Cases shows that $\ms T(Y^-)$ is $\tau_0$-good.

We now reach our final claim.

\begin{claim} \label{case3subclaim1}
Let $\ms T$ be any triangulation of $\Delta^{m-1} \times \Delta^{n-1}$ such that $\tau_\text{I}$, $\tau_0 \in \ms T$, $\ms T(X^-)$ is $\tau_\text{I}$-good, and $\ms T(Y^-)$ is $\tau_0$-good. Then either there is a flip of $\ms T$ supported on $(Y^+,Y^-)$, or there is a flip of $\ms T$ with result $\ms T'$ such that
\begin{multicols}{2}
\begin{enumerate} \renewcommand{\labelenumi}{(\alph{enumi})}
\item $\tau_\text{I}$, $\tau_0 \in \ms T'$
\item $\ms T'(X^-)$ is $\tau_\text{I}$-good
\item $\ms T'(Y^-)$ is $\tau_0$-good
\item $\abs{(\ms T')^\text{I}} \le \abs{\ms T^\text{I}}$
\item $\abs{\ms T'(X^-)_2} \le \abs{\ms T(X^-)_2}$
\item $\abs{\ms T'(X^-)^\ast} \le \abs{\ms T(X^-)^\ast}$
\item $\abs{\ms T'(Y^-)^\ast} < \abs{\ms T(Y^-)^\ast}$
\end{enumerate}
\end{multicols}
\end{claim}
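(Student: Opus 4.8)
The plan is to run the proof of Claim~\ref{case1claim} with the circuit $(Y^+,Y^-)$ playing the role of $(X^+,X^-)$ there, while dragging along $\tau_\text{I}$, $\sigma_\text{I}$, $X$ and the extra bookkeeping demanded by conclusions (d)--(g). Set $\ms S := \{\tau\in\ms T(Y^-)^\ast : \abs{\tau\cap Y^+}\le\abs{Y^+}-2\text{ and }4\times f_1\notin\tau\}$. Since $\rho=Y\minus\{4\times f_1\}\subseteq\sigma_0\subseteq\tau_0$ lies in $\ms T$, Proposition~\ref{flip} gives that $\ms T$ has a flip supported on $(Y^+,Y^-)$ if and only if $\ms S=\emptyset$; in that case we are done, so assume $\ms S\neq\emptyset$. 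A first useful remark is that $\ms T(Y^-)^\ast\subseteq\ms T(X^-)^\ast$: any $\tau\in\ms T(Y^-)^\ast$ satisfies $\tau_0\preceq_4\tau$, so $N_\tau(4)\supseteq N_{\tau_0}(4)\ni f_2$ by Proposition~\ref{monotone}, and $3\times f_1\in Y^-\subseteq\tau$, whence $X^-\subseteq\tau$. Consequently every $\tau\in\ms S$ satisfies $\sigma_0\not\subseteq\tau$ (in case (i) because $3\times f_{j_1}\in Y^+$ is missing from $\tau$; in case (ii) because $\rho$ contains both $1\times f_{j_1}$ and $3\times f_{j_2}$ while $\abs{\tau\cap Y^+}\le1$), so $\tau_0$-goodness of $\ms T(Y^-)$ forces $\tau\notin\ms T(X^-)_1$; and since $\tau_\text{I}$ is minimal in $\ms T(X^-)^\ast$ with respect to $\preceq_4$, Proposition~\ref{monotone} gives $N_\tau(f_2)\subseteq N_{\tau_\text{I}}(f_2)=\{3,4\}$, so $1\times f_2,2\times f_2\notin\tau$. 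Combining these facts with $\tau_\text{I}$-goodness of $\ms T(X^-)$, Proposition~\ref{elimination}, and alternating-path arguments against $\tau_\text{I}$ exactly as in the proof of Proposition~\ref{restrictions2}, one obtains the shape restrictions on the simplices of $\ms S$ --- the analogue of Propositions~\ref{restrictions}/\ref{restrictions2}.

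Next choose $\tau$ maximal in $\ms S$ with respect to $\preceq_{24}$ and let $P'$ be the path from $3$ to $4$ in $T(\tau)$. The restrictions above, fed through Proposition~\ref{minimal} applied to minimal elements of $\ms T(Y^-\cup P')^\ast$ with respect to $\preceq_4$ (the same maneuver used to rule out $P=\{1\times f_{j_1},4\times f_{j_1}\}$ in Case~1), force $P'$ into one of a short list of forms --- morally $P'=\{3\times f_a,1\times f_a,1\times f_b,4\times f_b\}$ and a longer variant passing through vertex $2$ --- and, as in Case~1, allow $\tau$ to be replaced by a minimal element of $\ms T(Y^-\cup P')^\ast$ with respect to $\preceq_4$ that is still $\preceq_{24}$-maximal in $\ms S$. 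For each surviving form one builds an auxiliary circuit $Z=\{Z^+,Z^-\}$ out of $Y^-$ and $P'$, with $\rho_Z:=Z\minus\{4\times f_1\}\subseteq\tau$ a maximal simplex of $\ms T_Z^+$, precisely mirroring the constructions in Subcases~1.1--2.2. One checks $\ms T(Z^-)\subseteq\ms T(Y^-)$ (via Proposition~\ref{monotone} and minimality of $\tau$ with respect to $\preceq_4$), and that $\ms T$ has a flip supported on $(Z^+,Z^-)$: otherwise Proposition~\ref{flip} produces $\tau'\in\ms T(Z^-)^\ast$ with $\abs{\tau'\cap Z^+}$ small and $4\times f_1\notin\tau'$, Proposition~\ref{monotone} puts $\tau'\in\ms S$, and $P'\not\subseteq\tau'$ together with Lemma~\ref{prec12} gives $\tau\prec_{24}\tau'$, contradicting maximality.

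Let $\ms T'$ be the result of flipping $(Z^+,Z^-)$ and let $\Psi$ be as in Proposition~\ref{shapeshift}. Writing $\ms T'(Y^-)^\ast$ and $\ms T'(X^-)^\ast$ explicitly via Proposition~\ref{shapeshift} --- each equals $\ms T(Y^-)^\ast$, respectively $\ms T(X^-)^\ast$, with $\ms T(Z^-)^\ast$ deleted and the $\Psi$-images of the (strictly fewer) sets $\ms T(Z\minus\{z\})^\ast$, $z\in Z^+\minus\{4\times f_1\}$, adjoined --- yields conclusions (g) and (f); conclusions (d) and (e) then follow from Propositions~\ref{Iflip1}/\ref{Iflip2}/\ref{Iflip3} applied to $Z$ (whose $\rho_Z$ carries the vertex those propositions need) together with the observation that the flip changes $N_\tau(f_j)$ only for the handful of $f_j$ occurring in $Z$; conclusions (b) and (c), preservation of $\tau_\text{I}$- and $\tau_0$-goodness, are read off the same explicit descriptions exactly as in Cases~1--2; and (a) holds because $\tau_0\supseteq\rho$ meets $Y^+$ too heavily to belong to $\ms T(Z^-)$, while $\tau_\text{I}$ cannot belong to $\ms T(Z^-)$ since $3\in N_{\tau_\text{I}}(f_2)$ whereas membership there would force (via Proposition~\ref{monotone}) $3\times f_2\notin\tau_\text{I}$.

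The main obstacle is the path analysis together with the simultaneous verification of (a)--(g). Unlike Cases~1 and~2 we must now track two reference simplices, $\tau_\text{I}$ and $\tau_0$, and preserve two separate goodness conditions at once, all while maintaining the new inequalities (e) on $\abs{\ms T(X^-)_2}$ and (f) on $\abs{\ms T(X^-)^\ast}$; pushing the classification of $P'$ far enough that only circuits $Z$ compatible with all of these constraints survive, and then discharging every one of (a)--(g) for both forms of $P'$ and both cases (i),(ii) of the circuit $Y$, is where essentially all the work lies --- though each individual step is of the type already carried out in the proofs of Claim~\ref{case1claim} and Proposition~\ref{12notempty}.
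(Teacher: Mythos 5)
The overall scaffold you describe — pick a maximal element of the blocking set under a two-index quasiorder, classify the path from $3$ to $4$, build an auxiliary circuit, flip it, read off the counts from the explicit $\Psi$-formula — does match the paper's architecture, but several of your specific choices are wrong in a way that would sink the argument.

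First, you skip the intermediate set $\ms S' = \{\tau \in \ms S : 1 \times f_{j_1} \in \tau\}$ (and Proposition~\ref{s'notempty}, its nonemptiness). This is not cosmetic. The circuit $Z$ the paper builds contains $4 \times f_{j_1}$ in $Z^-$, and the whole point of restricting to $\ms S'$ is to guarantee $1 \times f_{j_1} \in \tau$ so that $\pi \cup X^- \cup \{1\times f_{j_1}\} \subseteq \tau$, which is what lets Proposition~\ref{minimal} identify $\tau$ (after resetting to the minimal element) with the $\preceq_3$-minimal element of $\ms T(Z^-)^\ast$. Without the $\ms S'$ restriction you have no handle on where vertex $1$ attaches, and the subsequent arguments collapse.

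Second, your path classification is wrong. You propose that the surviving forms of $P'$ are ``morally $\{3\times f_a, 1\times f_a, 1\times f_b, 4\times f_b\}$ and a longer variant through $2$,'' but the form through vertex $1$ is exactly the one the paper \emph{rules out}: if $P'$ passed through $1$, the proof of Proposition~\ref{12notempty} would produce a simplex in $\ms T(X^-)_1$ containing $P'$, and since $P' \neq P$ it would not contain $\sigma_0$, contradicting $\tau_0$-goodness of $\ms T(Y^-)$. The surviving forms are the edge $\{3\times f_{j_1'}, 4\times f_{j_1'}\}$ and the path through vertex $2$. Relatedly, your $\rho_Z := Z \minus \{4\times f_1\}$ cannot be right: the paper's $Z$ never contains $4\times f_1$ (it trades $3\leftrightarrow 4$ at $f_{j_1'}$ and $f_{j_1}$, with $j_1\neq 1$), and the maximal simplices of $\ms T_Z^+$ used are $Z\minus\{3\times f_{j_t}\}$ and $Z\minus\{4\times f_{j_1'}\}$, the latter of which requires a separate argument (via Proposition~\ref{elimination} and maximality in $\preceq_{23}$) to even lie in $\ms T$. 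Finally, the quasiorder used for maximality is $\preceq_{23}$, not $\preceq_{24}$; this matters because the relevant $\sigma_r$ of the circuit $Z$ is $Z\minus\{3\times f_{j_1}\}$, so Proposition~\ref{minimalcontainssimplex} ties the argument to $\preceq_3$, with $2$ the free direction. Your $\preceq_{24}$/$\preceq_4$ choices would put the free direction on the wrong axis and would not reproduce the containments you invoke when appealing to Lemma~\ref{prec12}.
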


Let us see how this will complete the proof of Case 3. Suppose $\ms T$ satisfies (a)-(c). We first show that $\ms T(Y^-) \subseteq \ms T(X^-)$ and $\tau_\text{I} \notin \ms T'(Y^-)$. For the first claim, let $\tau \in \ms T(Y^-)$. Clearly $3 \times f_1 \in \tau$. Also, by Proposition~\ref{minimal}, $\tau_0$\ is minimal in $\ms T(Y^-)$ with respect to $\preceq_4$. Hence by Proposition~\ref{monotone}, we have $N_{\tau_0}(4) \subseteq N_\tau(4)$, so $4 \times f_2 \in \tau$. Thus $\tau \in \ms T(X^-)$, as desired. For the second claim, if $\tau_\text{I} \in \ms T(Y^-)$, then by Proposition~\ref{monotone} $N_{\tau_0}(f_2) \supseteq N_{\tau_\text{I}}(f_2)$, so $3 \times f_2 \notin \tau_\text{I}$, a contradiction. Hence $\tau_\text{I} \notin \ms T(Y^-)$.

Now, by Claim~\ref{case3subclaim1}, $\ms T$ is flip-connected to a triangulation $\ms T'$ that satisfies (a)-(f) of the Claim and such that there is a flip of $\ms T'$ supported on $(Y^+,Y^-)$. Let $\ms T''$ be the result of this flip. In case (i), we have
\[
\ms T''(X^-)^\ast = \ms T'(X^-)^\ast \minus \ms T'(Y^-)^\ast \cup \Psi( \ms T'(Y \minus \{3 \times f_{j_1}\})^\ast )
\]
and in case (ii), we have
\[
\ms T''(X^-)^\ast = \ms T'(X^-)^\ast \minus \ms T'(Y^-)^\ast \cup \Psi( \ms T'(Y \minus \{1 \times f_{j_1}\})^\ast ) \cup \Psi( \ms T'(Y \minus \{3 \times f_{j_2}\})^\ast ).
\]
In both cases, $\abs{\ms T''(X^-)^\ast} < \abs{\ms T'(X^-)^\ast}$. We now check the rest of the conclusions of Claim~\ref{case3claim}. By Proposition~\ref{Iflip2}(a) in case (i) and Proposition~\ref{Iflip1}(a) in case (ii), we have $\abs{(\ms T'')^\text{I}} \le \abs{(\ms T')^\text{I}}$. It is easy to check from the above equations that if $\ms T'(X^-)$ is $\tau_\text{I}$-good, then so is $\ms T''(X^-)$. As shown above, $\tau_\text{I} \notin \ms T'(Y^-)$, so $\tau_\text{I} \in \ms T''$. Finally, it is easy to check that if $\Psi(\tau) \in \ms T''(X^-)_2$, then $\tau \in \ms T'(X^-)_2$. Hence, $\abs{\ms T''(X^-)_2} \le \abs{\ms T'(X^-)_2}$. This proves Claim~\ref{case3claim}.

We now prove Claim~\ref{case3subclaim1}.

\begin{proof}[Proof of Claim~\ref{case3subclaim1}]
Suppose $\ms T$ does not have a flip supported on $(Y^+,Y^-)$. Let $\ms S$ be the set of maximal simplices $\tau \in \ms T(Y^-)^\ast$ such that $\abs{\tau \cap Y^+} \le \abs{Y^+} - 2$ and $4 \times f_1 \notin \ms \tau$. Since $\rho \in \ms T$, by Proposition~\ref{flip}, $\ms S \neq \emptyset$.

As shown above, $\tau_0$ is the minimal element of $\ms T(Y^-)^\ast$ with respect to $\preceq_4$ and $\ms T(Y^-) \subseteq \ms T(X^-)$. We also prove the following.

\begin{prop} \label{restrictions4}
Let $\tau \in \ms S$. The following are true.
\begin{enumerate} \renewcommand{\labelenumi}{(\alph{enumi})}
\item $\tau \cap X^+ = \emptyset$
\item $2 \times f_{j_r} \notin \tau$ for all $r$
\item $i \times f_j \notin \tau$ for all $i$, $j \in \{1, 2\}$
\end{enumerate}
\end{prop}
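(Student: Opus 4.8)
The plan is to fix an arbitrary $\tau \in \ms S$ and prove (a), (b), (c) in that order, drawing on four standing facts: the inclusions $\ms S \subseteq \ms T(Y^-)^\ast \subseteq \ms T(X^-)^\ast$; that $\tau_\text{I}$ and $\tau_0$ are the (unique, hence smallest) $\preceq_4$-minimal elements of $\ms T(X^-)^\ast$ and $\ms T(Y^-)^\ast$, so that $\tau_\text{I} \preceq_4 \tau$ and $\tau_0 \preceq_4 \tau$; that $\ms T(X^-)$ is $\tau_\text{I}$-good and $\ms T(Y^-)$ is $\tau_0$-good; and the inequality $\abs{\tau \cap Y^+} \le \abs{Y^+} - 2$ built into the definition of $\ms S$, together with Proposition~\ref{alternating}. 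By ``monotonicity'' I mean chaining Proposition~\ref{monotone} along a $\preceq_4$-chain exactly as in the proof of Proposition~\ref{restrictions3}: if $\sigma^\sharp \preceq_4 \tau$ and $f_j \in N_{\sigma^\sharp}(4)$, then $N_\tau(f_j) \subseteq N_{\sigma^\sharp}(f_j)$.

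For (a): $4 \times f_1 \notin \tau$ holds by the definition of $\ms S$. Since $\tau_0 \preceq_4 \tau$ and $f_2 \in N_{\tau_0}(4)$ (because $4 \times f_2 \in X^- \subseteq \tau_0$), monotonicity gives $N_\tau(f_2) \subseteq N_{\tau_0}(f_2)$, and $3 \notin N_{\tau_0}(f_2)$ because $3 \times f_2 \in X^+$ while $X^+ \cap \tau_0 = \emptyset$ (as $\tau_0 \in \ms T(X^-)_1$); hence $3 \times f_2 \notin \tau$, proving (a). Next, $\tau_\text{I} \preceq_4 \tau$ and $f_2 \in N_{\tau_\text{I}}(4)$ give $N_\tau(f_2) \subseteq N_{\tau_\text{I}}(f_2) = \{3,4\}$; with $3 \notin N_\tau(f_2)$ this forces $N_\tau(f_2) = \{4\}$, so in particular $1 \times f_2, 2 \times f_2 \notin \tau$, which is the $f_2$-part of (c). (Alternatively, once (a) gives $\tau \cap X^+ = \emptyset$ this part is immediate from Proposition~\ref{restrictions3}(a).)

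For (b): let $d$ be the $f$-index of an edge of the path $P$, i.e.\ $d = j_1$ in case (i) and $d \in \{j_1,j_2\}$ in case (ii); these are exactly the $f$-indices of $Y^-$ other than that of $f_1$. Assume $2 \times f_d \in \tau$. Then the path from $2$ to $f_d$ in $T(\tau)$ is the single edge $2\,f_d$, which lies in neither $X^-$ nor $Y^-$ and is therefore a $2$-alternating path with respect to either. On the other hand, using $2 \times f_1 \in \tau_0$ (from $\tau_0 \in \ms T(X^-)_1$), $3 \times f_1 \in \tau_0$, and the edges of $P \subseteq \tau_0$, the path from $2$ to $f_d$ in $T(\tau_0)$ is $2 - f_1 - 3$ continued along $P$ to $f_d$; checking edge positions shows this longer path is $2$-alternating with respect to $X^-$ when $f_d$ is reached before $P$ leaves its initial ``$X^-$-block'' (case (i), and case (ii) with $d = j_2$) and with respect to $Y^-$ otherwise (case (ii) with $d = j_1$). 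In each case two distinct $2$-alternating paths between the same endpoints appear in $\ms T(X^-)$ or $\ms T(Y^-)$, contradicting Proposition~\ref{alternating}; hence $2 \times f_d \notin \tau$.

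It remains to establish the $f_1$-part of (c). First, $\{1,2\} \not\subseteq N_\tau(f_1)$: otherwise $4 \notin N_\tau(f_1)$ (by (a)) would put $\tau \in \ms T^\text{I}$, and $3 \times f_2 \notin \tau$ (by (a)) would give $\sigma_\text{I} \not\subseteq \tau$, contradicting $\tau_\text{I}$-goodness(a) of $\ms T(X^-)$. Hence $2 \times f_1 \in \tau$ would force $1 \times f_1 \notin \tau$, which with $X^+ \cap \tau = \emptyset$ places $\tau$ in $\ms T(X^-)_1$; then $\tau_0$-goodness of $\ms T(Y^-)$ forces $\sigma_0 \subseteq \tau$, hence $\rho = Y \minus \{4 \times f_1\} \subseteq \tau$; but $\rho \cap Y^+ = Y^+ \minus \{4 \times f_1\}$ has $\abs{Y^+}-1$ elements while $\abs{\tau \cap Y^+} \le \abs{Y^+}-2$ and $4 \times f_1 \notin \tau$, a contradiction --- so $2 \times f_1 \notin \tau$. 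The remaining claim, $1 \times f_1 \notin \tau$, is the step I expect to be the main obstacle: no monotonicity bounds $N_\tau(f_1)$ from above, and $\tau_0$-goodness constrains only $\ms T(X^-)_1$, not $\ms T(X^-)_2$, which is precisely where a hypothetical $\tau$ with $1 \times f_1 \in \tau$ and $2 \times f_1 \notin \tau$ would live. My intended route is to combine the structural description of the $3$-to-$4$ path of $T(\tau)$ from the proof of Proposition~\ref{12notempty} --- once $\tau \cap X^+ = \emptyset$ and $\ms T(X^-)$ is $\tau_\text{I}$-good, that path has at most one intermediate $e$-vertex --- with Proposition~\ref{alternating}: the extra edge $1 \times f_1$ then forces a short $X^-$- or $Y^-$-alternating path between two vertices that clashes with the longer alternating path dictated by the $\preceq_4$-minimality of $\tau_0$ and the explicit form of $\rho$, the inequality $\abs{\tau \cap Y^+} \le \abs{Y^+}-2$ being exactly what eliminates the degenerate configurations; cases (i) and (ii) are handled separately, with $X^-$ or $Y^-$ chosen as the reference simplex to match the parity of the alternation.
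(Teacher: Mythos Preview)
Your arguments for (a), (b), the $f_2$-part of (c), and the exclusion of $2 \times f_1$ are correct and essentially match the paper (your (b) splits into cases on the reference simplex $X^-$ versus $Y^-$, whereas the paper uses $Y^-$ uniformly, but your version still works). The genuine gap is exactly where you flag it: ruling out $1 \times f_1 \in \tau$.

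Your proposed route---analyzing the $3$-to-$4$ path in $T(\tau)$ and looking for a direct alternating-path clash with $T(\tau_0)$---does not go through. If $1 \times f_1 \in \tau$, the single edge $f_1 \to 1$ is a $2$-alternating path in $T(\tau)$, but the path from $f_1$ to $1$ in $T(\tau_0)$ (e.g.\ $f_1 - 3 - f_{j_1} - 1$ in case (i)) is \emph{not} $2$-alternating with respect to either $X^-$ or $Y^-$: its second edge $3 \times f_{j_1}$ lies in $Y^+$, not $Y^-$, and is not in $X^-$ either. So Proposition~\ref{alternating} gives no contradiction. The constraint $\abs{\tau \cap Y^+} \le \abs{Y^+}-2$ does not rescue this, because the clash you need is between $\tau$ and $\tau_0$, and $\tau_0$ contains all of $Y^+ \minus \{4 \times f_1\}$.

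The paper's missing idea is to \emph{change simplices}: assuming $1 \times f_1 \in \tau$, pass to the $\preceq_4$-minimal element $\tau'$ of $\ms T(X^- \cup Y^- \cup \{1 \times f_1\})^\ast$. Proposition~\ref{minimal} then forces the $1$-alternating path from $4$ to $2$ in $T(\tau')$ to end at some $2 \times f_j$ with $j \in \{1,2\} \cup \{j_r\}$. Monotonicity (from $\tau' \preceq_4 \tau$ and $\tau_0 \preceq_4 \tau'$) shows $\tau' \cap X^+ = \emptyset$, so Proposition~\ref{restrictions3} applies to $\tau'$: this kills $j=1$ (part (b), since $1 \times f_1 \in \tau'$) and $j=2$ (part (a)). The case $j = j_r$ is killed by the alternating-path argument of part (b), whose proof only uses $\tau' \in \ms T(Y^-)$, not the full membership in $\ms S$. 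The point is that the obstruction lives not in $\tau$ itself but in the $\preceq_4$-minimal simplex beneath it once $1 \times f_1$ is added to the base.
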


\begin{proof}
By definition of $\ms S$, we have $4 \times f_1 \notin \tau$. Since $\tau_0$\ is minimal in $\ms T(Y^-)$ with respect to $\preceq_4$, by Proposition~\ref{monotone} we have $N_{\tau_0}(f_2) \supseteq N_{\tau}(f_2)$, so $3 \times f_2 \notin \tau_\text{I}$. This proves (a).

Suppose $2 \times f_{j_r} \in \tau$. Then with respect to $Y^-$, we have a 2-alternating path $f_{j_r} \rightarrow 2$ in $T(\tau)$, and a 2-alternating path $f_{j_r} \rightarrow \dotsc \rightarrow 3 \rightarrow f_1 \rightarrow 2$ in $T(\tau_0)$. This contradicts Proposition~\ref{alternating}, which proves (b).

By part (a) and Proposition~\ref{restrictions3}(a), we have $i \times f_2 \notin \tau$ for $i = 1$, 2. Suppose that $1 \times f_1 \in \tau$. Let $\tau'$ be the minimal element of $\ms T(X^- \cup Y^- \cup \{1 \times f_1\})$ with respect to $\preceq_4$. By Proposition~\ref{minimal}, we have $2 \times f_j$ for some $j = 1$, 2, or $j_r$. This contradicts Propositions~\ref{restrictions3}(b), (a), and \ref{restrictions4}(b) respectively. Thus $1 \times f_1 \notin \tau$.

Finally, suppose that $2 \times f_1 \in \tau$. As above, we have $1 \times f_1 \notin \tau$. Hence $\tau \in \ms T(X^-)_1$. But since $\abs{\tau \cap Y^+} \le \abs{Y^+} - 2$, we have $\sigma_0 \not\subseteq \tau$, contradicting $\tau_0$-goodness. This proves (c).
\end{proof}

Now, let
\[
\ms S' = \{ \tau \in \ms S : 1 \times f_{j_1} \in \tau \}.
\]

\begin{prop} \label{s'notempty}
$\ms S'$ is not empty.
\end{prop}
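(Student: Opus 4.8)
The plan is to pass to the contraction of $\ms T$ at the simplex $Z := Y^- \cup \{1 \times f_{j_1}\}$. First note $Z$ is indeed a simplex: in either case its edges form a forest in $G_{4,n}$, namely the isolated edge $3\times f_1$ together with a path joining $1$, $4$, $f_{j_1}$ (and $f_{j_2}$ in case (ii)). Since $Y^- \subseteq \tau_0$ and $1\times f_{j_1}\in P\subseteq\tau_0$, we have $Z\subseteq\tau_0$, so $Z\in\ms T$ and $\phi_Z(\ms T)$ is a local triangulation of $\Delta_I\times\Delta_J$ at $\bar\xi:=\{\bar e_1\bar f_1,\bar e_2\bar f_2\}$, where $\bar e_1,\bar e_2,\bar e_3$ are the images of $\{3\},\{1,4\},\{2\}$ and $\bar f_1,\bar f_2$ are the images of $\{f_1\}$ and of $\{f_{j_1}\}$ (resp.\ $\{f_{j_1},f_{j_2}\}$ in case (ii)). Under $\phi_{I,J}$ the edge $4\times f_1$ maps to $\bar e_2\bar f_1$ and the edge $3\times f_{j_1}$ (resp.\ $3\times f_{j_2}$) maps to $\bar e_1\bar f_2$. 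Hence, if $\bar\tau$ is any maximal simplex of $\phi_Z(\ms T)$ with $\bar e_1\bar f_2\notin\bar\tau$ and $\bar e_2\bar f_1\notin\bar\tau$, its preimage $\tau\in\ms T(Z)^\ast$ satisfies $1\times f_{j_1}\in\tau$, $Y^-\subseteq\tau$, $4\times f_1\notin\tau$, and $3\times f_{j_1}\notin\tau$ (resp.\ $3\times f_{j_2}\notin\tau$); so $\tau\in\ms S'$. Thus it suffices to produce such a $\bar\tau$.

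The four edges $\bar e_1\bar f_1$, $\bar e_2\bar f_2$, $\bar e_1\bar f_2$, $\bar e_2\bar f_1$ form a $4$-cycle, i.e.\ a circuit $\bar Y$ of $\Delta_I\times\Delta_J$, whose two parts are exactly $\bar\xi=\{\bar e_1\bar f_1,\bar e_2\bar f_2\}$ and $\{\bar e_1\bar f_2,\bar e_2\bar f_1\}$. The key observation is that $\phi_Z(\ms T)$ can have no flip supported on $(\bar Y\minus\bar\xi,\,\bar\xi)$: by the local analogues of Propositions~\ref{flipdef} and \ref{flip}, the result of such a flip would be a local triangulation at $\bar\xi$; but that result does not contain $\bar\xi$, since $\bar\xi$ belongs to the ``$+$'' triangulation of $\bar Y$ and is removed (being $\emptyset\cup\bar\xi$ with $\emptyset$ in the common link of the maximal simplices of that triangulation), while it is not put back because $\bar\xi$ is not in the ``$-$'' triangulation of $\bar Y$ — and every local triangulation at $\bar\xi$ must contain $\bar\xi$. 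So by the local analogue of Proposition~\ref{flip}, the absence of this flip yields a maximal simplex $\bar\tau$ of $\phi_Z(\ms T)$ with $\abs{\bar\tau\cap\bar Y}\le\abs{\bar Y}-2=2$, i.e.\ one avoiding both $\bar e_1\bar f_2$ and $\bar e_2\bar f_1$. Lifting $\bar\tau$ back through $\phi_{I,J}$ gives the desired element of $\ms S'$.

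The step I expect to need the most care is the bookkeeping around $\phi_Z$: correctly identifying the partitions $I$, $J$ associated to $Z$ in both cases, checking that $\bar e_1\bar f_1,\bar e_1\bar f_2,\bar e_2\bar f_1,\bar e_2\bar f_2$ are four distinct edges (which uses $j_1,j_2\notin\{1,2\}$), and tracking precisely which edges of $\Delta^3\times\Delta^{n-1}$ are sent to $\bar e_1\bar f_2$ and $\bar e_2\bar f_1$, so that the lift of $\bar\tau$ is genuinely seen to lie in $\ms S'$. One should also make explicit that Proposition~\ref{flip} (and the underlying Proposition~\ref{flipdef}) apply to local triangulations with the same proof; granting that, the ``no flip across the square'' argument itself is short. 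Note that, unlike the earlier Claims of this Case, this argument does not seem to need the goodness hypotheses — it only uses that $\phi_Z(\ms T)$ is a local triangulation at $\bar\xi$.
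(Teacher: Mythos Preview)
Your argument has a genuine gap at the ``no flip'' step. You assert a local analogue of Proposition~\ref{flipdef}: that if the two hypotheses of that proposition hold in the local triangulation $\phi_Z(\ms T)$ for the circuit $\bar Y$, then the result of the flip is again a local triangulation at $\bar\xi$. But this implication fails precisely when the circuit meets the base simplex --- and here $\bar Y^- = \bar\xi$. Concretely, take $\abs{I}=3$, $\abs{J}=2$ and the local triangulation of $\Delta_I\times\Delta_J$ at $\bar\xi$ whose maximal simplices are $\bar\xi\cup\{\bar e_1\bar f_2,\bar e_3\bar f_1\}$ and $\bar\xi\cup\{\bar e_2\bar f_1,\bar e_3\bar f_1\}$. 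Both hypotheses of Proposition~\ref{flipdef} hold for $\bar Y$ (the two three--element faces $\bar Y\setminus\{\bar e_1\bar f_2\}$ and $\bar Y\setminus\{\bar e_2\bar f_1\}$ lie in the triangulation and have the same link $\{\bar e_3\bar f_1\}$), yet every maximal simplex meets $\bar Y\setminus\bar\xi$; there is no $\bar\tau$ of the kind you need. Equivalently, in the contraction by $\bar\xi$ the images of $\bar e_1\bar f_2$ and $\bar e_2\bar f_1$ are antipodal vectors $v,-v$, and the corresponding circuit $\{\{v\},\{-v\}\}$ supports a flip automatically whenever $-v$ is used; the criterion of Proposition~\ref{flip} is vacuous for such $(1,1)$ circuits and gives no obstructing simplex. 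So neither the local--flip contradiction nor the contraction route yields the desired $\bar\tau$ in general.

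This also shows your closing remark --- that the goodness hypotheses are unnecessary --- is off. The paper's proof uses them essentially: it begins with some $\tau\in\ms S$, writes down the path $P'$ from $3$ to $4$ in $T(\tau)$, uses $\tau_0$-goodness to exclude the ``$i_1=1$'' shape of $P'$, then passes to the $\preceq_4$-minimal element $\tau'$ of $\ms T(X^-\cup Y^-\cup P')$. Proposition~\ref{minimal} forces $1\times f_j\in\tau'$ for some $j\in\{1,2,j_1,j'_r\}$; Proposition~\ref{restrictions4}(c) (which itself rests on both goodness hypotheses) rules out $j\in\{1,2\}$, and $\tau_\text{I}$-goodness(b) rules out $j=j'_r$, leaving $j=j_1$ and hence $\tau'\in\ms S'$. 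Your contraction bookkeeping is fine, but it cannot replace this use of goodness.
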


\begin{proof}
Suppose $\tau \in \ms S$. Let $P'$ be the path in $T(\tau)$ from 3 to 4. By Proposition~\ref{restrictions4}(a) and the proof of Proposition~\ref{12notempty}, $P'$ has one of the following forms.
\begin{enumerate} \renewcommand{\labelenumi}{(\roman{enumi}$'$)}
\item $P' = \{3 \times f_{j_1'}, 4 \times f_{j_1'}\}$, $j_1' \neq 1$, 2
\item $P' = \{3 \times f_{j_1'}, 2 \times f_{j_1'}, 2 \times f_{j_2'}, 4 \times f_{j_2'}\}$, $j_1' \neq 1$, $j_2' \neq 2$
\item $P' = \{3 \times f_{j_1'}, 1 \times f_{j_1'}, 1 \times f_{j_2'}, 4 \times f_{j_2}\}$, $j_1' \neq 1$, $j_2' \neq 2$
\end{enumerate}
Suppose we have (iii$'$). Then by the proof of Proposition~\ref{12notempty}, there is some $\tau' \in \ms T(X^-)_1$ with $P' \subseteq \tau'$. Since $P' \neq P$, we have $\sigma_0 \not\subseteq \tau'$. This contradicts $\tau_0$-goodness. Hence (i$'$) or (ii$''$) must hold.

Now, let $\tau'$ be the minimal element of $\ms T(X^- \cup Y^- \cup P')$ with respect to $\preceq_4$. Since $P' \subseteq \tau'$, we have $\tau' \in \ms S$. By Proposition~\ref{minimal}, we have $1 \times f_j \in \tau'$ for $j = 1$, 2, $j_1$, or $j_r'$ for some $r$.\footnote{This is true in both cases (i) and (ii). In case (ii), consider the 2-alternating path from 4 to 1. The second to last edge of this path must be $i \times f_j \in X^- \cup Y^- \cup P'$ for some $i \neq 1$; therefore $j = 1$, 2, $j_1$, or $j_r'$.} By Proposition~\ref{restrictions4}(c), we have $j \neq 1$, 2.

Suppose $j = j_r'$. First assume (i$'$) is true. Then $1 \times f_{j_1'} \in \tau'$. On the other hand, we also have $2 \times f_{j'} \in \tau'$ for some $j = 1$, 2, $j_r$ or $j_1'$, and so by Propositions~\ref{restrictions4}(b) and (c), we have $2 \times f_{j_1'} \in \tau'$. This contradicts $\tau_\text{I}$-goodness(b). If (ii$'$) is true, then we immediately have a contradiction to $\tau_\text{I}$-goodness(b). Hence $j = j_1$. So $\tau' \in \ms S'$, as desired.

Note that in this proof, we showed that $1 \times f_{j_r'} \notin \tau'$ for any $r$. Since $1 \times f_{j_1} \in \tau'$, and in case (ii), $1 \times f_{j_2} \in \tau'$ since $Y^- \subseteq \tau'$, it follows that $f_{j_r'} \neq j_1$, $j_2$ for all $r$.
\end{proof}

Now, choose $\tau$ to be a maximal element of $\ms S'$ with respect to $\preceq_{23}$. Let $P'$ be the path in $T(\tau)$ from 3 to 4. Then as in the previous proof, either (i$'$) or (ii$'$) is true. We consider these cases separately.

\paragraph{Subcase 3.1: $P'$ has form (i$'$)}

As noted in the previous proof, we have $j_1' \neq j_1$, $j_2$. In case (i), let $Z = \{Z^+,Z^-\}$ be the circuit
\begin{align*}
Z^- &= \{ 3 \times f_{j_1'}, 4 \times f_{j_1} \} \\
Z^+&= \{ 4 \times f_{j_1'}, 3 \times f_{j_1} \}
\end{align*}
and in case (ii), let $Z$ be the circuit
\begin{align*}
Z^- &= \{ 3 \times f_{j_1'}, 4 \times f_{j_1}, 1 \times f_{j_2} \} \\
Z^+&= \{ 4 \times f_{j_1'}, 1 \times f_{j_1}, 3 \times f_{j_2} \}
\end{align*}
Let $\pi = Z \minus \{ 3 \times f_{j_t} \}$, where $t = 1$ in case (i) and $t = 2$ in case (ii). Then $\pi$ is a maximal simplex of $\ms T_Z^+$. Since $1 \times f_{j_1} \in \tau$ by the definition of $\ms S'$, we have $\pi \cup X^- \cup \{1 \times f_{j_1}\} = X^- \cup Y^- \cup P' \cup \{1 \times f_{j_1}\} \subseteq \tau$.
Let $\tau_\ast$ be the minimal element of $\ms T(X^- \cup Y^- \cup P' \cup \{1 \times f_{j_1}\})$ with respect to $\preceq_3$. By the proof of Proposition~\ref{s'notempty}, we have $2 \times f_{j_1'} \in \tau_\ast$. So by Proposition~\ref{minimal}, $\tau_\ast$ is minimal in $\ms T(Z^-)$ with respect to $\preceq_3$.

Since $P' \cup \{1 \times f_{j_1}, 4 \times f_{j_1}\} \subseteq \tau$, $\tau^\ast$, by Lemma~\ref{prec12} we have $\tau \sim_{23} \tau_\ast$. Hence $\tau_\ast$ is maximal in $\ms S'$ with respect to $\preceq_{23}$. We can thus redefine $\tau$ as $\tau = \tau^\ast$.

We claim that $\ms T(Z^-) \subseteq \ms T(Y^-)$. Suppose $\tau' \in \ms T(Z^-)$. We have $Y^- \minus \{3 \times f_1\} \subseteq Z^- \subseteq \tau'$. Also, since $\tau$ is minimal in $\ms T(Z^-)$ with respect to $\preceq_3$, we have $N_\tau(3) \subseteq N_{\tau'}(3)$. Hence $3 \times f_1 \in \tau'$. So $\tau' \in \ms T(Y^-)$.

We next claim that $\pi' := Z \minus \{4 \times f_{j_1'}\} \in \ms T$. By Proposition~\ref{elimination}, there exists a maximal simplex $\tau' \in \ms T(Z^-)$ with $\tau' = \tau \minus \{4 \times f_{j_1'}\} \cup \{i \times f_j\}$ for some $i \times f_j$ where $i \in \{2,3\}$. Then $\tau \prec_3 \tau'$, so by the definition of $\tau$, we must have $\tau' \notin \ms S'$. Hence, we must have $i \times f_j = 3 \times f_{j_t}$. Thus $\pi' \subseteq \tau'$, as desired.

We finally claim that $\ms T$ has a flip supported on $(Z^+,Z^-)$. Suppose the contrary. Since $\pi' \in \ms T$, by Proposition~\ref{flip}, there is some $\tau' \in \ms T(Z^-)$ with $\abs{\tau' \cap Z^+} \le \abs{Z^+} - 2$ and $4 \times f_{j_1'} \notin \tau'$. As shown above, $\tau' \in \ms T(Y^-)$. In addition, since $\tau$ is minimal in $\ms T(Z^-)$ with respect to $\preceq_3$, we have $N_\tau(f_1) \supseteq N_{\tau'}(f_1)$, so $4 \times f_1 \notin \tau'$. It follows that $\abs{\tau' \cap Y^+} \le \abs{\tau' \cap Z^+} \le \abs{Z^+} - 2 = \abs{Y^+}-2$. Hence, $\tau' \in \ms S$.

Let $P''$ be the path from 3 to 4 in $T(\tau')$, and let $\tau''$ be the minimal element of $\ms T(X^- \cup Y^- \cup P'')$ with respect to $\preceq_4$. By the proof of Proposition~\ref{s'notempty}, $\tau'' \in \ms S'$. Since $4 \times f_{j_1'} \notin P''$ by the definition of $\tau'$, we have $P' \not\subseteq \tau''$. Thus, by Lemma~\ref{prec12}, we have $\tau \not\sim_{23} \tau''$. It follows that $\tau \prec_{23} \tau''$, which contradicts the maximality of $\tau$ in $\ms S'$ with respect to $\preceq_{23}$.

Hence, $\ms T$ has a flip supported on $(Z^+,Z^-)$. Let $\ms T'$ be the result of this flip. In both cases, we have
\[
\ms T'(Y^-)^\ast = \ms T(Y^-)^\ast \minus \ms T(Z^-)^\ast \cup \Psi( \ms T(Z \minus \{4 \times f_{j_1'}\})^\ast ).
\]
Hence $\abs{\ms T'(Y^-)^\ast} < \abs{\ms T(Y^-)^\ast}$. It is also clear from $\Psi$ that $\abs{\ms T'(X^-)^\ast} = \abs{\ms T(X^-)^\ast}$. By Proposition~\ref{Iflip2}(a) in case (i) and Proposition~\ref{Iflip1}(a) in case (ii), $\abs{(\ms T')^\text{I}} \le \abs{\ms T^\text{I}}$. It is easy to check that if $\Psi(\tau) \in \ms T''(X^-)_2$, then $\tau \in \ms T'(X^-)_2$. It is also easy to check $\tau_\text{I}$-goodness and $\tau_0$-goodness.

Finally, to see that $\tau_\text{I}$, $\tau_0 \in \ms T'$, it suffices to show $\tau_\text{I}$, $\tau_0 \notin \ms T(Z^-)$. Since $\tau$ is minimal in $\ms T(Z^-)$ with respect to $\preceq_3$, for any $\tau' \in \ms T(Z^-)$ we have $N_\tau(f_1) \supseteq N_{\tau'}(f_1)$ and hence $2 \times f_1 \notin \tau'$. Thus $\tau_\text{I}$, $\tau_0 \notin \ms T(Z^-)$.

\paragraph{Subcase 3.2: $P'$ has form (ii$'$)}

In case (i), let $Z = \{Z^+,Z^-\}$ be the circuit
\begin{align*}
Z^- &= \{ 3 \times f_{j_1'}, 2 \times f_{j_2'}, 4 \times f_{j_1} \} \\
Z^+&= \{ 2 \times f_{j_1'}, 4 \times f_{j_2'}, 3 \times f_{j_1} \}
\end{align*}
and in case (ii),
\begin{align*}
Z^- &= \{ 3 \times f_{j_1'}, 2 \times f_{j_2'}, 4 \times f_{j_1}, 1 \times f_{j_2} \} \\
Z^+&= \{ 2 \times f_{j_1'}, 4 \times f_{j_2'}, 1 \times f_{j_1}, 3 \times f_{j_2} \}
\end{align*}
Let $\pi = Z \minus \{ 3 \times f_{j_t} \}$, where $t = 1$ in case (i) and $t = 2$ in case (ii). Then $\pi$ is a maximal simplex of $\ms T_Z^+$ and $\pi \cup X^- \cup \{1 \times f_{j_1}\} \subseteq \tau$. By Proposition~\ref{minimal}, $\pi$ is minimal in $\ms T(X^-)$ with respect to $\preceq_3$.

By the same arguments as in the previous subcase, we have $\ms T(Z^-) \subseteq \ms T(Y^-)$, $Z \minus \{4 \times f_{j_2'}\} \in \ms T$, and $\ms T$ has a flip supported on $(Z^+,Z^-)$. The remainder of Claim~\ref{case3subclaim1} also follows the arguments of the previous subcase. This concludes the proof of Phase I.
\end{proof}

\subsection{Phase II}

Let $\ms T$ be a triangulation of $\Delta^3 \times \Delta^{n-1}$. Let $\ms T^\text{II}$ be the set of all maximal simplices $\tau \in \ms T^\ast$ for which there is some $f_j \in \Delta^{n-1}$ with $\{1,2\} \subseteq N_\tau(f_j)$ and $\{3,4\} \not\subseteq N_\tau(f_j)$. The goal of this section is to prove the following.

\begin{claim} \label{phaseIIclaim}
$\ms T$ is flip-connected to a triangulation $\ms T'$ with $(\ms T')^\text{II} = \emptyset$.
\end{claim}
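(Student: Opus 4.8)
The plan is to run the same kind of argument as in Phase~I, starting from its output. The first step is to invoke Claim~\ref{phaseIclaim} and thereby assume $\ms T^\text{I} = \emptyset$; this loses no generality, since $\ms T^\text{I} \subseteq \ms T^\text{II}$ (if $\{1,2\}\subseteq N_\tau(f_j)$ and $4\notin N_\tau(f_j)$ then certainly $\{3,4\}\not\subseteq N_\tau(f_j)$), so any $\ms T'$ flip-connected to such a $\ms T$ with $(\ms T')^\text{II}=\emptyset$ also has $(\ms T')^\text{I}=\emptyset$. Under the hypothesis $\ms T^\text{I}=\emptyset$, a maximal simplex $\tau$ lies in $\ms T^\text{II}$ exactly when it has some $f_j$ with $N_\tau(f_j)=\{1,2,4\}$: indeed $\{1,2\}\subseteq N_\tau(f_j)$ forces $4\in N_\tau(f_j)$ (else $\tau\in\ms T^\text{I}$), and membership in $\ms T^\text{II}$ then forces $3\notin N_\tau(f_j)$. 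As in Phase~I, it suffices to show that any $\ms T$ with $\ms T^\text{I}=\emptyset$ and $\ms T^\text{II}\neq\emptyset$ is flip-connected to some $\ms T'$ with $(\ms T')^\text{I}=\emptyset$ and $\abs{(\ms T')^\text{II}}<\abs{\ms T^\text{II}}$.

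Second, I would prove the Phase~II analogues of Propositions~\ref{Iflip1}--\ref{Iflip3}: for each type of circuit $X$ (classified by which of $1,2,3,4$ occur among its $e$-indices), bound $\abs{(\ms T')^\text{II}}$ in terms of $\abs{\ms T^\text{II}}$ and simultaneously certify that $\ms T^\text{I}$ stays empty. Proposition~\ref{shapeshift} makes this routine --- under a flip, the only cells whose shapes change are the $f_{j_r}$, and each changes only by swapping an $e_{i_r}$ for an $e_{i_{r+1}}$ --- and Proposition~\ref{uniqueattach} controls where new mixed cells can appear. The key point is that the flip we ultimately want turns the offending $\{1,2,4\}$-cell into the unmixed cell $\{1,2,3,4\}$ (so it leaves $\ms T^\text{II}$), while the cells it changes elsewhere become $\{2,3,4\}$ or $\{1,3,4\}$ or unmixed --- never $\{1,2\}$ --- so $\ms T^\text{I}$ cannot reappear; in the analogue of Proposition~\ref{Iflip1}(b) the relevant hypothesis becomes ``$3\in\{i_1,\dotsc,i_k\}$'' rather than ``$4\in\{i_1,\dotsc,i_k\}$''.

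Third, I would choose $\tau_\text{II}$ to be a maximal element of $\ms T^\text{II}$ with respect to the order $\preceq_{43}$ (the Phase~II counterpart of $\preceq_{34}$, pushing toward $e_3$). The assumption $\ms T^\text{I}=\emptyset$ collapses the possible shapes: every maximal simplex of $\Delta^3\times\Delta^{n-1}$ satisfies $\sum_r(\abs{N_r}-1)=3$ and $\bigcup_r N_r=\{1,2,3,4\}$, and one $N_r$ of $\tau_\text{II}$ equals $\{1,2,4\}$, so $\sh(\tau_\text{II})=\{\{1,2,4\},\{3,x\}\}$ for some $x\in\{1,2,4\}$; up to exchanging $1$ and $2$ this leaves only the two cases $\sh(\tau_\text{II})=\{\{1,2,4\},\{1,3\}\}$ and $\sh(\tau_\text{II})=\{\{1,2,4\},\{3,4\}\}$, against the three cases of Phase~I. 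In each case I set $\sigma_\text{II}$ to be the minimal subcomplex of $\tau_\text{II}$ whose graph has a component containing $\{1,2,3,4\}$, define a circuit $X$ for which the flip on $(X^+,X^-)$, once available, realizes the desired move by the Phase~II analogue of Proposition~\ref{Iflip1}(b) (e.g.\ $X^-=\{1\times f_1,3\times f_2\}$ in the first case, where $f_1$ carries $\{1,2,4\}$ and $f_2$ carries $\{1,3\}$), and for which $\tau_\text{II}$ is the $\preceq_3$-minimal element of $\ms T(X^-)^\ast$. Using Lemma~\ref{prec12}, Proposition~\ref{uniqueminimal}, Proposition~\ref{minimal}, and Proposition~\ref{monotone} exactly as in Proposition~\ref{good1}, I then show $\ms T(X^-)$ is ``$\tau_\text{II}$-good'': no $\tau\in\ms T(X^-)^\ast$ has $\tau\in\ms T^\text{II}$ with $\sigma_\text{II}\not\subseteq\tau$, and no $\tau\in\ms T(X^-)^\ast$ carries a $\{1,2,4\}$-cell at an index other than the prescribed one. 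The second case, where the $\{3,4\}$-cell sits ``far'' from the $\{1,2,4\}$-cell in $T(\tau_\text{II})$ and a naive circuit would create a $\{1,2,3\}$-cell, should require one preliminary maneuver, just as Phase~I's Case~3 did.

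Finally, the heart of the argument is a sub-claim mirroring Claim~\ref{case1claim}: if $\ms T$ is $\tau_\text{II}$-good at $X^-$, then either $(X^+,X^-)$ supports a flip of $\ms T$ --- and we are done by the Phase~II analogue of Proposition~\ref{Iflip1}(b) --- or $\ms T$ admits a flip whose result $\ms T'$ still contains $\tau_\text{II}$, is still $\tau_\text{II}$-good at $X^-$, still has $(\ms T')^\text{I}=\emptyset$, satisfies $\abs{(\ms T')^\text{II}}\le\abs{\ms T^\text{II}}$, and has $\abs{\ms T'(X^-)^\ast}<\abs{\ms T(X^-)^\ast}$. Iterating this drives $\ms S:=\{\tau\in\ms T(X^-)^\ast:\tau\cap X^+=\emptyset\}$ empty, at which point Proposition~\ref{flip} yields the flip on $(X^+,X^-)$. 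To prove the sub-claim one restricts the shapes of the simplices in $\ms S$ using $\tau_\text{II}$-goodness together with the alternating-path machinery (Propositions~\ref{alternating}, \ref{elimination}, \ref{minimal}), reads off the path $P$ from $e_1$ (or $e_3$) to $e_4$ in $T(\tau)$ for a suitably $\preceq$-maximal $\tau\in\ms S$, builds an intermediate circuit $Y$ directly from $P$, and verifies the bookkeeping via Propositions~\ref{shapeshift} and \ref{flip}. As in Phase~I, this sub-claim --- pinning down the finite list of admissible paths $P$ and checking all the accounting --- is the main obstacle; the only genuinely new ingredient is the extra verification, in each flip-effect proposition, that no cell is ever turned into $\{1,2\}$, and the tight shape constraints forced by $\ms T^\text{I}=\emptyset$ should make this a short addition rather than a source of new cases.
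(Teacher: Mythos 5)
Your first three steps are on track, and you've correctly spotted that $\ms T^\text{I}\subseteq\ms T^\text{II}$ so that invoking Phase~I loses no generality. But there is a genuine gap in the shape analysis, and it is precisely the gap that would make the rest of your plan harder than it needs to be. You conclude (correctly, from $\ms T^\text{I}=\emptyset$ and $\tau_\text{II}\in\ms T^\text{II}$) that $\sh(\tau_\text{II})=\{\{1,2,4\},\{3,x\}\}$ for some $x\in\{1,2,4\}$, and you collapse to two cases up to the $1\leftrightarrow 2$ symmetry: $x\in\{1,2\}$ and $x=4$. The paper then shows that the case $x=4$ cannot occur at all: if $\sh(\tau)=\{\{1,2,4\},\{3,4\}\}$ with $N_\tau(f_j)=\{1,2,4\}$, apply Proposition~\ref{elimination} to $e_4\times f_j$. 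Removing that edge splits $T(\tau)$ into a component containing $4$, $3$ (via the $\{3,4\}$-cell) and a component containing $1$, $2$, $f_j$; the proposition produces an adjacent $\tau'=\tau\minus\{4\times f_j\}\cup\{e_{i'}\times f_{j'}\}$ with $e_{i'}$ in the $\{1,2\}$ component and $f_{j'}$ in the $\{3,4\}$ component. Then $N_{\tau'}(f_j)=\{1,2\}$, so $\tau'\in\ms T^\text{I}$, contradicting $\ms T^\text{I}=\emptyset$. This leaves a single case up to symmetry, and Phase~II becomes a near-verbatim repetition of Phase~I, Case~1 with the roles of $3$ and $4$ reversed. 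You instead expect the $\{\{1,2,4\},\{3,4\}\}$ case to require a Case-3-style preliminary maneuver; that case does not arise, and trying to set up a circuit and goodness argument for it would be chasing a vacuous configuration. You need the elimination argument to see that Phase~II has only one case.

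Two secondary points. First, because of the tight shape constraint, $\tau_\text{II}$ (a maximal element of $\ms T^\text{II}$ in $\preceq_3$) is already the $\preceq_3$-minimal element of $\ms T(X^-)^\ast$ by Proposition~\ref{minimal} --- the $1$-alternating paths from $e_3$ to every $e_i$ are visible directly in $T(\tau_\text{II})$ --- so the paper runs the whole goodness argument with the plain partial order $\preceq_3$, without passing through a quasiorder or redefining $\tau_\text{II}$, and obtains the stronger conclusion that no $\tau\neq\tau_\text{II}$ in $\ms T(X^-)^\ast$ lies in $\ms T^\text{II}$ at all. Your choice of $\preceq_{43}$ is not wrong but is heavier machinery than needed. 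Second, if you do use $\preceq_{43}$ you should take $\sigma_\text{II}$ to be the minimal subset of $\tau_\text{II}$ whose graph has a component containing $\Delta^{m-1}\minus\{e_4\}=\{1,2,3\}$ (per Lemma~\ref{prec12}), not $\{1,2,3,4\}$.
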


By Phase I, we may assume $\ms T^\text{I} = \emptyset$. Assume $\ms T^\text{II} \neq \emptyset$.

\begin{prop}
Let $\tau \in \ms T^\text{II}$. Then $\sh(\tau) = \{\{1,2,4\},\{1,3\}\}$ or $\{\{1,2,4\},\{2,3\}\}$.
\end{prop}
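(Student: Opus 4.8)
The plan is to squeeze the shape of $\tau$ between the two hypotheses available here: membership in $\ms T^\text{II}$ forces one block of $\sh(\tau)$ to be exactly $\{1,2,4\}$, while the standing assumption $\ms T^\text{I} = \emptyset$ (together with a spanning-tree edge count) eliminates all but two candidates for the remaining block. First I would extract from $\tau \in \ms T^\text{II}$ an index $j_0$ with $\{1,2\} \subseteq N_\tau(f_{j_0})$ and $\{3,4\} \not\subseteq N_\tau(f_{j_0})$. Since $\ms T^\text{I} = \emptyset$, we cannot have $4 \notin N_\tau(f_{j_0})$, as that would exhibit $\tau \in \ms T^\text{I}$; hence $4 \in N_\tau(f_{j_0})$, and then $3 \notin N_\tau(f_{j_0})$ because $\{3,4\} \not\subseteq N_\tau(f_{j_0})$. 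Therefore $N_\tau(f_{j_0}) = \{1,2,4\}$.

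Next, since $\tau$ is maximal, $T(\tau)$ is a spanning tree of $G_{4,n}$, so $\sum_{j \in [n]} \bigl(\abs{N_\tau(f_j)} - 1\bigr) = 3$; as $\abs{N_\tau(f_{j_0})} - 1 = 2$ and every $N_\tau(f_j)$ is nonempty, there is exactly one further index $j_1$ with $\abs{N_\tau(f_{j_1})} = 2$, all other $N_\tau(f_j)$ being singletons. Thus $\sh(\tau) = \{\{1,2,4\}, N\}$ with $N := N_\tau(f_{j_1})$ a two-element subset of $\{1,2,3,4\}$. The case $N = \{1,2\}$ is immediately impossible, since then $f_{j_1}$ itself witnesses $\tau \in \ms T^\text{I}$; so it remains only to rule out $4 \in N$, i.e.\ $N \in \{\{1,4\}, \{2,4\}, \{3,4\}\}$.

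This last step is the hard part. Suppose $4 \in N$. Then $4$ is adjacent in $T(\tau)$ to both $f_{j_0}$ and $f_{j_1}$, so it has degree at least two, and the idea is to delete the edge $4 \times f_{j_0}$ and apply Proposition~\ref{elimination} (with the ambient $\xi$ empty). Put $\sigma := \tau \minus \{4 \times f_{j_0}\}$; removing this edge splits $T(\tau)$ into two components, each containing an edge (the $f_{j_0}$-component still carries $1 \times f_{j_0}$, and the $4$-component still carries $4 \times f_{j_1}$). Proposition~\ref{elimination} then yields $\tau' \in \ms T^\ast$ of the form $\sigma \cup \{i' \times f_{j'}\}$ with $f_{j'}$ in the component not containing $f_{j_0}$, hence $f_{j'} \neq f_{j_0}$ and $N_{\tau'}(f_{j_0}) = N_\tau(f_{j_0}) \minus \{4\} = \{1,2\}$; but then $\tau' \in \ms T^\text{I}$, contradicting $\ms T^\text{I} = \emptyset$. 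So $4 \notin N$, and $N$ is then a two-element subset of $\{1,2,3\}$ different from $\{1,2\}$, i.e.\ $N = \{1,3\}$ or $N = \{2,3\}$. The one place I expect a careless argument to go wrong is the choice of edge to remove: deleting the ``offending'' edge $4 \times f_{j_1}$ leads nowhere, whereas deleting $4 \times f_{j_0}$ is what strips the $4$ off the block $\{1,2,4\}$ while keeping $\{1,2\}$ intact --- precisely what is needed to manufacture a member of $\ms T^\text{I}$.
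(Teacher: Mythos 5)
Your proof is correct and takes essentially the same approach as the paper: both force $\{1,2,4\}$ to be a block using $\ms T^\text{I} = \emptyset$, use the spanning-tree edge count to see there is exactly one further two-element block, and apply Proposition~\ref{elimination} at the edge $4 \times f_{j_0}$ to contradict $\ms T^\text{I} = \emptyset$ when $4$ lies in that block. The only difference is cosmetic: the paper narrows the bad case directly to $\{3,4\}$ (the second block must contain $3$ for $T(\tau)$ to be connected), whereas you also nominally rule out $\{1,4\}$ and $\{2,4\}$, which are in fact already impossible since each would put a $4$-cycle in the tree $T(\tau)$.
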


\begin{proof}
Since $\ms T^\text{I} = \emptyset$, we have $\{1,2,4\} \subseteq \sh(\tau)$. Then either the Proposition is true or $\sh(\tau) = \{\{1,2,4\},\{3,4\}\}$. Suppose we have the latter case. Let $f_j \in \Delta^{n-1}$ be such that $N_\tau(f_j) = \{1,2,4\}$. By Proposition~\ref{elimination}, there exists a maximal simplex $\tau' \in \ms T^\ast$ with $\tau' = \tau \minus \{4 \times f_j\} \cup \{i' \times f_{j'}\}$ for some $j' \neq j$. Then $\tau' \in \ms T^\text{I}$, contradicting $\ms T^\text{I} = \emptyset$.
\end{proof}

Now, let $\tau_\text{II}$ be a maximal element of $\ms T^\text{II}$ with respect to $\preceq_3$. Without loss of generality, assume $\sh(\tau_\text{II}) = \{\{1,2,4\},\{1,3\}\}$; the case where $\sh(\tau_\text{II}) = \{\{1,2,4\},\{2,3\}\}$ follows analogously. Without loss of generality, assume $N_{\tau_\text{II}}(f_1) = \{1,2,4\}$ and $N_{\tau_\text{II}}(f_2) = \{1,3\}$.

Let $X = \{X^+,X^-\}$ be the circuit
\begin{align*}
X^- &= \{ 1 \times f_1, 3 \times f_2 \} \\
X^+&= \{ 3 \times f_1, 1 \times f_2 \}
\end{align*}
Let $\sigma = X \minus \{3 \times f_1\}$. Then $\sigma \cup \{2 \times f_1, 4 \times f_1\} \subseteq \tau_\text{II}$. By Proposition~\ref{minimal}, $\tau_\text{II}$ is the minimal element of $\ms T(X^-)^\ast$ with respect to $\preceq_3$.

We now prove the following.

\begin{prop} \label{goodII}
The following are true.
\begin{enumerate} \renewcommand{\labelenumi}{(\alph{enumi})}
\item There is no $\tau \in \ms T(X^-)^\ast$ with $\tau \in \ms T^\text{II}$ and $\tau \neq \tau_\text{II}$.
\item There is no $\tau \in \ms T(X^-)^\ast$ and $j \neq 1$, 2, such that $\{1,2\} \subseteq N_\tau(f_j)$.
\end{enumerate}
\end{prop}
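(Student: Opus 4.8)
The plan is to follow the strategy of Proposition~\ref{good1}(a), now taking advantage of the fact that the order in play, $\preceq_3$, is a genuine partial order rather than a quasiorder; this makes part~(a) almost immediate, and part~(b) will reduce to part~(a).

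For part~(a), recall (Propositions~\ref{minimal} and~\ref{uniqueminimal}) that $\tau_\text{II}$ is the unique minimal element of $\ms T(X^-)^\ast$ with respect to $\preceq_3$. Since $\ms T(X^-)^\ast$ is a finite poset, a unique minimal element is automatically a minimum, so $\tau_\text{II} \preceq_3 \tau$ for every $\tau \in \ms T(X^-)^\ast$, with $\tau_\text{II} \prec_3 \tau$ whenever $\tau \neq \tau_\text{II}$. Hence any $\tau \in \ms T(X^-)^\ast$ with $\tau \neq \tau_\text{II}$ that also lies in $\ms T^\text{II}$ would contradict the choice of $\tau_\text{II}$ as a $\preceq_3$-maximal element of $\ms T^\text{II}$.

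For part~(b), I would start from a hypothetical $\tau \in \ms T(X^-)^\ast$ and index $j \neq 1,2$ with $\{1,2\} \subseteq N_\tau(f_j)$ and derive a contradiction with part~(a). Because $\ms T^\text{I} = \emptyset$ rules out $4 \notin N_\tau(f_j)$, we have $\{1,2,4\} \subseteq N_\tau(f_j)$, so $N_\tau(f_j)$ is either $\{1,2,4\}$ or $\{1,2,3,4\}$. If it is $\{1,2,4\}$ then $\tau \in \ms T^\text{II}$ outright. If it is $\{1,2,3,4\}$, I would delete the edge $3 \times f_j$: since $j \neq 2$ this edge is not in $X^-$, and both components of $T(\tau \minus \{3 \times f_j\})$ carry an edge (the component of $3$ still contains $3 \times f_2 \in X^-$, the component of $f_j$ contains $1 \times f_j$), so Proposition~\ref{elimination} yields $\tau' \in \ms T(X^-)^\ast$ of the form $(\tau \minus \{3 \times f_j\}) \cup \{i' \times f_{j'}\}$ with $f_{j'}$ in the component of $3$; then $f_{j'} \neq f_j$, so $N_{\tau'}(f_j) = \{1,2,4\}$ and $\tau' \in \ms T^\text{II}$. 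In either case we have produced an element of $\ms T^\text{II} \cap \ms T(X^-)^\ast$ on which $\{1,2\} \subseteq N(f_j)$ for our index $j \neq 1,2$; since for $\tau_\text{II}$ the only index with this property is $j = 1$ (as $\sh(\tau_\text{II}) = \{\{1,2,4\},\{1,3\}\}$ with $N_{\tau_\text{II}}(f_1) = \{1,2,4\}$), this element differs from $\tau_\text{II}$, contradicting part~(a).

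The only step lacking a direct counterpart in the Phase~I argument is the case $N_\tau(f_j) = \{1,2,3,4\}$ in part~(b); I expect this to be the sole mild obstacle, and it is handled by the single application of Proposition~\ref{elimination} above, once one verifies that deleting $3 \times f_j$ leaves both components with an edge.
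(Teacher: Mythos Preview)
Your proof is correct and matches the paper's approach: for (a) the argument is identical, and for (b) both reduce to (a) by deleting the edge $3 \times f_j$ via Proposition~\ref{elimination} to land in $\ms T^\text{II}$. The only cosmetic difference is that the paper skips the appeal to $\ms T^\text{I} = \emptyset$ in (b), since removing $3$ from $N_\tau(f_j)$ already forces $\{3,4\} \not\subseteq N_{\tau'}(f_j)$ and hence $\tau' \in \ms T^\text{II}$ regardless of whether $4$ was present.
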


\begin{proof}
To prove (a), assume such a $\tau$ exists. Since $\tau_\text{II}$ is minimal in $\ms T(X^-)^\ast$ with respect to $\preceq_3$, we have $\tau_\text{II} \prec_3 \tau$. This contradicts the maximality of $\tau_\text{II}$ in $\ms T^\text{II}$.

To prove (b), assume such a $\tau$ exists. Since $j \neq 2$, by Proposition~\ref{elimination}, there is some $\tau' \in \ms T(X^-)^\ast$ (possibly the same as $\tau$) with $N_{\tau'}(f_j) = N_\tau(f_j) \minus \{3\}$. Hence $\tau' \in \ms T^\text{II}$. Also, since $j \neq 1$, $\tau' \neq \tau_\text{II}$. This contradicts (a), which completes the proof.
\end{proof}

Call any subcollection $\ms S \subseteq \ms T$ of simplices $\tau_\text{II}$-\emph{good} if it satisfies Proposition~\ref{goodII} with $\ms T(X^-)$ replaced with $\ms S$. To prove Claim~\ref{phaseIIclaim}, it suffices to prove the following.

\begin{claim}
Let $\ms T$ be any triangulation of $\Delta^3 \times \Delta^{n-1}$ such that $\tau_\text{II} \in \ms T$ and $\ms T(X^-)$ is $\tau_\text{II}$-good. Then either there is a flip of $\ms T$ supported on $(X^+,X^-)$, or there is a flip of $\ms T$ with result $\ms T'$ such that $\tau_\text{II} \in \ms T'$, $\ms T'(X^-)$ is $\tau_\text{II}$-good, $\abs{(\ms T')^\text{II}} \le \abs{\ms T^\text{II}}$, and $\abs{\ms T'(X^-)^\ast} < \abs{\ms T(X^-)^\ast}$.
\end{claim}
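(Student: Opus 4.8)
The statement is the Phase~II analogue of Claim~\ref{case1claim}, and the plan is to follow the proof of that claim almost verbatim, replacing $\ms T^\text{I}$ by $\ms T^\text{II}$ and using the simpler goodness condition of Proposition~\ref{goodII}. Since $\sh(\tau_\text{II})=\{\{1,2,4\},\{1,3\}\}$ is completely determined, the casework should in fact be lighter than in Case~1.

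First I would set $\ms S:=\{\tau\in\ms T(X^-)^\ast:\tau\cap X^+=\emptyset\}$. Because $\sigma=X\minus\{3\times f_1\}\subseteq\tau_\text{II}$ lies in $\ms T$ and is a maximal simplex of $\ms T_X^+$, Proposition~\ref{flip} shows that if $\ms S=\emptyset$ then $\ms T$ has a flip supported on $(X^+,X^-)$; so assume $\ms S\neq\emptyset$. Next I would record, for $\tau\in\ms S$, the restrictions analogous to Proposition~\ref{restrictions}: $1\times f_2\notin\tau$ and $3\times f_1\notin\tau$ by definition of $\ms S$; $N_\tau(f_2)=\{3\}$, since $\tau_\text{II}$ is $\preceq_3$-minimal in $\ms T(X^-)^\ast$ and hence $N_{\tau'}(f_2)\subseteq N_{\tau_\text{II}}(f_2)=\{1,3\}$ for every $\tau'\in\ms T(X^-)^\ast$ by Proposition~\ref{monotone}; and $2\times f_1\notin\tau$, since otherwise $\{1,2\}\subseteq N_\tau(f_1)$ with $3\notin N_\tau(f_1)$ would give $\tau\in\ms T^\text{II}$ with $\tau\neq\tau_\text{II}$ (note $\tau_\text{II}\notin\ms S$ because $1\times f_2\in\tau_\text{II}$), contradicting $\tau_\text{II}$-goodness(a).

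I would then analyze, for $\tau\in\ms S$, the path $P$ in $T(\tau)$ from $1$ to $3$, using these restrictions and $\tau_\text{II}$-goodness(b) together with the standard device of passing to the $\preceq_3$-minimal element of $\ms T(X^-\cup P)^\ast$ and invoking Proposition~\ref{minimal} to produce a vertex $2\times f_j$ contradicting a restriction or goodness. This should rule out the length-two path, force the first interior vertex of $P$ to be $4$, and force $P$ to have one of the two forms
\[
P=\{1\times f_{j_1},\,4\times f_{j_1},\,4\times f_{j_2},\,3\times f_{j_2}\},\quad j_2\neq 2,
\]
or
\[
P=\{1\times f_{j_1},\,4\times f_{j_1},\,4\times f_{j_2},\,2\times f_{j_2},\,2\times f_{j_3},\,3\times f_{j_3}\},\quad j_3\neq 2,
\]
i.e.\ exactly the two forms of Case~1 with $3$ and $4$ interchanged along the path. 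Choosing $\tau$ maximal in $\ms S$ with respect to $\preceq_{21}$, I would introduce the circuit $Y$ obtained by closing $P$ up through the edges $3\times f_2\in X^-$ and $1\times f_2\in X^+$: in the first case $Y^-=\{1\times f_{j_1},4\times f_{j_2},3\times f_2\}$, $Y^+=\{4\times f_{j_1},3\times f_{j_2},1\times f_2\}$, and in the second $Y^-=\{1\times f_{j_1},4\times f_{j_2},2\times f_{j_3},3\times f_2\}$, $Y^+=\{4\times f_{j_1},2\times f_{j_2},3\times f_{j_3},1\times f_2\}$. With $\rho:=Y\minus\{1\times f_2\}$ a maximal simplex of $\ms T_Y^+$ satisfying $\rho\cup\{1\times f_1\}=X^-\cup P$ (forced by acyclicity of $T(\tau)$), and after replacing $\tau$ by the $\preceq_1$-minimal element of $\ms T(X^-\cup P)^\ast$---which by Proposition~\ref{minimal} is the $\preceq_1$-minimal element of $\ms T(Y^-)^\ast$ and, since $P\subseteq\tau$, is $\sim_{21}$-equivalent to the original $\tau$ by Lemma~\ref{prec12}, hence still maximal in $\ms S$---I would verify $\ms T(Y^-)\subseteq\ms T(X^-)$ via Proposition~\ref{monotone} and then show that $\ms T$ has a flip supported on $(Y^+,Y^-)$: if not, Proposition~\ref{flip} produces $\tau'\in\ms T(Y^-)^\ast$ with $\abs{\tau'\cap Y^+}\le\abs{Y^+}-2$ and $1\times f_2\notin\tau'$, which lies in $\ms S$ and has $P\not\subseteq\tau'$, so $\tau\not\sim_{21}\tau'$ by Lemma~\ref{prec12} and thus $\tau\prec_{21}\tau'$, contradicting maximality.

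Finally, letting $\ms T'$ be the result of the $(Y^+,Y^-)$ flip, Proposition~\ref{shapeshift} gives the usual description of $\ms T'(X^-)^\ast$ (the simplices of $\ms T(X^-)^\ast$ outside $\ms T(Y^-)^\ast$ together with the $\Psi$-images of a couple of collections $\ms T(Y\minus\{y\})^\ast$), whence $\abs{\ms T'(X^-)^\ast}<\abs{\ms T(X^-)^\ast}$; a direct inspection of the shape changes caused by this flip---each affected $N(f_{j_r})$ loses one vertex and gains another, with $N(f_2)$ losing $3$---shows that no new simplex acquires either a witness for membership in $\ms T^\text{II}$ or an $f_j$ with $j\neq 1,2$ and $\{1,2\}\subseteq N(f_j)$, so $\abs{(\ms T')^\text{II}}\le\abs{\ms T^\text{II}}$ and $\ms T'(X^-)$ is still $\tau_\text{II}$-good; and $\tau_\text{II}\notin\ms T(Y^-)$ (else Proposition~\ref{monotone} would force $3\times f_2\notin\tau_\text{II}$), so $\tau_\text{II}\in\ms T'$. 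The real work is concentrated in the path-structure analysis and in showing that the $\preceq_{21}$-maximal choice of $\tau$ blocks every obstruction to the $(Y^+,Y^-)$ flip; but both steps are direct transcriptions of the corresponding steps in Case~1, so the principal risk is bookkeeping rather than any new idea.
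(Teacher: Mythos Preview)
Your proposal is correct and follows exactly the paper's own approach---a direct transcription of the proof of Claim~\ref{case1claim} with the roles of $3$ and $4$ swapped. One small slip: in the final check that $\tau_\text{II}\notin\ms T(Y^-)$, the contradiction should be $2\times f_1\notin\tau_\text{II}$ (from $N_\tau(f_1)\supseteq N_{\tau_\text{II}}(f_1)$, since $\tau$ is $\preceq_1$-minimal in $\ms T(Y^-)^\ast$ and $2\times f_1\notin\tau$), not $3\times f_2\notin\tau_\text{II}$.
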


The proof of this claim is the same as the proof of Claim~\ref{case1claim} in Phase I, Case 1, with the roles of 4 and 3 reversed. We leave the details to the reader.

\subsection{Phase III}

We will now define a specific triangulation $\ms T_0$ and show that all triangulations of $\Delta^3 \times \Delta^{n-1}$ are flip-connected to it. This will conclude the proof of the Theorem.

Let $\le_0$ be the total ordering on $\Delta^{n-1}$ such that $f_i <_0 f_j$ if $i < j$. We will show the following.

\begin{prop} \label{basepoint}
There is a unique triangulation $\ms T_0$ of $\Delta^3 \times \Delta^{n-1}$ such that $(\ms T_0)^\text{II} = \emptyset$ and the orders $\le_{\ms T_0[12]}$ and $\le_{\ms T_0[34]}$ are both the same as $\le_0$.
\end{prop}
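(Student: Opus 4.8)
The plan is to prove existence by writing down an explicit triangulation, and uniqueness by showing the two conditions pin down every maximal simplex.

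\emph{Existence.} I would take $\ms T_0$ to be the staircase triangulation of $\Delta^3 \times \Delta^{n-1}$ associated with the linear order $1 < 3 < 4 < 2$ on $\Delta^3$ and the order $f_n < f_{n-1} < \dots < f_1$ on $\Delta^{n-1}$: its maximal simplices are exactly the spanning trees $T(\tau)$ of $G_{4,n}$ that are monotone lattice paths with respect to these two orders, so that each $N_\tau(f_j)$ is an interval in the order $1 < 3 < 4 < 2$ and consecutive intervals meet in a single vertex. Three things must then be checked. First, $\ms T_0$ is a triangulation: this is the standard fact that the staircase triangulation of a product of two simplices is a triangulation, and one can verify the gluing axioms directly from the lattice-path description. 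Second, $(\ms T_0)^\text{II} = \emptyset$: the only interval of $1 < 3 < 4 < 2$ containing both $1$ and $2$ is all of $\{1,2,3,4\}$, so $\{1,2\} \subseteq N_\tau(f_j)$ forces $N_\tau(f_j) = \Delta^3 \supseteq \{3,4\}$. Third, $\le_{\ms T_0[12]} = \le_{\ms T_0[34]} = \le_0$: by Proposition~\ref{compare}, $f_j <_{12} f_{j'}$ iff some maximal simplex contains $\{2 \times f_j, 1 \times f_{j'}\}$; since $1$ is the first and $2$ the last entry of the order, a lattice path can contain $1 \times f_{j'}$ only on its initial columns and $2 \times f_j$ only on its final columns, and with the column order $f_n,\dots,f_1$ one finds this is possible exactly when $j < j'$; the identical computation gives $\le_{34} = \le_0$ because $3$ is the second and $4$ the third entry of the order, which is precisely why $1 < 3 < 4 < 2$ rather than $1 < 4 < 3 < 2$ is the correct choice.

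\emph{Uniqueness.} Let $\ms T$ satisfy the two hypotheses. The first step is to pin down the $n$ unmixed cells. Since $\le_{\ms T[12]} = \le_0$, Proposition~\ref{m=1} identifies the restriction $\ms T[\{1,2\}\times\Delta^{n-1}]$ exactly, with maximal simplices $\tau^{(12)}_j := \{1\times f_j,\dots,1\times f_n\}\cup\{2\times f_1,\dots,2\times f_j\}$; in particular each $\tau^{(12)}_j$ lies in $\ms T$. Any maximal simplex $\tau$ of $\ms T$ containing $\tau^{(12)}_j$ has $\{1,2\}\subseteq N_\tau(f_j)$, so $(\ms T)^\text{II}=\emptyset$ forces $N_\tau(f_j)=\Delta^3$, whence $\tau$ is the unique unmixed cell labelled $f_j$; comparing cardinalities, it must equal $\tau^{(j)} := \tau^{(12)}_j\cup\{3\times f_j,4\times f_j\}$. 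So the unmixed cells of $\ms T$ are forced, and agree with those of $\ms T_0$. The second step shows the remaining (mixed) cells are forced, by induction on $n$. The case $n=1$ is trivial. For $n>1$, the restriction $\ms T':=\ms T[\Delta^3\times\{f_1,\dots,f_{n-1}\}]$ again satisfies the hypotheses — its maximal simplices come from the maximal $\tau\in\ms T^\ast$ with $|N_\tau(f_n)|=1$ by deleting the leaf at $f_n$, so $(\ms T')^\text{II}=\emptyset$, and restriction to a face commutes with restricting the two orders — so by induction $\ms T'$ is the unique such triangulation of $\Delta^3\times\Delta^{n-2}$. It remains to see $\ms T'$ extends to $\ms T$ in only one way. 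The lever is that $f_n$ is maximal in both $\le_{12}$ and $\le_{34}$: Step~1 already forces the unmixed cell $\tau^{(n)} = (\{2\}\times\Delta^{n-1})\cup\{1\times f_n,3\times f_n,4\times f_n\}$, and Proposition~\ref{m=1} applied to the $\{3,4\}$-restriction gives a matching rigidity at the $f_n$-end; using Proposition~\ref{elimination} and Proposition~\ref{m=1} on the links of the codimension-one simplices meeting the vertices $i\times f_n$, one shows every maximal $\tau$ with $|N_\tau(f_n)|\ge 2$ equals $\tau^{(n)}$ and every maximal simplex of $\ms T'$ extends in a unique way (attaching $f_n$ as a leaf at the vertex dictated by the staircase order, or being absorbed into $\tau^{(n)}$). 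Hence $\ms T = \ms T_0$.

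\emph{Main obstacle.} Step~1 and all the reductions of Step~2 are routine given the machinery already developed. The genuine difficulty is the extension argument in Step~2: showing that once $\ms T'$ and the forced cell $\tau^{(n)}$ are known, there is only one way to finish the subdivision of $n\Delta^3$ compatibly with $(\ms T)^\text{II}=\emptyset$ and $\le_{34}=\le_0$. The subtlety is that $(\ms T)^\text{II}=\emptyset$ by itself does \emph{not} force maximal simplices to have interval shapes (a shape like $\{1,4\}$ is not excluded by it), so the argument must exploit the equality of \emph{both} restriction orders with $\le_0$ simultaneously, by an inductive descent in the spirit of Lemma~\ref{prec12}. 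Alternatively, after Step~1 one may invoke the fact from Section~3.2 that $\ms T$ is determined by $\mc C(\ms T)$ together with the labelling of its unmixed cells, reducing uniqueness to showing the mixed subdivision is determined; but this still requires the same order-based rigidity.
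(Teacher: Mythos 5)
Your existence half is fine and matches the paper's construction (the staircase triangulation for the order $1,3,4,2$ on $\Delta^3$). Your uniqueness half, however, has a genuine gap that you yourself flag: the extension step in the induction on $n$ is asserted but not proved, and it is precisely the hard part. The sketch ``using Proposition~\ref{elimination} and Proposition~\ref{m=1} on the links of the codimension-one simplices meeting the vertices $i\times f_n$, one shows\ldots'' is not a proof, and since $(\ms T)^\text{II}=\emptyset$ alone does not force interval shapes (as you correctly observe), it is not clear the induction closes without substantially more work.

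The missing lever is the fact that $\ms T^\text{II}=\emptyset$ controls \emph{all} the restriction orders involving $1$ or $2$, not just $\le_{12}$: the paper first proves (Proposition~\ref{orders}) that $\ms T^\text{II}=\emptyset$ holds if and only if $\le_{12}=\le_{13}=\le_{14}=\le_{32}=\le_{42}$. Combined with the hypotheses $\le_{12}=\le_{34}=\le_0$, this makes $\le_{ii'}=\le_0$ for every pair $i,i'$ with $i$ before $i'$ in the sequence $1,3,4,2$. Uniqueness then follows in one stroke: if some $\sigma\in\ms T$ contained a crossing pair $\{i\times f_j,\ i'\times f_{j'}\}$ with $i$ before $i'$ and $j<j'$ in the planar drawing $D$, then Proposition~\ref{compare} would give $f_{j'}<_{ii'}f_j$, i.e.\ $j'<j$, a contradiction. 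So every simplex of $\ms T$ is planar in $D$, hence $\ms T\subseteq\ms T_0$, hence $\ms T=\ms T_0$ since both are triangulations. This completely bypasses the extension argument you found difficult. You do not need to pin down the unmixed cells separately, and you do not need induction on $n$; the containment of one triangulation in another already forces equality. If you want to salvage your inductive route, you would still need something equivalent to Proposition~\ref{orders} to show each attachment of a cell at $f_n$ is forced, so there is little to be gained by avoiding the planarity argument.
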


\begin{proof}
We first prove the following.

\begin{prop} \label{orders}
Let $\ms T$ be a triangulation of $\Delta^3 \times \Delta^{n-1}$. Then $\ms T^\text{II} = \emptyset$ if and only if the orders $\le_{12}$, $\le_{13}$, $\le_{14}$, $\le_{32}$, $\le_{42}$ are all the same.
\end{prop}

\begin{proof}
Suppose $\ms T^\text{II} = \emptyset$. It suffices to prove that $\le_{12}$ and $\le_{13}$ are the same; the other cases are analogous. Suppose without loss of generality that $f_1 <_{12} f_2$ and $f_2 <_{13} f_1$. By Proposition~\ref{compare}, we have $\xi := \{1 \times f_1, 3 \times f_2 \} \in \ms T$. Let $\tau$ be the minimal element of $\ms T(\xi)^\ast$ with respect to $\preceq_3$. By Proposition~\ref{minimal}, we must have $2 \times f_j \in \tau$ for $j = 1$ or 2. If $j = 2$, then Proposition~\ref{compare} implies that $f_2 <_{12} f_1$, which contradicts our original assumption. Hence $2 \times f_1 \in \tau$. Now, since $\ms T^\text{II} = \emptyset$, this implies that $\{1,2,3,4\} \subseteq N_\tau(f_1)$. However, then there is no 1-alternating path with respect to $\xi$ in $T(\tau)$ from 3 to 2 or 4. This contradicts Propostion~\ref{minimal}. Hence $\le_{12}$ and $\le_{13}$ are the same.

Conversely, suppose $\ms T^\text{II} \neq \emptyset$. Let $\tau \in \ms T^\ast$ and $j \in [n]$ be such that $\{1,2\} \subseteq N_\tau(f_j)$ but $i \notin N_\tau(f_j)$ for either $i = 3$ or 4. Let $j' \in [n]$ be such that $i \times f_{j'} \in \tau$. By Proposition~\ref{compare}, we have $f_{j'} <_{1i} f_j$ and $f_j <_{i2} f_{j'}$. Thus $\le_{1i}$ and $\le_{i2}$ are not the same relation, as desired. 
\end{proof}

We now consider a particular drawing of $G_{m,n}$ in the plane. Let $\ell_1$ and $\ell_2$ be two parallel lines in the plane. Draw vertices 1, 3, 4, and 2 on $\ell_1$ in that order. Draw vertices $f_1$, $f_2$, \dots, $f_n$ on $\ell_2$ in that order and in the opposite direction as $\overrightarrow{12}$. This gives a drawing $D$ of $G_{m,n}$.

Suppose $\ms T$ is a triangulation of $\Delta^3 \times \Delta^{n-1}$ such that $\ms T^\text{II} = \emptyset$ and $\le_{\ms T[12]}$, $\le_{\ms[34]}$ are the same as $\le_0$. We claim that for any $\sigma \in \ms T$, the drawing of $T(\sigma)$ in $D$ is planar. Suppose the contrary. Then there exists $\{i \times f_j, i' \times f_{j'}\} \in \ms T$ such that $i$, $i'$ are distinct and appear in the sequence 1, 3, 4, 2 in that order, and $j < j'$. By Proposition~\ref{compare}, this implies $f_{j'} <_{ii'} f_j$. However, by our original assumptions on $\ms T$ and Proposition~\ref{orders}, we have that $\le_{ii'}$ is the same as $\le_0$. Hence $f_{j'} <_0 f_j$, so $j' < j$. This is a contradiction, which proves our claim.

Let $\ms T_0$ be the collection of all simplices $\sigma \subseteq \Delta^3 \times \Delta^{n-1}$ such that $T(\sigma)$ is planar in $D$. It is well-known that $\ms T_0$ is a triangulation of $\Delta^3 \times \Delta^{n-1}$. By the above claim, we have $\ms T \subseteq \ms T_0$, and hence $\ms T = \ms T_0$, as desired.
\end{proof}

We can now state the goal of Phase III.

\begin{claim}
If $\ms T$ is a triangulation of $\Delta^3 \times \Delta^{n-1}$, then $\ms T$ is flip-connected to $\ms T_0$.
\end{claim}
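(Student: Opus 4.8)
The plan is to reduce to the case $\ms T^\text{II} = \emptyset$ using Phases~I and II, and then to drive the triangulation to $\ms T_0$ by flips, tracking progress with a potential that counts inversions of the two-dimensional restriction orders (geometrically: how far out of order the unmixed cells, and then the remaining cells, are).

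By Claims~\ref{phaseIclaim} and \ref{phaseIIclaim}, $\ms T$ is flip-connected to a triangulation with $\ms T^\text{II} = \emptyset$, so assume $\ms T^\text{II} = \emptyset$. By Proposition~\ref{orders} the orders $\le_{12}$, $\le_{13}$, $\le_{14}$, $\le_{32}$, $\le_{42}$ all coincide; together with $\le_{34}$ this is a list of six linear orders on $\Delta^{n-1}$ associated to $\ms T$, and any triangulation whose six orders all equal $\le_0$ has $\ms T^\text{II} = \emptyset$ by Proposition~\ref{orders} and hence equals $\ms T_0$ by Proposition~\ref{basepoint}. So set $\Phi(\ms T)$ to be the total number of inversions, relative to $\le_0$, of the six orders $\le_{ii'}$; the goal is to reach $\Phi = 0$.

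The main move decreases $\Phi$. Suppose $\ms T^\text{II} = \emptyset$ and $\Phi(\ms T) > 0$, so some order has an adjacent inversion. If the common order does, say labels $f_a$, $f_b$ consecutive in it with $f_a$ before $f_b$ but $b < a$, then for each $(i,i') \in \{(1,2),(1,3),(1,4),(3,2),(4,2)\}$ in turn, Proposition~\ref{compare} gives $\{i' \times f_a, i \times f_b\} \in \ms T$, so the $2$-element circuit $X$ with $X^- = \{i' \times f_a, i \times f_b\}$ and $X^+ = \{i \times f_a, i' \times f_b\}$ has $\ms T_X^+ \subseteq \ms T$; the flip on $(X^+,X^-)$ reverses $f_a$, $f_b$ in $\le_{ii'}$ and in no other order (Proposition~\ref{fliporder}), removing one inversion. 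After all five of these flips the five coinciding orders agree once more, so $\ms T^\text{II} = \emptyset$ is restored (Proposition~\ref{orders}) and $\Phi$ has dropped by five. If instead only $\le_{34}$ has an adjacent inversion, a single analogous flip on a $\{3,4\}$-circuit removes it without disturbing $\ms T^\text{II} = \emptyset$. Iterating reaches $\Phi = 0$, hence $\ms T = \ms T_0$.

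The main obstacle is that the flip on such an $(X^+,X^-)$ need not be available: by Proposition~\ref{flip} it fails exactly when some maximal $\tau \in \ms T(X^-)^\ast$ has $\tau \cap X = X^-$, i.e.\ when $T(\tau)$ routes the path between $i$ and $i'$ through a third vertex of $\{1,2,3,4\}$. Following the pattern of the Claims in Phases~I and II, I would first remove such a $\tau$ by a bounded sequence of auxiliary flips: take $\tau$ extremal in a suitable order $\preceq_{i'}$ or $\preceq_{ii'}$, read off the obstructing path $P$ in $T(\tau)$, and flip along an auxiliary circuit supported on $P$, using Lemma~\ref{prec12} together with Propositions~\ref{minimal} and \ref{elimination} to control the shapes of the affected simplices exactly as in Phase~I, and using Proposition~\ref{fliporder} to ensure these auxiliary flips do not increase $\Phi$. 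This obstruction-removal step is the technical core of Phase~III, but it is considerably lighter than Phase~I: the circuits arising here have only two negative elements, so the shapes of the modified simplices are tightly constrained and the case analysis is short. With obstruction removal in place, the potential argument above completes the proof of the Claim, and hence of the Theorem.
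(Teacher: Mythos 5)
Your high-level plan matches the paper's: reduce to $\ms T^\text{II} = \emptyset$, then sort the restriction orders to $\le_0$ by flips that realize adjacent transpositions, reaching $\ms T_0$ by Proposition~\ref{basepoint}. However, the core of your proposal is missing, and the missing piece is exactly the content of Phase~III.

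You observe (correctly) that by Proposition~\ref{flip} the flip on a two-element circuit $X$ may fail if some $\tau \in \ms T(X^-)^\ast$ routes the relevant path through a third vertex of $\{1,2,3,4\}$, and you propose to handle this with a Phase~I--style obstruction-removal subroutine. But the whole point of Phase~III in the paper is that, once $\ms T^\text{II} = \emptyset$ and the labels are consecutive in the appropriate order, no such obstruction can exist: if $T(\tau)$ contained such a detour, Proposition~\ref{compare} would yield an inequality in some $\le_{1i}$ or $\le_{i2}$ or $\le_{34}$ contradicting either the coincidence of orders from Proposition~\ref{orders} or the consecutivity of $f_j, f_{j'}$. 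This is a short, self-contained contradiction argument; there is no need for $\preceq_{ii'}$-extremal elements, Lemma~\ref{prec12}, or auxiliary circuits supported on $P$. By invoking that heavy machinery you have not actually argued that it would terminate or preserve your potential, and you would be fighting a problem that does not arise. So the technical core of your proof is, as you concede, a sketch, and it is sketching the wrong argument.

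Two further problems. First, you assert that $\{i' \times f_a, i \times f_b\} \in \ms T$ implies $\ms T_X^+ \subseteq \ms T$; it only gives $X^- \in \ms T$, which is much weaker, and is precisely why the existence of the flip is nontrivial and requires Proposition~\ref{flip}. Second, before performing the five flips that swap $f_j, f_{j'}$ in the five coinciding orders, the paper first applies part (a) to force $\le_{34}$ to agree with $\le_{12}$ on that pair; this normalization is used essentially in the nonexistence-of-obstruction arguments (e.g., ruling out $i_{k-1}=4$ for $X_1$ uses $\le_{34}=\le_{12}$), and the paper's specific ordering $X_1,\dotsc,X_5$ is chosen so that the intermediate flipped orders feed the next contradiction. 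Your proposal neither performs this normalization nor addresses why your particular sequence of pairs would work, so the potential-decreasing step is not justified as written.
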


By Phase II, we may assume $\ms T^\text{II} = \emptyset$. By Proposition~\ref{basepoint}, it suffices to prove the following.

\begin{claim}
Let $\ms T$ be a triangulation with $\ms T^\text{II} = \emptyset$. Then the following are true.
\begin{enumerate} \renewcommand{\labelenumi}{(\alph{enumi})}
\item Suppose $f_j$ and $f_{j'}$ are consecutive increasing elements in the order $\le_{34}$. Then there is a flip of $\ms T$ with result $\ms T'$ such that $(\ms T')^\text{II} = \emptyset$, $\le_{\ms T[12]}$ and $\le_{\ms T'[12]}$ are the same, and $\le_{\ms T[34]}$ and $\le_{\ms T'[34]}$ are the same except $f_{j'} <_{\ms T'[34]} f_j$.
\item Suppose $f_j$ and $f_{j'}$ are consecutive increasing elements in the order $\le_{12}$. Then $\ms T$ is flip-connected to a triangulation $\ms T'$ such that $(\ms T')^\text{II} = \emptyset$, $\le_{\ms T[34]}$ and $\le_{\ms T'[34]}$ are the same, and $\le_{\ms T[12]}$ and $\le_{\ms T'[12]}$ are the same except $f_{j'} <_{\ms T'[12]} f_j$.
\end{enumerate}
\end{claim}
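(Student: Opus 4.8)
The plan is to handle both parts by flipping the circuit $X$ associated to a bipartite $4$-cycle of $G_{4,n}$. For part~(a) I take the $4$-cycle $3-f_j-4-f_{j'}$, oriented so that $X^-=\{4\times f_j,\,3\times f_{j'}\}$ and $X^+=\{3\times f_j,\,4\times f_{j'}\}$; since $f_j<_{34}f_{j'}$, Proposition~\ref{compare} gives $X^-\in\ms T$. For part~(b) I take instead the $4$-cycle $1-f_j-2-f_{j'}$ with $X^-=\{2\times f_j,\,1\times f_{j'}\}$ and $X^+=\{1\times f_j,\,2\times f_{j'}\}$, again with $X^-\in\ms T$ by Proposition~\ref{compare}. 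In both cases, once $\ms T$ has a flip supported on $(X^+,X^-)$, Proposition~\ref{fliporder} gives the stated effect on the orders: in case~(a) $\le_{34}$ has $f_j,f_{j'}$ transposed while $\le_{12}$ is untouched, and symmetrically in case~(b) (the auxiliary flips used in~(b) will all have $\abs{Y^-}\ge 3$, hence will also preserve $\le_{34}$ by Proposition~\ref{fliporder}). Moreover, by Proposition~\ref{shapeshift} the only cells that change shape have $N_\tau(f_j)$ or $N_\tau(f_{j'})$ altered, and only by exchanging one element of $\{3,4\}$ (resp.\ of $\{1,2\}$); so a block of a post-flip cell containing $\{1,2\}$ would have to come from a pre-flip block already containing $\{1,2\}$, hence equal to $\{1,2,3,4\}$ -- an unmixed cell; but an unmixed cell carries no other two-element block, so the triggering hypothesis ``$\sigma_r\subseteq\tau$'' of Proposition~\ref{shapeshift} fails and that cell is unchanged. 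This yields $(\ms T')^\text{II}=\emptyset$, so everything reduces to showing the flip is supported.

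By Proposition~\ref{flip} (and $\abs X=4$) this amounts to ruling out a maximal ``bad'' simplex $\tau\supseteq X^-$ avoiding both elements of $X^+$. For part~(a) I claim none exists, so a single flip works. Restricting $\tau$ to the face $\{3,4\}\times\Delta^{n-1}$, the consecutiveness of $f_j,f_{j'}$ in $\le_{34}$ together with Proposition~\ref{m=1} forces this restriction into the common facet of the two adjacent maximal cells of $\ms T[\{3,4\}\times\Delta^{n-1}]$ labelled $f_j$ and $f_{j'}$. Hence $N_\tau(f'')\cap\{3,4\}$ is a singleton for every $f''$, so no block of $\sh(\tau)$ contains $\{3,4\}$, and since $\ms T^\text{II}=\emptyset$ no block contains $\{1,2\}$; thus every nontrivial block is one of $\{1,3\},\{1,4\},\{2,3\},\{2,4\}$, and the path in $T(\tau)$ from $3$ to $4$ has the form $3-g-e-g'-4$ with $e\in\{1,2\}$. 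Applying Proposition~\ref{compare} to the edges $3\times g,\ e\times g'$ and to $e\times g,\ 4\times g'$ gives, when $e=1$, $g<_{13}g'$ and $g'<_{14}g$ (and when $e=2$, the analogous pair for $\le_{32},\le_{42}$); this contradicts Proposition~\ref{orders}, which makes $\le_{13}=\le_{14}$ and $\le_{32}=\le_{42}$ because $\ms T^\text{II}=\emptyset$. This short case check -- the one place using both $\ms T^\text{II}=\emptyset$ and consecutiveness -- completes part~(a), the remaining conclusions following as in the first paragraph.

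Part~(b) is genuinely harder, because a bad $\tau$ can exist: for instance $N_\tau(f_j)=\{2,3,4\}$, $N_\tau(f_{j'})=\{1,3\}$, with every other $f$-vertex attached to $1$ or to $2$, is consistent with $\ms T^\text{II}=\emptyset$ and with $f_j,f_{j'}$ consecutive in $\le_{12}$. Here I would follow the template of Claim~\ref{case1claim}: choose a maximal simplex $\tau_\text{III}$ minimal with respect to $\preceq_2$ among those containing $X^-$, let $\sigma_\text{III}$ be its minimal subset whose tree has a component spanning $\{1,2\}$ together with the relevant third vertex, and call $\ms T(X^-)$ \emph{good} when no maximal simplex of it is bad with $\sigma_\text{III}$ missing and no cell has $\{1,2\}\subseteq N(f_{j''})$ for $j''\ne j,j'$ (the analogue of $\tau_\text{I}$-goodness). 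Using maximality of $\tau_\text{III}$ in the appropriate $\preceq$-order and Lemma~\ref{prec12}, one checks as in Proposition~\ref{good1} that $\ms T(X^-)$ is good, and then proves the analogue of Claim~\ref{case1claim}: while $\ms T$ has no flip on $(X^+,X^-)$, there is a flip on an auxiliary circuit $Y$ -- built along the path in $T(\tau)$ from $f_j$ to $2$ (or from $f_{j'}$ to $1$) through the offending $3$- or $4$-neighbour -- whose result keeps $\tau_\text{III}$, keeps goodness, does not increase $\abs{(\ms T')^\text{II}}$ (by Propositions~\ref{Iflip1}--\ref{Iflip3}), and strictly decreases $\abs{\ms T(X^-)^\ast}$. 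Iterating reaches a triangulation with a flip supported on $(X^+,X^-)$, which finishes part~(b). The main obstacle is exactly this last step: selecting the auxiliary circuits and carrying through the bookkeeping (goodness, shape counts, the $\ms T^\text{II}$-count) -- a direct but lengthy adaptation of Phase~I, Case~1.
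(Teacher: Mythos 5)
Part (a) is correct and takes essentially the paper's approach: the same circuit $X^-=\{4\times f_j,\,3\times f_{j'}\}$, the same use of Proposition~\ref{flip} to reduce to a bad $\tau$, and the same contradiction from Proposition~\ref{orders} applied to the path from $3$ to $4$ in $T(\tau)$. Your organization (first forcing every $\{3,4\}$-coordinate to be a singleton via Proposition~\ref{m=1}, then examining the length-$4$ path) is a cosmetic variant of the paper's (rule out the length-$2$ path, then pick any internal $i_r\in\{1,2\}$). One small remark: you derive $(\ms T')^\text{II}=\emptyset$ via a Proposition~\ref{shapeshift} argument, which works but is clunkier than the paper's route: Proposition~\ref{fliporder} says the flip leaves all five of $\le_{12},\le_{13},\le_{14},\le_{32},\le_{42}$ untouched, so they remain mutually equal, and Proposition~\ref{orders} gives $(\ms T')^\text{II}=\emptyset$ immediately.

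Part (b) is where the real gap is, and you acknowledge it yourself. Your plan is a single $2$-$2$ flip on $X^-=\{2\times f_j,\,1\times f_{j'}\}$, forced by a Phase~I-Case~1-style iteration of auxiliary flips on larger circuits $Y$; but you explicitly leave ``selecting the auxiliary circuits and carrying through the bookkeeping'' as the remaining obstacle. That is not a side issue --- it is the entire content of part (b). In particular: (i) the Phase~I goodness machinery and the $\ms T^\text{I}$-count propositions (\ref{Iflip1}--\ref{Iflip3}) were tailored to the specific index patterns of Phase~I and do not carry over verbatim; you would need fresh analogues for the $\ms T^\text{II}$ count; (ii) it is not obvious that the relevant $\sim$-quasiorder argument produces a maximal $\tau$ whose $3$-to-$4$ path has short enough form to build a single clean $Y$; (iii) the claim that $\abs{\ms T'(X^-)^\ast}$ strictly decreases each round needs the shape-count identity, which you have not established for your $Y$'s.

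The paper's actual proof of (b) sidesteps all of this with a key observation you did not find: \emph{first invoke part (a) to bring $\le_{34}$ into coincidence with $\le_{12}$.} Once $\le_{12}=\le_{13}=\le_{14}=\le_{32}=\le_{42}=\le_{34}$, the desired transposition of $f_j,f_{j'}$ is achieved by a sequence of \emph{five} direct $2$-$2$ flips, on the circuits $X_1,\dotsc,X_5$ (in the order $\{1,3\},\{2,4\},\{1,2\},\{2,3\},\{1,4\}$ of pairs), each of which is shown to be supported by a short contradiction argument on the $3$-to-$i'$ path in a putative bad $\tau$, using only Proposition~\ref{orders} and the knowledge of exactly which pairwise orders have already had $f_j,f_{j'}$ swapped. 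No auxiliary circuits, no goodness bookkeeping, no cardinality induction. After the five flips, Proposition~\ref{fliporder} says $\le_{34}$ is untouched and each of the other five orders has $f_j,f_{j'}$ swapped exactly once, so by Proposition~\ref{orders} the result has $\ms T^\text{II}=\emptyset$. This is a genuinely different and substantially lighter decomposition than your proposal, and it is the missing idea you would need to complete part (b).
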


\subsubsection{Proof of (a)}

Let $X = \{X^+,X^-\}$ be the circuit
\begin{align*}
X^- &= \{ 4 \times f_j, 3 \times f_{j'} \} \\
X^+&= \{ 3 \times f_j, 4 \times f_{j'} \}
\end{align*}
By Proposition~\ref{compare}, $X^- \in \ms T$. We claim that $\ms T$ has a flip supported on $(X^+,X^-)$. If so, then by Proposition~\ref{fliporder}, we have that $\le_{\ms T[12]}$ and $\le_{\ms T'[12]}$ are the same and $\le_{\ms T[34]}$ and $\le_{\ms T'[34]}$ are the same except $f_{j'} <_{\ms T'[34]} f_j$. By Proposition~\ref{orders}, we also have $\ms T^\text{II} = \emptyset$. This will prove (a).

Suppose that $\ms T$ does not have a flip supported on $(X^+,X^-)$. By Proposition~\ref{flip}, there is a maximal simplex $\tau \in \ms T(X^-)^\ast$ with $\tau \cap X^+ = \emptyset$. Let
\[
P = \{ 4 \times f_{j_1}, i_1 \times f_{j_1}, i_1 \times f_{j_2}, i_2 \times f_{j_2}, \dotsc, i_{k-1} \times f_{j_k}, 3 \times f_{j_k} \}
\]
be the path in $T(\tau)$ from 4 to 3. First suppose that
\[
P = \{ 4 \times f_{j_1}, 3 \times f_{j_1} \}.
\]
Since $\ms T \cap X^+ = \emptyset$, we have $j_1 \neq j$, $j'$. Since $X^- \in \tau$, it follows from Proposition~\ref{compare} that $f_j <_{34} f_{j_1}$ and $f_{j_1} <_{34} f_{j'}$. This contradicts the assumption that $f_j$, $f_{j'}$ are consecutive in $\le_{34}$. Hence, $k > 1$.

Assume that $i_r = 1$ for some $r = 1$, \dots, $k-1$. Since $\{i_{r-1} \times f_{j_r}, 1 \times f_{j_{r+1}}\} \subseteq P$,\footnote{We define $i_0 = 4$, and $i_k = 3$.} we have $f_{j_r} <_{1i_{r-1}} f_{j_{r+1}}$. Since $\{1 \times f_{j_r}, i_{r+1} \times f_{j_{r+1}}\} \subseteq P$, we have $f_{j_{r+1}} <_{1i_{r+1}} f_{j_r}$. However, by Proposition~\ref{orders}, $<_{1i_{r-1}}$ and $<_{1i_{r+1}}$ are the same relation. This is a contradiction. Similarly, we cannot have $i_r = 2$ for any $r = 1$, \dots, $k-1$. Hence $\ms T$ has a flip supported on $(X^+,X^-)$, as desired.

\subsection{Proof of (b)}

By part (a), we may assume that $\le_{34}$ is the same as $\le_{12}$. Let $X_1$, $X_2$, \dots, $X_5$ be the circuits
\begin{gather*}
\begin{aligned}
X_1^- &= \{ 3 \times f_j, 1 \times f_{j'} \} \\
X_1^+&= \{ 1 \times f_j, 3 \times f_{j'} \}
\end{aligned} \qquad
\begin{aligned}
X_2^- &= \{ 2 \times f_j, 4 \times f_{j'} \} \\
X_2^+&= \{ 4 \times f_j, 2 \times f_{j'} \}
\end{aligned}\qquad
\begin{aligned}
X_3^- &= \{ 2 \times f_j, 1 \times f_{j'} \} \\
X_3^+&= \{ 1 \times f_j, 2 \times f_{j'} \}
\end{aligned} \\
\begin{aligned}
X_4^- &= \{ 2 \times f_j, 3 \times f_{j'} \} \\
X_4^+&= \{ 3 \times f_j, 2 \times f_{j'} \}
\end{aligned} \qquad
\begin{aligned}
X_5^- &= \{ 4 \times f_j, 1 \times f_{j'} \} \\
X_5^+&= \{ 1 \times f_j, 4 \times f_{j'} \}  
\end{aligned}
\end{gather*}
We claim that $\ms T$ has a flip supported on $(X_1^+, X_1^-)$, the result has a flip supported on $(X_2^+, X_2^-)$, and so on, with the final result of the five flips being $\ms T'$. If this is the case, then by Proposition~\ref{fliporder}, we have that the order $\le_{34}$ remains unchanged after these flips, and the orders $\le_{12}$, $\le_{13}$, $\le_{14}$, $\le_{32}$, $\le_{42}$ remain unchanged except the order of $f_j$ and $f_{j'}$ is flipped in all of them. By Proposition~\ref{orders}, it follows that $(\ms T')^\text{II} = \emptyset$. This will prove (b).

First, suppose $\ms T$ does not have a flip supported on $(X_1^+,X_1^-)$. By Proposition~\ref{flip}, there is some $\tau \in \ms T(X_1^-)^\ast$ with $\tau \cap X_1^+ = \emptyset$. Let
\[
P = \{ 3 \times f_{j_1}, i_1 \times f_{j_1}, i_1 \times f_{j_2}, i_2 \times f_{j_2}, \dotsc, i_{k-1} \times f_{j_k}, 1 \times f_{j_k} \}
\]
be the path from 3 to 1 in $T(\tau)$. Suppose that $j_k \neq j'$. Since $\tau \cap X_1^+ = \emptyset$, we also have $j_k \neq j$. Since $\{ 3 \times f_j, 1 \times f_{j_k} \} \subseteq \tau$, we have $f_j <_{13} f_{j_k}$. Since $\{ i_{k-1} \times f_{j_k}, 1 \times f_{j'} \} \subseteq \tau$, we have $f_{j_k} <_{1i_{k-1}} f_{j'}$. But $<_{13}$ and $<_{1i_{k-1}}$ are both the same as $<_{12}$, contradicting the assumption that $f_j$ and $f_{j'}$ are consecutive in this order. Hence $j_k = j'$.

Since $\tau \cap X_1^+ = \emptyset$, we have $i_{k-1} \neq 3$. If $i_{k-1} = 4$, then $\{3 \times f_j, 4 \times f_{j'}\} \subseteq \tau$, and thus $f_{j'} <_{34} f_j$. But we assumed that $<_{34}$ is the same as $<_{12}$, so this is a contradiction. Finally, if $i_{k-1} = 2$, then we similarly have $f_{j'} <_{32} f_j$. But $<_{32}$ is the same as $<_{12}$, a contradiction. Thus, $\ms T$ has a flip supported on $(X_1^+,X_1^-)$.

Let the result of this flip be $\ms T_2$. Suppose there is some $\tau \in \ms T(X_2^-)^\ast$ with $\tau \cap X_2^+ = \emptyset$. Let
\[
P = \{ 2 \times f_{j_1}, i_1 \times f_{j_1}, i_1 \times f_{j_2}, i_2 \times f_{j_2}, \dotsc, i_{k-1} \times f_{j_k}, 4 \times f_{j_k} \}
\]
be the path from 2 to 4 in $T(\tau)$. Since the only order $\le_{ii'}$ that changed from $\ms T$ to $\ms T_2$ was $\le_{13}$ (and $\le_{31}$), we can apply analogous arguments to the ones above to show that $j_1 = j$ and $i_1 \neq 4$, 3, 1. This is a contradiction, so $\ms T_2$ has a flip supported on $(X_1^+,X_1^-)$.

Let the result of this flip be $\ms T_3$. Suppose there is some $\tau \in \ms T(X_3^-)^\ast$ with $\tau \cap X_3^+ = \emptyset$. Let
\[
P = \{ 2 \times f_{j_1}, i_1 \times f_{j_1}, i_1 \times f_{j_2}, i_2 \times f_{j_2}, \dotsc, i_{k-1} \times f_{j_k}, 1 \times f_{j_k} \}
\]
be the path from 2 to 1 in $T(\tau)$. Since the only changes from $\ms T$ to $\ms T_3$ in the orders $\le_{ii'}$ were between $f_j$ and $f_{j'}$, we can apply the same arguments as above to show that $j_1 = j$ and $j_k = j'$. Now, since $\tau \cap X_3^+ = \emptyset$, we have $i_1 \neq 1$. Also, since $f_{j'} <_{\ms T_3[13]} f_j$, by Proposition~\ref{compare} we have $i_1 \neq 3$. Hence $i_1 = 4$. Similarly, we have $i_{k-1} = 3$. Thus, $\{ 2 \times f_j, 3 \times f_{j_2} \} \subseteq \tau$, so $f_j <_{32} f_{j_2}$, and $\{ 4 \times f_{j_2}, 1 \times f_{j'} \} \subseteq \tau$, so $f_{j_2} <_{14} f_{j'}$. But both of these orders are the same as $<_{12}$ in $\ms T_3$, which contradicts the fact that $f_j$, $f_{j'}$ are consecutive in this order. Hence $\ms T_3$ has a flip supported on $(X_3^+,X_3^-)$.

Let the result of this flip be $\ms T_4$. Suppose there is some $\tau \in \ms T(X_4^-)^\ast$ with $\tau \cap X_4^+ = \emptyset$. Let
\[
P = \{ 2 \times f_{j_1}, i_1 \times f_{j_1}, i_1 \times f_{j_2}, i_2 \times f_{j_2}, \dotsc, i_{k-1} \times f_{j_k}, 3 \times f_{j_k} \}
\]
be the path from 2 to 3 in $T(\tau)$. First, suppose that $i_r = 1$ for some $r = 1$, \dots, $k-1$. By the argument from part (a) of this proof, we have $f_{j_r} <_{1i_{r-1}} f_{j_{r+1}}$ and $f_{j_{r+1}} <_{1i_{r+1}} f_{j_r}$. If $\{j_r,j_{r+1}\} \neq \{j,j'\}$, then we have a contradiction because $\le_{\ms T_4[1i_{r-1}]}$ and $\le_{\ms T_4[1i_{r+1}]}$ are the same on pairs other than $\{f_j,f_{j'}\}$. Suppose $\{j_r,j_{r+1}\} = \{j,j'\}$. Then we must have $j_r = j_1 = j$ and $j_{r+1} = j_k = j'$. Then the first inequality is $f_j <_{12} f_{j'}$, which contradicts $f_{j'} <_{\ms T_4[12]} f_j$. Hence $i_r \neq 1$ for all $r$.

Now, suppose that $j_k \neq j'$. By the argument we made for $(X_1^+,X_1^-)$, we have $j_k \neq j$, $f_j <_{32} f_{j_k}$, and $f_{j_k} <_{3i_{k-1}} f_{j'}$. Since $i_{k-1} \neq 1$, and we originally assumed that $\le_{34}$ was the same as $\le_{12}$, we have that $<_{3i_{k-1}}$ is the same as $<_{32}$ on all pairs other than $\{f_j,f_{j'}\}$. Hence $f_j <_{32} f_{j_k} <_{32} f_{j'}$, which contradicts the fact that $f_j$ and $f_{j'}$ are consecutive in this order. Thus $j_k = j$. Now, since $\tau \cap X_4^+ = \emptyset$, we have $i_{k-1} \neq 2$. Also, $i_{k-1} \neq 1$ as above. Hence $i_{k-1} = 4$. By Proposition~\ref{compare}, we thus have $f_j <_{42} f_{j'}$. This contradicts the fact that $f_{j'} <_{\ms T_4[42]} f_j$. Hence $\ms T_4$ has a flip supported on $(X_4^+,X_4^-)$.

An analogous argument shows that the result has a flip supported on $(X_5^+,X_5^-)$. This completes the proof of the Theorem.

\end{document}